\documentclass[12pt]{amsart}
\pdfoutput=1
\usepackage{microtype}
\overfullrule = 10cm    
\usepackage{etex} 
\usepackage[active]{srcltx}
\usepackage{calc,amssymb,amsthm,amsmath,amscd, eucal,ulem}
\usepackage{alltt}
\usepackage{mathtools}
\usepackage{verbatim}
\synctex=1
\usepackage{bbding}
\RequirePackage[dvipsnames,usenames]{color}

\normalem
\input{mabliautoref.sty}
\input{xy}
\xyoption{all}
\input{kmacros3.sty}
\usepackage{tikz}
\usepackage{amsfonts, mathrsfs}
\usepackage[cal=boondox, calscaled=1.05]{mathalfa}
\usepackage{calligra}
\usepackage{amssymb}
\usepackage{stmaryrd} 
\usepackage{dcpic, pictexwd}
\usepackage[left=1in,top=1in,right=1in,bottom=1in]{geometry}
\usepackage{bm}
\usepackage{verbatim}
\usepackage{upgreek}
\usepackage{systeme}

\usepackage{marvosym}
\usepackage{hyperref}

\numberwithin{equation}{theorem}

\newcommand{\kay}{\mathcal{k}}
\newcommand{\el}{\mathcal{l}}

\renewcommand{\:}{\colon}
\newcommand{\eg}{{\itshape e.g.} }

\renewcommand{\m}{\mathfrak{m}}
\renewcommand{\n}{\mathfrak{n}}
\newcommand{\p}{\mathfrak{p}}
\newcommand{\q}{\mathfrak{q}}

\DeclareMathOperator{\Norm}{N}

\DeclareMathOperator{\val}{val}

\DeclareMathOperator{\colim}{colim}
\DeclareMathOperator{\eps}{\varepsilon}


\newcommand{\FEt}{\textsf{F\'{E}t}} 

\newcommand{\Gal}{\textnormal{Gal}}

\DeclareMathOperator{\DIV}{Div}
\DeclareMathOperator{\Cl}{Cl}

\usepackage{setspace}
\usepackage{hyperref}
\usepackage{enumerate}
\usepackage{graphicx}
\usepackage[all,cmtip]{xy}

\usepackage{verbatim}



\renewcommand{\sA}{\mathcal{A}}

\renewcommand{\sC}{\mathcal{C}}
\renewcommand{\sD}{\mathcal{D}}
\renewcommand{\sE}{\mathcal{E}}
\renewcommand{\sF}{\mathcal{F}}

\renewcommand{\sL}{\mathcal{L}}
\renewcommand{\sM}{\mathcal{M}}

\renewcommand{\sO}{\mathcal{O}}

\renewcommand{\sY}{\mathcal{Y}}

\renewcommand{\sp}{\mathcal{p}}

\begin{document}
\title{Tame fundamental groups of pure pairs and Abhyankar's lemma} 
\author[J.~Carvajal-Rojas]{Javier Carvajal-Rojas}
\address{\'Ecole Polytechnique F\'ed\'erale de Lausanne\\ SB MATH CAG\\MA C3 615 (B\^atiment MA)\\ Station 8 \\CH-1015 Lausanne\\Switzerland \newline\indent
Universidad de Costa Rica\\ Escuela de Matem\'atica\\ San Jos\'e 11501\\ Costa Rica}
\email{\href{mailto:javier.carvajalrojas@epfl.ch}{javier.carvajalrojas@epfl.ch}}
\author[A.~St\"abler]{Axel St\"abler}
\address{Universit\"at Leipzig\\ Mathematisches Institut\\ Augustusplatz 10\\
04109 Leipzig\\Germany} 
\email{\href{mailto:staebler@math.uni-leipzig.de}{staebler@math.uni-leipzig.de}}

\keywords{Pure $F$-regularity, PLT singularities, fundamental groups, splitting primes, Abhyankar's lemma.}

\thanks{The first named author was supported in part by the NSF CAREER Grant DMS \#1252860/1501102 and by the ERC-STG \#804334. The second named author was supported in part by SFB-Transregio 45 Bonn-Essen-Mainz financed by Deutsche Forschungsgemeinschaft.}

\subjclass[]{13A35, 14B05, 14H30}


\begin{abstract}
Let $(R,\fram, \kay)$ be a strictly local normal $\kay$-domain of positive characteristic and $P$ be a prime divisor on $X=\Spec R$. We study the Galois category of finite covers over $X$ that are at worst tamely ramified over $P$ in the sense of Grothendieck--Murre. Assuming that $(X,P)$ is a purely $F$-regular pair, our main result is that every Galois cover $f \: Y \to X$ in that Galois category satisfies that $\bigl(f^{-1}(P)\bigr)_{\mathrm{red}}$ is a prime divisor. We shall explain why this should be thought as a (partial) generalization of a classical theorem due to S.S.~Abhyankar regarding the \'etale-local structure of tamely ramified covers between normal schemes with respect to a divisor with normal crossings. Additionally, we investigate the formal consequences this result has on the structure of the fundamental group representing the Galois category. We also obtain a characteristic zero analog by reduction to positive characteristics following Bhatt--Gabber--Olsson's methods.
\end{abstract}
\maketitle

\tableofcontents
\section{Introduction} \label{sec.Intro}
In their former preprint \cite{CarvajalStablerFsignaturefinitemorphisms}, the authors studied the behavior of \emph{pure $F$-regularity} under finite covers; see \cite[\S4, \S5]{CarvajalStablerFsignaturefinitemorphisms}. In the present work, we shall deepen into the consequences of \cite[Theorems 4.8, 5.12]{CarvajalStablerFsignaturefinitemorphisms}, which explain the behavior of splitting primes, splitting ratios, and test ideals along closed subvarieties under finite covers. In the spirit of \cite{CarvajalSchwedeTuckerEtaleFundFsignature,CarvajalFiniteTorsors,JeffriesSmirnovTransformationRule}, we shall do so by studying the conditions they impose on the structure of covers over purely $F$-regular singularities that are at worst tamely ramified over the minimal center of $F$-purity divisor. To this end, consider the following setup.

\begin{setup} \label{setup2}
Let $(R, \fram, \kay, K)$ be a strictly local normal $\kay$-domain of (equi-)characteristic $p\geq 0$ and dimension $\geq 2$.\footnote{Let us recall that a strictly local ring is a henselian local ring with separably closed residue field.} Set $X \coloneqq \Spec R$, let $Z \subset X$ be a closed subscheme of codimension $\geq 2$, and set $X^{\circ} = X \smallsetminus Z$. Let $P$ be a prime divisor on $X$, whose restriction to $X^{\circ}$ we denote by $P$ as well. Consider the Galois category $\mathsf{Rev}^{P}(X^{\circ})$ of finite covers over $X^{\circ}$ that are at worst tamely ramified over $P$ and denote by $\pi_1^{\mathrm{t},P}(X^{\circ})$ the corresponding fundamental group; see \autoref{sec.preliminaresTameStuff}. 
\end{setup}
\begin{terminology}[local pure log pairs]
With notation as in \autoref{setup2}, we say that $(X,P)$ is a \emph{pure pair} if either $p > 0$ and $(X,P)$ is purely $F$-regular, or $p =0 $ and $(X,P+\Delta)$ is a purely log terminal pair for some (auxiliary) effective divisor $\Delta$ on $X$ with coefficients strictly less than $1$. See \autoref{section.preliminaries} for more details on these definitions. 
\end{terminology}

In positive characteristic, our main result is the following.

\begin{theoremA*}[\autoref{prop.OnlyOnePrime}, \autoref{pro.(a)HoldPLT}]
Work in \autoref{setup2}. If $(X,P)$ is a pure pair, then every connected cover $f\: Y^{\circ} \to X^{\circ}$ in $\mathsf{Rev}^{P}(X^{\circ})$ satisfies that $Q \coloneqq \bigl(f^{-1}(P)\bigr)_{\mathrm{red}}$ is a prime divisor on $Y^{\circ}$ and $(Y,Q)$ is a pure pair. 
\end{theoremA*}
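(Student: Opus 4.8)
The plan is to reduce everything to the positive characteristic statement (Proposition~\ref{prop.OnlyOnePrime}) and then handle characteristic zero separately by reduction mod $p$. In positive characteristic, the strategy is to exploit the behavior of the splitting prime under the finite cover $f\: Y^\circ \to X^\circ$ as developed in \cite[Theorems 5.1, 6.12]{CarvajalStablerFsignaturefinitemorphisms}. First I would pass to the semilocalization of $Y^\circ$ at the (finitely many, by finiteness of $f$) primes of $Y$ lying over $\fram$, and recall that purity of the pair $(X,P)$ means the splitting prime of the pair equals the prime ideal $P$ itself, equivalently the splitting ratio is computed along $P$ and the center of $F$-purity is exactly $P$. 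The key input is that tame ramification over $P$ forces the ramification-theoretic contribution along $P$ to be ``invisible'' to the $F$-splitting structure in the relevant sense, so the transformation rule for test ideals/splitting primes under $f$ (the cited theorems) shows that the splitting prime of the pulled-back pair $(Y, Q)$ is again a single prime ideal, namely the radical of $f^{-1}(P)$. Since the splitting prime of an $F$-pure pair is always a prime ideal that is uniquely determined, its being the radical of $f^{-1}(P)$ forces $Q = (f^{-1}(P))_{\mathrm{red}}$ to be irreducible, hence a prime divisor; and the fact that the splitting ratio along $Q$ is positive (again by the transformation rule, tameness guaranteeing no drop to zero) gives that $(Y,Q)$ is purely $F$-regular, i.e. a pure pair.

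More concretely, the steps in order would be: (1) reduce to $Y = \Spec S$ strictly local by base-changing/semilocalizing and using that $R \to S$ is module-finite with $S$ normal; (2) invoke purity of $(X,P)$ to identify the splitting prime $\mathcal{P}_R(P)$ with $P$ and record the splitting ratio $r(P) > 0$; (3) apply the transformation rule of \cite[Thm 5.1]{CarvajalStablerFsignaturefinitemorphisms} to the tamely ramified cover to compute $\mathcal{P}_S(Q)$ in terms of $\mathcal{P}_R(P)$ and the ramification data, concluding $\mathcal{P}_S(Q) = \sqrt{f^{-1}(P)} \cdot S$ is prime; (4) deduce irreducibility of $Q$ from primality of the splitting prime together with the fact that the splitting prime of a pair with center $Q$ is contained in (and in the pure case equal to) the defining ideal of the center; (5) use \cite[Thm 6.12]{CarvajalStablerFsignaturefinitemorphisms} together with positivity of the transformed splitting ratio to conclude $(Y,Q)$ is purely $F$-regular. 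For the characteristic zero case (Proposition~\ref{pro.(a)HoldPLT}), I would spread out $(X, P+\Delta)$ over a finitely generated $\mathbb{Z}$-algebra, use that plt singularities reduce to purely $F$-regular pairs for a dense set of primes (standard, via Hara--Watanabe--Takagi type correspondences), apply the positive characteristic case fiberwise, and descend the conclusion that $f^{-1}(P)_{\mathrm{red}}$ is a prime divisor and $(Y,Q)$ is plt back to characteristic zero following Bhatt--Gabber--Olsson's descent of irreducibility and the standard reduction-mod-$p$ dictionary.

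The main obstacle I anticipate is step (3)–(4): controlling the splitting prime of the pullback precisely enough to conclude it is prime rather than merely unmixed or radical. The transformation rule from \cite{CarvajalStablerFsignaturefinitemorphisms} tells us how $F$-splittings of $S$ restrict to $F$-splittings of $R$, but extracting from this that the splitting prime $\mathcal{P}_S(Q)$ is exactly the extension of $\sqrt{f^{-1}(P)}$ — and in particular has only one minimal prime — requires using tameness in an essential way: if $f$ were wildly ramified over $P$ one would expect the extra $p$-torsion in the ramification to split $f^{-1}(P)$ into several components or to kill the splitting ratio. So the heart of the argument is a careful analysis showing that the tameness hypothesis (in the Grothendieck--Murre sense, i.e. ramification indices prime to $p$ and trivial residue extensions after base change) is exactly what is needed to make the transformation rule output a prime splitting ideal with positive splitting ratio. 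A secondary subtlety is that $Y^\circ$ is only the complement of a codimension $\geq 2$ set, so one must check that $Q$ being prime on $Y^\circ$ is equivalent to it being prime on $Y$, which follows from normality and the codimension hypothesis since divisors and their components are detected in codimension one.
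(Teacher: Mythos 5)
You have the right skeleton---identify the splitting prime of the pulled-back Cartier algebra and invoke the transformation rule---but the pivotal step is asserted rather than proved. \cite[Theorem C]{CarvajalStablerFsignaturefinitemorphisms} produces \emph{one} distinguished prime $\q = \sp\bigl(S, f^*\sC_R^P\bigr)$ lying over $\p$; it does not by itself exclude \emph{other} primes of $S$ over $\p$, so it cannot tell you that this splitting prime equals $\sqrt{\p S}$, which is exactly the content of the irreducibility of $Q$. (The splitting prime is prime by construction; the question is whether it exhausts $f^{-1}(P)$.) You correctly flag this as the main obstacle but then attribute its resolution to ``tameness alone,'' whereas tameness enters earlier and elsewhere: it yields $f^*P - \Ram = E \geq 0$ (transposability), and pure $F$-regularity of $(X,P)$ then forces surjectivity of $\Tr_{S/R}$. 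The missing idea is the Galois symmetry argument: after reducing to the case where $L/K$ is Galois, one shows $\Tr_{S/R}(\q') \subset \p$ for \emph{every} prime $\q'$ over $\p$, because $\Tr_{S/R}(x) = \sum_{\sigma} \sigma(x)$ and $\Gal(L/K)$ acts transitively on the primes over $\p$, so the containment known for $\q$ propagates to all conjugates. Then for $\varphi \in \sC^P_{e,R}$ the identity $\Tr_{S/R}\circ \varphi^{\top} = \varphi \circ F^e_*\Tr_{S/R}$ gives $\varphi^\top(F^e_*\q') \subset \Tr_{S/R}^{-1}(\p) \subsetneq S$, hence $\varphi^\top(F^e_*\q')\subset \fran$, i.e.\ $\q' \subset \sp\bigl(S,f^*\sC_R^P\bigr) = \q$, and incomparability forces $\q'=\q$. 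Without this (or a substitute) your steps (3)--(4) do not close.

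Two smaller points. Pure $F$-regularity of $(Y,Q)$ is not obtained from ``positivity of the transformed splitting ratio'' alone; the paper deduces it from the adjoint-ideal transformation rule \cite[Theorem 6.12]{CarvajalStablerFsignaturefinitemorphisms} together with surjectivity of the trace. And for characteristic zero the paper does \emph{not} reduce mod $p$ for this part of Theorem A: it argues directly, using that $\bigl(Y, (g^{-1}P)_{\mathrm{red}} + g^*\Delta_1\bigr)$ is again PLT \cite[Corollary 2.43]{KollarSingulaitieofMMP} and that minimal LC centers are normal \cite[Theorem 7.2]{FujinoGongyoCanonicalBundleFormulasAndSubadjunction}, so the preimage of $P$ in the strictly local model is normal and connected, hence irreducible. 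Your spreading-out route is viable in principle, but you would still need to descend irreducibility of $Q$ from a dense set of closed fibers, an extra step you have not addressed.
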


The proof of this result in positive characteristic is inspired by our previous work \cite{CarvajalStablerFsignaturefinitemorphisms}. The analog in characteristic zero is well-known to experts; see \autoref{sec.CharZeroBusiness}. In a nutshell, we use \cite[Theorem 4.8]{CarvajalStablerFsignaturefinitemorphisms} and the symmetry induced by the Galois action to prove that there is only one point of $Y$ lying over the generic point of $P$. Indeed, any such a point must correspond to the splitting prime of the pair $(Y,Q)$. Then, one may use \cite[Theorem 5.12]{CarvajalStablerFsignaturefinitemorphisms} to prove that $(Y,Q)$ is a pure pair. In fact, one may do this quantitatively by means of the transformation rule for splitting ratios in \cite[Theorem 4.8]{CarvajalStablerFsignaturefinitemorphisms}. Theorem A in combination with finiteness of local fundamental groups \cite{CarvajalSchwedeTuckerEtaleFundFsignature,XuFinitenessOfFundGroups} have very strong consequences on the structure of $\pi_1^{\mathrm{t},P}(X^{\circ})$. Concretely:

\begin{theoremB*}[\autoref{thm.TameApplication}] Work in \autoref{setup2}. Suppose that $(X,P)$ is a pure pair of characteristic $p>0$. Then, there exists an exact sequence of topological groups
\[
 \hat{\Z}^{(p)} \to \pi_1^{\textnormal{t},P}(X^{\circ}) \to \pi^{P}_{\mathrm{1,\acute{e}t}}(X^{\circ}) \to 1,
\]
where $\pi^{P}_{\mathrm{1,\acute{e}t}}(X^{\circ})$ is the fundamental group corresponding to the Galois subcategory of covers that are \'etale over $P$. The group $\pi^{P}_{\mathrm{1,\acute{e}t}}(X^{\circ})$ is finite with order prime-to-$p$ and no more than  $\mathrm{min}\bigl\{1\big/r\big(R,P\big),1/s(R)\bigr\}$, where $r\big(R,P\big)$ is the splitting ratio of $(R,P)$ and $s(R)$ is the $F$-signature of $R$. Furthermore:
\begin{enumerate}
    \item The homomorphism $\hat{\Z}^{(p)} \to \pi_1^{\textnormal{t},P}(X^{\circ})$ is injective if the divisor class of $P$ is a prime-to-$p$ torsion element of $\Cl R $. In this case, the short exact sequence
    \[
     1 \to \hat{\Z}^{(p)} \to \pi_1^{\textnormal{t},P}(X^{\circ}) \to \pi^{P}_{\mathrm{1,\acute{e}t}}(X^{\circ}) \to 1
    \]
    splits (as topological groups) if and only if the divisor class of $P$ is trivial.
    
     \item If $P$ is a non-torsion element of $\Cl X$, we have a short exact sequence
     \[ 0 \to \varprojlim_{n \in N^P(X^{\circ})} \bZ/n\bZ \to \pi_1^{\textnormal{t},P}(X^{\circ}) \to \pi^{P}_{\mathrm{1,\acute{e}t}}(X^{\circ}) \to 1,\]
     where $N^P(X^\circ) \subset \bN$ is the set of prime-to-$p$ positive integers $n \in \bN$ for which there is a divisor $D$ on $X$ such that $P-n \cdot D \in \Cl X$ has prime-to-$p$ torsion and $D|_U$ is Cartier, where $U \coloneqq X^\circ \smallsetminus Z$. The sequence is split if and only if for every $n \in N^P(X^\circ)$ there are divisors $D_n$ with $D_n\vert_U$ Cartier such that $P = nD_n \in \Cl X$ which are compatible in the sense that $mD_{nm} = D_n$ in $\Cl X$ for all $n,m \in N^P(X^\circ)$
\end{enumerate}
\end{theoremB*}

\begin{remark}
By \cite[Corollary 1.2]{TaylorInversionOfAdjuntionFSignature}, we expect that $\mathrm{min}\bigl\{1\big/r\big(R,P\big),1/s(R)\bigr\} = 1/s(R)$ in Theorem B. Indeed, Taylor's result establishes that this is the case when $P$ has a prime-to-$p$ torsion divisor class. 
\end{remark}

Over the complex numbers, we obtain the following analog.
\begin{theoremC*}
[\autoref{thm.TameApplicationCharZero}]
Work in \autoref{setup2}. Suppose that $(X,P)$ is a pure pair over $\mathbb{C}$. Then, there is an exact sequence of topological groups
\[
\hat{\mathbb{Z}} \to \pi_1^{\mathrm{t}, P}(X^{\circ}) \to \pi^{P}_{\mathrm{1,\acute{e}t}}(X^{\circ}) \to 1, \]  where $\pi^{P}_{\mathrm{1,\acute{e}t}}(X^{\circ})$ is finite; it is the fundamental group corresponding to the Galois subcategory of covers which are \'etale over $P$.
Additionally:
\begin{enumerate}
    \item The homomorphism $\hat{\Z} \to \pi_1^{\textnormal{t},P}(X^{\circ})$ is injective if the divisor class of $P$ is a torsion element of $\Cl R$.  In this case, the short exact sequence
   \[
    1 \to \hat{\mathbb{Z}} \to \pi_1^{\mathrm{t}, P}(X^{\circ}) \to \pi^{P}_{\mathrm{1,\acute{e}t}}(X^{\circ}) \to 1, 
    \] 
    splits (as topological groups) if and only if the divisor class of $P$ is trivial.
    
     \item If $P$ is a non-torsion element of $\Cl X$, then we have a short exact sequence
     \[ 0 \to \bZ/n\bZ \to \pi_1^{\textnormal{t},P}(X^{\circ}) \to \pi^{P}_{\mathrm{1,\acute{e}t}}(X^{\circ}) \to 1.\]
     The sequence is split if and only if there is a divisor $D$ with $D\vert_U$ Cartier such that $P = nD \in \Cl X$
\end{enumerate}
\end{theoremC*}

We shall prove Theorem B and Theorem C as formal consequences of the following two statements. See \autoref{sect.MainFormal} and especially \autoref{thm.MainFormal} for further details.
\begin{itemize}
     \item Every connected cover $f\: Y^{\circ} \to X^{\circ}$ in $\mathsf{Rev}^{P}(X^{\circ})$ satisfies that $\bigl(f^{-1}(P)\bigr)_{\mathrm{red}}$ is a prime divisor on $Y^{\circ}$. 
    \item There exists a universal \'etale-over-$P$ cover $\tilde{X}^{\circ} \to X^{\circ}$.
\end{itemize}
In positive characteristic, we give direct proofs of these statements in \autoref{sect.TameFunpurelyFregularpair}.

\subsection{Abhyankar's lemma}
We briefly mention here why the results in this work should be thought of as partial generalizations to Abhyankar's lemma; see \autoref{sec.preliminaresTameStuff} for further details.

Abhyankar's lemma \cite[Expos\'e XIII, \S 5]{GrothendieckSGA} is a theorem on the local structure, from the point of view of the \'etale topology, of finite covers between normal integral schemes that are tamely ramified with respect to a divisor with normal crossings (on the base). It establishes that, locally in the \'etale topology, any such cover is a quotient of a (generalized) Kummer cover; see \cite{GrothendieckMurreTameFundamentalGroup}, \cite[\href{https://stacks.math.columbia.edu/tag/0EYG}{Tag 0EYG}]{stacks-project}. In a sense, Abhyankar's lemma is a \emph{purity} theorem for Kummer covers. Indeed, by definition and \autoref{theo.serretameiskummer}, a tamely ramified cover with respect to a divisor is a cover that is Kummer at the codimension-$1$ \'etale-germs. Assuming the divisor has normal crossings; which is a regularity condition, Abhyankar's lemma establishes that such a cover is Kummer at all \'etale germs.

Let us understand Abhyankar's lemma with a simple but already fundamental example. With notation as in \autoref{setup2}, assume that $R$ is regular (or just pure in the sense of \cite{CutkoskyPurity}) and $P=\Div f$. A finite cover $R \subset S$ with $S$ a normal local domain is tamely ramified with respect to $P$ if $R_f \subset S_f$ is \'etale and the generic field extension $K(S)/K(R)$ is tamely ramified with respect to the DVR $R_{(f)}$. An example of such an extension is a Kummer cover: $S=R[T]/(T^n-f)$ with $n$ prime to the characteristic. However, there may exist several non-Kummer tamely ramified covers; see \autoref{ex.TheCusp}. In general, the connected components of the pullback of a tamely ramified cover $R \subset S$ to $R_{(f)}^{\mathrm{sh}}$ must be Kummer covers and the converse holds provided that $R_f \subset S_f$ is \'etale; see \autoref{theo.serretameiskummer} and \autoref{lem.Lemma228GM71}. 

In the above setup, Abhyankar's lemma establishes that if $R/f$ is regular, then any Galois tamely ramified cover of $R$ with respect to the prime divisor $\Div f$ is necessarily Kummer. One may then wonder for what singularities of $R/f$ Abhyankar's lemma hold. We shall see that in the situation of Theorems B and C, if $R/f$ is either KLT in characteristic zero or strongly $F$-regular in positive characteristic, then the statement of Abhyankar's lemma hold. A simpler version of our partial generalization of Abhyankar's lemma is the following. For the more general statement see \autoref{lem.THELEMMAFORABH} (keeping in mind \autoref{ex.Smoothambient} and \autoref{exSommothAmbientCharZero}).

\begin{theoremD*}[{\autoref{lem.THELEMMAFORABH}, \autoref{cor.AbhyankarLemmaPureFRegPairs}, \autoref{cor.abyankarcharzero}}]
With notation as in \autoref{setup2}, suppose that $X$ is regular and $P = \Div f$. If $(X,P)$ is a pure pair, then every Galois tamely ramified cover over $X$ with respect to $P$ is of the form $\Spec R[T]/(T^n-f) \to X$ for $n$ prime to the characteristic.
\end{theoremD*}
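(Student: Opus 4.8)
The plan is to derive Theorem D as a special case of Theorem B (respectively Theorem C) combined with the structural description of Kummer covers over a regular base. First I would observe that since $X = \Spec R$ with $R$ regular and $P = \Div f$, the pair $(X,P)$ is a regular ambient containing a smooth (indeed, reduced free) divisor, so the splitting ratio $r(R,P)$ and the $F$-signature satisfy $s(R) = 1$ and $r(R,P) = 1$ (this is recorded in \autoref{ex.Smoothambient}, and in characteristic zero the analogous statement appears in \autoref{exSommothAmbientCharZero}). Feeding this into the bound $\min\{1/r(R,P), 1/s(R)\}$ from Theorem B forces the finite quotient $G$ to be trivial, so the exact sequence degenerates to $\hat{\Z}^{(p)} \twoheadrightarrow \pi_1^{\mathrm{t},P}(X^\circ) \to 1$. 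Since $R$ is regular, the divisor class group $\Cl R$ is trivial, in particular the class of $P$ is trivial (hence, vacuously, a prime-to-$p$ torsion element), so case (a) of Theorem B applies: the homomorphism $\hat{\Z}^{(p)} \to \pi_1^{\mathrm{t},P}(X^\circ)$ is injective, and the resulting short exact sequence $1 \to \hat{\Z}^{(p)} \to \pi_1^{\mathrm{t},P}(X^\circ) \to 1$ splits trivially. Therefore $\pi_1^{\mathrm{t},P}(X^\circ) \cong \hat{\Z}^{(p)}$ (in characteristic zero, $\pi_1^{\mathrm{t},P}(X^\circ) \cong \hat{\Z}$ by Theorem C with $\pi_1^{\textnormal{\'et}}(X^\circ) = 1$, as a strictly local regular ring is simply connected in codimension one).

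Next I would translate this computation of the fundamental group into the concrete description of the covers. The pro-cyclic group $\hat{\Z}^{(p)}$ (or $\hat{\Z}$) has, for each $n$ prime to the characteristic, exactly one open subgroup of index $n$, hence exactly one connected Galois tamely-ramified-over-$P$ cover of $X^\circ$ of degree $n$ up to isomorphism; and moreover, this exhausts all connected Galois objects of $\mathsf{Rev}^P(X^\circ)$ because every finite quotient of $\hat{\Z}^{(p)}$ is cyclic of order prime to the characteristic. On the other hand, the Kummer cover $\Spec R[T]/(T^n - f) \to X$, after removing $Z$ and restricting to $X^\circ$, is a connected Galois cover of degree $n$ that is tamely ramified over $P$ (this is the running fundamental example; its connectedness uses that $f$ is not an $m$-th power for any $m \mid n$, which holds because $\Div f = P$ is reduced, and its tameness is by \autoref{theo.serretameiskummer}). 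By the uniqueness just established, any Galois tamely ramified cover of $X$ with respect to $P$ of degree $n$ must be isomorphic, over $X^\circ$, to this Kummer cover; and since both sides are normal and agree over $X^\circ$ whose complement has codimension $\geq 2$, the isomorphism extends over $Z$ by Hartogs/reflexivity, giving the claimed description over all of $X$.

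The main obstacle, and the step requiring the most care, is the reduction inputs $s(R) = 1$ and $r(R,P) = 1$ in positive characteristic — or rather making sure one is actually allowed to invoke Theorem B here, since Theorem B is stated in \autoref{setup2} where $R$ need only be normal, but its proof and the finiteness of $\pi_1^{\mathrm{t},P}$ rely on $(X,P)$ being a pure pair; a regular $R$ with $P = \Div f$ is indeed purely $F$-regular (the $F$-signature is $1$ and the divisor is a coordinate hyperplane after completion, so purity of the pair is immediate from the local structure), so this is clean but worth stating explicitly. A secondary subtlety is the passage between statements about $X$ and statements about $X^\circ$: the notion of "Galois tamely ramified cover over $X$ with respect to $P$" should be interpreted via its restriction to $X^\circ$, and one must check (as in the discussion preceding \autoref{cor.AbhyankarLemma}, via \autoref{ex.Smoothambient}) that extending a tame cover of $X^\circ$ across the codimension-$\geq 2$ locus $Z$ is harmless — this is where normality of the total space and reflexivity of $f_*\sO$ enter. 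I do not expect genuine difficulty beyond assembling these pieces; the theorem is essentially a dictionary entry translating "the tame fundamental group is $\hat{\Z}^{(p)}$" into "every cover is Kummer."
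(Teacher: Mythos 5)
Your argument is correct and follows essentially the paper's own route: Theorem D is obtained there as a formal interpretation of \autoref{thm.MainFormal} (via \autoref{cor.AbhyankarLemma}, keeping \autoref{ex.Smoothambient} in mind), exactly as you do --- triviality of $\Cl R$ and of the \'etale-over-$P$ quotient $G$ identify $\pi_1^{\mathrm{t},P}(X^{\circ})$ with $\hat{\Z}^{(p)}$, and each finite quotient is then realized by the unique degree-$n$ Kummer cover, which extends over the codimension-$\geq 2$ locus by normality. One minor slip: \autoref{ex.Smoothambient} actually gives $r(R,P)=s(R/f)$, which need not equal $1$ when $R/f$ is merely strongly $F$-regular, but this is harmless since $s(R)=1$ alone already forces $G$ to be trivial in the bound $\min\bigl\{1/r(R,P),1/s(R)\bigr\}$.
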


\begin{convention}
If a scheme $X$ or ring $R$ is defined over $\mathbb{F}_p$, then we denote the $e$-th iterate of the Frobenius endomorphism by $F^e \: X \to X$, or by $R \to F^e_* R$. We use the shorthand notation $q \coloneqq p^e$ to denote the $e$-th power of the prime $p$, for instance $F^e\: r \mapsto r^q$. We assume all our schemes and rings to be locally noetherian. In positive characteristic we also assume that they are $F$-finite and hence excellent. 
\end{convention}

\subsection*{Acknowledgements} 
 We would like to thank Bhargav Bhatt, Manuel Blickle, Alessio Caminata, Elo\'isa~Grifo, Zsolt Patakfalvi, Karl Schwede, Anurag Singh, Ilya Smirnov, Roberto Svaldi, and Maciej Zdanowicz for very useful discussions and help throughout the preparation of this preprint. The authors are grateful to Karl~Schwede for very valuable comments on a draft of this preprint and for suggesting us the use of adjoint ideals to study the perseverance of pure $F$-regularity. The first named author commenced working on this project while in his last year of Graduate School at the Department of Mathematics of the University of Utah. He is greatly thankful for their hospitality and support. He is particularly grateful to his advisor Karl~Schwede for his guidance and generous support. We are also indebted to a diligent referee who pointed out several mistakes in an earlier version.
 
 \section{Preliminaries on pure log pairs} \label{section.preliminaries}
 
In this preliminary section, we review the definitions and main aspects of \emph{pure log pairs}. By this, we mean log pairs $(X,\Delta)$ that are purely $F$-regular if defined over a positive characteristic field, or purely log terminal if defined over a characteristic zero field.

\subsection{Pure $F$-regularity}
Consider $X = \Spec R$ where $R$ is an $F$-finite normal $\kay$-domain of positive characteristic $p$ and let $\sC$ be a Cartier algebra acting on $R$. See \cite[Section 2]{CarvajalStablerFsignaturefinitemorphisms} for the relevant notions of Cartier algebras and modules in the way we employ them here. Following \cite{SchwedeCentersOfFPurity}, a \emph{center of $F$-purity (or $F$-pure center) for $(R, \sC)$} is an integral closed subscheme $P = V(\mathfrak{p}) \subset X$ such that $\mathfrak{p}$ is a $\sC$-submodule of $R$. We say that $P$ is a \emph{minimal center of $F$-purity for $(R,\sC)$} if $\mathfrak{p}$ is a maximal proper $\sC$-submodule. Given a closed point $x \in \Spec R$, we call $P$ a \emph{minimal center of $F$-purity through $x$} if $x \in P$.

Following \cite[\S 3.1]{Smolkinphdthesis} and \cite[\S4]{SmolkinSubadditivity}, one defines $\uptau_{\p}(R,\sC)$ to be the smallest Cartier $\sC$-submodule of $R$ not contained in $\p$, which exists provided that $\sC_e(R) \not\subset \p$ for some $e>0$ (this condition is referred to as \emph{nondegeneracy}). See \cite{TakagiPLTAdjoint,TakagiHigherDimensionalAdjoint}, \cf \cite[\S 5.2]{CarvajalStablerFsignaturefinitemorphisms}. By \cite[Proposition 3.1.14]{Smolkinphdthesis}, we see that $P$ is a minimal center of $F$-purity for $(R,\sC)$ if and only if $\uptau_{\p}(R,\sC) + \p = R$. When $\uptau_{\p}(R,\sC) = R$, one says that \emph{$(R,\sC)$ is purely $F$-regular along $P$}. For the generalization to the case $\p$ is radical, see \cite[Lemma 5.11]{CarvajalStablerFsignaturefinitemorphisms}.

In the local case, minimal centers of $F$-purity exist, are unique, and admit a simpler description. Indeed, if $(R,\fram)$ is local then \emph{the} minimal $F$-pure center of $(R,\sC)$ is given by the closed subscheme cut out by the splitting prime $\sp(R,\sC)$; see \cite[Remark 4.4]{SchwedeCentersOfFPurity}. Further, we see that $\uptau_{\p}(R,\sC) = R$ if $\p=\sp(R,\sC)$. In other words, in the local case, $(R,\sC)$ is always purely $F$-regular along its (unique) minimal $F$-pure center. We are implicitly assuming that $\sp(R,\sC)$ is a proper ideal of $R$ (i.e. $(R,\sC)$ is $F$-pure).

Still assuming $R$ is local, let $P=V(\p) \subset X$ be the closed subscheme cut out by a prime ideal $\p \subset R$. Let $\sC_R^{[P]} \subset \sC_R$ be the Cartier subalgebra consisting of $P$-compatible $p^{-e}$-linear maps. In other words, $\varphi \in \sC_{e,R}$ belongs to $\sC_{e,R}^{[P]}$ if and only if $\varphi(F^e_* \p) \subset \p$. Since the splitting prime $\sp\big(R,\sC_R^{[P]}\big)$ is the unique largest prime ideal compatible with all the $p^{-e}$-linear maps in $\sC_R^{[P]}$, we have an inclusion 
\[
\p \subset \sp\Big(R,\sC_R^{[P]}\Big).
\]
This inclusion is an equality exactly when $P$ is the minimal $F$-pure center of $\sC_R^{[P]}$. In particular, we may say that \emph{$P$ is a minimal $F$-pure center of $X$} (with no explicit reference to a Cartier algebra) to say that it corresponds to the splitting prime of some Cartier algebra---necessarily $\sC_R^{[P]}$. In that case, $(R,\sC_R^{[P]})$ is purely $F$-regular along $P$.

In this paper, we are interested in minimal $F$-pure center divisors. In this case, we have:

\begin{proposition}
\label{pro.[P]=PforDivisors}
Let $R$ be a local normal $\kay$-domain with $P=V(\p)$ a prime divisor on $\Spec R$. Then, we have $\sC_R^{[P]} = \sC_R^P$, where $\sC_R^{P}$ is the Cartier algebra corresponding to the divisor $\Delta = P$; see \cite{SchwedeFAdjunction}.\footnote{It also coincides with $\sC_R^{\p}$ as in \cite[\S 3.3]{BlickleTestIdealsViaAlgebras}.} 
\end{proposition}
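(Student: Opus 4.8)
The plan is to prove the equality one Frobenius degree at a time: for each $e\geq 0$ I will show that the degree-$e$ pieces $\sC^{[P]}_{e,R}$ and $\sC^{P}_{e,R}$ coincide inside $\sC_{e,R}=\Hom_R(F^e_* R,R)$. Since both are closed under the composition product inherited from $\sC_R$ and carry the induced grading, the degree-wise equality is automatically an equality of Cartier algebras, so this suffices. For a fixed $e$ the idea is to reduce the comparison to codimension one, i.e.\ to the DVR $R_\p$, and then settle it by an explicit computation there.

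For the reduction, I claim that membership in each of the two subalgebras is detected after localizing at $\p$: one has $\varphi\in\sC^{[P]}_{e,R}$ if and only if $\varphi_\p\in\sC^{[P_\p]}_{e,R_\p}$, and likewise $\varphi\in\sC^{P}_{e,R}$ if and only if $\varphi_\p\in\sC^{P_\p}_{e,R_\p}$; granting this, the Proposition follows from the corresponding statement over $V\coloneqq R_\p$. The first equivalence is immediate from the elementary observation that an ideal of $R$ is contained in $\p$ exactly when its localization at $\p$ is contained in $\p R_\p$, applied to the ideal $\varphi(F^e_*\p)$. For the second, an $R$-linear extension of $\varphi$ over $F^e_* R\bigl((p^e-1)P\bigr)$, if it exists, is unique and must send a section $x$ of $R\bigl((p^e-1)P\bigr)$, viewed inside the fraction field $K$ of $R$, to $(\varphi\otimes_R K)(F^e_* x)$; since $R\bigl((p^e-1)P\bigr)$ agrees with $R$ at every height-one prime other than $\p$ and $R=\bigcap_{\height\q=1}R_\q$ by normality, such an extension exists precisely when $(\varphi\otimes_R K)(F^e_* x)\in R_\p$ for all such $x$, which is exactly the statement that $\varphi_\p$ extends over $F^e_* R_\p\bigl((p^e-1)P_\p\bigr)$. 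This reduces everything to the DVR $V$, whose closed point is the prime divisor $P_\p\coloneqq\Div t$ for a uniformizer $t$.

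On $V$, which is regular local and $F$-finite, $F^e_* V$ is $V$-free and $\sC_{e,V}=\Hom_V(F^e_* V,V)$ is free of rank one over $F^e_* V$; I fix the generator $\Phi_e$ reading off the coefficient of $F^e_*(t^{p^e-1})$ in a $V$-basis of $F^e_* V$ adapted to the $t$-adic filtration, so that every $\varphi\in\sC_{e,V}$ is of the form $\Phi_e\circ\bigl(F^e_*(g)\cdot(-)\bigr)$ for a unique $g\in V$. On one hand, the $F^e_* V$-module isomorphism $F^e_*\bigl(V\bigl((p^e-1)P_\p\bigr)\bigr)=F^e_*\bigl(t^{-(p^e-1)}V\bigr)\cong F^e_* V$ given by multiplication by $F^e_*(t^{p^e-1})$ identifies the restriction map $\Hom_V\bigl(F^e_*(t^{-(p^e-1)}V),V\bigr)\rightarrow\Hom_V(F^e_* V,V)$ with multiplication by $F^e_*(t^{p^e-1})$ on $\sC_{e,V}$, whence $\sC^{P_\p}_{e,V}=F^e_*(t^{p^e-1})\cdot\sC_{e,V}=\{\,\varphi\in\sC_{e,V}:g\in t^{p^e-1}V\,\}$. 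On the other hand, writing $g=\sum_{n\geq0}c_n t^n$ and expanding $F^e_*(g t^{j})$ in the chosen basis, one computes that $\varphi(F^e_*(t^{j}))=\Phi_e\bigl(F^e_*(g t^{j})\bigr)$ has vanishing constant term if and only if $c_{p^e-1-j}=0$ (with $c_i\coloneqq 0$ for $i<0$); as $F^e_*(tV)$ is generated over $V$ by the $F^e_*(t^{j})$ with $j\geq1$, this gives $\varphi\bigl(F^e_*(\p V)\bigr)\subseteq\p V$ if and only if $c_0=\cdots=c_{p^e-2}=0$, i.e.\ $g\in t^{p^e-1}V$. Comparing the two descriptions yields $\sC^{[P_\p]}_{e,V}=\sC^{P_\p}_{e,V}$, and the argument is complete.

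The only real content is this DVR computation; the one delicate point there is the bookkeeping of the generator $\Phi_e$ and of the coefficients $c_n$ when the residue field $V/tV$ is imperfect, which I would handle by working relative to a $p^e$-basis (equivalently, a coefficient field) or by faithfully flat descent from the perfect-residue-field case, everything else being formal. Alternatively, one can bypass the computation entirely and deduce the Proposition from \cite[\S3.3]{BlickleTestIdealsViaAlgebras} together with the standard identification of the Cartier algebra $\sC^{P}_R$ of \cite{SchwedeFAdjunction} attached to a reduced prime divisor with the subalgebra of compatible Cartier maps.
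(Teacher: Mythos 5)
Your proposal is correct and follows essentially the same route as the paper's (one-sentence) proof: reduce both membership conditions to the localization $R_\p$ and observe that over the DVR the two Cartier algebras coincide. You have simply written out the details the paper leaves implicit, namely why each membership is detected at $\p$ and the explicit rank-one computation identifying both with $F^e_*(t^{p^e-1})\cdot\sC_{e,R_\p}$.
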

\begin{proof}
Observe that membership in these Cartier algebras can be checked (by localizing) at $\mathfrak{p}$, where these Cartier algebras are obviously the same.
\end{proof}

\begin{notation}
With notation as in \autoref{pro.[P]=PforDivisors}, we write $\sp(X,P)$ and $r(X,P)$ (or with $X$ replaced by $R$) to denote the splitting prime and splitting ratio of the pair $\bigl(R,\sC_R^{[P]}\bigr)$. Moreover, we shall write $\sC_R^P$ instead of $\sC_R^{[P]}$.
\end{notation}

\begin{definition}[Purely $F$-regular local pair] \label{def.PurelyFRegularPair}
With notation as in \autoref{pro.[P]=PforDivisors}, we say that the pair $(X,P)$ (or with $R$ in place of $X$) is \emph{purely $F$-regular} if $P$ is a minimal $F$-pure center prime divisor on $X$.\footnote{Note that this is called \emph{divisorially F-regular} in \cite{HaraWatanabeFRegFPure}. However, we use the \emph{purely F-regular} terminology to emphasize the connections with purely log terminal (PLT) singularities and avoid confusions with \emph{divisorially log terminal} singularities (DLT).}
\end{definition}

\begin{remark} \label{rem.PureDRegularPairAdjointIdeal}
With notation as in \autoref{pro.[P]=PforDivisors}, notice that $(X,P)$ is a purely $F$-regular pair if and only $\uptau_{\p}(R,P) = R$, i.e. if $(X,P)$ is purely $F$-regular along $P$.
\end{remark}

We observe that $X$ must have ``mild'' singularities to admit a purely $F$-regular divisor.

\begin{proposition}
\label{pro.XisSFR}
Let $(X,P)$ be a purely $F$-regular local pair, then $R$ (or $X$) is strongly $F$-regular (with respect to its full Cartier algebra $\sC_R$). More generally, if $A$ is a local domain with an action by some Cartier algebra $\sA \subset \sC_A$, and $C = V(\mathfrak{c})$ a minimal $F$-pure center prime divisor for $\sA^C\coloneqq \sA \cap \sC_A^{C}$ and $(A, \sA)$ is $F$-regular at the generic point of $C$, then $(A, \sA)$ is $F$-regular.
\end{proposition}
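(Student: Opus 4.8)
The plan is to route everything through the splitting prime, using the two standard facts that $(A,\sA)$ is $F$-regular precisely when $\sp(A,\sA)=(0)$, and that the splitting prime is inclusion-reversing in the Cartier algebra (a smaller algebra admits more compatible ideals, hence has a larger splitting prime). First I would reduce the first assertion to the second: take $A=R$, let $\sA=\sC_R$ be the full Cartier algebra, and set $C=P$ and $\mathfrak{c}=\mathfrak{p}$. Then $\sA^{C}=\sC_R\cap\sC_R^{P}=\sC_R^{P}=\sC_R^{[P]}$ by \autoref{pro.[P]=PforDivisors}, and by \autoref{def.PurelyFRegularPair} the assumption that $(X,P)$ is purely $F$-regular is precisely the assumption that $P$ is a minimal $F$-pure center prime divisor for $\sC_R^{P}$. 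Moreover, since $R$ is a normal domain and $\mathfrak{p}$ has height one, $R_{\mathfrak{p}}$ is a DVR, hence regular, hence strongly $F$-regular with respect to its full Cartier algebra $(\sC_R)_{\mathfrak{p}}=\sC_{R_{\mathfrak{p}}}$; that is, $(R,\sC_R)$ is $F$-regular at the generic point of $P$. So all hypotheses of the general statement hold, and it yields that $R$ is strongly $F$-regular.

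For the general statement I would argue as follows. Since $A$ is a domain, the zero ideal is $\sA$-compatible, so the splitting prime $\sp(A,\sA)$ exists and equals the largest prime $\mathfrak{q}$ with $\varphi(F^{e}_{*}\mathfrak{q})\subseteq\mathfrak{q}$ for all $e$ and all $\varphi\in\sA_{e}$. Because $\sA^{C}=\sA\cap\sC_{A}^{C}\subseteq\sA$, the prime $\sp(A,\sA)$ is in particular $\sA^{C}$-compatible, hence $\sp(A,\sA)\subseteq\sp(A,\sA^{C})$. By hypothesis $C=V(\mathfrak{c})$ is a minimal $F$-pure center prime divisor for $\sA^{C}$, and since $A$ is local such a center is unique and is cut out by the splitting prime, so $\sp(A,\sA^{C})=\mathfrak{c}$. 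Therefore $\sp(A,\sA)\subseteq\mathfrak{c}$; in particular $(A,\sA)$ is automatically $F$-pure, and $\sp(A,\sA)$ is a prime of height at most one.

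To conclude I would localize at $\mathfrak{c}$. The formation of the splitting prime commutes with localization, so $\sp(A,\sA)A_{\mathfrak{c}}=\sp\bigl(A_{\mathfrak{c}},\sA_{\mathfrak{c}}\bigr)$, and the right-hand side vanishes because $(A,\sA)$ is $F$-regular at the generic point of $C$, that is, $(A_{\mathfrak{c}},\sA_{\mathfrak{c}})$ is strongly $F$-regular. Thus $\sp(A,\sA)$ is a prime contained in $\mathfrak{c}$ that becomes zero after localizing at $\mathfrak{c}$, and since $A$ is a domain this forces $\sp(A,\sA)=(0)$; hence $(A,\sA)$ is $F$-regular. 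I do not expect a genuine obstacle here: the only nonformal inputs are the splitting-prime characterization of strong $F$-regularity and the compatibility of the splitting prime with localization, both of which belong to the basic theory of Cartier algebras (see \cite[Section~2]{CarvajalStablerFsignaturefinitemorphisms}, and Aberbach--Enescu for the classical full Cartier algebra), and the mechanism of the proof is just the containment $\sp(A,\sA)\subseteq\sp(A,\sA^{C})=\mathfrak{c}$ followed by localization at $\mathfrak{c}$.
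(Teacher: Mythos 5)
Your proposal is correct and follows essentially the same route as the paper: both reduce the first assertion to the second via normality of $R$ (so $R_{\mathfrak p}$ is a DVR, hence $F$-regular), and both hinge on the containment $\sp(A,\sA)\subseteq\sp(A,\sA^{C})=\mathfrak{c}$ coming from $\sA^{C}\subseteq\sA$, concluding from $F$-regularity at the generic point of $C$. The only cosmetic difference is the last step: the paper notes that a prime inside the height-one prime $\mathfrak{c}$ is either $0$ or $\mathfrak{c}$ and rules out the latter, whereas you localize at $\mathfrak{c}$ — and for that you only need the easy containment $\sp(A,\sA)A_{\mathfrak c}\subseteq\sp(A_{\mathfrak c},\sA_{\mathfrak c})$, not the full commutation with localization you invoke.
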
 
\begin{proof}
Since $\sA^{C} \subset \sA$, we have that $\sp(A,\sA) \subset \sp\bigl(A,\sA^C\bigr) = \mathfrak{c}$,
where the equality follows from $\mathfrak{c}$ being a prime maximal center of $F$-purity. Since $\mathfrak{c}$ has height $1$, $\sp(\sA)$ is either $0$ or $\mathfrak{c}$. If $\sp(\sA)=0$, we are done. If $\sp(A, \sA)=\mathfrak{c}$, then $(A,\sA)$ is not $F$-regular at the generic point of $C$, contradicting our hypothesis. To see the first statement follows from the last one, notice that, since $R$ is normal, $(R,\sC_R)$ is $F$-regular at the generic point of $P$.
\end{proof}

\begin{remark} \label{rem.SFRAmbient}
In \autoref{pro.XisSFR}, the normality hypothesis on $R$ is necessary. Indeed, we may consider the Whitney's umbrella singularity as a counterexample; see \cite[\S4.3.2]{BlickleSchwedeTuckerFSigPairs1}.
\end{remark}

Finally, we recall the global-to-local passage for $F$-pure centers.

\begin{proposition}
\label{pro.globaltolocalFpurecenters}
Let $X$ be the spectrum of a normal $\kay$-domain and let $\sC$ be a Cartier algebra on $X$. Let $P=V(\p)$ be a minimal center of $F$-purity passing through a geometric point $\bar{x} \to X$, then $\p \cdot \sO_{X,\bar{x}}^{\mathrm{sh}}$ is the splitting prime of the Cartier $\sO_{X,\bar{x}}^{\mathrm{sh}}$-algebra $\sO_{X,\bar{x}}^{\mathrm{sh}} \otimes \sC$.
\end{proposition}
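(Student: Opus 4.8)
The plan is to reduce the statement to a single \etale{}-base-change assertion for the adjoint-type test module $\uptau_{\p}$. Write $R$ for the coordinate ring of $X$, let $\fram_0\subset R$ be the prime ideal underlying $\bar{x}$ (so $\p\subseteq\fram_0$, since $P$ passes through $\bar{x}$), and set $R^{\sh}\coloneqq\sO_{X,\bar{x}}^{\sh}$ and $\sC^{\sh}\coloneqq R^{\sh}\otimes_R\sC$. Recall that $R^{\sh}$ is a strictly henselian local ring, faithfully flat over $R_{\fram_0}$, a filtered colimit of \etale{} $R_{\fram_0}$-algebras, and again $F$-finite because $F^e_*R^{\sh}\cong R^{\sh}\otimes_RF^e_*R$. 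Since $\p\subseteq\fram_0$ we have $\p R^{\sh}\subsetneq R^{\sh}$; moreover $\p R^{\sh}$ is a $\sC^{\sh}$-submodule of $R^{\sh}$, because compatibility of an ideal with $p^{-e}$-linear maps is preserved by flat base change, and $(R^{\sh},\sC^{\sh})$ is $F$-pure (as $(R,\sC)$ is). Hence $\p R^{\sh}\subseteq\sp(R^{\sh},\sC^{\sh})$, and it remains to prove the reverse inclusion. For this it is enough to establish
\[
\uptau_{\p R^{\sh}}(R^{\sh},\sC^{\sh})+\p R^{\sh}=R^{\sh}:
\]
if $\sp(R^{\sh},\sC^{\sh})\supsetneq\p R^{\sh}$, then $\sp(R^{\sh},\sC^{\sh})$ is a $\sC^{\sh}$-submodule not contained in $\p R^{\sh}$, so it contains the smallest such submodule $\uptau_{\p R^{\sh}}(R^{\sh},\sC^{\sh})$, whence it contains $\uptau_{\p R^{\sh}}(R^{\sh},\sC^{\sh})+\p R^{\sh}=R^{\sh}$, contradicting the properness of the splitting prime.

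The crux is the base-change identity $\uptau_{\p R^{\sh}}(R^{\sh},\sC^{\sh})=\uptau_{\p}(R,\sC)\cdot R^{\sh}$; granting it, multiplying the hypothesis $\uptau_{\p}(R,\sC)+\p=R$ --- valid by \cite[Proposition~3.1.14]{Smolkinphdthesis} since $P$ is a minimal center of $F$-purity for $(R,\sC)$ --- by $R^{\sh}$ yields the displayed equality. I would prove this identity by factoring $R\to R^{\sh}$ through $R_{\fram_0}$. For the localization $R\to R_{\fram_0}$ one checks $\uptau_{\p R_{\fram_0}}(R_{\fram_0},\sC_{\fram_0})=\uptau_{\p}(R,\sC)R_{\fram_0}$ directly: the inclusion ``$\subseteq$'' holds because $\uptau_{\p}(R,\sC)R_{\fram_0}$ is a $\sC_{\fram_0}$-submodule not contained in $\p R_{\fram_0}$, and ``$\supseteq$'' because every ideal of $R_{\fram_0}$ is extended while the contraction to $R$ of a $\sC_{\fram_0}$-submodule not contained in $\p R_{\fram_0}$ is a $\sC$-submodule not contained in $\p$. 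For the strict henselization $R_{\fram_0}\to R^{\sh}$ one invokes the \etale{}-base-change compatibility of $\uptau_{\p}$, namely $\uptau_{\p R'}(R',R'\otimes_R\sC)=\uptau_{\p}(R,\sC)R'$ for $R\to R'$ \etale{}, and passes to the filtered colimit defining $R^{\sh}$ (legitimate since $\uptau_{\p}$ and $\p$ are finitely generated, so the relevant equality is witnessed at a finite stage). This \etale{}-base-change statement is the $\uptau_{\p}$-analogue of the standard behaviour of test ideals under \etale{} morphisms; I would cite \cite{Smolkinphdthesis,BlickleTestIdealsViaAlgebras} and the computations of \cite[\S5]{CarvajalStablerFsignaturefinitemorphisms}.

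The only genuine obstacle is this \etale{}-base-change invariance of $\uptau_{\p}$; the remaining steps are routine. If one prefers to bypass $\uptau_{\p}$ altogether, the reverse inclusion $\sp(R^{\sh},\sC^{\sh})\subseteq\p R^{\sh}$ can be obtained as follows: contracting a $\sC^{\sh}$-compatible ideal along the faithfully flat map $R_{\fram_0}\to R^{\sh}$ produces a $\sC_{\fram_0}$-compatible ideal (an extended $p^{-e}$-linear map agrees with the original one on $F^e_*R_{\fram_0}$), so $\sp(R^{\sh},\sC^{\sh})\cap R_{\fram_0}\subseteq\sp(R_{\fram_0},\sC_{\fram_0})$, which equals $\p R_{\fram_0}$ by the localization step; combined with $\p R^{\sh}\subseteq\sp(R^{\sh},\sC^{\sh})$ and faithful flatness this gives $\sp(R^{\sh},\sC^{\sh})\cap R_{\fram_0}=\p R_{\fram_0}$, and since $R_{\fram_0}/\p R_{\fram_0}$ is a normal local domain (minimal centers of $F$-purity being normal, \cite{SchwedeCentersOfFPurity}), its strict henselization $R^{\sh}/\p R^{\sh}$ is a local domain whose generic fibre over $R_{\fram_0}/\p R_{\fram_0}$ is the spectrum of a field --- so $\p R^{\sh}$ is the unique prime of $R^{\sh}$ lying over $\p R_{\fram_0}$, and therefore $\sp(R^{\sh},\sC^{\sh})=\p R^{\sh}$.
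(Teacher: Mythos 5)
Your proposal is correct, but the argument that actually closes the proof is your second (``bypass'') route, and it is genuinely different from the paper's. The paper proves this proposition by pointing to \autoref{pro.splittingprimepreservedsh}: there one first shows $\sp(f^{\ast}\sC)=\sp(\sC)S$ for each single surjective \etale{} map $f\colon \Spec S\rightarrow \Spec R$ of local rings --- using that $R/\sp(\sC)$ is $F$-regular (\autoref{pro.OurMethodtoProvePureFRegularity}), that \etale{} base change preserves $F$-regularity, and that uniqueness of the splitting prime forces a unique prime over $\sp(\sC)$ --- and then passes elementwise to the filtered colimit defining $R^{\sh}$. You instead treat $R_{\fram_0}\rightarrow R^{\sh}$ as a single faithfully flat map: extension and contraction of compatible ideals give $\p R^{\sh}\subseteq\sp(\sD)$ and $\sp(\sD)\cap R_{\fram_0}=\p R_{\fram_0}$, and normality of the minimal center $R_{\fram_0}/\p R_{\fram_0}$ makes $\p R^{\sh}$ the unique prime of $R^{\sh}$ lying over $\p R_{\fram_0}$ (the generic fibre of $(R_{\fram_0}/\p R_{\fram_0})^{\sh}$ is a field, being a filtered colimit of connected \etale{} algebras over a normal domain tensored with its fraction field), which pins down the prime ideal $\sp(\sD)$. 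This is more elementary --- it avoids the stage-by-stage \etale{} analysis and the descent of $F$-regularity along \etale{} maps --- at the cost of not producing the intermediate statement $\sp(f^{\ast}\sC)=\sp(\sC)S$ for individual \etale{} neighbourhoods; the inputs are comparable, since normality of $V(\p)$ is itself strong $F$-regularity of $R/\sp(\sC)$. You also make explicit the global-to-local step $\p R_{\fram_0}=\sp(R_{\fram_0},\sC_{\fram_0})$ that the paper's one-line reference to the appendix leaves implicit.

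One caveat: your primary route rests on the \etale{} base-change identity $\uptau_{\p R'}(R',R'\otimes_R\sC)=\uptau_{\p}(R,\sC)R'$, which you assert with a citation rather than prove; it is not available in the cited sources in exactly the form you need, and you rightly flag it as the crux. Had the proposal relied on that route alone, this would be a genuine gap. Since the bypass route is self-contained and correct, the proposal stands.
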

\begin{proof}
See \autoref{pro.splittingprimepreservedsh} in the appendix.
\end{proof}

\subsubsection{Some examples of purely $F$-regular pairs}
\label{sect.examplesofsplittingratios}
We provide next some examples of purely $F$-regular pairs. Our method to prove that a given pair is purely $F$-regular is the following. 

\begin{lemma} \label{pro.OurMethodtoProvePureFRegularity}Let $R$ be a normal local domain, $\mathfrak{p}\subset R$ be a prime ideal (not necessarily of height $1$), and set $P=V(\p)$. Then, $\mathfrak{p}$ is the splitting prime of $\sC_R^{[P]}$ if and only if $R/\mathfrak{p}$ is $F$-regular with respect to the induced action of $\sC^{[P]}_R$. In that case, the splitting ratio of $(R,P)$ is the $F$-signature of $R/\mathfrak{p}$ with respect to $\sC^{[P]}_R$.
\end{lemma}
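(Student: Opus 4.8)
The plan is to pass to the ring $R/\p$ equipped with the Cartier algebra $\overline{\sC}$ induced by $\sC_R^{[P]}$, and to match compatible ideals — and then splittings — on the two sides. By the very definition of $\sC_R^{[P]}$ the ideal $\p$ is a $\sC_R^{[P]}$-submodule of $R$, so $\p\subseteq\sp\big(R,\sC_R^{[P]}\big)$ and every $\varphi\in(\sC_R^{[P]})_e$ descends to a $p^{-e}$-linear map $\bar{\varphi}\colon F^e_*(R/\p)\to R/\p$; the collection $\overline{\sC}_e\coloneqq\big\{\bar{\varphi}:\varphi\in(\sC_R^{[P]})_e\big\}$ is a Cartier algebra on $R/\p$, the induced action. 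A routine check then produces an inclusion-preserving bijection $J\mapsto J/\p$ (preimage as inverse) between the $\sC_R^{[P]}$-compatible ideals of $R$ containing $\p$ and the $\overline{\sC}$-compatible ideals of $R/\p$. Note also that an $\sC_R^{[P]}$-splitting of $R$ descends to an $\overline{\sC}$-splitting of $R/\p$, while a lift to $(\sC_R^{[P]})_e$ of an $\overline{\sC}$-splitting of $R/\p$ sends $F^e_*1$ to a unit of $R$ and so yields, after rescaling, an $\sC_R^{[P]}$-splitting of $R$; thus $F$-purity transfers both ways, and whenever one of the two pairs below is $F$-pure the relevant splitting prime is a proper ideal.

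For the equivalence, recall from the discussion preceding the lemma that $\p=\sp\big(R,\sC_R^{[P]}\big)$ if and only if $P$ is a minimal $F$-pure center of $\sC_R^{[P]}$, i.e. if and only if $\p$ is a maximal proper $\sC_R^{[P]}$-submodule of $R$. Transporting this through the bijection above, it is equivalent to $(0)$ being a maximal proper $\overline{\sC}$-submodule of $R/\p$; and since the splitting prime is the largest proper compatible ideal, the latter says exactly that $\sp(R/\p,\overline{\sC})=0$, that is, that $(R/\p,\overline{\sC})$ is $F$-regular (the $F$-purity needed in order to speak of these splitting primes is automatic by the previous paragraph). One could instead invoke \cite[Proposition 3.1.14]{Smolkinphdthesis} to rephrase the left-hand side as $\uptau_{\p}\big(R,\sC_R^{[P]}\big)+\p=R$ and observe that the image of $\uptau_{\p}\big(R,\sC_R^{[P]}\big)$ in $R/\p$ is the test ideal of $(R/\p,\overline{\sC})$.

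Finally, assume $\p=\sp\big(R,\sC_R^{[P]}\big)$ and compare the sequences defining the two invariants. Write $a_e(S,\sD)$ for the largest $n$ such that there exist $\psi_1,\dots,\psi_n\in\sD_e$ and $y_1,\dots,y_n\in F^e_*S$ with $\psi_i(y_j)=\delta_{ij}$, so that $s(S,\sD)=\lim_e a_e(S,\sD)/\rank_S F^e_*S$; and write $b_e$ for the largest $n$ such that there exist $\varphi_1,\dots,\varphi_n\in(\sC_R^{[P]})_e$ and $x_1,\dots,x_n\in F^e_*R$ with $\varphi_i(x_j)\equiv\delta_{ij}\pmod{\p}$. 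Since the splitting dimension of $(R,\sC_R^{[P]})$ is $\dim R/\p$, the splitting ratio is $r(R,P)=\lim_e b_e/\rank_{R/\p}F^e_*(R/\p)$ — the same normalization that computes $s(R/\p,\overline{\sC})$, with $\rank_{R/\p}F^e_*(R/\p)=p^{e(\dim R/\p+\alpha)}$ and $\alpha=\log_p[\kappa:\kappa^p]$ the same invariant as for $R$ (both local with residue field $\kappa$). So it suffices to show $b_e=a_e(R/\p,\overline{\sC})$ for every $e$: a configuration realizing $a_e(R/\p,\overline{\sC})$ lifts — choose preimages $x_j\in F^e_*R$ of the $y_j$ along $F^e_*R\twoheadrightarrow F^e_*(R/\p)$ and preimages $\varphi_i\in(\sC_R^{[P]})_e$ of the $\psi_i$ along $(\sC_R^{[P]})_e\twoheadrightarrow\overline{\sC}_e$, whence $\varphi_i(x_j)\equiv\delta_{ij}\pmod{\p}$ — and any configuration realizing $b_e$ reduces modulo $\p$ to one realizing $a_e(R/\p,\overline{\sC})$. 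Therefore $r(R,P)=s\big(R/\p,\overline{\sC}\big)$. I expect the only genuine work to be the bookkeeping of this last paragraph — the term-by-term identification $b_e=a_e(R/\p,\overline{\sC})$ and the matching of normalizing ranks; the equivalence is formal once the compatible-ideal dictionary is in place.
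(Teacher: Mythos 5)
Your proof is correct and follows essentially the same route as the paper's: the paper's (very terse) argument is precisely that $\p=\sp\bigl(R,\sC_R^{[P]}\bigr)$ iff $R/\p$ is simple as a module over the induced Cartier algebra, with the compatible-ideal dictionary and the identification $r(R,P)=s\bigl(R/\p,\overline{\sC_R^{[P]}}\bigr)$ delegated to citations (notably \cite[Lemma 2.13]{BlickleSchwedeTuckerFSigPairs1}), which you instead spell out by matching the counting sequences. The only point worth flagging is that if one takes the splitting-ratio numerator to be the honest free rank (exact $\delta_{ij}$) rather than your $b_e$, one needs the one-line observation that a matrix congruent to the identity mod $\p\subseteq\m$ is invertible, so the two counts coincide.
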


\begin{proof}
Note that $\mathfrak{p}$ is the splitting prime of $\sC_R^{[P]}$ if and only if $R/\mathfrak{p}$ viewed as an $\sC_R^{[P]}$-module is simple. However, we may equivalently view $R/\mathfrak{p}$ as an $\overline{\sC^{[P]}_R}$-module (\cf \cite[discussion before Lemma 2.20]{BlickleTestIdealsViaAlgebras}). See \cite[Lemma 2.13]{BlickleSchwedeTuckerFSigPairs1}.
\end{proof}

We shall also need the following observation.

\begin{remark}
\label{remark.FrobeniustraceFinitealgebra}
Consider the category of finite type $\kay$-algebras for some $F$-finite field $\kay$. Fix an isomorphism $\lambda\: \kay \to F^! \kay$ with adjoint $\kappa\: F_\ast \kay \to \kay$. If we have two Cartier linear maps $\Phi, \Psi\: F_\ast^e R \to R$ for some finite type $\kay$-algebra $R$, then, by choosing a presentation $S = \kay[x_1, \ldots, x_n] \to R$ and via \cite{FedderFPureRat}, we reduce the problem of whether $\Phi = \Psi$ to a computation in the polynomial ring $S$. Indeed, note that $\lambda$ induces an isomorphism $f^! \kay = \omega_S \to F^! \omega_S$, where $f\: \Spec S \to \Spec \kay$ is the structural map. Identifying $\omega_S$ with $S$, we obtain an isomorphism $\Sigma\: S \to F^! S$. By \cite[Lemma 4.1]{staeblerunitftestmodules}, the adjoint of $\Sigma$ is given by 
\[
\xi x_1^{i_1} \cdots x_n^{i_n} \mapsto \kappa(\xi) x_1^{(i_1 +1)/q} \cdots x_n^{(i_n +1)/q}, 
\]
with the usual convention that $x_i^{a/b}$ is zero whenever the exponent is not an integer. Now, by adjunction $\Hom_R(F_\ast^e R, R) = \Hom_R(R, {F^e}^! R)$ and by our choice of isomorphism $\Sigma$, we have that $\Hom_R(R, {F^e}^! R) \cong \Hom_R(R, R)$, and, by making this identification, $\Sigma$ induces the identity so that the adjoint of $\Sigma$ is a generator of $\Hom_R(F^e_\ast R,R)$.

In this way, if we want to check that two Cartier linear maps of a finite type $\kay$-algebra $R$ coincide, we may reduce, via a choice of presentation and Fedder's crtierion, to a comparison of two Cartier linear maps in a polynomial ring. For those to coincide in turn, we choose any basis $B$ of $F_\ast \kay$ as a $\kay$-module and then just need to check that they agree on $b \cdot x_1^{i_1} \cdots x_n^{i_n}$, where $b \in B$ and $0 \leq i_j \leq q -1$.

This line of reasoning is also preserved if we pass to completions. Indeed, by \cite[\href{https://stacks.math.columbia.edu/tag/0394}{Lemma 0394}]{stacks-project}, we may identify ${(F_\ast R)}^\wedge$ with $F_\ast R^\wedge$. Since both are finite free modules, the claim is clear.
\end{remark}

\begin{example}[Purely $F$-regular pairs on a regular ambient] \label{ex.Smoothambient}
Let $R$  be a \emph{regular} local ring. Recall that regular local rings are UFD; see \cite[\href{https://stacks.math.columbia.edu/tag/0AG0}{Lemma 0AG0}]{stacks-project}. In particular, any prime divisor on $\Spec R$ is principal \cite[\S 19, Theorem 47]{MatsumuraCommutativeAlgebra}. Let $\p=(f)$ be a height-$1$ prime ideal of $R$ with corresponding prime divisor $P$. As an immediate application of \autoref{pro.OurMethodtoProvePureFRegularity} and Fedder's criterion \cite{FedderFPureRat}, we see that $(R,P)$ is purely $F$-regular if and only if $R/f$ is a strongly $F$-regular ring. Moreover, in this case, one has $r(R,P)=s(R/f)$.\end{example}

\begin{example}[Graded hypersurfaces]
\label{exa.weightedhypersurface}
Let $R = \kay \llbracket z, x_0,x_1,\ldots,x_d \rrbracket/(z^n-x_0 h)$ be a normal hypersurface over a perfect field $\kay$, where $h$ is an irreducible weighted polynomial in the variables $x_1,\ldots,x_d$; see \cite{SinghSpiroffDivisorClassGroups}. 
Then, $\Cl R \cong \bZ/n\bZ$ and the divisor class of $(z,h)$ is a generator for $\Cl R$; see \cite[Corollary 3.4]{SinghSpiroffDivisorClassGroups}. Letting $P$ be the prime divisor corresponding to $(z,h)$, we claim the following.  
\end{example}
\begin{claim}
The pair $(R,P)$ is purely $F$-regular if $A \coloneqq \kay\llbracket x_1,\ldots,x_d \rrbracket/h$ is strongly $F$-regular. In that case, $r(R,P) \geq s(A)/n$, and $r(R,P)=1/n$ if $A$ is regular.
\end{claim}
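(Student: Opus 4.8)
The strategy is to apply \autoref{pro.OurMethodtoProvePureFRegularity} to the prime $\p=(z,h)\subset R$. Since $z^n-x_0h\in(z,h)$,
\[
R/\p \;=\; \kay\llbracket z,x_0,x_1,\ldots,x_d\rrbracket/(z,h)\;=\;\kay\llbracket x_0,x_1,\ldots,x_d\rrbracket/(h)\;=\;A\llbracket x_0\rrbracket ,
\]
a ring of dimension $1+\dim A$. By \autoref{pro.OurMethodtoProvePureFRegularity} and \autoref{rem.PureDRegularPairAdjointIdeal}, it therefore suffices to show that $A\llbracket x_0\rrbracket$ is $F$-regular with respect to the Cartier algebra $\overline{\sC_R^P}$ induced on it by $\sC_R^P=\sC_R^{[P]}$ (see \autoref{pro.[P]=PforDivisors}), and then $r(R,P)$ equals the $F$-signature of $A\llbracket x_0\rrbracket$ with respect to $\overline{\sC_R^P}$.

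Next I would pin down enough of $\sC_R^P$ by Fedder's criterion \cite{FedderFPureRat}. Put $S=\kay\llbracket z,x_0,x_1,\ldots,x_d\rrbracket$, $g=z^n-x_0h$, $q=p^e$, so $\Hom_R(F^e_\ast R,R)=F^e_\ast R\cdot\Phi^e_R$ with $\Phi^e_R(F^e_\ast\overline w)=\overline{\Phi^e_S(F^e_\ast(g^{q-1}w))}$, where $\Phi^e_S$ is the standard Frobenius trace of $S$ over the perfect field $\kay$. Since $R$ is Gorenstein, the degree-$e$ part of $\sC_R^P$ is $\{F^e_\ast a\cdot\Phi^e_R:a\in\sO_X(-(q-1)P)\}$ (\cf \cite{SchwedeFAdjunction}); and from $z^n=x_0h$ one checks $\ord_P z=1$ and $\ord_P h=n$ (indeed $\p R_\p=(z)$ and $h=x_0^{-1}z^n$ in $R_\p$), so $h^m z^r\in\sO_X(-(q-1)P)$ whenever $q-1=nm+r$ with $0\le r<n$. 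The heart of the matter is that $F^e_\ast(h^m z^r)\cdot\Phi^e_R$ restricts, modulo $(z,h)$, to the map $A\llbracket x_0\rrbracket\to A\llbracket x_0\rrbracket$ sending
\[
F^e_\ast\overline u\;\longmapsto\;(\text{unit})\cdot\Phi^e_{\kay[x_0]}\!\bigl(F^e_\ast(x_0^{\,k}u_{x_0})\bigr)\cdot\Psi^e_A\!\bigl(F^e_\ast\overline{u}_{\,x_{\ge1}}\bigr),\qquad k=k(e)\coloneqq q-1-m=\Bigl\lceil\tfrac{n-1}{n}(q-1)\Bigr\rceil,
\]
where $\Psi^e_A$ is the Fedder generator of $\Hom_A(F^e_\ast A,A)$: expanding $g^{q-1}=\sum_i\binom{q-1}{i}(-x_0h)^i z^{n(q-1-i)}$, the only summand surviving after applying $\Phi^e_S$ and reducing modulo $(z,h)$ is the one with $i=q-1-m$ (its $z$-exponent becomes exactly $q-1$, while $\binom{q-1}{q-1-m}$ is a unit mod $p$ by Lucas); and because $h$ involves only $x_1,\ldots,x_d$ this term splits under $\Phi^e_S=\Phi^e_{\kay[z]}\otimes\Phi^e_{\kay[x_0]}\otimes\Phi^e_{\kay[x_1,\ldots,x_d]}$, its $x_{\ge1}$-part becoming $\Psi^e_A$ modulo $h$ exactly as in \autoref{remark.FrobeniustraceFinitealgebra}. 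Hence $\overline{\sC_R^P}$ contains the Cartier algebra $\sC^{\frac{n-1}{n}\Div x_0}_{\kay\llbracket x_0\rrbracket}\boxtimes\sC_A$ on $A\llbracket x_0\rrbracket$ (here $\sC_A$ is the full Cartier algebra of $A$ and $\boxtimes$ is generated in each degree by the tensor of the degree-wise generators).

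This yields the statement. As $A$ is strongly $F$-regular and $\tfrac{n-1}{n}<1$, both $(A,\sC_A)$ and $\bigl(\kay\llbracket x_0\rrbracket,\tfrac{n-1}{n}\Div x_0\bigr)$ are $F$-regular, hence so is their box product, hence so is $(A\llbracket x_0\rrbracket,\overline{\sC_R^P})$; by \autoref{pro.OurMethodtoProvePureFRegularity} this means $(R,P)$ is purely $F$-regular. Using multiplicativity of the $F$-signature over box products and $s\bigl(\kay\llbracket x_0\rrbracket,c\,\Div x_0\bigr)=1-c$ \cite{BlickleSchwedeTuckerFSigPairs1},
\[
r(R,P)\;=\;s\bigl(A\llbracket x_0\rrbracket,\overline{\sC_R^P}\bigr)\;\ge\;s\bigl(\kay\llbracket x_0\rrbracket,\tfrac{n-1}{n}\Div x_0\bigr)\cdot s(A)\;=\;\tfrac1n\,s(A).
\]
If moreover $A$ is regular, then $h\notin\mathfrak{m}^2$, so after a weighted coordinate change we may assume $h=x_1$; now $\sO_X(-(q-1)P)$ is the monomial ideal $(z^{nm+r},z^{n(m-1)+r}x_1,\ldots,z^r x_1^m,x_1^{m+1})$, and carrying out the same restriction computation on each generator shows these restrictions generate exactly $F^e_\ast\!\bigl(x_0^{\,k(e)}\bigr)\cdot\Psi^e$, i.e.\ $\overline{\sC_R^P}=\sC^{\frac{n-1}{n}\Div x_0}_{\kay\llbracket x_0,x_2,\ldots,x_d\rrbracket}$, whence $r(R,P)=1-\tfrac{n-1}{n}=\tfrac1n$.

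The main obstacle is the Fedder computation in the second paragraph for an arbitrary (non-monomial) $h$. The delicate points are: that, because $h$ involves only $x_1,\ldots,x_d$ and is disjoint from $z,x_0$, the binomial expansion of $g^{q-1}$ contributes precisely one term after $\Phi^e_S$ and reduction modulo $(z,h)$; that the surviving $x_0$-exponent is exactly $\lceil\tfrac{n-1}{n}(q-1)\rceil$ — which is where the index $n$ of $[P]=[(z,h)]$ in $\Cl R\cong\bZ/n$ enters; and that the $x_{\ge1}$-part of that term reproduces the Fedder generator $\Psi^e_A$ modulo $h$, for which \autoref{remark.FrobeniustraceFinitealgebra} is designed. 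Only the inequality $r(R,P)\ge s(A)/n$ is claimed in general because, when $A$ is singular, the coupling of $h$ and $x_0$ through $g$ could a priori enlarge $\overline{\sC_R^P}$ beyond the box product; when $A$ is regular everything is monomial and one gets equality.
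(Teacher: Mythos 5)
Your argument for pure $F$-regularity and for the bound $r(R,P)\geq s(A)/n$ is essentially the paper's: reduce to $F$-regularity of $R/\p$ under the restricted Cartier algebra via \autoref{pro.OurMethodtoProvePureFRegularity}, use Fedder's criterion to see that $\overline{\sC_R^P}$ contains the maps obtained from $\Phi^e\cdot z^ih^jf^{q-1}$ with $i+nj=q-1$ (your single generator $h^mz^r$ is the paper's extremal case $j=\lfloor(q-1)/n\rfloor$, which is the one that matters), identify the surviving binomial term $z^{q-1}x_0^{q-1-j}h^{q-1}$, and conclude by the tensor-product behavior of $F$-signatures that the reduced algebra dominates $\sC^{\frac{n-1}{n}\Div x_0}\boxtimes\sC_A$. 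Where you genuinely diverge is the equality $r(R,P)=1/n$ for $A$ regular: you compute $\overline{\sC_R^P}$ \emph{exactly}, by changing coordinates so that $h=x_1$, writing down the monomial generators of $\p^{(q-1)}$ in the resulting toric ring, and checking that every generator restricts into $\Psi^e\cdot\bigl(x_0^{q-1-\lfloor(q-1)/n\rfloor}\bigr)$; the paper instead argues by contradiction using the transformation rule for splitting ratios \cite[Theorem 4.1]{CarvajalStablerFsignaturefinitemorphisms} applied to the degree-$n$ Veronese cover $\tilde R=R[x_0^{1/n},h^{1/n}]$, where a strict inequality would force $r(\tilde R,\tilde P)>1$. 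Your route is more computational but self-contained and makes the reduced Cartier algebra explicit; the paper's is shorter and illustrates the covering machinery the rest of the article is built on. Both are valid; your closing caveat about why only an inequality is claimed for singular $A$ matches the paper's state of knowledge.
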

\begin{proof}[Proof of claim]
Let $S \coloneqq \kay \llbracket z,x_0,x_1,\ldots,x_d \rrbracket$ and $f \coloneqq z^n-x_0h$. By Fedder's criterion \cite{FedderFPureRat}, we have that $\sC_{e,R}$ is generated by the reduction of $\Phi^e \cdot f^{q-1} \in \sC_{e,S}$ to $R$, where $\Phi$ denotes the Frobenius trace on $S$. In other words, $\sC_{R} = \sC^{\phi}_R$ where $\phi \coloneqq \overline{\Phi \cdot f^{p-1}}$. Having \autoref{pro.[P]=PforDivisors} in mind, we recall that $\sC^{P}_R$ is given, in degree $e$, by all maps $\phi^e \cdot g$ such that $\val_{\p} g \geq q-1$. Note that $z$ is a uniformizer for $R_{\p}$ and $\val_{\p} h = n$. In particular, $\sC^{P}_R$ contains the maps $\phi^e \cdot z^ih^j$ where $i+nj=q-1$. Therefore, the reduction of $\sC_R^{P}$ to $R/\p \cong \kay \llbracket x_0, x_1,\ldots,x_d \rrbracket/h$ contains the maps $\overline{\phi^e \cdot z^ih^j}$ with $i+nj=q-1$. However, these maps are the reductions of $\Phi^e \cdot z^ih^j f^{q-1} $ to $R/\p=S/(z,h)$, and we have that
\[
z^i h^j f^{q-1} \equiv (-1)^{q-1-j} \binom{q-1}{j} z^{q-1} x_0^{q-1-j} h^{q-1}  \bmod{\bigl( z^q , h^q \bigr)} 
\]
for all $i+nj=q-1$. In other words, the reduction of $\sC_R^{P}$ to $R/\p \cong \kay \llbracket x_0, x_1,\ldots,x_d \rrbracket/h \cong S/(z,h)$ contains, in degree $e$, the reductions of $\Phi^e \cdot z^{q-1} x_0^{q-1-j}h^{q-1}$ for all $j \leq (q-1)/n$. Alternatively, if $\Psi$ is the Frobenius trace for $\kay \llbracket x_0, x_1,\ldots,x_d \rrbracket$, we have that the reduction of $\sC_R^{P}$ to $\kay \llbracket x_0, x_1,\ldots,x_d \rrbracket / h$ contains the reductions of $\Psi^e \cdot  x_0^{q-1-j}h^{q-1}$. Therefore, we have:
\[
s\Big(R/\mathfrak{p}, \overline{\sC_R^{P}} \Big) \geq \frac{1}{n} \cdot s(A).\footnote{To see this, note that we may work in the polynomial case as completions have no bearing on the value of $F$-signatures; see \cite{YaoObservationsAboutTheFSignature} or \cite[\S 3]{CarvajalSchwedeTuckerBertiniFSignature}. Then, the result follows from the behavior of $F$-signatures with respect to tensor products; see \cite[Proposition 5.5]{CarvajalSmolkinUSTPandDFR} for instance.}
\]
Consequently, by applying \autoref{pro.OurMethodtoProvePureFRegularity}, we conclude that  $\mathfrak{p} = \sp(R,\sC_R^P)$, and furthermore
\[
r(R,P)=s\Big(R/\mathfrak{p}, \overline{\sC_R^{P}} \Big) \geq \frac{1}{n} \cdot s(A).
\]
To see this is an equality if $A$ is regular, we may use the transformation rule for splitting ratios \cite[Theorem 4.8]{CarvajalStablerFsignaturefinitemorphisms}. Indeed, suppose for sake of contradiction that the inequality is strict, and let $\tilde{R}$ be the Veronese-type cyclic cover given by $P$. That is, $\tilde{R}=\bigoplus_{i=0}^{n-1} \p^{(i)}$. It is not difficult to see that $\tilde{R}=R\bigl[x_0^{1/n},h^{1/n}\bigr]$ and the only prime in $\tilde{R}$ lying over $\p$ is $\tilde{\p}=\bigl(h^{1/n}\bigr)$; whose corresponding prime divisor we denote by $\tilde{P}$. Therefore, $\tilde{\p}$ must be the splitting prime of the pullback of $\sC^{P}_R$ along the cover $R \subset \tilde{R}$. Hence, the transformation rule for splitting ratios yields $r\bigl(\tilde{R},\tilde{P}\bigr) = n \cdot r (R,P) > s(A) = 1$,
which is a contradiction.
\end{proof}

\begin{example} \label{ex.P1crossP1}
Let $R = \kay \llbracket x, y, z, w \rrbracket /(xy-zw)$. Recall that the divisor class group of $R$ is free of rank $1$; see \cite[II, Exercise 6.5]{Hartshorne}. Moreover, the divisor class of the height-1 prime ideal $\mathfrak{p}=(x,z)$ is a generator of $\Cl R$.  We claim that $P=V(\mathfrak{p})$ is a minimal $F$-pure center.
\begin{claim} \label{clai.ClaimP1crosP1}
The pair $(R,P)$ is purely $F$-regular and $r(R, P) \geq 1/2$.
\end{claim}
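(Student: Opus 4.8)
The plan is to apply \autoref{pro.OurMethodtoProvePureFRegularity}. Since $\p = (x,z)$ and $f = xy - zw$ lies in $(x,z)$, we have $R/\p \cong \kay\llbracket y, w\rrbracket$, which is regular. Recalling that $\sC_R^{[P]} = \sC_R^P$ by \autoref{pro.[P]=PforDivisors}, it therefore suffices to produce a Cartier subalgebra $\mathcal E$ of the induced algebra $\overline{\sC_R^P}$ on $\kay\llbracket y,w\rrbracket$ such that $\bigl(\kay\llbracket y,w\rrbracket, \mathcal E\bigr)$ is $F$-regular with $F$-signature at least $1/2$: indeed, $F$-regularity of $\mathcal E$ forces $F$-regularity of the larger algebra $\overline{\sC_R^P}$, so that $R/\p$ is $F$-regular with respect to the induced action of $\sC_R^P$ and hence $\p = \sp(R,\sC_R^P)$, i.e.\ $(R,P)$ is purely $F$-regular; and then $r(R,P) = s\bigl(R/\p, \overline{\sC_R^P}\bigr) \geq s\bigl(\kay\llbracket y,w\rrbracket, \mathcal E\bigr) \geq 1/2$ by monotonicity of $F$-signatures in the Cartier algebra.

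To build $\mathcal E$ I would first make $\sC_R^P$ explicit exactly as in \autoref{exa.weightedhypersurface}. Writing $S = \kay\llbracket x,y,z,w\rrbracket$, Fedder's criterion \cite{FedderFPureRat} gives $\sC_R = \sC_R^{\phi}$ with $\phi = \overline{\Phi \cdot f^{p-1}}$, where $\Phi$ is the Frobenius trace of $S$; and since $z$ (equivalently $x$) is a uniformizer of the DVR $R_\p$ with $\val_\p z = \val_\p x = 1$ and $\val_\p y = \val_\p w = 0$, the symbolic power $\p^{(q-1)}$ is the monomial ideal generated by $\{x^i z^{q-1-i} : 0 \le i \le q-1\}$ and $\sC_{R,e}^P = \{\phi^e \cdot g : g \in \p^{(q-1)}\}$. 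Then I would compute the image of $\phi^e \cdot \bigl(x^i z^{q-1-i} y^c w^d\bigr)$ under reduction along $R \to R/\p = \kay\llbracket y,w\rrbracket$: expanding $f^{q-1} = (xy-zw)^{q-1}$ by the binomial theorem, multiplying by $x^i z^{q-1-i}y^c w^d$ and a test monomial $y^a w^b$, and applying $\Phi^e$ (only monomials all of whose exponents are $\equiv -1 \pmod q$ survive, cf.\ \autoref{remark.FrobeniustraceFinitealgebra}), one finds that a single term of the expansion---namely the one with binomial index $q-1-i$---contributes, that the resulting monomial is divisible by neither $x$ nor $z$, and that the induced operator on $\kay\llbracket y,w\rrbracket$ is, up to a unit, $\Psi^e \cdot \bigl(y^{(q-1-i)+c}\, w^{\,i+d}\bigr)$, with $\Psi$ the Frobenius trace of $\kay\llbracket y,w\rrbracket$ (the bookkeeping regarding the $F$-finite base field and completions being handled as in \autoref{remark.FrobeniustraceFinitealgebra}). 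As $(i,c,d)$ range over $0 \le i \le q-1$ and $c,d \ge 0$, the exponent pair $\bigl((q-1-i)+c,\ i+d\bigr)$ runs over all $(\alpha,\beta)$ with $\alpha,\beta \ge 0$ and $\alpha+\beta \ge q-1$; since the ideal generated by these monomials is $\n^{q-1}$ with $\n = (y,w)$, I would conclude that $\overline{\sC_R^P}$ contains the Cartier algebra $\mathcal E$ with $\mathcal E_e = \{\Psi^e \cdot h : h \in \n^{q-1}\}$, i.e.\ the Cartier algebra of the pair $\bigl(\kay\llbracket y,w\rrbracket, \n\bigr)$.

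It remains to compute $s\bigl(\kay\llbracket y,w\rrbracket, \mathcal E\bigr)$. A direct inspection of the $e$-th splitting ideal shows $I_e(\mathcal E) = \bigl\{\, r = \sum_{0 \le k,l \le q-1} s_{kl}^{q}\, y^k w^l : s_{kl} \in \n \text{ whenever } k+l \le q-1 \,\bigr\}$, so that $\kay\llbracket y,w\rrbracket / I_e(\mathcal E)$ has length $\#\{(k,l) : k,l \ge 0,\ k+l \le q-1\} = \binom{q+1}{2}$, whence $s\bigl(\kay\llbracket y,w\rrbracket, \mathcal E\bigr) = \lim_{e} \binom{q+1}{2}\big/q^2 = 1/2 > 0$; in particular $\bigl(\kay\llbracket y,w\rrbracket, \n\bigr)$ is strongly $F$-regular, and the argument of the first paragraph applies with this $\mathcal E$. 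The only genuinely delicate point is the exponent bookkeeping in the reduction computation of the second paragraph---identifying which single term of the binomial expansion of $f^{q-1}$ survives the twist by $g$ and then survives passage modulo $(x,z)$, and tracking the exact exponents---together with the routine care regarding the $F$-finite base field and completions already flagged in \autoref{remark.FrobeniustraceFinitealgebra}.
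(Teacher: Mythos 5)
Your proposal is correct and follows essentially the same route as the paper: Fedder's criterion to identify $\sC_R^P$ via $\phi^e\cdot x^iz^j$ with $i+j=q-1$, reduction of $x^iz^jf^{q-1}$ modulo $\bigl(x^q,z^q\bigr)$ to see that the induced maps on $R/\p\cong\kay\llbracket y,w\rrbracket$ are $\Psi^e\cdot y^jw^i$, and then \autoref{pro.OurMethodtoProvePureFRegularity}. The only cosmetic difference is that you evaluate $s\bigl(\kay\llbracket y,w\rrbracket,\mathcal E\bigr)=1/2$ by counting splitting ideals directly, whereas the paper quotes the area formula for the corresponding pair; both give the same bound.
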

\begin{proof}[Proof of claim]
Let $S = \kay \llbracket x, y, z, w \rrbracket$ and $f=xy-zw$. We use Fedder's criterion \cite{FedderFPureRat} to conclude that $\sC_{e,R}$ is generated by the reduction of $\Phi^e \cdot f^{q-1} \in \sC_{e,S}$ to $R$; where $\Phi$ denotes the Frobenius trace on $S$. That is, $\sC_R = \sC_R^{\phi}$ where $\phi \coloneqq \overline{\Phi \cdot f^{p-1}}$. With \autoref{pro.[P]=PforDivisors} in mind, recall that $\sC^{P}_R$ is given, in degree $e$, by all maps $\phi^e \cdot g$ such that $\val_{\p} g \geq q-1$. In particular, we have that $\sC_{e,R}^{P}$ contains all the maps $\phi^e \cdot x^i z^j$ such that $i+j=q-1$.  Thus, the reduction of $\sC_R^{P}$ to $R/\mathfrak{p} = \kay \llbracket y, w \rrbracket$ contains, in degree $e$, the maps $\overline{\phi}^e \cdot x^i y^j$ such that $i+j=q-1$. Notice that these maps are, respectively, the reductions of the map
$\Phi \cdot x^iy^jf^{q-1}$. Nonetheless, one readily sees that
\[
x^iy^jf^{p-1} \equiv (-1)^{i} \binom{p-1}{j} (xz)^{q-1}y^jw^i  \bmod{\bigl(x^p,z^p \bigr)}.
\]
Therefore, $\overline{\phi}^e \cdot x^i y^j$, $i+j=q-1$, is, up to pre-multiplication by units in $\kay$, the dual map of $F^e_* y^{i}w^{j}$ with respect to the free basis of $F^e_* R/\mathfrak{p}$ over $R/\mathfrak{p}$ given by $\{F^e_* y^k w^l\mid 0 \leq k,l \leq q-1\}$. That is, $\overline{\phi}^e \cdot x^i z^j = \Psi^e \cdot y^jw^i$ where $\Psi$ denotes the Frobenius trace of $R/\mathfrak{p} = \kay \llbracket y, w \rrbracket$. Hence,
\[
s\Bigl(\kay \llbracket y,w \rrbracket, \overline{\sC_R^{P}}\Bigr) \geq \textnormal{area} \bigl( [0,1]^{\times 2} \cap \{(y,w) \in \mathbb{R}^2 \mid y+w \geq 1\} \bigr) = 1/2.
\]
This proves the claim by \autoref{pro.OurMethodtoProvePureFRegularity}.
\end{proof}
\end{example}

\begin{example} \label{ex.P2crossP2}
Let $A \coloneqq \kay \llbracket u,v,w,x,y,z \rrbracket$ and $I \coloneqq(\Delta_1, \Delta_2, \Delta_3)$ where $\Delta_1 \coloneqq vz-wy$, $\Delta_2 \coloneqq wx-uz$, and $\Delta_3 \coloneqq uy - vx$. Let $R = A/I$. We claim that the prime divisor $P$ defined by the height-$1$ prime ideal $\p \coloneqq (u,v,w)$ is a minimal $F$-pure center, and moreover $r(R,P)\geq1/6$. We use \autoref{pro.OurMethodtoProvePureFRegularity}. To this end, we recall that $\sC_{e,R}$ was explicitly computed in \cite[Proposition 5.1]{KatzmanSchwedeSinghZhangRingsFrobOperators}.  Indeed, for non-negative integers $s,t$ such that $s+t\leq q-1$, one writes
\[
y^sz^t (\Delta_2 \Delta_3)^{q-1} \equiv x^{s+t} f_{s,t} \bmod{I^{[q]}},
\]
for some $f_{s,t}$, which is well-defined mod ${I^{[q]}}$. Then, 
\[I^{[q]}:I = I^{[q]} + (f_{s,t} \mid s,t\geq 0,\, s+t \leq q-1 ).
\]
Thus, by Fedder's criterion \cite{FedderFPureRat}, $\sC_{e,R}$ is generated by $\Phi^e \cdot f_{s,t}$, where $\Phi$ is a Frobenius trace associated to $A$. We choose $f_{0,0}$ to be $(\Delta_2 \Delta_3)^{q-1}$. In fact, we have that $I^{2(q-1)} \subset I^{[q]}:I$.
In particular, we have the following relations
\begin{equation}
\label{eq.SegreFedderCriterion}
y^sz^t f_{0,0} \equiv x^{s+t} f_{s,t} \bmod I^{[q]}.
\end{equation}
Let $\phi^e_{s,t}$ be the map in $\sC_{e,R}$ induced by $\Phi^e \cdot f_{s,t}$ for $s + t \leq q-1$.
\begin{claim}
$\sC^{P}_{R}$ contains the maps 
$ 
\{\phi_{s,t}^e \cdot u^l v^m w^n \, \vert \, l+m+n = q-1, s + t \leq q-1\}.
$
\end{claim}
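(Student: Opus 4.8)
The plan is to verify that each map $\phi^e_{s,t}\cdot u^lv^mw^n$ (with $l+m+n=q-1$ and $s+t\le q-1$) is compatible with $\p$, and then to quote \autoref{pro.[P]=PforDivisors} to conclude that it lies in $\sC_R^P=\sC_R^{[P]}$. Before that I would record two trivialities. First, $I\subseteq(u,v,w)$: indeed $\Delta_1\in(v,w)$, $\Delta_2\in(u,w)$ and $\Delta_3\in(u,v)$. Hence $\p=(\bar u,\bar v,\bar w)R$ is a height-one prime with $R/\p\cong\kay\llbracket x,y,z\rrbracket$, so in particular $x,y,z\notin\p$. Second, $\phi^e_{s,t}\cdot u^lv^mw^n$ really is an element of $\sC_{e,R}$ (each $\sC_{e,R}$ being a module over $R=\sC_{0,R}$): it is the map induced by $\Phi^e\cdot\bigl(u^lv^mw^nf_{s,t}\bigr)$, and $u^lv^mw^nf_{s,t}\in I^{[q]}:I$ since $f_{s,t}\in I^{[q]}:I$ and $I^{[q]}:I$ is an ideal.

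For the compatibility itself I would localize at $\p$, exactly as in the proof of \autoref{pro.[P]=PforDivisors}: a map of $\sC_{e,R}$ is $\p$-compatible if and only if its image in $\sC_{e,R_\p}$ is $\p R_\p$-compatible. Now $R_\p$ is a DVR, so $\sC_{e,R_\p}$ is generated over $F^e_*R_\p$ by a single trace map $\Theta^e$, and the description of the compatible subalgebra used in \autoref{ex.P1crossP1} and \autoref{exa.weightedhypersurface} applies verbatim in this principal setting: $\Theta^e\cdot g$ is $\p R_\p$-compatible whenever $\val_\p g\ge q-1$. Write $\phi^e_{s,t}\big|_{R_\p}=\Theta^e\cdot h_{s,t}$ with $h_{s,t}\in R_\p$, so $\val_\p h_{s,t}\ge 0$. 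Since $u,v,w\in\p$ we have $\val_\p u,\val_\p v,\val_\p w\ge 1$ (in fact all equal $1$, by solving $u=wx/z$ and $v=uy/x$ in $R_\p$ from $\Delta_2=\Delta_3=0$), so
\[
\val_\p\bigl(h_{s,t}\cdot u^lv^mw^n\bigr)=\val_\p h_{s,t}+(l+m+n)\ \ge\ q-1 .
\]
Therefore $(\phi^e_{s,t}\cdot u^lv^mw^n)\big|_{R_\p}=\Theta^e\cdot\bigl(h_{s,t}u^lv^mw^n\bigr)$ is $\p R_\p$-compatible, hence $\phi^e_{s,t}\cdot u^lv^mw^n$ is $\p$-compatible, i.e.\ lies in $\sC_R^P$; this is the claim.

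If one prefers to remain inside $A$, the same conclusion follows from a Fedder-type computation: it suffices to show $\Phi^e\bigl(F^e_*(u^{l+1}v^mw^nf_{s,t}A)\bigr)\subseteq(u,v,w)$ together with the two analogues obtained by replacing $u^{l+1}$ by $v^{m+1}$ or $w^{n+1}$. Multiplying by $x^{s+t}$ and invoking $(x^{s+t})^qf_{s,t}\equiv (x^{s+t})^{q-1}y^sz^tf_{0,0}\bmod I^{[q]}$ together with $I^{[q]}\subseteq(u,v,w)^{[q]}$ reduces this — because $x\notin(u,v,w)$ and $(u,v,w)$ is prime — to the case $f_{0,0}=(\Delta_2\Delta_3)^{q-1}$; and there one checks by direct inspection that, thanks to $l+m+n=q-1$ (rather than $\le q-2$), no monomial of $u^{l+1}v^mw^nf_{0,0}$ can, after multiplication by a further monomial, acquire the ``bad'' shape $u^{q-1}v^{q-1}w^{q-1}x^{*}y^{*}z^{*}$ on which $\Phi^e$ would fail to land in $(u,v,w)$. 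The only mildly delicate points in the whole argument are this monomial bookkeeping and the identification of the compatible subalgebra of the DVR $R_\p$; the localization route avoids the former, while the latter is precisely the computation already carried out in the two preceding examples.
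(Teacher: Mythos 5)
Your proof is correct and follows essentially the same route as the paper: localize at $\p$, observe that $R_\p$ is a DVR whose $\p R_\p$-compatible maps in degree $e$ are those of the form (generator)$\cdot g$ with $\val_\p g\ge q-1$, and note that $u^lv^mw^n$ already contributes valuation $l+m+n=q-1$. The only (harmless) difference is that the paper additionally uses \autoref{eq.SegreFedderCriterion} to see that each $\phi^e_{s,t}$ localizes to a generator of $\sC_{e,R_\p}$, whereas you only need $\val_\p h_{s,t}\ge 0$, which is automatic.
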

\begin{proof}[Proof of claim]
Observe that, up to pre-multiplications by units, all the maps $\phi^e_{s,t}$ induce the same map after we localize at $\mathfrak{p}=(u,v,w)$ by \autoref{eq.SegreFedderCriterion}. Note that $R_\p$ is a DVR so that $\sC_{e, R_\p}$ is principally generated. As the $\phi^e_{s,t}$ generate $\sC_{e,R}$, any $\phi^e_{s,t}$ is a generator of $\sC_{e,R_\p}$. Now, any element $u$, $v$, $w$ is a uniformizer in $R_\p$. To verify the claim, we may localize at $\p$, but then $\phi_{s,t} u^l v^m w^n$ is of the form $\kappa \cdot t^{l+m+n}$ where $\kappa$ is a generator of $\Hom(F_\ast^e R_\p, R_\p)$ and $t$ is a uniformizer. This map is $\p$-compatible if and only if $m+l+n \geq q-1$.
\end{proof}

Next, observe that $R/\p \cong \kay \llbracket x, y ,z \rrbracket$, with Frobenius trace denoted by $\Psi$. By \autoref{remark.FrobeniustraceFinitealgebra}, we may choose $\Psi$ in such a way that $\varphi^e_{s,t}$ and $\Psi^e$ are induced by the same map $\kappa:F_\ast \kay \to \kay$. Thus, for all $s+t \leq q-1$ and all $l+n+m=q-1$, we have that $\phi^e_{s,t} \cdot u^lv^mw^n$ restricts to a map in $\sC_{e,R/\p}$; say $\varphi^e_{s,t} \cdot u^lv^mw^n$. Hence, we have an equality $\varphi^e_{s,t} \cdot u^lv^mw^n = \Psi^e \cdot a_{s,t;l,m,n}$ for a uniquely determined $a_{s,t;l,m,n} \in \kay \llbracket x, y ,z \rrbracket$, which are explictly described as follows:
\begin{claim}
\label{claim.whichmaps}
Let $l,m,n;s,t$ be non-negative integers such that $l+m+n = q -1, s+t \leq q-1$. Let us set $q - 1-s -t \eqqcolon r \geq 0$, so that $r+s + t = q-1$. Then, we have that $a_{s,t;l,m,n} = 0$ unless one of the following four triples $(l+r,m+s,n+t), (l+r-q,m+s,n+t), (l+r,m+s-q,n+t), (l+r,m+s,n+t-q)$ belongs to $\{0,\ldots,q-1\}^{\times 3}$, in which case
\[
a_{s,t;l,m,n} = \xi \cdot x^{l+r}y^{m+s}z^{n+t} 
\]
for some unit $\xi \in \mathbb{F}_p^\times \subset \kay^{\times}$.
\end{claim}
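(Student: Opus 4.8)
The plan is to compute the operator $\varphi^e_{s,t}\cdot u^lv^mw^n$ on $\kay\llbracket x,y,z\rrbracket\cong A/\p$ explicitly, by reducing everything modulo $\p^{[q]}=(u^q,v^q,w^q)$. First I would record that $I^{[q]}\subseteq\p^{[q]}$, since $\Delta_1^q=v^qz^q-w^qy^q$, $\Delta_2^q=w^qx^q-u^qz^q$ and $\Delta_3^q=u^qy^q-v^qx^q$ all lie in $(u^q,v^q,w^q)$; therefore, recalling $f_{0,0}=(\Delta_2\Delta_3)^{q-1}$, \autoref{eq.SegreFedderCriterion} gives
\[
x^{s+t}f_{s,t}u^lv^mw^n\ \equiv\ y^sz^tu^lv^mw^n(\Delta_2\Delta_3)^{q-1}\bmod{\p^{[q]}}.
\]
Two elementary observations about Cartier operators then reduce the computation of $a_{s,t;l,m,n}$ to the right-hand side: (i) whenever $G\in A$ is such that $\Phi^e\cdot G$ descends to an operator $\bar G$ on $A/\p=\kay\llbracket x,y,z\rrbracket$ — as is the case for $G=f_{s,t}u^lv^mw^n$ by the claim above, since the induced map on $R=A/I$ is $\p$-compatible and $I\subseteq\p$ — the operator $\bar G$ depends only on the class of $G$ modulo $\p^{[q]}$, because $\Phi^e$ carries $F^e_*\p^{[q]}$ into $\p$; and (ii) premultiplication is compatible with descent, so $\overline{\Phi^e\cdot x^{s+t}G}=\bar G\cdot x^{s+t}$, which applied to $G=f_{s,t}u^lv^mw^n$ and $\bar G=\varphi^e_{s,t}\cdot u^lv^mw^n=\Psi^e\cdot a_{s,t;l,m,n}$ gives $\overline{\Phi^e\cdot x^{s+t}f_{s,t}u^lv^mw^n}=\Psi^e\cdot\bigl(x^{s+t}a_{s,t;l,m,n}\bigr)$.

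Next I would expand $y^sz^tu^lv^mw^n(\Delta_2\Delta_3)^{q-1}$ modulo $\p^{[q]}$. This polynomial is homogeneous of degree $2(q-1)+l+m+n=3(q-1)$ in $u,v,w$ (using $l+m+n=q-1$); since any monomial having a $u$-, $v$- or $w$-exponent $\geq q$ is annihilated, and a monomial of $(u,v,w)$-degree $3(q-1)$ all of whose $u$-, $v$-, $w$-exponents are $\leq q-1$ must have each of them equal to $q-1$, only the $u^{q-1}v^{q-1}w^{q-1}$-component survives. From $\Delta_2^{q-1}=\sum_i(-1)^i\binom{q-1}{i}u^iz^iw^{q-1-i}x^{q-1-i}$ and $\Delta_3^{q-1}=\sum_j(-1)^j\binom{q-1}{j}u^{q-1-j}y^{q-1-j}v^jx^j$, the monomial $u^{q-1-l}v^{q-1-m}w^{q-1-n}$ occurs in $(\Delta_2\Delta_3)^{q-1}$ for the unique pair $(i,j)=(n,l+n)$ — the exponent of $v$ forces $j=q-1-m$ ($=l+n$), the exponent of $w$ forces $i=n$, and then the exponent of $u$ is automatic — with scalar $(-1)^l\binom{q-1}{n}\binom{q-1}{l+n}$ and $(x,y,z)$-part $x^{q-1+l}y^{m}z^{n}$. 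Hence the right-hand side of the displayed congruence is $(-1)^l\binom{q-1}{n}\binom{q-1}{l+n}\,u^{q-1}v^{q-1}w^{q-1}x^{q-1+l}y^{m+s}z^{n+t}$ modulo $\p^{[q]}$.

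Finally I would put the pieces together. By Fedder's criterion \cite{FedderFPureRat} applied to the complete intersection $\p=(u,v,w)$, the operator $\overline{\Phi^e\cdot u^{q-1}v^{q-1}w^{q-1}}$ generates $\Hom_{\kay\llbracket x,y,z\rrbracket}\bigl(F^e_*\kay\llbracket x,y,z\rrbracket,\kay\llbracket x,y,z\rrbracket\bigr)$, and by the multiplicativity of the Frobenius trace over the power series ring $A$ together with the normalization fixed in \autoref{remark.FrobeniustraceFinitealgebra} it agrees with $\Psi^e$ up to a unit in $\mathbb{F}_p^\times$; likewise $\binom{q-1}{n}\equiv(-1)^n$ and $\binom{q-1}{l+n}\equiv(-1)^{l+n}\bmod p$ by Lucas' theorem (here $0\le n\le q-1$ and $0\le l+n\le l+m+n=q-1$). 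Combining this with observations (i) and (ii), the operator $\overline{\Phi^e\cdot x^{s+t}f_{s,t}u^lv^mw^n}$ equals $\xi\,\Psi^e\cdot x^{q-1+l}y^{m+s}z^{n+t}$ for some $\xi\in\mathbb{F}_p^\times$, and also equals $\Psi^e\cdot\bigl(x^{s+t}a_{s,t;l,m,n}\bigr)$. Since $a\mapsto\Psi^e\cdot a$ is injective on $\kay\llbracket x,y,z\rrbracket$ and $q-1+l\ge s+t$ (as $s+t\le q-1$), dividing by $x^{s+t}$ and using $r=q-1-s-t$ yields $a_{s,t;l,m,n}=\xi\,x^{l+r}y^{m+s}z^{n+t}$. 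The alternative ``$a_{s,t;l,m,n}=0$'' is in fact never realized: the exponents satisfy $(l+r)+(m+s)+(n+t)=(l+m+n)+(r+s+t)=2(q-1)$, so at most one of them can exceed $q-1$; if, say, $l+r>q-1$, then $l+r\le 2(q-1)$ gives $l+r-q\le q-2$ while $m+s,n+t\le q-1$, so one of the four listed triples always belongs to $\{0,\ldots,q-1\}^{\times 3}$.

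The steps I expect to be most delicate are the normalization bookkeeping — ensuring that the comparison between $\overline{\Phi^e\cdot u^{q-1}v^{q-1}w^{q-1}}$ and $\Psi^e$, and the reduction of the binomial coefficients to signs, both take place inside $\mathbb{F}_p$, for which \autoref{remark.FrobeniustraceFinitealgebra} and Lucas' theorem are the relevant inputs — together with the careful use of Fedder's lemma for the regular sequence $u,v,w$ behind observation (i).
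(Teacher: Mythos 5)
Your proof is correct, and while it rests on the same computational core as the paper's — reduce everything modulo $\p^{[q]}=(u^q,v^q,w^q)$, note that $u^lv^mw^n(\Delta_2\Delta_3)^{q-1}$ is homogeneous of degree $3(q-1)$ in $u,v,w$ so that only the $u^{q-1}v^{q-1}w^{q-1}$-component survives — you extract the multiplier $a_{s,t;l,m,n}$ differently, and more cleanly. The paper evaluates $\varphi^e_{s,t}\cdot u^lv^mw^n$ on every basis monomial $x^iy^jz^k$ and runs a case analysis on the congruences $i+r+l\equiv j+s+m\equiv k+t+n\equiv q-1 \pmod q$ (the $\alpha,\beta,\gamma$ bookkeeping), reading off $a_{s,t;l,m,n}$ by duality; you instead work at the level of operators, use that the descended operator depends only on the premultiplier modulo $\p^{[q]}$ together with \autoref{eq.SegreFedderCriterion} to replace $x^{s+t}f_{s,t}u^lv^mw^n$ by a single monomial times $u^{q-1}v^{q-1}w^{q-1}$, identify $\overline{\Phi^e\cdot u^{q-1}v^{q-1}w^{q-1}}$ with $\Psi^e$ via Fedder for the regular sequence $u,v,w$, and cancel $x^{s+t}$ using the faithfulness of $a\mapsto\Psi^e\cdot a$. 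This buys you two things: the mod-$q$ case analysis disappears entirely, and you see directly that the degenerate alternative ``$a_{s,t;l,m,n}=0$'' in the claim is vacuous, since $(l+r)+(m+s)+(n+t)=2(q-1)$ forces one of the four listed triples to lie in $\{0,\ldots,q-1\}^{\times 3}$ (a point the paper leaves implicit but uses in the subsequent analysis of which maps arise). Your normalization concerns are handled exactly as in the paper, via \autoref{remark.FrobeniustraceFinitealgebra} and Lucas' theorem, and the sign discrepancies with the paper's displayed coefficient in \autoref{eqn.f00} are immaterial since everything lands in $\mathbb{F}_p^\times$.
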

\begin{proof}[Proof of claim]
First of all, note that:
\begin{align*}
f_{0,0}=\Delta_2^{q-1}\Delta_3^{q-1} &= \left(\sum_{a+b = q-1} (-1)^b \binom{q-1}{a} {(wx)^a(uz)^b} \right) \left( \sum_{c+d = q-1} (-1)^d \binom{q-1}{c} {(uy)^c(vx)^d}\right) \\
&=\sum_{\substack{a+b=q-1 \\ \nonumber 
c+d=q-1}}{(-1)^{b+d}\binom{q-1}{a}\binom{q-1}{c} u^{b+c} v^d w^a x^{a+d} y^c z^b}.
\end{align*}
Therefore,
\begin{equation} \label{eqn.f00}
u^lv^m w^nf_{0,0} \equiv -\binom{q-1}{m}\binom{q-1}{n} u^{q-1}v^{q-1} w^{q-1}x^{l+q-1}y^mz^n \bmod \p^{[q]}.
\end{equation}
Indeed, after multiplying by $u^lv^mw^n$, every summand vanishes modulo $\p^{[q]}$ except for the summands where simultaneously $l+b+c\leq q-1$, $m+d \leq q-1$, and $n+a \leq q-1$. However, given the constraints $a+b=q-1$ and $c+d =q-1$, we have that
\[
(l+b+c)+(m+d)+(n+a) = 3(q-1).
\]
Hence, $l+b+c, m+d, n+a = q-1$, and also $a+d=l+q-1$. In particular, $m= c$ and $n=b$.  Set $\xi \coloneqq -\binom{q-1}{m}\binom{q-1}{n} \in \kay^{\times}$.

On the other hand, for $0 \leq i,j,l \leq q-1$ we have that
\begin{align*}
u^lv^mw^nx^iy^jz^kf_{s,t}&=\frac{1}{x^q} u^l v^m w^n x^{i+q-s-t}y^{j}z^{k}x^{s+t}f_{s,t}\\ &= \frac{1}{x^q} u^l v^m w^n x^{i+r+1}y^{j+s}z^{k+t}f_{0,0}\\
&\equiv  \frac{\xi}{x^q} u^{q-1}v^{q-1}w^{q-1}x^{q+i+r+l}y^{j+s+m}z^{k+t+n}  \bmod \p^{[q]}  \\
&\equiv \xi u^{q-1}v^{q-1}w^{q-1}x^{i+r+l}y^{j+s+m}z^{k+t+n} \bmod \p^{[q]}.
\end{align*}
Therefore,
\[
\Phi^e\bigl( u^lv^mw^nx^iy^jz^kf_{s,t} \bigr) \equiv \xi \Phi^e\bigl(u^{q-1}v^{q-1}w^{q-1}x^{i+r+l}y^{j+s+m}z^{k+t+n} \bigr) \bmod \p
\]
Next, we observe that this element is $0 \bmod \p$ unless
\[
i+r+l, j+s+m, k+t+n \equiv q-1 \bmod q.
\]
Since all these three sums are at most $3(q-1)$, we then have
\[
\begin{cases}
i+r+l=q-1+\alpha q \\ j+s+m = q-1 + \beta q \\ k+t+n = q-1 +\gamma q
\end{cases}
\]
for some $\alpha, \beta, \gamma \in \{0,1\}$. However, if we add these equations together, we obtain:
\[
i+j+k + 2(q-1) = 3(q-1) + (\alpha+\beta+\gamma)q.
\]
Equivalently,
\[
i+j+k=q-1 + (\alpha + \beta + \gamma)q.
\]
Being $i+j+k$ at most $3(q-1)$, this forces $\alpha + \beta + \gamma \in \{0,1\}$. Hence, $\alpha,\beta,\gamma$ are either all $0$ or one of them is $1$ while the other two are $0$. In the first case, we then have:
\[
i+r+l, j+s+m, k+t+n = q-1.
\]
Therefore, in this case, we have that $\Phi^e\bigl( u^lv^mw^nx^iy^jz^kf_{s,t} \bigr) \equiv 0 \bmod \p$ unless
\[
i=q-1-(r+l), j=q-1-(s+m), k=q-1 -(t+n) \geq 0. 
\]
In that case, $\Phi^e\bigl( u^lv^mw^nx^iy^jz^kf_{s,t} \bigr) \equiv \xi \bmod \p$, and so $a_{s,t;l,m,n} = \xi x^{r+l}y^{s+m}z^{t+n}$ (whenever $r+l,s+m, t+n \geq q-1$).

Let us consider now the remaining three cases, i.e. $(\alpha,\beta, \gamma) \in \bigl\{(1,0,0),(0,1,0),(0,0,1)\bigr\}$. By symmetry, it suffices to consider $(\alpha,\beta,\gamma) = (1,0,0)$. In this case, we have that the element $\Phi^e\bigl( u^lv^mw^nx^iy^jz^kf_{s,t} \bigr)$ vanishes modulo $\p$ unless
\[
q- 1 \geq i=q-1+q-(r+l) \geq 0; \text{ and } j=q-1-(s+m), \, k = q-1-(t+n) \geq 0, 
\]
equivalently
\[
0 \leq (r+l) - q \leq q-1; \text{ and } j=q-1-(s+m), \, k = q-1-(t+n) \geq 0, 
\]
which implies $\Phi^e\bigl( u^lv^mw^nx^iy^jz^kf_{s,t} \bigr) \equiv \xi x  \bmod \p$. In this case, $a_{s,t;l,m,n} = \xi x^{r+l} y^{s+m} z^{t+n}$.
\end{proof}
Let us analyze which maps the first case $(l +r, m+s, n+t) \in \{0,\ldots, q-1\}^{\times 3}$ of \autoref{claim.whichmaps} yields. Note that the map from the set
\[
\bigl\{(l,m,n;r,s,t) \in \{0,\ldots,q-1\}^{\times 6} \bigm| l+m+n, r+s+t= q-1 \text{ and } l+r, m+s, n+t \leq q-1 \bigr\} 
\]
to the set
\[
\bigl\{ (i,j,k) \in \{0,\ldots,q-1\}^{\times 3} \bigm| i+j+k = 2(q-1) \bigr\}
\]
defined by
\[
(l,m,n;r,s,t) \mapsto (l+r,m+s,n+t)
\]
is surjective. Indeed, taking $ l = s =0$ and given $0 \leq r,m \leq q-1$, we obtain $2(q-1) = r + t + m +n$ or, put differently, $2(q-1) - r- m = t+n$. Thus, we see that this case yields the maps $\Psi^e \cdot x^i y^j z^k$ with $i+j+k = 2(q-1)$. In other words, we obtain the Cartier algebra given by the pair $(\kay\llbracket x,y,z\rrbracket, (x,y,z)^2)$.

For the remaining three cases of \autoref{claim.whichmaps}, we obtain the maps \[
x \cdot \Psi^e \cdot x^{r+l-q}y^{s+m}z^{t+n}, \quad y \cdot \Psi^e \cdot x^{r+l}y^{s+m-q}z^{t+n}, \quad z \cdot \Psi^e \cdot x^{r+l}y^{s+m}z^{t+n-q}
\]
where, respectively, $(r+l-q,s+m,t+n) \in \{0,\ldots,q-1\}$, $(r+l,s+m-q,t+n) \in \{0,\ldots,q-1\}$,  $(r+l,s+m,t+n-q) \in \{0,\ldots,q-1\}$). However, these are all nonsurjective maps. 

In conclusion, we obtain
\[
s \Bigl(R/\p, \overline{\sC_{R}^{P}}\Bigr) \geq \textnormal{volume} \Bigl( [0,1]^{\times 3} \cap \{(x,y,z) \in \mathbb{R}^3 \mid x+y+z \geq 2\} \bigr) = 1/6>0,
\]
where we use \cite[Theorem 4.20]{BlickleSchwedeTuckerFSigPairs1} for the inequality. Hence, $r(R,P) \geq 1/6$ and $(R,P)$ is purely $F$-regular.
\end{example}

\begin{remark}
In \autoref{ex.P2crossP2}, it would be interesting to fully compute $\sC_R^{P}$ to check whether or not $r(R, P) = 1/6$. The issue is that we cannot apply Fedder's criterion for $R$ since $R$ is not regular. One may apply Fedder's criterion to $I + \mathfrak{p}$ in $A$ to work around this.
\end{remark}

\begin{question} \label{que.DeterminantalRings}
Let $C_{r,s}$ be the cone singularity given by the Segre embedding of $\mathbb{P}_{\kay}^r \times \mathbb{P}_{\kay}^s$. The $F$-signatures of these toric rings were compute in \cite{SinghFSignatureOfAffineSemigroup} and it is well-known that $\Cl C_{r,s} \cong \bZ$. In fact, $C_{r,s}$ is a determinantal ring. Let $S=\kay\llbracket x_{i,j}\mid 1\leq i \leq m, 1\leq j \leq n \rrbracket$ be the power series ring in the $m\times n$ matrix of variables $(x_{i,j})$, and let $I_t$ be the ideal generated by the $t \times t$ minors of $(x_{i,j})$ ($2 \leq t \leq \mathrm{min}\{m,n\}$). The quotient ring $R=R(m,n,t)=S/I_t$ is called a determinantal ring. We observe that $C_{r,s}$ is none other than $R(r+1,s+1,2)$. Moreover, if $P$ is the prime divisor on $\Spec C_{r,s}$ given by $\p=(x_{1,1},\ldots,x_{1,s+1})$,\footnote{In fact, any ideal generated by either a fixed column or row of variables.} then the divisor class of $P$ is a free generator of $\Cl C_{r,s}$. Based on the previous examples, it is natural to ask whether the pair $(C_{r,s},P)$ is purely $F$-regular and if so what its splitting ratio is. More generally, if $R$ is an arbitrary determinantal ring, we have that $\Cl R$ is freely generated by $P$ the divisor class of the height-$1$ prime ideal $\p$ generated by the $t-1$ size minors of any set of $t-1$ rows (or columns); see \cite[Corollary 8.4]{BrunsVetterDeterminantalRings}. We ask the same question as before for the pair $(R,P)$. Note that in order to answer to these questions along the same ideas we had for $C_{1,1}$ and $C_{1,2}$, a good understanding of the colon ideal $I_t^{[q]}:I_t$ is needed. Nonetheless, to the best of the authors' knowledge, very little is known about this. The authors believe a different approach is required. 
\end{question}

\subsection{Purely log terminal pairs} \label{sec.PLTPairs} We refer the reader to \cite{KollarMori} for a detailed exposition on log canonical singularities and to \cite{AmbroThesis} for the notion of (minimal) log canonical centers. We will, however, briefly review these notions here. Let $(X,\Delta)$ be a log pair defined over an algebraically closed field of characteristic zero. Fix a log resolution $\pi: Y \to (X, \Delta)$ and write $\Delta_Y = \pi^\ast(K_X + \Delta) -K_Y$. The pair $(X, \Delta)$ is \emph{log canonical} (LC) if the coefficients of $\Delta_Y$ are $\leq 1$. The pair $(X, \Delta)$ is called \emph{purely log terminal} (PLT) if it is LC and the exceptional components of $\Delta_Y$ have coefficients $< 1$. We say that $(X, \Delta)$ is \emph{Kawamata log terminal} (KLT) if all coefficients of $\Delta_Y$ are $< 1$. A prime divisor $P$ on $X$ is called an \emph{LC center} if the coefficient $a_P$ of the strict transform of $P$ in $\Delta_Y$ is $\geq 1$. Since the multiplier ideal $\mathcal{J}(X, \Delta)$ is given by $\pi_\ast \mathcal{O}_Y(K_Y  - \lfloor \pi^\ast(K_X + \Delta \rfloor)$, this is equivalent to $\mathcal{O}_X(-P) \supset \mathcal{J}(X, \Delta)$.

In analogy to \autoref{pro.globaltolocalFpurecenters}, we recall the global-to-local passage is for LC centers. Let $(X, \Delta)$ be a log canonical pair, $\dim X \geq 2$. Let $P$ be an LC center going through a closed point $x \in X$.
In studying $\sO_{X,x}^\mathrm{sh}$, we are free to replace $X$ by any open neighborhood $U$ of $x$ and $\Delta$ by $\Delta_U$. In particular, we may assume that $(X, \Delta)$ is purely log terminal. Indeed, we may write $\Delta_Y = E_1 + \cdots + E_n + \sum_{E} a_E E$, for some $n$ and such that $a_E < 1$. Note that one of the $E_i$, say $E_1$ is $P$. By the assumption that $P$ is a divisor and the minimal LC center through $x$, the other divisors $E_i$ do not contain $x$. Hence, replacing $X$ by a suitable neighborhood $U$ of $x$, we may assume that $(X, \Delta)$ is PLT and moreover $\lfloor \Delta \rfloor = P$ is a prime divisor going through $x$.\footnote{That is, the generic point of $P$ is the only codimension-$1$ point in the non-KLT locus of $(X,\Delta)$.} Thus, we may work in the following setup.

\begin{setup} \label{sec.LastSectionSetup2}
Let $(X,\Delta)$ be a PLT log pair of dimension at least $2$, such that $\lfloor \Delta \rfloor = P$ is a prime divisor going through a closed point $x\in X$. We set  $X_{\bar{x}}^{\circ} = \Spec \mathcal{O}^{\mathrm{sh}}_{X,x} \smallsetminus Z$, where $Z$ is some closed subset of codimension $\geq 2$. We denote by $P$ the pullback of $P$ to $U$.
\end{setup}

The following is analogous to \autoref{pro.XisSFR} and well-known to experts (cf.\ \cite[Proposition 2.43]{KollarMori})

\begin{proposition}
\label{le.plttokltperturbation}
Let $(X, \Delta = \sum a_i \Delta_i)$ be a PLT pair with $X$ quasi-projective and $0 \leq a_i \leq 1$. Then there is a $\mathbb{Q}$-Cartier $\mathbb{Q}$-divisor $\Delta'$ such that the pair $(X, \Delta + \eps \Delta')$ is KLT for all rational $0 < \eps \ll 1$
\end{proposition}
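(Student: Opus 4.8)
The plan is to adapt the proof of \cite[Proposition 2.43]{KollarMori}; the only subtlety is that, since $X$ is not assumed to be $\mathbb{Q}$-factorial, we cannot simply take $\Delta' = -\lfloor\Delta\rfloor$. First I would fix a log resolution $\pi\colon Y \to X$ of $(X,\Delta)$ and write
\[
\Delta_Y \coloneqq \pi^\ast(K_X+\Delta)-K_Y = \sum_i a_i \tilde\Delta_i + \sum_j c_j E_j,
\]
where the $\tilde\Delta_i$ are the strict transforms of the prime components $\Delta_i$ of $\Delta$ and the $E_j$ are the $\pi$-exceptional divisors. Since $\tilde\Delta_i$ is not $\pi$-exceptional, its coefficient $a_i$ in $\Delta_Y$ equals the coefficient of $\Delta_i$ in $\Delta$; so $a_i\le 1$, with equality exactly when $\Delta_i$ is a component of $\lfloor\Delta\rfloor$, while $(X,\Delta)$ being PLT forces $c_j<1$ for every $j$. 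In particular $(X,\Delta)$ is already KLT if $\lfloor\Delta\rfloor=0$, so we may assume $\lfloor\Delta\rfloor = \Delta_1+\dots+\Delta_k$ with $k\ge 1$, and the task is to perturb the coefficients along the $\tilde\Delta_i$ below $1$ without spoiling the strict inequalities at the remaining divisors.

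Next, exploiting that $X$ is quasi-projective, I would choose an effective Cartier divisor $G$ on $X$ whose support contains every $\Delta_i$ with $i\le k$: fixing a projective embedding, for $m\gg 0$ one finds a hypersurface of degree $m$ containing the closure of $\Delta_i$ but not that of $X$, and restricting it to $X$ yields an effective Cartier divisor $G_i$ with $\Delta_i\subseteq\operatorname{Supp} G_i$; take $G=G_1+\dots+G_k$. Set $\Delta'\coloneqq -G$, which is $\mathbb{Q}$-Cartier, so that $K_X+\Delta+\eps\Delta'$ is $\mathbb{Q}$-Cartier for every $\eps$.

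Finally, I would verify the KLT property on a log resolution $\pi\colon Y\to X$ of $(X,\Delta+G)$, which still has $c_j<1$ for all exceptional $E_j$ because $(X,\Delta)$ is PLT. Writing $\pi^\ast G = \tilde G + \sum_j g_j E_j$ with all $g_j\ge 0$ and $\tilde G\ge \sum_{i\le k}\tilde\Delta_i$, one has
\[
\pi^\ast(K_X+\Delta+\eps\Delta')-K_Y = \Delta_Y - \eps\,\pi^\ast G ,
\]
so that along $\tilde\Delta_i$ with $i\le k$ the coefficient becomes $1-\eps\cdot\mult_{\Delta_i}G<1$; along $\tilde\Delta_i$ with $i>k$ it is $\le a_i<1$; along the remaining components of $\tilde G$ it is $\le 0<1$; and along $E_j$ it is $c_j-\eps g_j$, which is $<1$ once $\eps$ is small. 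As there are only finitely many $E_j$, each with $c_j<1$, a single threshold $\eps_0>0$ works, and $(X,\Delta+\eps\Delta')$ is KLT for $0<\eps<\eps_0$.

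The step I expect to carry the actual content is the construction of a $\mathbb{Q}$-Cartier $\Delta'$ with negative coefficients along every component of $\lfloor\Delta\rfloor$ even though $\lfloor\Delta\rfloor$ itself may fail to be $\mathbb{Q}$-Cartier — this is precisely where quasi-projectivity is needed. One minor caveat is that $\Delta+\eps\Delta'$ picks up coefficient $-\eps$ along the components of $G$ not already in $\Delta$; this does not affect the KLT property (negative coefficients only increase discrepancies), and if an effective boundary is desired it suffices to note that when $\lfloor\Delta\rfloor$ is $\mathbb{Q}$-Cartier — in particular when $X$ is $\mathbb{Q}$-factorial — one may take $\Delta'=-\lfloor\Delta\rfloor$ directly, recovering \cite[Proposition 2.43]{KollarMori} verbatim.
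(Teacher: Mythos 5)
Your discrepancy computation is correct, and using quasi-projectivity to manufacture an effective Cartier divisor $G$ whose support contains every component of $\lfloor\Delta\rfloor$ is a legitimate way around the failure of $\mathbb{Q}$-factoriality. The genuine problem is the caveat you mention at the end and then wave off: with $\Delta'=-G$, the divisor $\Delta+\eps\Delta'=\Delta-\eps G$ acquires strictly negative coefficients along every component of $G$ not already in $\Supp\Delta$, so $(X,\Delta+\eps\Delta')$ is only \emph{sub}-klt; it is not a log pair in the sense used in \autoref{sec.PLTPairs}, where boundaries are effective. This is not cosmetic: the proposition is invoked in the proof of \autoref{pro.(b)HoldPLT} precisely so that one can spread out and apply \cite[Corollary 3.4]{TakagiInterpretationOfMultiplierIdeals} to conclude that the closed fibers are strongly $F$-regular pairs, and that correspondence is for effective boundaries. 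Your proposed repair---take $\Delta'=-\lfloor\Delta\rfloor$ when $\lfloor\Delta\rfloor$ is $\mathbb{Q}$-Cartier---does not cover the stated generality, which is the whole point of the proposition; and passing from $\Delta-\eps G$ to its positive part does not obviously preserve $\mathbb{Q}$-Cartierness of the log divisor.

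The paper's proof handles $\mathbb{Q}$-Cartierness and effectivity simultaneously by a different choice of $\Delta'$: pick $m>0$ with $m\Delta$ integral, an ample $H$, and $n\gg 0$ so that $\sO_X(nH+m\Delta)$ is globally generated; choose $D\in|nH+m\Delta|$ sharing no component with $\Delta$, and set $\Delta'=\frac{1}{m}D-\Delta$. Then $\Delta'\sim_{\mathbb{Q}}\frac{n}{m}H$ is $\mathbb{Q}$-Cartier, while $\Delta+\eps\Delta'=(1-\eps)\Delta+\frac{\eps}{m}D$ is effective with $\lfloor\Delta+\eps\Delta'\rfloor=0$, and the discrepancies $a(E,X,\Delta+\eps\Delta')$ converge to $a(E,X,\Delta)$ as $\eps\to 0$, giving KLT for $0<\eps\ll 1$. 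In effect, the ample twist both restores effectivity and carries the $\mathbb{Q}$-Cartier property; if you want to rescue your construction you would need to add back such an ample piece, at which point you are reproducing this argument.
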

\begin{proof}
Let $m > 0$ be so that $m \Delta$ is integral. Since $X$ is quasi-projective, there is an ample divisor $H$. Choose $n \gg 0$ so that $\mathcal{O}_X(nH + m \Delta)$ is globally generated. As the base locus of the linear system $\vert nH + m\Delta\vert$ is empty, we find an element $D$ of this linear system having no component in common with $\Delta$. Set $\Delta' = \frac{1}{m} D - \Delta$. Then, $K_X + \Delta + \eps \Delta'$ is $\mathbb{Q}$-Cartier since $\Delta'$ is so: $m \cdot \Delta' = D - m \Delta \sim nH$. Note that $\Delta + \eps \Delta' \geq 0$ for all rational $0 \leq \eps \ll 1$. Since $D$ and $\Delta$ share no components, $\lfloor \Delta + \eps \Delta' \rfloor =0$. As  $a(E,X, \Delta + \eps \Delta') \to a(E,X,\Delta)$ for $\eps \to 0$, there is $\eps$ so that $(X, \Delta+ \eps \Delta')$ is KLT.
\end{proof}

We make precise the connection between purely $F$-regular pairs and PLT pairs. 

\begin{theorem}[{\cite[Corollay 5.4]{TakagiPLTAdjoint}}]
\label{theo.PLTspreadout}
Let $(X, \Delta)$ be a log pair. Spread $(X, \Delta)$ out over some finitely generated $\mathbb{Z}$-algebra $A$. Then, $(X,\Delta)$ is PLT if and only if there is a dense open $U \subset \Spec A$ such that the reduction $(X_a, \Delta_a)$ is purely $F$-regular for all $a \in U$.
\end{theorem}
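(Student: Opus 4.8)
The plan is to run the standard ``reduction modulo $p$'' dictionary between multiplier ideals and test ideals, in the adjoint (boundary-aware) form adapted to $\lfloor\Delta\rfloor$, which is precisely Takagi's framework. First I would shrink $X$ so as to assume $X=\Spec R$ is affine with $K_X+\Delta$ $\mathbb{Q}$-Cartier (automatic once one speaks of PLT), and spread everything out over a finitely generated $\mathbb{Z}$-algebra $A$. On the characteristic-zero side I use the adjoint multiplier ideal $\mathrm{adj}(X,\Delta)\subseteq\mathcal{O}_X$ along $\lfloor\Delta\rfloor$, recalling that $(X,\Delta)$ is PLT exactly when $\mathrm{adj}(X,\Delta)=\mathcal{O}_X$; on the characteristic-$p$ side I use the adjoint test ideal $\uptau(R_a,\Delta_a)$ from \autoref{section.preliminaries}, recalling that $(X_a,\Delta_a)$ is purely $F$-regular exactly when $\uptau(R_a,\Delta_a)=R_a$. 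With these two translations in place the theorem reduces to a single assertion: there is a dense open $U\subseteq\Spec A$ such that $\uptau(R_a,\Delta_a)=\mathrm{adj}(X,\Delta)_a$, the reduction mod $p$ of the fixed ideal, for every $a\in U$.

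To prove that assertion I would follow Hara--Yoshida and Takagi. Fix once and for all a log resolution $\pi\colon Y\to(X,\Delta)$ on which $\mathrm{adj}(X,\Delta)$ is computed as $\pi_*\mathcal{O}_Y\bigl(K_Y+\widetilde{D}-\lfloor\pi^*(K_X+\Delta)\rfloor\bigr)$, with $\widetilde{D}$ the strict transform of $\lfloor\Delta\rfloor$, and spread $\pi$ out over $A$. After inverting finitely many elements of $A$ --- and this is where an \emph{open} set, rather than a merely dense one, appears --- I would arrange that $\pi_a$ is still a log resolution for every $a\in U$, that the relevant pushforwards commute with base change (generic freeness), and that the relative vanishing $R^{>0}(\pi_a)_*\mathcal{O}_{Y_a}\bigl(K_{Y_a}+\widetilde{D}_a-\lfloor\pi_a^*(K_{X_a}+\Delta_a)\rfloor\bigr)=0$ holds over $U$: it holds in characteristic zero by Grauert--Riemenschneider / local vanishing for multiplier ideals, and such cohomology vanishing for a fixed proper morphism of finite-type $A$-schemes is an open condition on $\Spec A$. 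Granting this, one identifies $\uptau(R_a,\Delta_a)$ with the image of the trace (evaluation) map descending from $Y_a$ --- the very computation that shows test ideals equal multiplier ideals, the only new ingredient being compatibility of the trace with the boundary component $\widetilde{D}_a$ --- and concludes $\uptau(R_a,\Delta_a)=\mathrm{adj}(X,\Delta)_a$ on $U$.

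Both implications are then formal. If $(X,\Delta)$ is PLT, then $\mathrm{adj}(X,\Delta)=\mathcal{O}_X$, so $\mathrm{adj}(X,\Delta)_a=R_a$, hence $(X_a,\Delta_a)$ is purely $F$-regular for all $a\in U$. Conversely, if $(X,\Delta)$ is not PLT, then $\mathrm{adj}(X,\Delta)$ is a proper ideal, and applying generic freeness to the finitely generated module $\mathcal{O}_X/\mathrm{adj}(X,\Delta)$ (nonzero over $\Frac A$) shows its reduction stays proper on a dense open $U'$; so on $U\cap U'$ one gets $\uptau(R_a,\Delta_a)\subsetneq R_a$, contradicting pure $F$-regularity on any dense open. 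The case where $\lfloor\Delta\rfloor$ is reducible is handled component by component, its components being pairwise disjoint under PLT, after first isolating the easy KLT-on-the-complement part via \autoref{le.plttokltperturbation}. I expect the middle step to be the real obstacle: one must know that the single relative vanishing theorem underpinning the resolution description of the multiplier ideal survives reduction mod $p\gg0$ for the spread-out resolution, and one must track carefully the round-downs of $\mathbb{Q}$-divisors and the interaction of the trace with $\widetilde{D}_a$; everything on either side of that step is generic freeness and constructibility.
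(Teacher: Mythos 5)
Your proposal is essentially a reconstruction of the argument in the cited source: the paper itself gives no proof of \autoref{theo.PLTspreadout} but simply quotes Takagi, whose proof is exactly the dictionary you describe, namely that the adjoint multiplier ideal along $\lfloor\Delta\rfloor$ reduces to the adjoint test ideal $\uptau_{\lfloor\Delta\rfloor}(R_a,\Delta_a)$ on a dense open of $\Spec A$ (building on Hara--Yoshida), combined with the characterizations $\mathrm{adj}(X,\Delta)=\sO_X\Leftrightarrow$ PLT and $\uptau_{\lfloor\Delta\rfloor}=R_a\Leftrightarrow$ purely $F$-regular, the converse direction being generic freeness applied to $\sO_X/\mathrm{adj}(X,\Delta)$. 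You correctly flag that the real content sits in the comparison of the two adjoint ideals --- in particular the containment $\mathrm{adj}(X,\Delta)_a\subseteq\uptau(R_a,\Delta_a)$ requires more than the trace-image description (it uses Frobenius actions on cohomology of the spread-out resolution and Serre vanishing) --- so your sketch matches the intended proof rather than replacing it.
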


\begin{theorem}
\label{theo.centerspreadout}
Let $(X, \Delta)$ be an affine PLT pair. Assume that $\lfloor \Delta \rfloor = P$ is an minimal LC center for some closed point $x \in P$. Spread $(X, \Delta)$, $P$, and $x$ out over some finitely generated $\mathbb{Z}$-algebra $A$. Then, for all $a \in U$, where $U$ is a dense open subset of $\Spec A$, the divisor $P_a$ is the minimal $F$-pure center through $x_a$. In this situation, a minimal LC center is normal. Conversely, if $P$ is not the minimal LC center through $x$, then $P_a$ is not the minimal $F$-pure center for $x_a$ for all closed points in a dense open set.
\end{theorem}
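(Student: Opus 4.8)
The plan is to reduce both directions of the asserted equivalence to the behaviour of test and adjoint ideals, and then to invoke Takagi's spreading-out theorem \autoref{theo.PLTspreadout} together with the global-to-local principle \autoref{pro.globaltolocalFpurecenters}; the normality clause will follow from a known fact about minimal LC centers.

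For the forward implication I would argue as follows. Since $P$ is the minimal LC center through $x$, I may replace $X$ by an affine neighbourhood of $x$ as in the discussion preceding \autoref{sec.LastSectionSetup2}, so that $\lfloor\Delta\rfloor=P$ is a prime divisor, $(X,\Delta)$ is PLT, and $\Delta=P+B$ with $\lfloor B\rfloor=0$ and $P$ not a component of $B$. After spreading $(X,\Delta)$, $P$, and $x$ out over $A$, \autoref{theo.PLTspreadout} furnishes a dense open $U\subseteq\Spec A$ over which each reduction $(X_a,\Delta_a)$ is purely $F$-regular; after shrinking $U$ I may also assume $x_a\in P_a$ and that $Z_a$ has codimension $\geq 2$. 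The key step is then the dictionary: by Takagi's adjoint-ideal description of purely $F$-regular pairs (\cite{TakagiPLTAdjoint}, \cf \autoref{rem.PureDRegularPairAdjointIdeal}) the condition on $(X_a,\Delta_a)$ says exactly that $\uptau_{\p_a}(R_a,\sC^{\Delta_a}_{R_a})=R_a$. Because $\Delta_a\geq P_a$ gives $\sC^{\Delta_a}_{R_a}\subseteq\sC^{P_a}_{R_a}=\sC^{[P_a]}_{R_a}$ (\autoref{pro.[P]=PforDivisors}) and test ideals grow with the Cartier algebra, I would conclude $\uptau_{\p_a}(R_a,\sC^{P_a}_{R_a})=R_a$ as well; by \cite[Proposition 3.1.14]{Smolkinphdthesis} this makes $\p_a$ the minimal center of $F$-purity for $\sC^{P_a}_{R_a}$, i.e.\ $(X_a,P_a)$ a purely $F$-regular pair. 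Finally, applying \autoref{pro.globaltolocalFpurecenters} to $\sC^{P_a}_{R_a}$ and the geometric point $\bar x_a$ shows $\p_a\cdot\sO^{\sh}_{X_a,\bar x_a}$ is the splitting prime of $\sO^{\sh}_{X_a,\bar x_a}\otimes\sC^{P_a}_{R_a}$, which is precisely the assertion that $P_a$ is the minimal $F$-pure center through $x_a$.

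For the normality clause I would simply cite that a minimal LC center is normal (\cite{AmbroThesis}); alternatively, for $a\in U$ the triviality of the adjoint ideal of $(X_a,\Delta_a)$ along $P_a$ makes $P_a$ normal by $F$-adjunction \cite{SchwedeFAdjunction}, and normality of $P$ descends since it can be tested on a dense open set of reductions. For the converse I would work with $(X,\Delta)$ log canonical and $P\leq\lfloor\Delta\rfloor$ a prime divisor through $x$ which is \emph{not} the minimal LC center through $x$; then there is an LC center $W$ with $x\in W\subsetneq P$, so $W$ is contained in the zero locus of the adjoint ideal $\mathrm{adj}_P(X,\Delta)$, which is therefore a proper ideal lying in the maximal ideal at $x$. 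By the compatibility of Takagi's adjoint ideals with reduction modulo $p$ (\cite{TakagiPLTAdjoint,TakagiHigherDimensionalAdjoint})---this is what produces the dense open set of primes---one has $\mathrm{adj}_P(X,\Delta)_a=\uptau_{\p_a}(R_a,\sC^{\Delta_a}_{R_a})$ for $a$ in a dense open, so this test ideal is a proper ideal vanishing at $x_a$. Then $\uptau_{\p_a}(R_a,\sC^{\Delta_a}_{R_a})+\p_a$ lies in the maximal ideal of $\sO_{X_a,x_a}$, and \cite[Proposition 3.1.14]{Smolkinphdthesis} shows $\p_a$ fails to be a minimal center of $F$-purity for $\sC^{\Delta_a}_{R_a}$ locally at $x_a$; equivalently the splitting prime strictly contains $\p_a$, so $P_a$ is not the minimal $F$-pure center through $x_a$.

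The step I expect to be the main obstacle is the dictionary in the forward direction---checking that Takagi's ``purely $F$-regular pair'' really coincides with ``$P_a$ is a minimal $F$-pure center''---which requires assembling the adjoint-ideal characterisation, \autoref{pro.[P]=PforDivisors}, Smolkin's criterion, and a verification that the auxiliary fractional part $B_a$ does not affect the relevant test ideal. The converse is conceptually lighter but hinges on the compatibility of adjoint ideals with reduction to positive characteristic; this is essential, since \autoref{theo.PLTspreadout} on its own would only rule out density of the locus of good primes rather than exhibit a dense open of bad ones.
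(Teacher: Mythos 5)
Your proposal is correct, but it reconstructs the statement from first principles where the paper simply outsources it: the paper's proof is essentially a citation of \cite[Theorem 6.8]{SchwedeCentersOfFPurity} for the forward direction, of \cite[Theorem 7.2]{FujinoGongyoCanonicalBundleFormulasAndSubadjunction} for normality, and for the converse it observes that a smaller LC center $Q$ through $x$ reduces to an $F$-compatible ideal $\mathfrak{q}_a\supsetneq\mathfrak{p}_a$, which immediately prevents $P_a$ from being minimal. Your route instead assembles the forward direction from Takagi's adjoint-ideal correspondence (\autoref{theo.PLTspreadout}), the monotonicity $\uptau_{\p}(R,\sC)\subseteq\uptau_{\p}(R,\sC')$ for $\sC\subseteq\sC'$ (which correctly handles the auxiliary fractional part $B_a$, since $\Delta_a\geq P_a$ gives $\sC^{\Delta_a}\subseteq\sC^{P_a}$), Smolkin's criterion, and \autoref{pro.globaltolocalFpurecenters}; and it handles the converse by showing the adjoint ideal $\mathrm{adj}_P(X,\Delta)$ vanishes along the smaller LC center and invoking compatibility of adjoint ideals with reduction mod $p$. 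Both converse arguments are valid; yours trades the "LC centers reduce to compatible ideals" input for the "adjoint ideals reduce to divisorial test ideals" input, and as you note the latter is genuinely needed to get a dense \emph{open} of bad primes rather than merely failing to get a dense open of good ones. The net effect is that your proof is more self-contained and yields the stronger conclusion that $(X_a,P_a)$ is a purely $F$-regular pair in the sense of \autoref{def.PurelyFRegularPair} (not merely that $P_a$ is the minimal $F$-pure center of $(X_a,\Delta_a)$), which is in fact the form in which the paper later uses this result; the cost is reliance on the implicit identification of Takagi's divisorial test ideal with Smolkin's $\uptau_{\p}$, which the paper also assumes elsewhere.
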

\begin{proof}
See \cite[Theorem 6.8]{SchwedeCentersOfFPurity}. Schwede's argument immediately also gives the converse statement: If there is some smaller LC center $Q$ passing through $x$, after reduction, we obtain an $F$-compatible ideal $\mathfrak{q}_a$ strictly containing $\mathfrak{p}_a$. Thus, $P_a$ cannot be the minimal $F$-pure center through $x_a$. For normality of the minimal LC center see \cite[Theorem 7.2]{FujinoGongyoCanonicalBundleFormulasAndSubadjunction}.
\end{proof}

By \autoref{theo.PLTspreadout} and \autoref{theo.centerspreadout}, examples in \autoref{sect.examplesofsplittingratios} are examples of PLT pairs when we let $\kay$ have characteristic zero. However, we need to sharpen our hypothesis for the analog of \autoref{ex.Smoothambient}.

\begin{example} \label{exSommothAmbientCharZero}
Let $R$ be regular, local and essentially of finite type over an algebraically closed field of characteristic zero, and let $(f) \subset R$ be a prime ideal. Then, the pair $(R,\Div f)$ is a PLT pair if and only if $R/f$ is a (Gorenstein) KLT singularity.
\end{example}

\section{Digression on local tame fundamental groups} \label{sec.preliminaresTameStuff}

The objective in this section is twofold. First, we overview all the necessary material regarding tame fundamental groups that we need to establish our results. Second, we prove the theorem establishing that Theorems B and C in \autoref{sec.Intro} are formal consequences of structural properties of the Galois category being studied. We start off with our first goal.

\subsection{Tame ramification, cohomological tameness, and Abhyankar's lemma} We commence by recalling some standard definitions in \cite{GrothendieckMurreTameFundamentalGroup}.
\begin{definition} [Tamely ramified field extensions with respect to a DVR] \label{def.TameRamification}
Let $K$ be a field with a discrete valuation ring (DVR) $(A, (u), \kay)$. One says that a finite separable field extension $L/K$ is \emph{tamely ramified with respect to $A$} if: for all (the finitely many) discrete valuation rings $(B, (v), \el)$ of $L$ lying over $A$, we have that $\kay \subset \el$ is separable and $\Char \kay = p$ does not divide the ramification index\footnote{The ramification index $e$ is characterized by the equality $u = b\cdot v^e$ with $b$ a unit in $B$.} of the extension $A \subset B$. If the extensions $A \subset B$ are \'etale, we say \emph{$L/K$ is \'etale with respect to $A$}.
\end{definition}

\begin{definition}[Tamely ramified covers with respect to a divisor] \label{def.TameCover}Let $X$ be a connected normal scheme and let $D = \sum_i P_i$ be a reduced effective divisor on $X$ with prime components $P_i$. One says that a finite cover $Y \to X$ is \emph{tamely ramified with respect to $D$ (or simply over $D$)} if $Y$ is normal and every connected component $Y' \to Y \to X$ of $Y$ is a finite cover $X$ that is \'etale away from $D$, and $K(Y')/K(X)$ is tamely ramified with respect to the DVRs $\sO_{X, \eta_i}$, where $\eta_i$ is the generic point of $P_i$. \end{definition}

The following lemma will be important in our forthcoming discussions.

\begin{lemma}[{\cite[\S2, Lemma 2.2.8]{GrothendieckMurreTameFundamentalGroup}}] \label{lem.Lemma228GM71}
Let $f \: Y \to X$ be a finite cover between connected normal schemes and let $D = \sum_i P_i$ be a reduced divisor on $X$ with prime components $P_i$. Suppose that $f: Y \to X$ is \'etale over the complement of $D$. The following statements are equivalent:
\begin{enumerate}
    \item $f$ is a tamely ramified cover with respect to $D$,
    \item for all $x\in D$, the pullback of $f$ along $g\: \Spec \O_{X,x} \to X$ is a tamely ramified cover with respect to $g^*D$.
    \item for all $x\in D$, the pullback of $f$ along $g\: \Spec \O_{X,x}^{\mathrm{sh}} \to X$ is a tamely ramified cover with respect to $g^*D$.
    \item  for all $x\in D$ of codimension $1$ (in $X$), the pullback of $f$ along $g\: \Spec \O_{X,x} \to X$ is a tamely ramified cover with respect to $g^*D$.
    \item for all $x\in D$ of codimension $1$ (in $X$), the pullback of $f$ along $g\: \Spec \O_{X,x}^{\mathrm{sh}} \to X$ is a tamely ramified cover with respect to $g^*D$.
\end{enumerate}
\end{lemma}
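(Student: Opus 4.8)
The plan is to establish the chain of implications $(1)\Leftrightarrow(2)\Leftrightarrow(3)$ and $(1)\Rightarrow(4)\Rightarrow(5)\Rightarrow(1)$, reducing everything to the valuative criterion for tame ramification. The basic mechanism is that tameness of a cover with respect to a divisor is, by \autoref{def.TameCover}, a condition recorded entirely by the induced extensions of the DVRs $\sO_{X,\eta_i}$ at the generic points of the prime components $P_i$, together with the étaleness condition away from $D$; and this local data is insensitive to the base changes in question. First I would record the trivial reductions: $(1)\Rightarrow(2)$ because for $x\in D$ the divisor $g^*D$ on $\Spec\sO_{X,x}$ still has the DVRs $\sO_{X,\eta_i}$ (for those $\eta_i$ specializing to $x$) as its generic-point local rings, and étaleness away from $g^*D$ is preserved by flat base change; similarly $(1)\Rightarrow(4)$ and $(2)\Rightarrow(4)$ are immediate since codimension-$1$ points are among all points of $D$. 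Passing from a local ring to its strict henselization, $(2)\Rightarrow(3)$ and $(4)\Rightarrow(5)$, uses that $\sO_{X,x}\to\sO_{X,x}^{\sh}$ is faithfully flat with geometrically regular (indeed ind-étale) fibers, so normality of the total space is preserved (the strict henselization of a normal local ring is normal), étaleness away from the divisor is preserved, and the ramification indices and residue extensions at height-$1$ primes are unchanged — a separable residue extension stays separable and the ramification index is an integer invariant under completion/henselization, hence the prime-to-$p$ condition persists.

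The substantive direction is $(5)\Rightarrow(1)$, i.e. checking tameness over $D$ can be detected after passing to the strict henselizations at the codimension-$1$ points of $D$. Here I would argue as follows. Tameness with respect to $D$ means: $f$ is étale over $X\smallsetminus D$ (this is a global hypothesis already granted in the statement), and for each generic point $\eta_i$ of a component $P_i$, the extension $K(Y')/K(X)$ is tamely ramified with respect to the DVR $\sO_{X,\eta_i}$ for each connected component $Y'$ of $Y$. Fixing $\eta_i$ and a DVR $B$ of $K(Y')$ over $A\coloneqq \sO_{X,\eta_i}$, I need the residue extension to be separable and the ramification index prime to $p$. Now $\eta_i$ is a codimension-$1$ point of $D$, so by hypothesis $(5)$ the pullback of $f$ along $\Spec\sO_{X,\eta_i}^{\sh}\to X$ is tamely ramified with respect to the pulled-back divisor. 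The point $\eta_i$ becomes the closed point of $\Spec\sO_{X,\eta_i}^{\sh}$, and the DVRs of the total space lying over it are exactly the localizations of (the normalization of) $Y\times_X \Spec\sO_{X,\eta_i}^{\sh}$; these are the strict henselizations $B^{\sh}$ of the various $B$ above. Since $B\to B^{\sh}$ does not change the ramification index over $A\to A^{\sh}$ (both henselizations share the same uniformizers) and since a residue extension $\ell/\kay$ is separable iff $\ell\otimes_{\kay}\bar\kay$ is reduced iff $\ell^{\sh}/\kay^{\sh}$ is separable, the tameness of the base-changed cover forces tameness of $B/A$. Doing this for all $i$ and all components $Y'$ gives $(1)$.

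The main obstacle — really the only place requiring care — is the bookkeeping in $(5)\Rightarrow(1)$ identifying the DVRs of $Y\times_X\Spec\sO_{X,\eta_i}^{\sh}$ lying over the closed point with the strict henselizations $B^{\sh}$ of the DVRs $B$ of $K(Y')$ over $A$, together with the (standard but worth stating) facts that henselization preserves the ramification index and the separability type of the residue extension. Once that dictionary is in place, everything else is formal. Since this lemma is quoted verbatim from \cite[\S2, Lemma 2.2.8]{GrothendieckMurreTameFundamentalGroup}, in the paper itself I would simply cite \emph{loc. cit.} rather than reproduce the argument; the sketch above indicates how one would verify it from scratch.
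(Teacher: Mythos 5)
The paper offers no proof of this lemma at all: it is quoted with the citation to Grothendieck--Murre, Lemma 2.2.8, serving as the proof, exactly as you anticipate in your closing remark. Your sketch is a correct reconstruction of the standard argument (tameness is recorded by the DVR data at the generic points of the components of $D$, and that data is insensitive to localization and strict henselization, the only substantive step being the identification of the DVRs of the pullback over $\Spec\O_{X,\eta_i}^{\mathrm{sh}}$ with ind-\'etale extensions of the DVRs $B$ of $K(Y')$ over $\O_{X,\eta_i}$), so there is nothing in the paper to compare it against beyond the citation.
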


\begin{defprop}[{Kummer-type cyclic covers, \cf \cite[Example 2.2.4]{GrothendieckMurreTameFundamentalGroup}}] \label{pro.KummerCoversAreTame}
Let $(X,D)$ be as in \autoref{def.TameCover} and defined over $\bZ[1/n][\zeta]$ where $\zeta$ is a primitive $n$-th root of unity, which means that $n \in \Gamma(X,\sO_X)$ is invertible and $\Gamma(X,\sO_X)$ contains a primitive $n$-th root of unity (e.g., $X$ may be defined over a separably closed field of characteristic prime to $n$). Suppose that $D=n\cdot E$ in $\Cl X$ and $E$ is Cartier away from $D$. Write $\Div_X \kappa + n\cdot E = D$ for some $\kappa \in K(X)^{\times}$. Then, the finite cover $f\: Y \to X$ determined by the $\sO_X$-algebra 
\[
\sO_X \xrightarrow{\subset} \bigoplus_{i=0}^{n-1}\sO_X(-i\cdot E), \quad \cdot \kappa \: \sO_X(-n \cdot E) \to \sO_X(-D)
\]
is a connected tamely ramified cover over $D$ that is generically cyclic of degree $n$. We refer to these covers as \emph{Kummer-type cyclic covers} or simply as \emph{Kummer covers} when $E$ and so $D$ are principal divisors. We allow $n=0$ to include the trivial cover.
\end{defprop}
\begin{proof}
Note that, over $U \coloneqq X \smallsetminus D$, the cover $Y \to X$ is the element of $H^1(U, \bm{\mu}_n)$ corresponding to $\Div_U \kappa + n \cdot E|_U = 0$ as $E|_U \in \Pic U$. In particular, $Y|_U \to U$ is a $\bZ/n\bZ$-torsor (as $\zeta \in \Gamma(X,\sO_X)$) and in particular \'etale. See \cite[III, \S4, p. 125-126]{MilneEtaleCohomology}. Next, we explain why $Y$ is normal. Note that, since $f$ is affine, $\sO_Y$ satisfies the $(\mathbf{S}_2)$ condition as so does the $\sO_X$-module $f_* \sO_Y = \bigoplus_{i=0}^{n-1}\sO_X(-i\cdot E)$. To see why $Y$ satisfies $(\mathbf{R}_1)$, it suffices to look at those codimension-$1$ points not lying over (the generic point of) the $P_i$'s as $f$ is \'etale away from $D$. That is, it suffices to check that the $\sO_{X,P_i}$-algebras $\sO_{X,P_i} \otimes_{\sO_X} f_* \sO_Y$ are regular for all $i$. Observe that $\sO_{X,P_i} \otimes_{\sO_X} f_* \sO_Y \cong \sO_{X,P_i}[T]/(T^n-t)$ where $t$ is a uniformizer of $\sO_{X,P_i}$, and further $\sO_{X,P_i}[T]/(T^n-t)$ is local with maximal ideal $(t) \oplus \bigoplus_{i=1}^{n-1} \sO_{X,P_i} \cdot T = (T)$ and so regular. This computation further shows that $\sO_{X,P_i} \otimes_{\sO_X} f_* \sO_Y$ is an extension of DVRs with ramification index $n$, which is prime to all residual characteristics of $X$ as $1/n \in \Gamma(X,\sO_X)$. This proves that $Y$ is normal.

It remains to prove that $Y$ is connected/integral, for which it suffices to show that $K(X) \otimes_{\sO_X} f_* \sO_Y$ is a field. Notice that, $K(X) \otimes_{\sO_X} f_* \sO_Y \cong K(X)[T]/(T^n-\kappa)$. Suppose, for the sake of contradiction, that $K(X)[T]/(T^n-\kappa)$ is not a field. Then, there is $\kappa' \in K(X)^{\times}$ such that $\Div \kappa = \Div \kappa'^m$ for some $1< m \mid n$ (using \cite[VI, \S6, Theorem 9.1]{LangAlgebra}); \cf \cite[Corollary 1.9]{TomariWatanabeNormalZrGradedRings}. Then, $m \cdot (\Div_X \kappa' + (n/m) D ) = P$, which violates the reducedness of $P$.
\end{proof}

Given the equivalence between (a) and (e) in \autoref{lem.Lemma228GM71}, it is of fundamental importance to understand the tamely ramified covers over a strictly local DVR with respect to its uniformizer. In this regard, the following result together with \autoref{lem.Lemma228GM71} imply that tamely ramified covers are Kummer over the \'etale-germs at the generic points of the divisor $D$.

\begin{theorem}[\cite{SerreLocalFields}]
\label{theo.serretameiskummer}
Let $K$ be a field with a \emph{strictly local} DVR $(A,(u),\kay)$ with $\Char \kay = p \geq 0$. Then, every Galois field extension $L/K$ that is tamely ramified with respect to $A$ is Kummer, \ie $L=K\big(u^{1/n}\big)$ for some $n$ prime to $p$, and in particular cyclic. In other words, every Galois tamely ramified cover over $X = \Spec A$ with respect to $\Div u$ is Kummer.
\end{theorem}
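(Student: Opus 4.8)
The plan is to prove the classical statement that tamely ramified extensions of a strictly henselian (strictly local) discrete valuation ring are Kummer, following the standard argument as in Serre's \emph{Local Fields}. Let $(A,(u),\kay)$ be the strictly local DVR with fraction field $K$, and let $L/K$ be finite separable and tamely ramified with respect to $A$. Since $A$ is strictly henselian, there is a \emph{unique} DVR $B$ of $L$ lying over $A$; let $(v)$ be its maximal ideal, $\el$ its residue field, $e$ the ramification index, and $f$ the residue degree, so that $[L:K] = ef$ (there is no defect: $A$ is excellent, or one reduces to the complete case). Because $A$ is strictly henselian its residue field $\kay$ is separably closed, so the separable residue extension $\el/\kay$ must be trivial, giving $f = 1$ and $[L:K] = e$. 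By the tameness hypothesis, $p = \Char \kay \nmid e$.

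Next I would produce the Kummer generator. Choosing a uniformizer $\pi$ of $B$, we have $u = w\pi^{e}$ for some unit $w \in B^{\times}$. The key point is that $w$ admits an $e$-th root in $B$: the residue $\bar{w} \in \el^{\times} = \kay^{\times}$ has an $e$-th root in $\kay$ since $\kay$ is separably closed and $T^{e}-\bar w$ is separable (as $p \nmid e$); then Hensel's lemma applied to $T^{e} - w$ over the henselian ring $B$ lifts this to an $e$-th root $w_{0} \in B^{\times}$. Setting $\varpi := w_{0}\pi$, we get another uniformizer of $B$ with $\varpi^{e} = u$ (after absorbing an $e$-th root of unity, which lies in $K$ because $\kay$ contains all prime-to-$p$ roots of unity and these lift to $A$ by Hensel — or simply rescale $w_{0}$). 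Thus $u^{1/e} = \varpi \in L$, and $K(u^{1/e}) \subseteq L$. Since $T^{e}-u$ is Eisenstein over $A$ it is irreducible over $K$, so $[K(u^{1/e}):K] = e = [L:K]$, forcing $L = K(u^{1/e})$. This extension is cyclic of degree $e$ prime to $p$, and Galois since $\kay$ (hence $K$, by Hensel) contains a primitive $e$-th root of unity. The final sentence about covers is just the translation: a Galois tamely ramified cover of $\Spec A$ with respect to $\Div u$ corresponds to such an $L/K$, hence is $\Spec A[T]/(T^{e}-u) \to \Spec A$.

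The main obstacle — really the only subtle point — is handling the residue field and roots of unity correctly: one must use strict henselianness in three places (to get $f=1$, to extract the $e$-th root of the unit $w$, and to ensure the requisite roots of unity lie in $K$ so the extension is Galois cyclic), and one should be slightly careful that the no-defect equality $[L:K]=ef$ holds, which is automatic here since $A$ is $F$-finite Noetherian excellent (or via passage to completion, which does not affect the list of DVRs over $A$ nor tameness). I would cite \cite[Ch.~I, \S7, Ch.~IV]{SerreLocalFields} for the structure of tamely ramified extensions and keep the write-up short, since all the ingredients are standard.
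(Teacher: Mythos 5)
Your argument is correct and is essentially the paper's: the paper simply cites \cite[Ch.~IV, \S 2, Proposition 8]{SerreLocalFields} (noting that in characteristic $p$ one uses tameness plus separable closedness of $\kay$ to get $p\nmid [L:K]$), and what you have written out is precisely that classical argument in detail --- unique DVR above, $f=1$, $[L:K]=e$ prime to $p$, Hensel to extract an $e$-th root of the unit, and Eisenstein to conclude $L=K(u^{1/e})$. The only cosmetic slip is the parenthetical about ``absorbing an $e$-th root of unity'': since $\varpi^{e}=w_{0}^{e}\pi^{e}=w\pi^{e}=u$ exactly, no adjustment is needed there.
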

\begin{proof}
See \cite[Ch. IV, \S 2, Proposition 8]{SerreLocalFields} for the case $p=0$. For characteristic $p>0$, note that, by tameness and $\kay$ being separably closed, we have $p \nmid [L:K]$. One simply replaces the use of \cite[Corollary 2]{SerreLocalFields} with Corollary IV, \S 2, 3 in ibid.
\end{proof}

\begin{remark}
The intuition behind \autoref{theo.serretameiskummer} is the following; see \cite[I, Example 5.2 (e)]{MilneEtaleCohomology}. We think of $\Spec K \cong \Spec A \smallsetminus \{(u)\} $ as an algebraic analog of the punctured disc in the plane, then this result says that $\pi_1^{\textnormal{\'et}}(\Spec K)$ is isomorphic to $\hat{\Z}$---the profinite completion of $\Z$---at least if the residual characteristic is $0$ else what we can say is $\pi_1^{\textnormal{t}}(\Spec K) \cong \hat{\Z}^{(p)}$. 
\end{remark}

As mentioned before, \autoref{theo.serretameiskummer} tells us that tamely ramified covers over a reduced effective divisor are  of a very special type \'etale-locally around the generic points of the divisor. In case the divisor $D$ in \autoref{def.TameCover} has \emph{normal crossings} \cite[\S1.8]{GrothendieckMurreTameFundamentalGroup}, \emph{Abhyankar's lemma} establishes that the same hold at all \emph{special} points in the support of the divisor; see \cite[\S 2.3]{GrothendieckMurreTameFundamentalGroup}, \cite[Expos\'e XIII, \S5]{GrothendieckSGA}. More precisely:

\begin{theorem}[Abhyankar's lemma]
With notation as in \autoref{def.TameCover}, suppose additionally that $(X,D)$ has normal crossings in the sense of \cite[\S1.8]{GrothendieckMurreTameFundamentalGroup}. Then, the connected components of the pullback of $Y \to X$ along $ \Spec \sO_{X,\bar{x}}^{\mathrm{sh}} \to X$ are (quotients) of Kummer covers for all geometric points $\bar{x}\to X$.
\end{theorem}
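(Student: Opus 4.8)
The plan is to absorb all the tame ramification into a single (generalized) Kummer cover over the strictly local base and then invoke Zariski--Nagata purity of the branch locus, with \autoref{theo.serretameiskummer} supplying the one-dimensional input. First I would reduce to a strictly local regular base: replacing $X$ by $A\coloneqq\sO_{X,\bar x}^{\mathrm{sh}}$ and $D$ by its pullback $D_A$, \autoref{lem.Lemma228GM71} shows that $Y_A\coloneqq Y\times_X\Spec A\to\Spec A$ is again finite, normal, \etale{} away from $D_A$, and tamely ramified over $D_A$, while normal crossings, being an \etale-local condition, is inherited by $D_A$. If the image point of $\bar x$ is not on $D$, then $Y_A\to\Spec A$ is finite \etale{} over a strictly henselian local ring, hence a finite disjoint union of copies of $\Spec A$, each a trivial Kummer cover, so there is nothing to prove; thus assume $\bar x\in D$. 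Since $A$ is regular, strictly henselian, and local and $D_A$ has normal crossings, I may choose a regular system of parameters $u_1,\dots,u_d$ of $A$ such that the components of $D_A$ are exactly $V(u_1),\dots,V(u_r)$, so that $D_A=\Div(u_1\cdots u_r)$. This is the only use of the normal crossings hypothesis, and it is essential: it is what keeps the Kummer cover below regular.

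Next I would form the equalizing Kummer cover. By tameness, every ramification index of $\Frac(B)/\Frac(A)$ along any $V(u_i)$ --- over all connected components $\Spec B$ of $Y_A$ and all height-$1$ primes of $B$ over the generic point of $V(u_i)$ --- is prime to $p=\Char\kay$; let $n$ be the least common multiple of this finite set, still prime to $p$, and set $A'\coloneqq A[T_1,\dots,T_r]/(T_1^n-u_1,\dots,T_r^n-u_r)$. By \autoref{pro.KummerCoversAreTame} and its proof, $\Spec A'\to\Spec A$ is a Kummer cover, $A'$ is a regular local ring with regular system of parameters $T_1,\dots,T_r,u_{r+1},\dots,u_d$, and $A'$ is again strictly henselian (its residue field is still $\kay$). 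Fix a connected component $\Spec B$ of $Y_A$ --- a normal domain, since $Y_A$ is normal --- and let $B'$ be the normalization of $B\otimes_A A'$. The heart of the argument is the claim that $A'\to B'$ is finite \etale. Since $A'$ is regular and $B'$ is normal and finite over $A'$, Zariski--Nagata purity of the branch locus reduces this to \etaleness{} in codimension $1$ on $\Spec A'$. At a height-$1$ prime $\mathfrak{q}$ not containing any $T_i$, the contracted prime $\mathfrak{q}\cap A$ avoids $u_1\cdots u_r$, hence lies in the \etale{} locus of $Y_A\to\Spec A$, and since $A'$ is flat over $A$ it follows that $A'\to B'$ is \etale{} at $\mathfrak{q}$; the remaining height-$1$ primes are precisely the $(T_i)$.

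The key point --- and the main obstacle --- is therefore the computation at the primes $(T_i)$, which is exactly the one-variable Abhyankar lemma. I would reduce it, by passing to the strict henselizations of the DVRs $A_{(u_i)}$ and $A'_{(T_i)}$, to the following: by \autoref{theo.serretameiskummer} every factor of the resulting extension of $\Frac(A_{(u_i)})$ cut out by $B$ has the form $\Frac(A_{(u_i)})\bigl(u_i^{1/m}\bigr)$ with $m\mid n$; since $u_i=T_i^n$ in $A'_{(T_i)}$ and $m\mid n$, adjoining $u_i^{1/m}$ becomes trivial over $A'_{(T_i)}$, so $A'\to B'$ is unramified at $(T_i)$. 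Multiplicativity of ramification indices is what makes ``$m\mid n$ for every such $m$'' the right condition, and it is the interaction of this with the (possibly disconnected) structure of $Y_A$ over the strictly henselian base that I expect to require the most care; purity is what upgrades ``unramified in codimension $1$'' to ``\etale'', and this is exactly where the regularity of $A'$ --- hence the normal crossings hypothesis --- is indispensable.

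Finally I would conclude. Having shown $B'$ is finite \etale{} over the strictly henselian local ring $A'$, it is a finite product of copies of $A'$; in particular $\Frac(B)\subseteq\Frac(A')$. As $\Frac(A')/\Frac(A)$ is abelian Galois --- a Kummer extension of a field containing the relevant roots of unity, since $\kay$ is separably closed and $p\nmid n$ --- the intermediate field $\Frac(B)$ equals $\Frac(A')^{H}$ for $H\coloneqq\Gal(\Frac(A')/\Frac(B))$, and taking integral closures in $A'$ (which is normal) gives $B=(A')^{H}$. Thus $\Spec B=\Spec A'/H$ is a quotient of the Kummer cover $\Spec A'$, which is the assertion for this connected component; running over all components of $Y_A$ completes the proof.
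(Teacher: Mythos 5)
Your argument is correct, and it is essentially the classical proof of Abhyankar's lemma from \cite[Expos\'e XIII, \S 5]{GrothendieckSGA} and \cite[\S 2.3]{GrothendieckMurreTameFundamentalGroup}, which is exactly what the paper relies on here (the paper states the theorem with these citations and gives no proof of its own). The three ingredients you use --- reduction to a strictly henselian regular base via \autoref{lem.Lemma228GM71}, the equalizing Kummer cover $A'$ dominating all ramification indices combined with \autoref{theo.serretameiskummer} at the codimension-one points, and Zariski--Nagata purity over the regular ring $A'$ to conclude that $B'$ is finite \'etale and hence that $B$ is a quotient $(A')^{H}$ --- are precisely the steps of that standard argument.
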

We are interested in studying tame cover with respect to divisors that may not have normal crossings. Fortunately, our efforts will lead to a generalization of this result when the divisor $D$ is irreducible yet singular; see \autoref{lem.THELEMMAFORABH}. Following \cite{KerzSchmidtOnDifferentNotionsOfTameness,ChinburgErezPappasTaylorTameActions}, we have a stronger notion of tameness.

\begin{definition}[Cohomological tameness]
Let $U$ be a normal connected scheme equipped with a dense open embedding $U \to X$ with $X$ normal and connected.\footnote{Unlike \cite{KerzSchmidtOnDifferentNotionsOfTameness}, we do not require $X$ to be proper over some field.} We say that a finite Galois cover $V \to U$ is \emph{cohomologically tamely ramified with respect to $X$} if its integral closure $f\: Y \to X$ is so that the trace map $\Tr_{Y/X} \colon f_* \O_Y \to \O_X$ is surjective. A finite \'etale cover $V \to U$ is cohomologically tamely ramified if it can be dominated by a Galois one.
\end{definition}

\subsection{Tame Galois categories and their fundamental groups}
Consider the setup:

\begin{setup} \label{setupTameGroups}
Let $(R,\fram,\kay, K)$ be a strictly local normal domain of dimension at least $2$. Let $Z$ be a closed subscheme of $X \coloneqq \Spec R$ of codimension at least $2$. We consider a prime Weil divisor $P$ on $X^{\circ} \coloneqq X \smallsetminus Z$, which extends to a unique prime divisor on $X$ that we also denote by $P$, then $P$ corresponds to a unique height-$1$ prime ideal $\p \subset R$. We set $U \coloneqq X^{\circ} \smallsetminus P$. We further set $p \coloneqq \Char \kay \geq 0$ and assume that $ p \in \p \subset \fram$, so that $\Char K(P) = p = \Char \kay$ (here, $K(P)$ is the function field of $P$, i.e., the residue field of $R_{\p}$). In particular, after choosing an embedding $\bar{\bF}_p \subset \kay$, we have that $(R,\fram,\kay, K)$ is a local algebra over $\bZ_p^{\mathrm{sh}}$---the maximal unramified extension of $\bZ_p$, which is given by adjoining all prime-to-$p$ roots of unity to $\bZ_p$. Of course, this is just a fancy way to say that $R$ contains all the $n$-th roots of unity if $p  \nmid  n$. In particular, we allow $\bZ_p^{\mathrm{sh}} \to R$ to be injective, i.e., $R$ may be of mixed characteristic in this section. However, we are assuming that $R_{\p}$ has the same (mixed or not) characteristic as $R$.
\end{setup}

We study two types of tame Galois categories in this paper which we introduce next. We invite the reader to consult \cite{MurreLecturesFundamentalGroups, CadoretGaloisCategories} for a thorough exposition on Galois categories and fundamental groups, or the classic, original reference \cite[Expos\'e V]{GrothendieckSGA}.

\subsubsection{The cohomologically tame Galois category}
Working in \autoref{setupTameGroups}, the first tame fundamental group of interest is the the fundamental group $\pi_1^{\textnormal{t}}(X^{\circ})$ classifying the Galois category $\mathsf{FEt}^{\mathrm{t},X}(X^{\circ})$ of covers over $X^{\circ}$ that are cohomologically tamely ramified with respect to $X$. The minimal (or connected) objects of this Galois category are the local finite extensions $(R,\fram, \kay, K) \subset (S,\fran, \el, L)$ such that $S$ is a normal domain, $\Tr_{S/R} \: S \to R$ is surjective, and $R \subset S$ is \'etale over $X^{\circ}$. Thus, $\pi_1^{\textnormal{t},X}(X^{\circ}) = \varprojlim \textnormal{Gal}(L/K)$ where the limit runs over all Galois extensions $L/K$ inside a fixed separable closure of $K$ such that the integral closure of $R$ in $L$; say $R^L/R$, is \'etale over $X^{\circ}$ and $\Tr \: R^L \to R$ is surjective. See \cite[\S 2.4]{CarvajalSchwedeTuckerEtaleFundFsignature}.

\subsubsection{The tame Galois category of a prime divisor}
In this section, the perspective is quite different from the one above.
Working in \autoref{setupTameGroups}, consider the Galois category $\mathsf{Rev}^{P}(X^{\circ})$ of finite covers over $X^{\circ}$ that are tamely ramified with respect to $P$. The corresponding fundamental group is denoted by $\pi_1^{\mathrm{t},P}(X^{\circ})$ (we choose a geometric generic point as our base  point, which is suppressed from the notation). As before, we may restrict ourselves to a local algebra setup as the following remark explains.
\begin{remark}[Reduction to local algebra] \label{rem.ReductionLocalAlgebra}
Since $R$ is a strictly local normal domain, the Galois objects of the category $\mathsf{Rev}^{P}(X^{\circ})$ are the (generically) Galois local finite extensions of normal domains $(R,\fram, \kay, K) \subset (S, \fran, \el, L)$ that are \'etale over $U$ but tamely ramified over $P$ (i.e. $L/K$ is tamely ramified with respect to $R_{\p}$). In this way,
\[
\pi_1^{\textnormal{t},P}(X^{\circ}) = \varprojlim \textnormal{Gal}(L/K)
\]
where the limit runs over all finite Galois extensions $L/K$ inside some fixed separable closure of $K$ such that the integral closure of $R$ in $L$ is tamely ramified over $X^{\circ}$ with respect to $P$.
When we refer to a cover $Y^{\circ} \to X^{\circ }$ in $\mathsf{Rev}^{P}(X^{\circ})$, we mean that $Y=\Spec S$ with $S$ as above, and $Y^{\circ}=Y\smallsetminus f^{-1}(Z)$, where $f\: \Spec S \to \Spec R$ is the corresponding morphism.
\end{remark}

\begin{example}[Kummer-type cyclic covers] \label{ex.KummerTypeCyclicCovers}
Suppose that there is a divisor $D$ on $X$ such that $P = n \cdot D$ in $\Cl X$ and so that $D|_U$ is Cartier. Writing, $\Div \kappa + nD = P$, let $S= \Gamma(Y,\sO_Y)=\bigoplus_{i=0}^{n-1} R(-iD)$ with $f\: Y \to X$ as in \autoref{pro.KummerCoversAreTame}. Let $\fran \coloneqq \fram \oplus \bigoplus_{i=1}^{n-1} R(-iD)$ and  $\q \coloneqq \p \oplus \bigoplus_{i=1}^{n-1} R(-iD)$. One readily sees that these two are ideals of $S$. In fact, $S/\fran = R/\fram= \kay$ and $S/\q = R/\p$, thereby $\fran$ is maximal and $\q$ is prime. Moreover: $\fran \cap R = \fram $, $\q \cap R = \p$, and $\fran$, $\q$ are; respectively, the only primes of $S$ with such property. Further, we have that $\height \q = 1$. Indeed, $\height \q \leq \height (\q \cap R) = \height \p = 1$ from integrality of $S/R$ (Going-up theorem) and $S$ being a domain rules out the possibility $\height \q = 0$ (as clearly $\q \neq 0$). Thus, $Q=V(\q)$ is a prime divisor on $Y$. Putting everything together, $(R, \fram, \kay, K, \p) \subset (S, \fran, \kay, K(\kappa^{1/n}), \q)$ defines a cyclic cover in $\mathsf{Rev}^{P}(X^{\circ})$ whose pullback to $R_{\p}$ is Kummer; see the proof of \autoref{pro.KummerCoversAreTame}.

If $\p = (r)$ is principal, then $\Div \kappa + nD = \Div r$ and so $D$ is torsion, say of index $m \mid n$. In particular, we may use this to define a connected quasi-\'etale cover $g\: W \to X$ of degree $m$ trivializing $D$. Then, it follows that the base change $f_W \: Y_W \to W$ is a Kummer cover of the form $\Spec \sO_W[T]/(T^n-r) \to W$. Indeed, after trivializing $D$, say $D=\Div s$, the equality $\Div \kappa + n \Div s = \Div r$ says that $r=us^n\kappa$ for some unit $u \in \Gamma(W,\sO_W)$. Then, since $\Gamma(W,\sO_W)$ is also strictly henselian, we may say that $u=1$ by replacing $s$ by $u^{-1/n}s$. Thus, $L(r^{1/n})=L(\kappa^{1/n})$, where $L$ is the function field of $W$. 
\end{example}

\subsection{Some examples of tamely ramified covers} \label{sec.ExamplesTameCovers}

In this section, we provide some examples illustrating what may go wrong in Abhyankar's lemma if the divisor in question is too singular. Additionally, we consider instructive to have some examples at hand that we may use across the forthcoming sections to highlight particular features of our results. We will employ the following useful fact throughout.

\begin{proposition}[{\cite[\href{https://stacks.math.columbia.edu/tag/09EB}{Lemma 09EB}]{stacks-project}}] \label{pro.nefFormular}
Let $R$ be a normal domain with fraction field $K$. Let $L/K$ be a finite Galois extension of degree $d$, and let $S$ be the integral closure of $R$ in $L$. Fix a height-$1$ prime ideal $\p \subset R$, and let $\q_1,\ldots, \q_n \subset S$ be the list of distinct prime ideals of $S$ lying over $\p$. Then, all the DVR extensions $R_{\p} \to S_{\q_i}$ share the same ramification index $e$ and residual degree $i$. Moreover, the formula $d=n\cdot e \cdot i$ holds.
\end{proposition}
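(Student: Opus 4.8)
\emph{Plan.} This is the classical ``$efg$''-identity for Galois extensions (compare \cite[Lemma 09EB]{stacks-project}); here is the route I would take. The first move is to localize. Set $A \coloneqq R_{\p}$, which is a DVR since $R$ is Noetherian and normal and $\p$ has height $1$; write $K$ for its fraction field and $\kappa$ for its residue field. Let $B$ be the localization of $S$ at the multiplicative set $R \smallsetminus \p$. Because integral closure commutes with localization, $B$ is the integral closure of $A$ in $L$; since $B$ is integral over the \emph{local} ring $A$, its maximal ideals are exactly the extensions $\m_1, \dots, \m_n$ of $\q_1, \dots, \q_n$, and $B_{\m_i} = S_{\q_i}$. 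Thus it suffices to prove the statement for $A \subset B$.

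Next I would show that $B$ is a \emph{finite free} $A$-module of rank $d$. Finiteness is the one genuinely external input: since $L/K$ is Galois it is separable, so the trace form of $L/K$ is nondegenerate, whence $B$ lies inside a finitely generated $A$-submodule of $L$; as $A$ is Noetherian, $B$ is finite over $A$. Being torsion-free over the DVR $A$, it is free, and $B \otimes_A K = L$ forces $\rank_A B = [L:K] = d$. In particular $B$ is a semilocal Dedekind domain with maximal ideals $\m_1, \dots, \m_n$, and each $B_{\m_i}$ is a DVR dominating $A$.

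The substantive step is transitivity of the action of $G \coloneqq \Gal(L/K)$ on $\{\m_1, \dots, \m_n\}$. Indeed $G$ fixes $A$, hence stabilizes $B$ and permutes the $\m_i$; and if $\m_j$ lay outside the $G$-orbit of $\m_i$, prime avoidance would furnish $x \in \m_j$ with $x \notin \sigma(\m_i)$ for every $\sigma \in G$, and then $\Norm_{L/K}(x) = \prod_{\sigma \in G}\sigma(x)$ would lie in $\m_j \cap A = \m_A \subseteq \m_i$, forcing $\sigma(x) \in \m_i$ for some $\sigma$ and contradicting the choice of $x$. Transitivity then yields, for each pair $i, j$, a $\sigma \in G$ inducing an $A$-algebra isomorphism $B_{\m_i} \cong B_{\m_j}$; hence all the DVRs $S_{\q_i} = B_{\m_i}$ share one ramification index $e$ and one residue degree $f = [\kappa(\q_i):\kappa]$, which is the first assertion. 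If anything here is an ``obstacle'' it is just this transitivity argument together with the finiteness claim above; everything else is routine Dedekind-domain bookkeeping.

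For the numerical formula I would count $\dim_\kappa B/\m_A B$ in two ways. On the one hand $B \cong A^{\oplus d}$ gives $\dim_\kappa B/\m_A B = d$. On the other hand, in the Dedekind domain $B$ one has $\m_A B = \prod_{i=1}^{n}\m_i^{\,e}$ with pairwise comaximal factors, so the Chinese Remainder Theorem gives $B/\m_A B \cong \prod_{i=1}^{n} B/\m_i^{\,e}$; filtering $B/\m_i^{\,e}$ by powers of $\m_i$ exhibits $e$ successive quotients, each isomorphic to $\kappa(\q_i)$ and hence of $\kappa$-dimension $f$, so $\dim_\kappa B/\m_i^{\,e} = ef$ and $\dim_\kappa B/\m_A B = nef$. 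Comparing the two counts gives $d = nef$.
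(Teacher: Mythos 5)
Your proof is correct and is the standard argument: the paper offers no proof of its own, deferring entirely to \cite[Lemma 09EB]{stacks-project}, whose proof runs along exactly the lines you describe — reduce to the integral closure $B$ of the DVR $A=R_{\p}$, use nondegeneracy of the trace form to get $B$ finite free of rank $d$, prove transitivity of the Galois action on the primes above $\m_A$ via the norm/prime-avoidance trick, and count $\dim_\kappa B/\m_A B$ two ways to get $d = \sum_i e_i f_i = nef$. There is nothing to add.
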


\begin{terminology}
We shall often refer to $i$ in \autoref{pro.nefFormular} as the \emph{inertia degree}.
\end{terminology}

\begin{example}[The cusp] \label{ex.TheCusp}
Let $(R,\fram,\kay,K)$ be a regular local ring with regular system of parameters $\fram=(x,y)$. We assume $\Char \kay \neq 2,3$. Let $L$ be the splitting field of $T^3+xT+y \in K[T]$. This polynomial is irreducible.\footnote{Indeed, if it were reducible, it would admit a root in $K$ and further in $R$ by normality of $R$. In that case, $y=t(t^2+x)$ for some $t\in R$. Since $R$ is a UFD and $y$ is an irreducible element, this implies that either $t$ or $t^2+x$ is a unit, and \emph{a fortiori} both are units implying further that $y$ is a unit, which is a contradiction.} Let $t_1, t_2, t_3 \in L$ be the distinct roots of $T^3+xT+y$. Setting,
\[
\delta \coloneqq (t_1-t_2)(t_2-t_3)(t_1-t_3)
\]
we have that $\delta^2=-4x^3-27y^2 \eqqcolon \Delta$. In particular, $\delta \notin K$ for $\Delta$ is irreducible in $R$. Therefore, $L/K$ is a Galois extension of degree $6$ with $\Gal(L/K) \cong S_3$---the symmetric group. See \cite[\S7.5]{RomanFieldTheory} or \cite[VI, \S2]{LangAlgebra}. In fact, $K(\delta)$ is the fixed field of the (cyclic) alternating group $A_3 \subset S_3$. Thus, $L=K(\delta,t_1)$ and
\[
L=K(\delta)[T]\big/\big(T^3+xT+y\big).
\]
In fact, a direct computation shows that if $t$ is one of the roots then the remaining roots are: 
\[
\frac{-t}{2} \pm \frac{\delta}{2\big(3t^2+x\big)},
\]
where $3t^2 + x \neq 0$ as the minimal polynomial of $t$ over $K$ has degree $3$.\footnote{In case the reader wants to corroborate this assertion by hand, notice that $T^3+xT+y=(T-t)\big(T^2+tT+t^2+x\big)$. Hence, it suffices to verify that these are roots of $T^2+tT+t^2+x = (T+t/2)^2+\big(3t^2+4x\big)/4$, which in turn boils down to checking  $\delta^2 +\big(3t^2+4x\big)\big(3t^2+x\big)^2=0$, which is a straightforward computation.}

We set $t=t_1$, and set $t_2$ to be the root with the positive sign in the above expression. Let $S$ be the integral closure of $R$ in $L$. Of course, $S \ni \delta, t_1,t_2, t_3$. Then, we have:
\begin{claim} \label{cla.TameCoverCusp}
$R \subset S$ is a tamely ramified extension with respect to the prime divisor $D= \Div \Delta$. Moreover, there are exactly three prime divisors of $S$ lying over $(\Delta)$, with ramification index $e=2$ and inertia degree $i=1$.
\end{claim}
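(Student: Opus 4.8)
The plan is to analyze the extension $R \subset S$ along the height-one prime $(\Delta)$ by a combination of the $\mathrm{nef}$-formula \autoref{pro.nefFormular} and an explicit understanding of how $\delta$ and the roots $t_i$ behave modulo $\Delta$. First I would record the tower $K \subset K(\delta) \subset L$, with $[K(\delta):K]=2$ and $[L:K(\delta)]=3$; since $\Delta$ is irreducible in the UFD $R$, the ring $R_{(\Delta)}$ is a DVR with uniformizer $\Delta$, and $K(\delta)=K\bigl(\sqrt{\Delta}\bigr)$ is exactly a Kummer extension of degree $2$ of that DVR. Hence, over $R_{(\Delta)}$, the subextension $R \subset R'$ (the integral closure of $R$ in $K(\delta)$) is totally ramified of index $2$ at the single prime lying over $(\Delta)$, with trivial residue extension, and $\delta$ is a uniformizer there.

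Next I would pass to the top extension $R' \subset S$ over $K(\delta) \subset L$ and show it is \emph{unramified} at the prime above $(\Delta)$. The key computation is that the discriminant of $T^3+xT+y$ over $K(\delta)$ is $\delta^2 = \Delta$, but we already extracted $\sqrt{\Delta}=\delta$, and more precisely one uses the factorization $T^3+xT+y=(T-t_1)\bigl(T^2+t_1T+t_1^2+x\bigr)$ together with the explicit formula $t_{2},t_3 = -t_1/2 \pm \delta/\bigl(2(3t_1^2+x)\bigr)$ from the preamble. Reducing modulo the maximal ideal of $S$ over $(\Delta)$: since $\delta \equiv 0$ there, the two roots $t_2, t_3$ become congruent to $-t_1/2$, while $t_1$ stays a root of $\bar T^3+\bar x\bar T+\bar y$ over the residue field. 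The separability of this cubic reduction away from where $3t^2+x$ vanishes — which never happens on roots, as noted in the excerpt — shows $R' \subset S$ is étale at this prime, so the ramification index over $R_{(\Delta)}$ is still $2$ and the residual degree remains $1$; thus $f=1$ and $e=2$. Then the formula $6 = d = n\cdot e\cdot f$ from \autoref{pro.nefFormular} forces $n=3$: exactly three primes of $S$ lie over $(\Delta)$.

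It then remains to check two things: that $R \subset S$ is étale away from $D=\Div\Delta$, and that $2 = \Char\kay$ does not divide $e=2$ — wait, it does, but $\Char\kay\neq 2$ by hypothesis, so $p \nmid e$ and tameness at $(\Delta)$ holds in the sense of \autoref{def.TameRamification} (the residue extension there is trivial hence separable). For étaleness away from $D$: the only other height-one primes of $R$ where $R \subset S$ could ramify are those dividing the discriminant of the full degree-$6$ extension $L/K$; since $L/K$ is built from adjoining $\sqrt{\Delta}$ and then a root of a cubic whose discriminant over $K(\delta)$ is $\Delta$ (a square there), the relative discriminant of $S/R$ is supported only on $\Div\Delta$. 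Hence $R \subset S$ is étale over $X \smallsetminus D$, and by \autoref{lem.Lemma228GM71}(d), applied to the codimension-one point $(\Delta)$ where we have verified the tameness condition directly, $R \subset S$ is tamely ramified with respect to $D$, as claimed.

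The main obstacle I anticipate is the étaleness of $R' \subset S$ at the prime over $(\Delta)$: one must be careful that reducing the cubic gives a squarefree polynomial over the residue field even though its discriminant $\Delta$ vanishes — this works precisely because $\delta = \sqrt{\Delta}$ has already been adjoined, so the relevant discriminant to check is the one of the \emph{residual} cubic, not $\Delta$ itself. Making this rigorous may require writing $S$ locally as $R'_{\q}[T]/(T^3+xT+y)$ and verifying directly that the Jacobian $3T^2+x$ is a unit there, using that $3t^2+x\neq 0$ for any root $t$ of the minimal cubic (as its minimal polynomial has degree $3$). Once that local presentation is in hand, the rest is bookkeeping with \autoref{pro.nefFormular}.
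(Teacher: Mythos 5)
Your overall strategy (factor through the Kummer layer $R\subset R[\delta]$, show the cubic layer is unramified over $(\delta)$, then let \autoref{pro.nefFormular} force $n=3$) is viable, but the central step is not established, and as written it is self-contradictory. You observe that modulo a prime of $S$ over $(\Delta)$ one has $t_2\equiv t_3\equiv -t_1/2$; but that says precisely that the reduced cubic $\bar T^3+\bar x\bar T+\bar y$ acquires a \emph{double root}, so it is not separable, and the fiber $\kappa\bigl((\delta)\bigr)[T]/(\bar T^3+\bar x\bar T+\bar y)$ is not \'etale. Your fallback --- that the Jacobian $3T^2+x$ is a unit in $S_{\q}$ because $3t^2+x\neq 0$ for any root $t$ in $L$ --- is a non sequitur: non-vanishing of $3t^2+x$ in $L$ does not prevent $3t^2+x$ from lying in $\q$ (indeed, at the double root $\bar s=-\bar t_1/2$ one has $3\bar s^2+\bar x=0$ exactly). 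Showing that $3t^2+x\notin\q$ for the relevant prime is the genuinely hard content here, and the paper devotes an explicit computation to it: writing $R[\delta,t]$ as the free $R$-module $\langle 1,t,t^2\rangle_R\oplus\langle\delta,\delta t,\delta t^2\rangle_R$, one checks that every power of $3t^2+x$ has constant coefficient $x^n$, hence $3t^2+x\notin\sqrt{(\delta)}$ since $x\notin(\Delta)$. A second, related gap: the presentation $S_{\q}=R'_{\q}[T]/(T^3+xT+y)$ that you propose to use is only valid at the prime(s) where $t_1$ reduces to the \emph{simple} root of the reduced cubic; at the other primes over $(\Delta)$ the subring $R[\delta,t_1]$ is not normal and one must switch primitive elements (this is why the paper covers $\Spec S$ by the three loci $S_{\varpi_i}$ with $\varpi_i=3t_i^2+x$).

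For contrast, the paper's proof avoids the tower entirely at the counting step: it uses $\Delta=\delta^2=(t_1-t_2)^2(t_2-t_3)^2(t_1-t_3)^2$ and shows that a prime $\q$ over $(\Delta)$ cannot contain two of the differences $t_i-t_j$ (else it contains all three, forcing $e=6$ by \autoref{pro.nefFormular} and, via the explicit formulas \autoref{eqn.DifferencesOft_i's}, forcing $3t^2+x\in\sqrt{(\delta)}$, which the free-basis computation refutes). Your approach could be repaired --- for instance, by noting that the reduced cubic cannot be a perfect cube (since $\bar x\neq 0$), hence has a simple root, hence some prime of $S$ over $(\delta)$ is unramified over $R[\delta]$, and then invoking the uniformity of $e$ and $f$ from \autoref{pro.nefFormular} --- but that argument is not the one you gave, and without it the claim $e=2$, $f=1$ is unsupported.
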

\begin{proof}[Proof of claim]
By \autoref{pro.KummerCoversAreTame}, the integral closure of $R$ in  $K(\delta)$ is $R[\delta]$, so that $S$ is the integral closure of $R[\delta]$ in $L$. On the other hand, we may consider the flat extension of degree $3$
\[
R[\delta] \subset R[\delta,t] \cong R[\delta][T]\big/\big(T^3+xT+y\big).
\]
Notice that the discriminant ideal of this extension is $(\Delta)$ whereas the different ideal is $(3t^2+x)$. Therefore, $R \subset R[\delta,t]$ is \'etale away from $D$ and so $R[\delta,t]_{\delta}=R[\delta, \delta^{-1},t]$ is normal. In particular, the extension $R[\delta, t] \subset S$ is an equality after localizing at $\delta$ (or well at $3t^2+x$). Thus, the extension $R_{\Delta} \subset S_{\Delta}$ is \'etale. By \autoref{lem.Lemma228GM71}, we are left with showing $R_{(\Delta)} \to S_{(\Delta)}$ is a tamely ramified extension. To this end, observe that
\[
\Delta =\delta^2 = (t_1-t_2)^2(t_2-t_3)^2(t_1-t_3)^2.
\]
Now, let $\q\subset S$ be a prime ideal lying over $(\Delta)$. It must then contain at least one of the elements $t_1-t_2, t_2-t_3, t_1-t_3$. We argue next it can contain only one of them. Indeed, if it contains two of them it must contain the third one and thus all of them.\footnote{For instance, $t_1-t_3=(t_1-t_2)+(t_2-t_3)$.} In particular, the ramification index of $R_{(\Delta)} \to S_{\q}$ is at least $6$ and by applying \autoref{pro.nefFormular} we conclude that $n=1$, $e=6$, and $i=1$ (with notation as in \autoref{pro.nefFormular}). In particular, $\q$ is generated by either of these elements. On the other hand, we have that
\begin{equation} \label{eqn.DifferencesOft_i's}
t_1-t_2=\frac{3t\big(3t^2+x\big)-\delta}{2\big(3t^2+x\big)}, \quad t_1-t_3=\frac{3t\big(3t^2+x\big)+\delta}{2\big(3t^2+x\big)}, \quad t_2-t_3 = \frac{2\delta}{2\big(3t^2+x\big)}.
\end{equation}
From this, we conclude that all the displayed numerators belong to $\q$ and so does $6t\big(3t^2+x\big)$. Nonetheless, $\q \not\ni t$ as otherwise $y=-t\big(t^2+x\big) \in \q \cap R = (\Delta)$, which is not the case. In this way, our conclusion must be that $3t^2+x \in \q \cap R[\delta,t] = \sqrt{(\delta)}$.\footnote{To see the equality, consider $\mathfrak{r}\subset R[\delta,t]$ to be a minimal prime of $(\delta)$. Since $R[t,\delta] \subset S $ is integral, there is at least one prime ideal of $S$ lying over $\mathfrak{r}$. However, any such a prime must lie over $(\Delta) \subset R$ and so must do $\q$.} This, however, is a contradiction. Indeed, we have that $R[\delta,t]$ is a rank-$6$ free $R$ module and moreover
\[
R[\delta, t] = R \cdot 1 \oplus R\cdot t \oplus R \cdot t^2 \oplus R \cdot \delta \oplus R \cdot \delta t \oplus R \cdot \delta t^2 = \big\langle 1,t,t^2 \big\rangle_R \oplus \big\langle \delta, \delta t, \delta t^2 \big\rangle_R,   
\]
whence one sees that any power of $3t^2+x$ is going to be belong to the direct summand $\big\langle 1,t,t^2 \big\rangle_R$ whereas 
\[
(\delta) \subset \Big((\Delta) \cdot \big\langle 1,t,t^2 \big\rangle_R \Big) \oplus \big\langle \delta, \delta t, \delta t^2 \big\rangle_R.
\]
Additionally, an inductive argument readily shows that the constant coefficient of $\big(3t^2+x\big)^n$ is $x^n$ for every exponent $n$. Putting everything together, we see that $3t^2 + x \in \sqrt{(\delta)}$ yields that $x^n \in (\Delta)$ for some $n$ and so $x\in (\Delta)$, which is the sought contradiction.

In conclusion, the principal ideals $(t_1-t_2), (t_2-t_3), (t_1-t_3) \subset S$ share no minimal prime. By using \autoref{pro.nefFormular}, we conclude that these are (the) prime ideals of $S$ lying over $(\Delta) \subset R$, with ramification index $e=2$ and inertia degree $i=1$. This proves the claim.
\end{proof}
\end{example}

\begin{example}[Whitney's umbrella] \label{ex.WhitneyUmbrella}
Let $(R,\fram,\kay,K)$ be a regular local ring with $\kay$ of \emph{odd characteristic}, and let $f\coloneqq x^2-y^2z$ where $\fram=(x,y,z)$ is a regular system of parameters. The polynomial expression $x^2-y^2z$ plays a fundamental role in the description of degree-$4$ Galois extensions; see \cite[VI, \S Ex. 4]{LangAlgebra}. Thus, we start off by considering the degree-$2$ Galois extension $E=K\big(\sqrt{f}\big)$. Next, we consider the tower of degree-$2$ Galois extensions
\[
E\subset E\big(\sqrt{z}\big) \subset E\big(\sqrt{z}\big) \left(\sqrt{x+y\sqrt{z}}\right) . 
\]
Set $\alpha = x+y\sqrt{z}$, $\alpha'=x-y\sqrt{z}$, and $\beta = \sqrt{\alpha}$. The above tower is $E \subset E(\alpha) \subset E(\beta)$. By \cite[loc. cit.]{LangAlgebra}, $E(\beta)/E$ is a non-cyclic degree-$4$ Galois extension as $\alpha \alpha' = f$ is a square in $E$. In fact, $E(\beta)/E$ is the splitting field of $T^4-2xT^2+f \in E[T]$:
\[
T^4-2xT^2+f = \big(T^2-\alpha\big)\big(T^2-\alpha'\big)=(T-\beta)(T+\beta)\big(T-\sqrt{f}/\beta\big)\big(T+\sqrt{f}/\beta\big).
\]
Moreover, $E(\beta)/K$ is a degree-$8$ non-cyclic Galois extension, for it is the splitting field of $T^4-2xT^2+f \in K[T]$. In fact, setting $\beta'\coloneqq \sqrt{f}/\beta$, we see that $\Gal\bigl(E(\beta)/E\bigr)$ is generated by the transpositions $\tau \: \beta \mapsto -\beta$ and $\sigma \: \beta \mapsto \beta'$. Moreover, in $\Gal\big(E(\beta)/K\big)$ we have $\rho \: \beta \mapsto \beta, \sqrt{f} \mapsto -\sqrt{f}$. In this way, $\pi \coloneqq \sigma \rho \: \beta \mapsto \beta', \sqrt{f} \mapsto -\sqrt{f} $ is an element of order $4$ whose square and cube are; respectively, $\tau$ and $ \rho \sigma$. That is, $\Gal\big(E(\beta)/K\big)$ is generated by two elements $\sigma$ and $\pi$ satisfying relations: $\sigma^2=1$, $\pi^4=1$, and $\sigma \pi = \pi^3 \sigma$. In other words, $\Gal\big(E(\beta)/K\big)$ is isomorphic the dihedral group---the symmetries of the square. Thus,
\[
\Gal\big(E(\beta)/K\big) = \{1,\sigma, \rho, \tau, \pi, \sigma \rho, \pi \sigma, \sigma \tau \}.
\]

Let $S$ be the integral closure of $R$ in $E(\beta)$. Next, we claim the following.
\begin{claim}
$S=R\big[\sqrt{z},\beta, \beta'\big]$
\end{claim}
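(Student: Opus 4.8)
The plan is to show that $S = R[\sqrt{z},\beta,\beta']$ by verifying the two standard containments and then checking normality of the right-hand side. First I would observe that $\sqrt{z},\beta,\beta'$ are all integral over $R$: indeed $\sqrt z$ satisfies $T^2 - z$, while $\beta$ satisfies $T^4 - 2xT^2 + f$ and $\beta' = \sqrt{f}/\beta$ satisfies the same polynomial (recall $\beta\beta' = \sqrt f$ and $\beta^2 + \beta'^2 = 2x$). Hence the subring $S' \coloneqq R[\sqrt z,\beta,\beta']$ is contained in $S$. Note also that $S'$ contains $\sqrt f = \beta\beta'$ and $\alpha = \beta^2 = x + y\sqrt z$, so $\Frac(S') = E(\beta)$; thus $S'$ and $S$ have the same fraction field, and it suffices to prove $S'$ is integrally closed (equivalently, normal) — then $S' \supseteq S$ follows since $S$ is the integral closure of $R$ in $E(\beta)$.

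Next I would exhibit $S'$ as an explicit free $R$-module in order to control it, and reduce normality to the Serre criteria $(\mathbf R_1)$ and $(\mathbf S_2)$, exactly as in the proof of \autoref{pro.KummerCoversAreTame}. The idea is to factor the cover into a tower of Kummer-type quadratic extensions adapted to a regular system of parameters, at each level applying \autoref{pro.KummerCoversAreTame} (with its $(\mathbf S_2)$ and $(\mathbf R_1)$ bookkeeping) and the computation of discriminant/different ideals to identify where ramification occurs. Concretely: $R \subset R[\sqrt z]$ is the Kummer cover for $\Div z$, and over this ring $\sqrt z$ is a new parameter; then $R[\sqrt z] \subset R[\sqrt z, \beta] = R[\sqrt z][T]/(T^2 - (x + y\sqrt z))$ is the Kummer cover for the prime divisor $\Div(x + y\sqrt z)$ (one checks $x + y\sqrt z$ is irreducible, hence a regular parameter, in $R[\sqrt z]$, since $f = x^2 - y^2 z$ is squarefree and $z$ is not a square in $R$). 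The final adjunction of $\beta' = \sqrt f/\beta$ only inverts elements away from the branch divisors — on localizing at $\sqrt f$ (or at $\beta^2 + \beta'^2 - 2x$'s relevant factor) one sees $R[\sqrt z,\beta,\beta'] = R[\sqrt z,\beta]$ after that localization, so $S'$ is étale over the regular (hence normal) ring $R[\sqrt z,\beta]$ away from a codimension-$\geq 1$ locus, and $S_2$-ness comes from freeness as an $R$-module; normality then follows by checking $R_1$ at the finitely many height-one primes containing $\sqrt f$.

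The main obstacle I anticipate is the $(\mathbf R_1)$ verification along the ramification locus of $\Div f$: because $f = x^2 - y^2 z$ is the Whitney umbrella equation, its divisor is singular, and one must show that after the full base change the localizations $S'_{\mathfrak q}$ at height-one primes $\mathfrak q$ over $(f)$ are genuinely DVRs rather than non-normal. I would handle this by working in the explicit free basis $\{1,\sqrt z\} \otimes \{1,\beta\} \otimes \{1,\beta'\}$ (or a suitable sub-basis, reducing modulo the relation $\beta\beta' = \sqrt f$), localizing at each minimal prime of $(f)$ in $R[\sqrt z]$ — there the factors $\alpha = x + y\sqrt z$ and $\alpha' = x - y\sqrt z$ of $f$ become the relevant uniformizers — and computing the maximal ideal of the local ring as in the displayed computation $(a,t) = (t)$ in the proof of \autoref{pro.KummerCoversAreTame}, concluding that $\beta$ (resp. $\beta'$) is a uniformizer. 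Assembling these local checks with $(\mathbf S_2)$ gives normality of $S'$, and hence $S = S' = R[\sqrt z,\beta,\beta']$.
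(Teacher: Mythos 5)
Your overall strategy---prove that $S' \coloneqq R\big[\sqrt{z},\beta,\beta'\big]$ is normal via Serre's criterion by building it as a tower of Kummer covers---is viable and genuinely different from the paper's argument, which instead reduces to the single quadratic extension $R\big[\sqrt{f},\sqrt{z}\big] \subset S$, characterizes an integral element $\gamma = a+b\beta$ through its trace and norm, and computes $\sqrt{(\alpha')}=\big(\alpha',\sqrt{f}\big)$ explicitly. However, your execution contains a fatal error. You assert that, after forming $R \subset R\big[\sqrt{z}\big] \subset R\big[\sqrt{z},\beta\big]$, the ``final adjunction of $\beta'=\sqrt{f}/\beta$ only inverts elements away from the branch divisors'' and that $R\big[\sqrt{z},\beta,\beta'\big]=R\big[\sqrt{z},\beta\big]$ after a localization. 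This is false: $\big[K\big(\sqrt{z},\beta\big):K\big]=4$ while $[E(\beta):K]=8$, and since $K\big(\sqrt{z},\beta,\beta'\big)\ni \beta\beta'=\sqrt{f}$ this field is all of $E(\beta)$, so adjoining $\beta'$ is a genuine quadratic extension of $\Frac\big(R\big[\sqrt{z},\beta\big]\big)$, not a birational modification. (Indeed $\sqrt{f}\notin R\big[\sqrt{z},\beta\big]$, so one cannot even ``localize at $\sqrt{f}$'' there; the identity $\beta'=\sqrt{f}/\beta$ is only usable once $\sqrt{f}$ has been adjoined, which is the paper's tower $R\subset R\big[\sqrt{f},\sqrt{z}\big]\subset S$, not yours.) As a result your $(\mathbf{R}_1)$ verification---``\'etale away from a codimension-$\geq 1$ locus,'' which is in any case vacuous for a generically \'etale cover---never addresses the ramification of this last quadratic layer, and the argument does not close. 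A secondary omission: you propose to check $(\mathbf{R}_1)$ only at height-one primes containing $\sqrt{f}$, but the branch locus also meets $(z)$, where the cover is ramified.

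The gap is repairable within your framework. One has $\beta'^2=\alpha'=x-y\sqrt{z}$; the ideal $(\alpha')\subset R\big[\sqrt{z}\big]$ is prime and distinct from $(\alpha)$ (as in the paper's proof of the second claim), the quadratic cover $R\big[\sqrt{z}\big]\subset R\big[\sqrt{z},\beta\big]$ has discriminant $(4\alpha)$ and is therefore \'etale at the primes over $(\alpha')$, so $\Div_{R[\sqrt{z},\beta]}\alpha'$ is reduced and a third application of \autoref{pro.KummerCoversAreTame} shows that $R\big[\sqrt{z},\beta\big][T]\big/\big(T^2-\alpha'\big)\cong R\big[\sqrt{z},\beta,\beta'\big]$ is normal; since this ring contains $\sqrt{f}=\beta\beta'$, its fraction field is $E(\beta)$ and it must coincide with $S$ (regularity at the primes over $(z)$ is then automatic, as the two upper layers are \'etale there). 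As written, though, the proposal does not prove the claim.
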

\begin{proof}[Proof of claim]
By \autoref{pro.KummerCoversAreTame}, $R\big[\sqrt{f}, \sqrt{z}\big]$ is normal and so it is the integral closure of $R$ in $E(\alpha)$---the fixed field of $\langle\tau\rangle$. Thus, we just need to prove that $S$ is the integral closure of $R\big[\sqrt{f}, \sqrt{z}\big]$ in $E(\beta)$. To this end, we prove that any element $\gamma \in S$ is an $R\big[\sqrt{f}, \sqrt{z}\big]$-linear combination of $1$, $\beta$, and $\beta'$. We know that $\gamma = a + b \beta \in E(\beta)$ for some (uniquely determined) $a,b \in E(\alpha)$. Since $E(\beta)/E(\alpha)$ is a quadratic extension, the minimal polynomial of $\gamma$ is described in terms of its trace and norm as follows:
\[
T^2 + \Tr_{E(\beta)/E(\alpha)} (\gamma) T + \Norm_{E(\beta)/E(\alpha)}(\gamma).
\]
Observe that $\Tr_{E(\beta)/E(\alpha)} (\gamma) = 2a$ and $\Norm_{E(\beta)/E(\alpha)}(\gamma)=a^2-b^2\alpha$. Therefore, $\gamma \in S$ if and only if both $2a$ and $a^2-b^2\alpha$ belong to $R\big[\sqrt{f}, \sqrt{z}\big]$, which is equivalent to $a,b^2\alpha \in R\big[\sqrt{f}, \sqrt{z}\big]$. 

Now, since $b^2 \alpha$ belongs to $R\big[\sqrt{f}, \sqrt{z}\big]$ so does $b^2 f = b^2 \alpha \alpha'$. That is, since $b^2\alpha \in R\big[\sqrt{f}, \sqrt{z}\big]$, then $b^2 f$ belongs to the ideal $(\alpha')\subset R\big[\sqrt{f}, \sqrt{z}\big]$. Since $b^2f=\big(b\sqrt{f}\big)^2$, this is to say  that $b \sqrt{f} \in E(\alpha)$ is integral over $(\alpha') \subset R\big[\sqrt{f}, \sqrt{z}\big]$. Given that $R\big[\sqrt{f}, \sqrt{z}\big]$ is integrally closed in $E(\alpha)$, we conclude that $b\sqrt{f} \in \sqrt{(\alpha')} \subset R\big[\sqrt{f}, \sqrt{z}\big]$; see \cite[Chapter 2, Corollary 2.6]{KunzIntroToACAG}. The result then follows once we have shown that
\begin{equation} \label{eqn.LaBellaEcuacion}
\sqrt{(\alpha')} = \big(\alpha',\sqrt{f}\big), \text{ in } R\big[\sqrt{f}, \sqrt{z}\big].
\end{equation}
Indeed, granted \autoref{eqn.LaBellaEcuacion}, we would have that:
\[
\gamma=a+b\beta = a+\big(b\sqrt{f}\big) \frac{\beta}{\sqrt{f}} = a + \big(c\alpha' + d \sqrt{f}\big) \frac{\beta}{\sqrt{f}} = a + d\beta + c\frac{\alpha'}{\beta'} = a+d\beta+ c\beta',
\]
for some $d,c \in R\big[\sqrt{f}, \sqrt{z}\big]$---we saw before that $a\in R\big[\sqrt{f}, \sqrt{z}\big]$.

To prove \autoref{eqn.LaBellaEcuacion}, observe that the containment from right to left is clear, for $\sqrt{f}^2 = \alpha \alpha'$. For the converse containment, observe that $R\big[\sqrt{f}, \sqrt{z}\big]$ is free over $R$ with basis $1, \sqrt{z}, \sqrt{f}, \sqrt{z}\sqrt{f}$. In particular, if an element in $R\big[\sqrt{f}, \sqrt{z}\big]$ belongs to $\sqrt{(\alpha')}$ then so does the summand in the $R$-span of $1$ and $\sqrt{z}$. Thus, it is enough to prove that $r+s\sqrt{z} \in \sqrt{(\alpha')}$; with $r,s \in R$, belongs to $(\alpha')$. That is, it suffices to explain why the contraction of $\sqrt{(\alpha')} \subset R\big[\sqrt{f}, \sqrt{z}\big]$ to $R\big[\sqrt{z}\big]$ is the ideal $(\alpha')$. This, however, follows from observing that $(\alpha')\subset R\big[\sqrt{z}\big]$ is a prime ideal. Indeed, observe that $R\big[\sqrt{z}\big]$ is a regular local ring (and so an UFD) as its maximal ideal is given by $\fram \oplus R \cdot \sqrt{z} =\big(x,y,z,\sqrt{z}\big)=\big(x,y,\sqrt{z}\big)$. On the other hand, the extension of the prime ideal $(f) \subset R$ to $R\big[\sqrt{z}\big]$ splits as $(f)=(\alpha)(\alpha')$. Since there cannot be more than two prime ideals of $R\big[\sqrt{z}\big]$ lying over $(f) \subset R$, we conclude that these are $(\alpha)$ and $(\alpha')$.
\end{proof}

\begin{claim}
The extension $R \subset S$ is tamely ramified with respect to the reduced divisor $D=\Div z + \Div f$. Moreover, for both prime divisors $(z), (f) \subset R$, there are exactly two prime ideals of $S$ lying over with ramification index $2$ and inertia degree $2$.
\end{claim}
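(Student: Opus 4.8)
The plan is to establish first that $R\subset S$ is \'etale away from $D=\Div(zf)$, then to invoke \autoref{lem.Lemma228GM71} (equivalence of (a) and (d)) to reduce tameness to the behaviour at the two codimension-$1$ points $(z)$ and $(f)$ of $D$, and finally to compute that behaviour explicitly, which simultaneously settles the ``moreover'' part. The \'etaleness is immediate: inverting $zf$ in $S=R\big[\sqrt z,\beta,\beta'\big]$ turns $z$, $\alpha$, and $\alpha'$ into units (the latter two because $\alpha\alpha'=f$), so $\sqrt z$, $\beta=\sqrt\alpha$, $\beta'=\sqrt{\alpha'}$ are adjoined as square roots of units, which is \'etale since $\Char\kay\neq 2$. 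For the reduction, tameness with respect to $D$ over a localization $R_{\mathfrak p}$ means (by definition) that each DVR of $E(\beta)$ over $R_{\mathfrak p}$ has ramification index prime to $\Char\kay$ and separable residue extension; by \autoref{pro.nefFormular} these ramification indices divide $[E(\beta):K]=8$, hence are powers of $2$, hence prime to $p$ (recall $\Char\kay$ is odd), and the residue extensions will turn out to have degree $2$, which is separable in residual characteristic $\neq2$. So the crux is the local description.

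At $(z)$ the ring $R_{(z)}$ is a DVR with uniformizer $z$ and residue field $\Frac(R/z)$, the fraction field of the regular local ring $R/z$, in which $x$ is a prime element and hence not a square. I would run along the tower $K\subset K(\sqrt z)\subset K(\sqrt z,\sqrt f)\subset E(\beta)$. The first step is totally ramified of degree $2$, with integral closure of $R_{(z)}$ a DVR $\mathcal O$ of uniformizer $\sqrt z$ and residue field $\Frac(R/z)$; modulo $\sqrt z$ one has $f\equiv x^2$ and $\alpha\equiv x$ in $\mathcal O$, so $f$ and $\alpha$ are units there and the remaining degree-$4$ extension $E(\beta)=K(\sqrt z)(\sqrt f,\beta)$ is \'etale over $\mathcal O$, giving $e=2$ over $(z)$. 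Now $\bar f=\bar x^2$ is a square in $\Frac(R/z)$, so adjoining $\sqrt f$ splits $\mathcal O$ into two DVRs, while $\bar\alpha=\bar x$ is not a square there, so adjoining $\beta$ keeps two primes but introduces the quadratic residue extension $\Frac(R/z)\big(\sqrt{\bar x}\big)$. Hence there are exactly two primes of $S$ over $(z)$, with $e=2$ and $f=2$ (note $2\cdot2\cdot2=8$).

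The place $(f)$ is the delicate one. Here $R_{(f)}$ is a DVR with uniformizer $f$ (a prime element of $R$) and residue field $\kay_f:=\Frac\big(R/(x^2-y^2z)\big)$, the function field of the Whitney umbrella; in $\kay_f$ one has $z=(x/y)^2$, so $\bar z$ is a square, whereas $2x$ is \emph{not}: writing $s:=x/y$ (so $z=s^2$, $x=sy$), the element $2x=2sy$ has odd order along $\{y=0\}$ in the normalization of $R/(x^2-y^2z)$, hence is a non-square in $\kay_f$. Along the tower $K\subset K(\sqrt f)\subset K(\sqrt f,\sqrt z)=E(\alpha)\subset E(\beta)$: the first step is totally ramified of degree $2$, with integral closure $\mathcal O_1$ of $R_{(f)}$ a DVR of uniformizer $\sqrt f$ and residue field $\kay_f$; since $\bar z$ is a square, adjoining $\sqrt z$ splits $\mathcal O_1$ into two unramified DVRs $\mathcal O_+,\mathcal O_-$ with $\sqrt z\equiv\pm x/y$. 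Finally adjoin $\beta$ with $\beta^2=\alpha=x+y\sqrt z$: using $\alpha\alpha'=f$ and $v_{\mathcal O_\pm}(f)=2$, at $\mathcal O_+$ we have $\bar\alpha=2x\neq0$, so $\alpha$ is a unit and (as $2x$ is a non-square) this step is \'etale with residue extension $\kay_f\big(\sqrt{2x}\big)$; at $\mathcal O_-$ we have $v_{\mathcal O_-}(\alpha)=2$ (forced by $\bar\alpha'=2x\neq0$), so I would replace $\beta$ by $\gamma:=\beta/\sqrt f\in E(\beta)$, with $\gamma^2=\alpha/f=1/\alpha'$ a unit whose reduction $1/(2x)$ is again a non-square, so $\mathcal O_-[\gamma]$ is the integral closure and again \'etale over $\mathcal O_-$ with quadratic residue extension. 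Thus over $(f)$ there are exactly two primes of $S$, once more with $e=2$ and $f=2$.

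Combining these, $R\subset S$ is \'etale away from $D$, and at the generic points of $\Div z$ and $\Div f$ it has ramification index $2$ (prime to $p$) with separable quadratic residue extensions, so \autoref{lem.Lemma228GM71} yields that $R\subset S$ is tamely ramified with respect to $D$; the two local analyses show there are exactly two primes of $S$ over each of $(z)$ and $(f)$, each with ramification index $2$ and inertia degree $2$. I expect the hard part to be the place $(f)$: its two primes are interchanged by the Galois automorphism $\sqrt z\mapsto-\sqrt z$, which swaps $\alpha$ and $\alpha'$, so $\alpha$ is a unit at one prime but has valuation $2$ at the other, forcing the change of generator $\gamma=\beta/\sqrt f$; the whole computation then hinges on $2x$ (equivalently $1/(2x)$) being a non-square in the function field of the Whitney umbrella, which is precisely where the geometry of the equation $f=x^2-y^2z$ enters.
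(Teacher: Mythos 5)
Your proof is correct, and while its skeleton matches the paper's --- \'etaleness away from $D$ by inverting $zf$ (square roots of units in odd characteristic, where the paper computes discriminants), and tameness at the two codimension-one points from the observation that $e$ and the residue degree divide $8=n\cdot e\cdot f$ while the characteristic is odd --- the way you establish the ``moreover'' part is genuinely different. The paper exhibits the global factorizations $f=\beta^2\beta'^2$ and $4y^2z=(\beta-\beta')^2(\beta+\beta')^2$ in $S$ and reads off $n=e=f=2$ from the formula $8=n\cdot e\cdot f$, together with short footnote arguments that $(\beta)\neq(\beta')$ and $(\beta-\beta')\neq(\beta+\beta')$; in particular it produces explicit generators of the four primes. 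You instead filter the degree-$8$ extension through a tower of three quadratic steps at each place and decide ramified/split/inert at each step by a Hensel-type criterion, namely whether the relevant unit is a square in the residue field: over $(z)$ the square $\bar f=\bar x^{\,2}$ against the nonsquare $\bar\alpha=\bar x$ in $\Frac(R/z)$, and over $(f)$ the square $\bar z=(x/y)^2$ against the nonsquare $2x$ in $\Frac(R/f)$, the latter detected by its odd valuation along $y=0$ (one can make this airtight by noting that $(R/f)[x/y]$ is regular local with regular parameters $y$ and $x/y$, so that valuation is honestly divisorial). Your route is longer but more systematic --- it requires no guess as to what the primes are, it explains \emph{why} the count comes out as it does at each quadratic layer, and it isolates precisely where the geometry of $x^2-y^2z$ enters (the square classes of $\bar x$ and of $2x$, and the Galois swap $\alpha\leftrightarrow\alpha'$ forcing the renormalization $\gamma=\beta/\sqrt f$). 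The paper's route is shorter and yields the explicit generators $(\beta),(\beta'),(\beta\pm\beta')$, which make the later discussion of these primes concrete. Both are complete proofs.
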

\begin{proof}[Proof of claim]
We begin by proving that $R_{zf} \subset S_{zf}$ is \'etale. Indeed, we have a tower $R\subset R\big[\sqrt{f},\sqrt{z}\big] \subset S$ where the bottom extension is flat of degree $4$. One readily verifies that the discriminant ideal of the bottom extension is $(zf)$, so it is \'etale over $R_{zf}$. It suffices to check that $R\big[\sqrt{f},\sqrt{z}\big]_{zf} \subset S_{zf}$ is \'etale. To this end, notice that by inverting $f$ we invert $\alpha$ and $\alpha'$ in $R\big[\sqrt{f},\sqrt{z}\big]$ and $\beta$ and $\beta'$ in $S$, for we have the relation $\alpha\alpha'=f=\beta^2\beta'^2$. In particular, $1/\alpha \in R\big[\sqrt{f},\sqrt{z}\big]_{zf}$ and $1/\beta \in S_{zf}$. Therefore, $R\big[\sqrt{f},\sqrt{z}\big]_{zf} \subset S_{zf}$ is free with basis $1, \beta$ (as $\beta'=\frac{\sqrt{f}}{\beta} = \frac{\sqrt{f}}{\alpha} \beta$) and its discriminant ideal is generated by
\[
\begin{vmatrix}
1 & 1 \\
\beta & - \beta
\end{vmatrix}^2 = (-2\beta)^2=4\alpha,
\]
which is a unit and consequently the extension is \'etale; as needed.

It remains to prove that $E(\beta)/K$ is tamely ramified with respect to both DVRs $R_{(z)}$ and $R_{(f)}$. Nevertheless, this follows from simple characteristic considerations. Indeed, since the extensions are Galois, we know in each case that $8=n\cdot e \cdot i$ with $n$ being the number of primes lying over, $e$ the ramification indexes, and $i$ the residual degrees; as in \autoref{pro.nefFormular}. Then, $e$ and $i$ are necessarily prime to characteristic, which was assumed odd from the beginning. Recall that $f=\beta^2 \beta'^2$. Using \autoref{pro.nefFormular}, this implies that $(\beta), (\beta') \subset S$ are (the) prime ideals of $S$ lying over $(f) \subset R$, and the ramification index is $2$ as well as the residual degree.\footnote{To see that these two ideals are different, note that otherwise would imply that $(\alpha)=(\alpha')$ in $R\big[\sqrt{z}\big]$, which is tantamount to say that $(f) \in \Spec R$ is a branch point of $R\subset R[z]$. This, however, is not the case.} Similarly, we have that $2y\sqrt{z}=(\beta-\beta')(\beta+\beta')$, so that $4y^2z=(\beta-\beta')^2(\beta+\beta')^2$. Therefore, $(\beta-\beta'), (\beta+\beta') \subset S$ are (the) two prime ideals of $S$ lying over $(z) \subset R$, with ramification index and inertia degree equal to $2$.\footnote{Notice that $(\beta-\beta') \neq (\beta+\beta')$ in $S$ as otherwise this would yield that the common ideal contains both $(\beta)$ and $(\beta')$, which is absurd as then they are all the same ideal.}
\end{proof}
\end{example}

\begin{example} \label{ex.Parabola}
We may specialize \autoref{ex.WhitneyUmbrella} by setting $y=1$. More precisely, we may consider $(R,\fram,\kay,K)$ to be a regular local ring of odd residual characteristic with regular system of parameters $\fram=(x,z)$ and set $f \coloneqq x^2-z$. Letting $L/K$ be the splitting prime of $T^4-2xT^2+f \in K[T]$, the same arguments \emph{mutatis mutandis} as in \autoref{ex.WhitneyUmbrella} show that $S \coloneqq R^L = R\big[\sqrt{z}, \sqrt{x\pm\sqrt{z}}\big]$ and moreover that $R \subset S$ is a degree-$8$ Galois tamely ramified extension over $D = \Div z + \Div f$. Further, for both (regular) prime divisors $(z), (f) \subset R$ there are exactly two prime of $S$ lying over with ramification and inertia indexes equal to $2$.
\end{example}

\begin{remark}[Failure of Abhyankar's lemma for divisors without normal crossings]
Observe that \autoref{ex.TheCusp}, \autoref{ex.WhitneyUmbrella}, and \autoref{ex.Parabola} are counterexamples for Abhyankar's lemma if no regularity condition is imposed on the divisor. Indeed, in each case, we may consider $R$ to be additionally strictly local, then it admits a tamely ramified cover (e.g. $S$) that is not Kummer (for it is not cyclic). In the cusp case, the divisor $D$ has not normal crossings for it is cut out by a singular (irreducible) equation. In the Whitney's umbrella case, the divisor has not normal crossings because $f$ is not a regular element in the ring $R^{\mathrm{sh}}_{(x,y)}$ as $f=x^2-y^2z = (x-y\sqrt{z})(x+y\sqrt{z})$ in this ring. In the case of \autoref{ex.Parabola}, we have that $z$ and $f$ are both regular elements yet $R/(z,f)$ is not regular as $(z,f)=(z,x^2)$.
\end{remark}

\subsection{Main formal theorems}
\label{sect.MainFormal}
Next, we explain why our main results on $\pi_1^{\mathrm{t},P}(X^{\circ})$ can be seen as formal consequences of some interesting properties of the Galois category $\mathsf{Rev}^{P}(X^{\circ})$. With notation as in \autoref{setupTameGroups}, to a Galois object $f\: Y^{\circ} \to X^{\circ}$ in $\mathsf{Rev}^{P}(X^{\circ})$ of degree $d_f$ we may associate three positive integers $n_f$, $e_f$, and $i_f$ which are subject to the relation $d_f=n_f\cdot e_f \cdot i_f$; see \autoref{pro.nefFormular}. With this in mind:

\begin{terminology} \label{term.MAnyTerms} Working in \autoref{setupTameGroups}, we consider the following properties on $\mathsf{Rev}^{P}(X^{\circ})$.
\begin{enumerate}
    \item \emph{$P$-irreducibility}: Every connected cover $f\: Y^{\circ} \to X^{\circ}$ in $\mathsf{Rev}^{P}(X^{\circ})$ satisfies that $Q\coloneqq \bigl(f^{-1}(P)\bigr)_{\mathrm{red}}$ is a prime divisor on $Y^{\circ}$. In other words, with notation as in \autoref{rem.ReductionLocalAlgebra}, there is exactly one prime, say $\q$, lying over $\p$ in the extension $R\subset S$. If $f$ is Galois, this means $n_f = 1$.
    \item \emph{inertial boundedness}: There exists $N \in \bN$ such that $i_f \leq N$ for all Galois objects $f\: Y^{\circ} \to X^{\circ}$ in $\mathsf{Rev}^{P}(X^{\circ})$.
    \item \emph{inertial tameness}: The inertial degree $i_f$ is prime-to-$p$ for all Galois objects $f\: Y^{\circ} \to X^{\circ}$ in $\mathsf{Rev}^{P}(X^{\circ})$.
    \item \emph{inertial decantation}: Assuming the $P$-irreducibility of $\mathsf{Rev}^{P}(X^{\circ})$, inertial decantation means that every Galois cover $f\: Y^{\circ} \to X^{\circ}$ in $\mathsf{Rev}^{P}(X^{\circ})$ dominates a quasi-\'etale Galois cover $Y'^{\circ} \to X^{\circ}$ in $\mathsf{Rev}^{P}(X^{\circ})$ whose generic degree is the generic degree of $Q\coloneqq \bigl(f^{-1}(P)\bigr)_{\mathrm{red}} \to P$. Equivalently, with notation as in \autoref{rem.ReductionLocalAlgebra}, if $\q \subset S$ is the only prime lying over $\p$, there is a factorization $(R, \fram, \kay,K;\p) \subset (S', \fran', \el',L';\q') \subset (S, \fran, \el,L;\q) $ such that the bottom extension induces an \'etale-over-$P$ (i.e. quasi-\'etale) cover in $\mathsf{Rev}^{P}(X^{\circ})$ and $[\kappa(\q'): \kappa(\p)]=[L':K]=[\kappa(\q):\kappa(\p)]$. When the latter degree is $1$, we say that $f$ is \emph{totally ramified}.
\end{enumerate}
\end{terminology}

\begin{remark} \label{rem.CategoricalBehaviour} If $\mathsf{Rev}^{P}(X^{\circ})$ is $P$-irreducible, we may think of the covers in $\mathsf{Rev}^{P}(X^{\circ})$ as local extensions $(R,\fram,\kay,K;\p)\subset (S,\fran,\el,L;\q)$; as in \autoref{rem.ReductionLocalAlgebra}, where $\q$ is the only (height-$1$) prime ideal of $S$ lying over $\p$. We follow the convention to denote the prime divisor corresponding to $\q$ by $Q$ and so on. Note that, if $f\: Y^{\circ} \to X^{\circ}$ is a connected cover in $\mathsf{Rev}^{P}(X^{\circ})$, then the category $\mathsf{Rev}^{Q}(Y^{\circ})$ is $Q$-irreducible and $\mathsf{Rev}^{Q}(Y^{\circ})$ is the Galois category given by the objects of $\mathsf{Rev}^{P}(X^{\circ})$ that lie over (or dominate) the object $(Y^{\circ}, Q)$. If $f$ is further Galois, then $\mathsf{Rev}^{Q}(Y^{\circ})$ is inertially bounded (resp. tame) if so is $\mathsf{Rev}^{P}(X^{\circ})$.
\end{remark}

\begin{lemma} \label{lem.Lemma(a')}
$P$-irreducibility implies inertial decantation.
\end{lemma}
\begin{proof}
With notation as in \autoref{rem.ReductionLocalAlgebra}, since $\q$ is the only prime lying over $\p$, its decomposition group $D\coloneqq \{\sigma \in \Gal(L/K) \mid \sigma(\q) = \q\}$ is the whole Galois group $\Gal(L/K)$. Therefore, its inertia group $I$ sits as the kernel in the following short exact sequence of groups
\[
1 \to I \to \Gal(L/K) \to \Aut\big(\kappa(\q)/\kappa(\p)\big) \to 1.
\]
By the tameness of the ramification, $\kappa(\q)/\kappa(\p)$ is a finite separable extension and so Galois by \cite[\href{http://stacks.math.columbia.edu/tag/09ED}{Lemma 09ED}]{stacks-project}. Thus, we have
\[
1 \to I \to \Gal(L/K) \to \Gal \big(\kappa(\q)/\kappa(\p)\big) \to 1.
\]
We may use the Galois correspondence to obtain a factorization
\[
(R,\fram, \kay, K, P) \subset \big(S^I, \fran^I, \el^I, L^{I}, Q^I\big) \subset (S, \fran, \el, L, Q)
\]
where both covers are Galois and the upper script $I$ denotes the invariant or fixed elements under the action of $I$. Moreover, $\Gal\big(L^{I}/K\big) = \Gal \big(\kappa(\q)/\kappa(\p)\big)$ and the bottom extension is \'etale at $\q^I$; see \cite[\href{http://stacks.math.columbia.edu/tag/09EH}{Lemma 09EH}]{stacks-project}, and so quasi-\'etale. Furthermore, $\big[\kappa\bigl(\q^I\bigr):\kappa(\p)\big] = \big[L^I:K\big]=\big[\kappa(\q):\kappa(\p)\big]$. This proves the lemma.
\end{proof}

\begin{defprop}
In the situation of \autoref{setupTameGroups}, the full subcategory $\mathsf{Rev}^{P}_{\mathrm{\acute{e}t}}(X^{\circ})$ of $\mathsf{Rev}^{P}(X^{\circ})$ consisting of those $Z^\circ \to X^\circ$ that are \'etale-over-$P$ is a Galois subcategory. We denote the corresponding fundamental group by $\pi^{P}_{\mathrm{1,\acute{e}t}}(X^{\circ})$.
\end{defprop}
\begin{proof}
We follow the proof of \cite[Theorem 2.4.2]{GrothendieckMurreTameFundamentalGroup} and only need to verify the conditions G1, G2 and G3 of \cite[Expos\'e V, 4]{GrothendieckSGA}. Clearly, $X^\circ$ itself is a final object. For the existence of fiber products, take $Y^\circ \to Z^\circ, W^\circ \to Z^\circ$ in $\mathsf{Rev}(X^\circ)$ and consider the following diagram
\[\begin{xy} \xymatrix{(Y^\circ \times_{Z^\circ} W^\circ)_{\mathrm{nor}} \ar[r] & Y^\circ \times_{Z^\circ} W^\circ \ar[r] \ar[d] & Y^\circ \ar[d] \\ & W^\circ \ar[r]& Z^\circ} \end{xy} \] where the normalization is taken with respect to the total ring of fractions of $Y^\circ \times_{Z^\circ} W^\circ$. By \cite[loc.\ cit.]{GrothendieckMurreTameFundamentalGroup}, this is the fiber product in $\mathsf{Rev}^{P}(X^{\circ})$. Note that $Y^\circ \times_{Z^\circ} W^\circ$ is \'etale over $P$ since \'etale morphisms are stable under base change. Moreover, as \'etale morphisms preserve normality (and $X^\circ$ is normal), we conclude that $Y^\circ \times_{Z^\circ} W^\circ \to X^\circ$ is normal at $P$ and thus the normalization is an isomorphism at $P$. The existence of direct sums is clear. Consider now $Y^\circ \to X^\circ$ a morphism in $\mathsf{Rev}^P(X^\circ)$, $G$ a finite subgroup of $\Aut(Y^\circ)$. Then, 
\[
\begin{xy} \xymatrix{Y^\circ \ar[r] \ar[d]_f &Y^\circ/G \ar[dl]^u\\ X^\circ} \end{xy}
\]
is a commutative diagram in $\mathsf{Rev}^P(X^\circ)$. Assume that $f$ is \'etale over $P$. If $Q$ is a point in $Y^\circ$ lying over $P$ with image $Q'$ in $Y^\circ/G$, we have inclusions of DVRs $\sO_{X^\circ, P} \subset \sO_{Y^\circ/G, Q'} \subset \sO_{Y^\circ, Q}$. Since the inclusion $\sO_{X^\circ, P} \subset \sO_{Y^\circ, Q}$ is unramified, the first extension is also unramified. Hence, $u$ is \'etale at $P$. Condition G3 follows just as in \cite[loc.\ cit.]{GrothendieckMurreTameFundamentalGroup}
\end{proof}

\begin{lemma} \label{lem.UniversalCover}
Work in \autoref{setupTameGroups}. Suppose that $\mathsf{Rev}^{P}(X^{\circ})$ is $P$-irreducible. Then, $\mathsf{Rev}^{P}(X^{\circ})$ inertially bounded if and only if $\mathsf{Rev}^{P}_{\mathrm{\acute{e}t}}(X^{\circ})$ has a universal cover, i.e. $\pi^{P}_{\mathrm{1,\acute{e}t}}(X^{\circ})$ is finite. If $\tilde{f} \:\tilde{X}^{\circ} \to X^{\circ}$ is such universal cover, then $\mathsf{Rev}^{\Tilde{P}}(\Tilde{X}^{\circ})$ is so that its Galois objects have inertia degree equal to $1$. If $\mathsf{Rev}^{P}(X^{\circ})$ is further inertially tame, then the order of $\pi^{P}_{\mathrm{1,\acute{e}t}}(X^{\circ})$ is prime-to-$p$. 
\end{lemma}
\begin{proof}
Since $\mathsf{Rev}^{P}(X^{\circ})$ is $P$-irreducible, the degree of a Galois object in $\mathsf{Rev}^{P}_{\mathrm{\acute{e}t}}(X^{\circ})$ coincides with its inertial degree. The first and third statements then follow. The second statement follows from the inertial decantation on $\mathsf{Rev}^{\Tilde{P}}(\Tilde{X}^{\circ})$; see \autoref{lem.Lemma(a')} and \autoref{rem.CategoricalBehaviour}.
\end{proof}

By purity of the branch locus, we only need to check inertial boundedness and/or tameness on the regular locus of $X$, \ie $X^\circ = X_\mathrm{reg}$. This will play a crucial role in \autoref{sec.CharZeroBusiness}. We make this precise next but it can be skipped for now. 

\begin{proposition} \label{pro.RoleOfRegularLocus}
Work in \autoref{setupTameGroups}. There is a fully faithful functor between Galois categories $\mathsf{Rev}^{P}_{\mathrm{\acute{e}t}}(X^{\circ}) \to \mathsf{F\acute{E}t}(X_{\mathrm{reg}})$, which induces a surjective homomomorphism between the corresponding fundamental groups. Moreover, this functor induces an isomorphism between fundamental groups if $Z$ cuts out the singular locus of $X$.
\end{proposition}
\begin{proof}
Recall that $\mathsf{F\acute{E}t}(X_{\mathrm{reg}})$ is equivalent to the Galois subcategory of the absolute Galois category of $K$ given by finite separable extensions $K \subset L \subset K^{\mathrm{sep}}$ such that the integral closure of $R \subset R^L$ in $L$ is \'etale over $X_{\mathrm{reg}}$; see \cite[\S 2.4]{CarvajalSchwedeTuckerEtaleFundFsignature}. On the other hand, as mentioned before in \autoref{rem.ReductionLocalAlgebra}, $\mathsf{Rev}^{P}(X^{\circ})$ corresponds the the Galois subcategory given by field extensions where $R\subset R^{L}$ is \'etale over $U$ and $L/K$ is tamely ramified with respect to $R_{\p}$, whereas $\mathsf{Rev}^{P}_{\mathrm{\acute{e}t}}(X^{\circ})$ is the one in which $R \subset R^L$ is \'etale over $U$ and $L/K$ is \'etale with respect to $R_{\p}$; see \autoref{def.TameRamification}. In particular, $\mathsf{Rev}^{P}_{\mathrm{\acute{e}t}}(X^{\circ})$ is (or can be identified with) a full Galois subcategory of $\mathsf{F\acute{E}t}(X_{\mathrm{reg}})$. Indeed, if $L/K$ is in $\mathsf{Rev}^{P}_{\mathrm{\acute{e}t}}(X^{\circ})$ then $R \subset R^{L}$ is quasi-\'etale, and so induces an \'etale cover over $X_{\mathrm{reg}}$ by Zariski--Nagata--Auslander purity of the branch locus for regular schemes \cite[\href{https://stacks.math.columbia.edu/tag/0BMB}{Lemma 0BMB}]{stacks-project}, \cf \cite{ZariskiPurity,NagataPurity,NagataPurityII,AuslanderPurity}. Moreover, if $X^{\circ} = X_{\mathrm{reg}}$ (i.e. $Z$ cuts out the singular locus), we have the same categories as in that case $U \subset X_{\mathrm{reg}}$ and $X_{\mathrm{reg}}$ contains the regular point of $P$. It is worth noticing that the normality of $X$ is essential through the previous arguments. Finally, observe that the remaining statements are formal consequences of the just proven; see \cite[Chapter 5]{MurreLecturesFundamentalGroups}.
\end{proof}

\begin{corollary}
\label{le.b'-condition}
Work in \autoref{setupTameGroups}. Suppose that $\mathsf{Rev}_{\mathrm{\acute{e}t}}^{P}(X_\mathrm{reg})$ has a universal cover (of prime-to-$p$ degree). Then, $\mathsf{Rev}_{\mathrm{\acute{e}t}}^{P}(X^\circ)$ has a universal cover (of prime-to-$p$ degree).
\end{corollary}
\begin{proof}
\autoref{pro.RoleOfRegularLocus} can be summarized as follows: $\pi^{P}_{1,\mathrm{\acute{e}t}}(X_{\mathrm{reg}}) \cong \pi_1^{\mathrm{\acute{e}t}}(X_{\mathrm{reg}}) \twoheadrightarrow \pi^{P}_{\mathrm{1,\acute{e}t}}(X^{\circ})$. Thus, finiteness/tameness on the left-hand side group implies finiteness/tameness on the right-hand one.
\end{proof}

\begin{definition}
Work in \autoref{setupTameGroups}. Define $N^P(X^{\circ}) \subset \bN$ as the set of prime-to-$p$ positive integers $n \in \bN$ for which there is a divisor $D$ on $X$ such that $P-n \cdot D \in \Cl X$ has prime-to-$p$ torsion and $D|_U$ is Cartier. Likewise, define $M^P(X^{\circ}) \subset \bN$ as the set of prime-to-$p$ positive integers $n \in \bN$ for which there is a divisor $D$ on $X$ such that $P=n \cdot D \in \Cl X$ and $D|_U$ is Cartier. Note that $1 \in M^P(X^{\circ}) \subset N^P(X^{\circ})$.
\end{definition}

With the above in place, we have the following theorem.

\begin{theorem} \label{thm.MainFormal}
Work in \autoref{setupTameGroups}. Suppose that $\mathsf{Rev}^{P}(X^{\circ})$ is $P$-irreducible and has bounded inertia. Then, there exists a short exact sequence of topological groups
\begin{equation} \label{eqn.CokernelSequence}
\hat{\mathbb{Z}}^{(p)}\to \pi_1^{\mathrm{t},P}(X^{\circ}) \to \pi^{P}_{\mathrm{1,\acute{e}t}}(X^{\circ}) \to 1
\end{equation}
where $\pi^{P}_{\mathrm{1,\acute{e}t}}(X^{\circ})$ is  finite and $\hat{\mathbb{Z}}^{(p)}$ is the prime-to-$p$ part of the profinite completion of $\Z$ (if $p=0$ we shall agree upon $\hat{\mathbb{Z}}^{(p)}\coloneqq \hat{\Z}$). If $\mathsf{Rev}^{P}(X^{\circ})$ also has tame inertia, then the order of $\pi^{P}_{\mathrm{1,\acute{e}t}}(X^{\circ})$ is prime-to-$p$ and the following two statements hold:
 \begin{itemize}
    \item If $P \in \Cl X$ has prime-to-$p$ torsion, 
    \autoref{eqn.CokernelSequence} yields a short exact sequence
    \begin{equation} \label{eqn.TorsionCase}
     0 \to \hat{\Z}^{(p)} \to \pi_1^{\textnormal{t},P}(X^{\circ}) \to \pi^{P}_{\mathrm{1,\acute{e}t}}(X^{\circ}) \to 1
    \end{equation}
    which splits (in the category of topological groups) if and only if $M^P(X^{\circ})$ equals the set of prime-to-$p$ positive integers. Further,  the sequence is split if and only if $P = 0 \in \Cl X$. 
    \item If $P \in \Cl X$ is nontorsion, \autoref{eqn.CokernelSequence} yields a short exact sequence
    \begin{equation} \label{eqn.NonTorsionCase}
   0 \to \varprojlim_{ n \in N^P(X^{\circ})
   } \bZ / n\bZ \to \pi_1^{\mathrm{t},P}(X^{\circ}) \to \pi^{P}_{\mathrm{1,\acute{e}t}}(X^{\circ}) \to 1
    \end{equation}
    which is split (in the category of topological groups) if and only if $N^P(X^{\circ}) \supset M^P(X^{\circ})$ is an equality and there is a compatible system $\{\frac{1}{n} P \in \Cl X\}_{ n \in M^P(X^{\circ})}$ of factors of $P$ meaning that $m \cdot (\frac{1}{mn} P) = \frac{1}{n} P$ in $\Cl X$ and $\frac{1}{1} P = P$ (e.g.\ if $\Cl X$ has no prime-to-$p$ torsion). In particular, if $N^P(X^{\circ})$ is finite (e.g.\ $\Cl X$ modulo prime-to-$p$ torsion is finitely generated), there is a short exact sequence
    \begin{equation} \label{eqn.NontorsionAndFiniteGeneration}
    0 \to \bZ / n\bZ \to \pi_1^{\mathrm{t},P}(X^{\circ}) \to \pi^{P}_{\mathrm{1,\acute{e}t}}(X^{\circ}) \to 1
    \end{equation}
    and so $\pi_1^{\mathrm{t},P}(X^{\circ})$ is finite of order prime-to-$p$. Likewise, \autoref{eqn.NontorsionAndFiniteGeneration} is split if and only if $P \in \Cl X$ is $n$-divisible with $\frac{1}{n} P \in \Pic U$.
\end{itemize}
\end{theorem}

\begin{remark} \label{rem.DirectabilityofNP}
The limit in the kernel of \autoref{eqn.NonTorsionCase} makes sense because, if $m\cdot M= P = n \cdot N$ in $\Cl X$ modulo prime-to-$p$ torsion (with $p \nmid m,n$ and $M|_U,N|_U \in \Pic U$), then $P= l \cdot (a \cdot N + b \cdot M)$ in $\Cl X$ modulo prime-to-$p$ torsion, where: $(m) \cap (n) = (l)$, $am + bn = k$ and $(m,n)=(k)$. In other words, if $m,n$ belong to $N^P(X^{\circ})$ then so does their least common multiple $l$.
\end{remark}

The following two lemmas are well-known to experts but are included for lack of a reference.

\begin{lemma}
\label{lem.StrictHenselizationFiniteMorphism}
Let $\phi\: (R, \mathfrak{m}) \to S$ be a finite extension of normal domains. Denote by $S^{\mathrm{sh}}$ the strict henselization of $S$ with respect to a prime $\mathfrak{n}$ lying over $\mathfrak{m}$. Then, the canonical morphism $\Spec S^{\mathrm{sh}} \to \Spec S \otimes_R R^{\mathrm{sh}}$ is a connected component, in particular a clopen (\ie closed and open) immersion. Furthermore, assume that:
\begin{enumerate}
    \item $(R,\fram)$ is a DVR,
    \item $\phi$ is a generically \'etale extension, and
    \item $\phi_{\fran} \: R \to S_{\fran}$ has trivial residue field extension for all maximal ideals $\fran$ lying over $\fram$.
\end{enumerate}
Then, all the connected components of $\Spec S \otimes_R R^{\mathrm{sh}}$ arise in this way and then are in bijective correspondence with the prime ideals of $S$ lying over $\fram$. 
\end{lemma}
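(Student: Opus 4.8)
The plan is to reduce the entire statement to the standard structure theory of (strict) henselizations of finite algebras, so that hypothesis (c) is the only genuinely new input. First I would establish the unconditional assertion. Set $A \coloneqq S \otimes_R R^{\mathrm{sh}}$. Since $R \to S$ is finite, $A$ is finite over the strictly henselian local ring $R^{\mathrm{sh}}$, so $A$ decomposes as a finite product $A = \prod_i T_i$ of local rings, each finite as a module over $R^{\mathrm{sh}}$ and henselian; see \cite{stacks-project}. Each residue field $\kappa(T_i)$ is finite over the separably closed field $\kay^{\mathrm{sep}} = \kappa(R^{\mathrm{sh}})$, hence a finite purely inseparable extension of it, hence itself separably closed, so each $T_i$ is in fact \emph{strictly} henselian. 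On the other hand $R \to R^{\mathrm{sh}}$ is a filtered colimit of étale ring maps, so by base change $S \to A$ is too; and since every idempotent of $A$ already lives in a finite stage, each factor $T_i$ is a filtered colimit of étale $S$-algebras. Writing $\fran_i \coloneqq \mathfrak m_{T_i} \cap S$ — a prime of $S$ over $\fram$ because $R^{\mathrm{sh}} \to T_i$ is local — we see that $T_i$ is a strictly henselian local $S_{\fran_i}$-algebra, ind-étale over $S_{\fran_i}$, whose residue field is separable algebraic over $\kappa(\fran_i)$ and separably closed, hence equals $\kappa(\fran_i)^{\mathrm{sep}}$. This is precisely the universal characterization of a strict henselization, so $T_i$ is canonically the strict henselization of $S$ at $\fran_i$. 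Specializing to the distinguished prime $\fran$, the canonical comparison map $A \to S^{\mathrm{sh}}$ (which exists because $S^{\mathrm{sh}}$ is strictly henselian and therefore, over $R$, receives a unique local map from $R^{\mathrm{sh}}$ once a compatible embedding $\kay^{\mathrm{sep}} \hookrightarrow \kappa(\fran)^{\mathrm{sep}}$ is fixed) kills every idempotent of $A$ except the one cutting out the factor $T_{i_0}$ with $\fran_{i_0} = \fran$, and it identifies that factor with $S^{\mathrm{sh}}$; this says exactly that $\Spec S^{\mathrm{sh}} \hookrightarrow \Spec A$ is a clopen immersion onto a connected component.

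Next I would handle the refined assertion under (a)--(c). Hypotheses (a) and (b) make $A$ well behaved: being ind-étale over the normal ring $S$, $A$ is a filtered colimit of normal rings, hence normal, and its total ring of fractions $\Frac S \otimes_{\Frac R} \Frac(R^{\mathrm{sh}})$ is étale over $\Frac(R^{\mathrm{sh}})$ by (b); so $A$ is a finite product of normal domains, each finite over the strictly henselian DVR $R^{\mathrm{sh}}$, i.e.\ each $T_i$ is a strictly henselian DVR, and its connected component $\Spec T_i$ is honestly indexed by the height-$1$ prime $\fran_i$ via the identification of $T_i$ with the strict henselization of $S$ at $\fran_i$ obtained above. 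This already shows every connected component of $\Spec A$ ``arises in this way,'' and the assignment $\fran_i \mapsto \Spec T_i$ is injective since the closed point of $\Spec T_i$ lies over $\fran_i$. It remains to see this assignment is onto the set of primes of $S$ over $\fram$ and one-to-one: surjectivity is automatic, because the fibre of $\Spec A \to \Spec R$ over $\fram$ is $\Spec\bigl((S/\fram S)\otimes_{\kay}\kay^{\mathrm{sep}}\bigr) \to \Spec(S/\fram S)$, which is surjective; and ``one factor per prime'' is exactly where (c) enters: if $\kappa(\fran) = \kappa(\fram) = \kay$ then $(S/\fram S)_{\fran}\otimes_{\kay}\kay^{\mathrm{sep}}$ is again local (a local Artinian $\kay$-algebra with residue field $\kay$ stays local after the separable base change), so there is a unique prime of $A$ lying over both $\fran$ and $\fram R^{\mathrm{sh}}$, hence a unique factor $T_i$ with $\fran_i = \fran$. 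Combining these points yields the asserted bijection between the connected components of $\Spec A$ and the primes of $S$ over $\fram$.

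The step that will require the most care — the real obstacle — is the identification of each local factor $T_i$ with the strict henselization of $S$ at $\fran_i$ \emph{through the canonical map}, rather than merely abstractly: this amounts to unwinding the universal property of strict henselization (that an ind-étale, strictly henselian, local algebra whose residue field is the separable closure of the residue field of the base \emph{is} a strict henselization), together with the bookkeeping of which factor the comparison map $S\otimes_R R^{\mathrm{sh}} \to S^{\mathrm{sh}}$ lands in. That last bit is governed by the separable-degree count $[\kappa(\fran):\kay]_{\mathrm{s}}$, which is precisely why hypothesis (c) is what upgrades ``every component arises this way'' to an honest bijection; dropping (c) would replace the bijection by a $[\kappa(\fran):\kay]_{\mathrm{s}}$-to-$1$ correspondence. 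Everything else is a routine application of the standard theory of finite algebras over (strictly) henselian local rings, for which I would cite \cite{stacks-project}.
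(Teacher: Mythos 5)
Your proof is correct, but it takes a genuinely different route from the paper's, most visibly in the ``furthermore'' part. For the first assertion the paper simply quotes \cite[\href{https://stacks.math.columbia.edu/tag/05WR}{Lemma 05WR}]{stacks-project} (that $S^{\mathrm{sh}}$ is the localization of $S\otimes_R R^{\mathrm{sh}}$ at a prime, necessarily maximal) and combines it with the decomposition of a finite algebra over a henselian local ring into local factors; you instead identify \emph{every} local factor $T_i$ as the strict henselization of $S$ at $\fran_i=\mathfrak{m}_{T_i}\cap S$, via the characterization of strict henselizations among ind-\'etale, strictly henselian local algebras. That identification is the one step you rightly flag as delicate---it can be closed by applying \cite[\href{https://stacks.math.columbia.edu/tag/08HV}{Lemma 08HV}]{stacks-project} to the ind-\'etale map $S\to T_i$, since $T_i$ is already strictly henselian---and once granted it gives strictly more than the paper's first claim. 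For the second assertion the paper runs a generic-degree count: normality and (b) give that $S\otimes_R R^{\mathrm{sh}}$ is a finite product of normal domains whose generic degrees over $K(R^{\mathrm{sh}})$ sum to $[L:K]$, while (a) and (c) give $[L:K]=\sum_i e_i$, which is already exhausted by the known components $S^{\mathrm{sh}}_{\fran_i}$, so no others exist. You instead count closed points of the special fibre: the maximal ideals of $S\otimes_R R^{\mathrm{sh}}$ contracting to a fixed $\fran$ are the points of $\kappa(\fran)\otimes_{\kay}\kay^{\mathrm{sep}}$, of which there is exactly one under (c). Your count is more elementary, never leaves the closed fibre, and---as your closing remark about the $[\kappa(\fran):\kay]_{\mathrm{s}}$-to-$1$ correspondence makes explicit---shows that (a) and (b) are not actually needed for the bijection, which is consistent with the paper's discussion of the \'etale case in \autoref{ex.UsingLemmaConnectedComponents}. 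The trade-off is that your argument leans on the heavier universal characterization of strict henselizations, whereas the paper's needs only the single citation above plus elementary ramification bookkeeping, for which (a) and (b) are, however, essential.
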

\begin{proof}
By \cite[\href{https://stacks.math.columbia.edu/tag/05WR}{Lemma 05WR}]{stacks-project}, $S^{\mathrm{sh}}$ is obtained as the localization of a prime ideal of $S \otimes_R R^{\mathrm{sh}}$ lying above $\mathfrak{n}$ and $\mathfrak{m}^{\mathrm{sh}}$. Since $S$ is normal and $\Spec S \otimes R^{\mathrm{sh}} \to \Spec S$ is a colimit of \'etale morphisms, $S \otimes R^{\mathrm{sh}}$ is normal by \cite[\href{https://stacks.math.columbia.edu/tag/033C}{Lemma 033C}]{stacks-project} and \cite[\href{https://stacks.math.columbia.edu/tag/037D}{Lemma 037D}]{stacks-project}. Hence, it is a product of normal domains. Moreover, $S \otimes_R R^{\mathrm{sh}}$ is a finite algebra over the henselian local ring $R^{\mathrm{sh}}$ and thus by \cite[\href{https://stacks.math.columbia.edu/tag/04GG}{Lemma 04GG (10)}]{stacks-project} we have 
\[
S \otimes_R R^{\mathrm{sh}} = \prod_{i=1}^m \big( S \otimes_R R^{\mathrm{sh}} \big)_{\fram_i}
\]
where $\fram_1,\ldots, \fram_m$ are the maximal ideals of $S \otimes_R R^{\mathrm{sh}}$ lying over $\fram^{\mathrm{sh}}$. We conclude that $S \otimes_R R^{\mathrm{sh}}$ is a finite product of normal local domains. Since any prime of $S \otimes_R R^{\mathrm{sh}}$ lying above $\mathfrak{n}$ and $\mathfrak{m}^{\mathrm{sh}}$ is necessarily maximal,  $\Spec S^{\mathrm{sh}} \to \Spec S \otimes_R R^{\mathrm{sh}}$ is a clopen immersion.

Finally, we discuss the statement regarding the case $(R,\fram)$ is a DVR. Set $(u) = \fram$. In this case, $S$ is a semi-local Dedekind domain and in particular a PID; let $\fran_1,\ldots,\fran_n$ be the maximal ideals of $S$ lying over $\fram$. Let $K$ be the function field of $R$, so that $\Spec K \to \Spec R$ defines the open immersion given by the principal open $D(u)=\Spec R_u$. Observe that $\big(R^{\mathrm{sh}}, \theta(u)\big)$ is a (strictly henselian) DVR as well, where $\theta$ is the canonical homomorphism $R \to R^{\mathrm{sh}}$. With this being said, we see that pullback of the cartesian square
\[
\xymatrix{
\Spec S \ar[d] & \Spec S \otimes_R R^{\mathrm{sh}} \ar[d]  \ar[l] \\
\Spec R  & \Spec R^{\mathrm{sh}} \ar[l]
}
\]
to the Zariski open $\Spec K \to \Spec R $ is given by the cartesian square
\[
\xymatrix{
\Spec L  \ar[d] & \Spec L \otimes_K K\big(R^{\mathrm{sh}}\big) \ar[d] \ar[l]\\
\Spec K  & \Spec K\big(R^{\mathrm{sh}}\big) \ar[l]
}
\]
where $L$ denotes the fraction field of $S$. In particular, the generic rank of the finite $R^{\mathrm{sh}}$-algebra $S \otimes_R R^{\mathrm{sh}}$ is equal to $[L:K]$---the generic rank of $S$ over $R$. In particular, we have that
\[
[L:K] = \sum_{i=1}^m \Big[ K\Big(  \big( S \otimes_R R^{\mathrm{sh}} \big)_{\fram_i}\Big) : K\big(R^{\mathrm{sh}}\big) \Big] = \sum_{i=1}^n \big[K\big(S_{\fran_i}^{\mathrm{sh}}\big):K\big(R^{\mathrm{sh}}\big)\big] + \Sigma,
\]
where $\Sigma$ is the remaining summands, i.e. the sum corresponding to the (\emph{a priori} possible) connected components that are not isomorphic to strict henselizations of $S$ at some of its maximal ideals. Our goal is to prove that $\Sigma = 0$ (i.e. it is an empty summation). To this end, observe that, by combining assumptions (a) and (c) with \cite[\href{https://stacks.math.columbia.edu/tag/09E8}{Remark 09E8}]{stacks-project}, we have $[L:K] = \sum_{i=1}^n e_i $ where $e_i$ is the ramification index of the extension of DVRs $\phi_{\fran_i} \: R \to S_{\fran_i}$. Hence, it suffices to prove $
\big[K\big(S_{\fran_i}^{\mathrm{sh}}\big):K\big(R^{\mathrm{sh}}\big)\big] = e_i$. Observe that the ramification index of $R^{\mathrm{sh}} \to S^{\mathrm{sh}}_{\fran_i}$ is exactly $e_i$ and its residue field extension is trivial (it is tacitly assumed here that the residue field of both is the same separable closure of $R/\fram = S_{\fran_i}/\fran_iS_{\fran_i}$). The result then follows from \cite[\href{https://stacks.math.columbia.edu/tag/09E8}{Remark 09E8}]{stacks-project}.
\end{proof}

\begin{example} \label{ex.UsingLemmaConnectedComponents}
We may use \autoref{lem.StrictHenselizationFiniteMorphism} to argue the part in the proof of \autoref{cla.TameCoverCusp} where we explain why there cannot be only one prime of $S$ lying over $(\Delta)$. Indeed, if there were only one such a prime $\q \subset S$, we saw that the degree-$6$ extension of DVRs $R_{(\Delta)} \to S_{\q}$ has ramification index $6$ and Galois group isomorphic to $S_3$. However, when we apply \autoref{lem.StrictHenselizationFiniteMorphism} and its proof we obtain that $R_{(\Delta)}^{\mathrm{sh}} \to S_{\q}^{\mathrm{sh}}$ is a degree-$6$ extension with Galois group $S_3$. Nevertheless, this contradicts \autoref{theo.serretameiskummer} as it states that the Galois group must be cyclic.

Either directly or indirectly, we know that there must be three primes $\q_1=(t_2-t_3)$, $\q_2=(t_1-t_3)$, and $\q_3=(t_1-t_3)$ of $S$ lying over $(\Delta) \subset R$, all of them with ramification index $2$ and inertia degree $1$. As predicted by \autoref{lem.StrictHenselizationFiniteMorphism}, we can see directly that
\[
S \otimes_R R_{(\Delta)}^{\mathrm{sh}} \cong S_{\q_1}^{\mathrm{sh}} \times S_{\q_2}^{\mathrm{sh}} \times S_{\q_3}^{\mathrm{sh}},
\]
where each extension $R_{(\Delta)}^{\mathrm{sh}} \subset S_{\q_i}^{\mathrm{sh}}$ is a degree-$2$ Kummer extension of strictly local DVRs. Indeed, denoting $\varpi_i \coloneqq 3t_i^2+x$, we had from \autoref{ex.TheCusp} that $R[\delta,t_i]_{\varpi_i} = S_{\varpi_{i}}$, and moreover $\Spec S = \bigcup_{i=1}^3 \Spec S_{\varpi_i}$ where $\q_i \in \Spec S_{\varpi_j}$ if and only if $i=j$ (this follows from \autoref{eqn.DifferencesOft_i's} and the argument in the succeeding paragraph). This is nothing but an open covering of $\Spec S$ by standard \'etale morphisms over $\Spec R[\delta]$ \cf \cite[I, Theorem 3.14]{MilneEtaleCohomology}. In fact, the morphisms $\Spec_{\varpi_i} \to \Spec R[\delta]$ are \'etale neighborhoods of $(\delta) \subset \Spec R[\delta]$. In particular, the canonical homomorphism $S_{\varpi_i} \to S_{\q_i}$ is an isomorphism when twisted by $R[\delta]_{(\delta)}^{\mathrm{sh}}$---the strict henselization of $R[\delta]$ at $(\delta)$---which is then canonically isomorphic to each of $S_{\q_i}^{\mathrm{sh}}$. Finally, one verifies directly that the canonical homomorphism $R[\delta]\otimes_R R^{\mathrm{sh}}_{(\Delta)} \to R[\delta]_{(\delta)}^{\mathrm{sh}}$ is an isomorphism.

Finally, we point out that hypothesis (c) in \autoref{lem.StrictHenselizationFiniteMorphism} is (trivially) crucial for the proposition to hold. Indeed, suppose that $R \to S$ is a finite \'etale extension of DVRs (i.e. $n,e=1$ in \autoref{pro.nefFormular}). Then, the generic and inertia degrees coincide; denote them by $d$. However, $S \otimes_R R^{\mathrm{sh}}$ is product of $d$ copies of $R^{\mathrm{sh}}$. Roughly speaking, we get $d$ connected components of $\Spec S \otimes_R R^{\mathrm{sh}}$ out of just one prime lying over the maximal ideal of $R$ (both are a degree-$2$ Kummer cover $R^{\mathrm{sh}}_{(\delta)}$ with respect to $\Div \Delta$). This concludes the example. 
\end{example}

\begin{lemma}\label{lem.Showingprincipal}
Let $f\: Y \to X$ be a degree-$d$ finite cover of normal integral schemes. Suppose that $f_* \sO_Y$ is locally free on some big open $U \subset X$ (i.e., $X \smallsetminus U$ has codimension $\geq 2$). Then, the kernel of $f^* \: \Pic X \to \Pic Y$ is $d$-torsion. In particular, $f^*$ maps nontorsion elements into nontorsion elements.
\end{lemma}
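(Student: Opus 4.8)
The plan is to reduce to a big open subset of $X$ on which $f$ becomes finite locally free and then apply a determinant/norm argument; the degree $d$ of the cover (a positive integer, equal to the rank of $f_*\sO_Y$ at the generic point of $X$) will turn out to annihilate $\ker f^*$. The first ingredient I would record is the standard fact that, since $X$ is normal and integral and $U\subset X$ is big (its complement has codimension $\geq 2$), restriction gives an injection $\Pic X\hookrightarrow\Pic U$: a line bundle on $X$ that is trivial on $U$ is of the form $\sO_X(D)$ for a Cartier divisor $D$ supported in codimension $\geq 2$, so the Weil divisor attached to $D$ vanishes and hence $\sO_X(D)\cong\sO_X$ by normality of $X$. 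In particular, to show an element of $\ker f^*$ is killed by $d$ it is enough to show that its restriction to $U$ is killed by $d$ in $\Pic U$.

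Next I would set $V:=f^{-1}(U)$ and $g:=f|_V\colon V\to U$. By hypothesis $g_*\sO_V$ is locally free, and since $U$ is integral (hence connected) its rank is the constant $d$. For any invertible sheaf $M$ on $V$ the sheaf $g_*M$ is again locally free of rank $d$ (the pushforward along a finite locally free morphism of a locally free sheaf is locally free), so one can form the invertible sheaf $\mathrm{Nm}(M):=\det(g_*M)\otimes\det(g_*\sO_V)^{-1}$ on $U$. The only computation required is that the projection formula $g_*(g^*N)\cong N\otimes_{\sO_U}g_*\sO_V$ yields $\det(g_*(g^*N))\cong N^{\otimes d}\otimes\det(g_*\sO_V)$, hence $\mathrm{Nm}(g^*N)\cong N^{\otimes d}$ for every $N\in\Pic U$, while $\mathrm{Nm}(\sO_V)\cong\sO_U$ trivially. (Alternatively, one can quote the existence of a norm homomorphism $\Pic V\to\Pic U$ with this property for finite locally free morphisms.)

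To finish: given $L\in\Pic X$ with $f^*L\cong\sO_Y$, restriction to $V$ gives $g^*(L|_U)\cong(f^*L)|_V\cong\sO_V$, and therefore
\[
(L|_U)^{\otimes d}\;\cong\;\mathrm{Nm}\bigl(g^*(L|_U)\bigr)\;\cong\;\mathrm{Nm}(\sO_V)\;\cong\;\sO_U.
\]
By the injectivity recorded above, $L^{\otimes d}\cong\sO_X$; thus $\ker f^*$ is annihilated by $d$, so in particular it contains no nontorsion class. For the last assertion of the lemma, if $L$ were nontorsion while $f^*L$ were torsion, say $(f^*L)^{\otimes k}\cong\sO_Y$ for some $k\geq 1$, then $f^*(L^{\otimes k})\cong\sO_Y$, so $L^{\otimes k}\in\ker f^*$ would be torsion, forcing $L$ itself to be torsion --- a contradiction. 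Hence $f^*$ sends nontorsion classes to nontorsion classes.

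I do not anticipate a real obstacle: the argument is essentially routine. The two places demanding a little care are the injectivity $\Pic X\hookrightarrow\Pic U$ (which is exactly where bigness of $U$ and normality of $X$ are used) and the verification that $g_*M$ stays locally free so that $\det$ is defined; granting those, the key identity $\mathrm{Nm}\circ g^*=(-)^{\otimes d}$ is immediate from the projection formula and nothing further needs to be computed.
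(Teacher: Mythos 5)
Your argument is correct and is essentially the paper's own proof: both apply the projection formula to $f^*\sL\cong\sO_Y$, take determinants of the pushforward over the big open $U$ where $f_*\sO_Y$ is locally free to get $\sL_U^{\otimes d}\cong\sO_U$, and then use normality of $X$ and $\codim(X\smallsetminus U)\geq 2$ to conclude $\sL^{\otimes d}\cong\sO_X$. Packaging the determinant computation as a norm homomorphism is only a cosmetic difference.
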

\begin{proof}
Let $\sL$ be an invertible sheaf on $X$ such that $f^* \sL \cong \sO_Y$. Then, $f_* f^* \sL \cong f_* \sO_Y$. Nonetheless, $f_*f^* \sL \cong f_*\bigl(\sO_Y \otimes f^* \sL \bigr) \cong \sL \otimes f_* \sO_Y$ by the projection formula. Hence, we have an isomorphism $\sL \otimes f_*\sO_Y \cong f_* \sO_Y$. Note that the rank of $f_* \sO_Y$ is $d$. By letting $V=f^{-1}(U)$ and taking determinants we have $\det f_*\sO_V \cong \det \bigl(\sL_U \otimes f_* \sO_V\bigr) = \sL_U^d \otimes \det f_* \sO_V$. Therefore, $\sL_U^d \cong \sO_U$. Since $X$ is normal and $\codim X \smallsetminus U \geq 2$, we conclude that $\sL^d \cong \sO_X$. 
\end{proof}

The following observation plays a crucial role in our main theorem. It can be thought of as a singular Abhyankar's lemma for prime divisors.

\begin{lemma} \label{lem.THELEMMAFORABH}
Work in \autoref{setupTameGroups} and suppose $\mathsf{Rev}^{P}(X^{\circ})$ to be $P$-irreducible. Then, every totally ramified cyclic Galois object in $\mathsf{Rev}^{P}(X^{\circ})$ of prime-to-$p$ degree $n$ is  (up to isomorphism) a Kummer-type cyclic cover (as in \autoref{ex.KummerTypeCyclicCovers}) of degree $n \in M^P(X^{\circ})$. In particular, if $P=0 \in \Cl X$; say $\p=(f)$, and $\Cl X$ has no prime-to-$p$ torsion, then every totally ramified Galois cover in $\mathsf{Rev}^{P}(X^{\circ})$ is isomorphic to a Kummer cover of the form $\Spec \sO_{X^{\circ}}[T]/(T^n-f) \to X^{\circ}$ (with $n$ prime to the characteristic).
\end{lemma}
\begin{proof}
Let $(R,\fram,\kay, K, P) \subset (S,\fran,\kay,L,Q)$ in $\mathsf{Rev}^{P}(X^{\circ})$ be a totally ramified cyclic Galois cover of degree $n$ with $p\nmid n$.  Its pullback to $U$ induces a connected $\bZ/n\bZ$-torsor $V \to U$ (where $V=Y^{\circ} \smallsetminus Q$ and $U=X^{\circ} \smallsetminus P$). Therefore, by Kummer theory \cite[III, \S4]{MilneEtaleCohomology} and using that $R$ contains a primitive $n$-th root of unity, there exists an $n$-torsion Cartier divisor $D$ on $U$ such that $f_U \: V \to U$ is the cyclic cover defined by the spectrum of the finite $\sO_U$-algebra $\sA \coloneqq \bigoplus_{i=0}^{n-1}{\sO_U(-iD)}$ defined via a global section $\sO_U \xrightarrow{\cong} \sO_U(nD)$, say $\kappa \in K^{\times}$ such that $\Div_U \kappa + nD = 0$ (and so $\cdot \kappa \:\sO_U(-nD) \to \sO_U$). We write $L=\sA_K=K \otimes \sA = K(\kappa^{1/n})$. Let $\bar{D}$ be the closure of $D$ in $X$. Thus, $\bar{D}$ is a Weil divisor on $X$ such that $\bar{D}|_{U} = D$ and $\Div_X \kappa + n \bar{D} = e \cdot P$ where $e = \val_P \kappa$. We may assume that $e \geq 0$ (by replacing both $\kappa$ and $D$ by their respective inverses if necessary). That is, we may assume $\kappa \in R_{\p}$. Note that $ \Div_V \kappa^{1/n} + f_U^* D = 0$ on $V$, which is obtained by dividing by $n$ the pullback of $\Div_U \kappa + nD = 0$. Further, $n \cdot \val_Q \kappa^{1/n} =  \val_Q \kappa = n \cdot \val_P \kappa$ (using the hypothesis that the extension is totally ramified) and so $\val_Q \kappa^{1/n} = e \geq 0$. This lets us conclude the following. 
\begin{claim}
$(n,e)=1$
\end{claim}
\begin{proof}[Proof of claim]
Consider the subextension $R_{\p} \subset R_{\p}[\kappa^{1/n}] \subset S_{\p} = S_{\q}$. Note that $R_{\p} \subset R_\p[\kappa^{1/n}]$ is a free local extension of rank $n$, where the maximal ideal of $R_\p[\kappa^{1/n}]$ is $(t,\kappa^{1/n})$ with $t$ a uniformizer of $R_{\p}$. Also, note that $R_\p[\kappa^{1/n}] \subset S_{\q}$ is birational. In particular, $S_{\q}$ is the normalization of $R_{\p}[\kappa^{1/n}]$. However, this can only happen if $(n,e)=1$. Indeed, we may base change $R_{\p} \subset R_{\p}[\kappa^{1/n}] \subset S_{\q}$ by $R_{\p}^{\mathrm{sh}}$ to obtain $R_{\p}^{\mathrm{sh}} \subset R_{\p}^{\mathrm{sh}}[\kappa^{1/n}] \subset S_{\q}^{\mathrm{sh}}$ using \autoref{lem.StrictHenselizationFiniteMorphism} (c). Nonetheless, 
\[
R_{\p}^{\mathrm{sh}}[\kappa^{1/n}] \to \big(R[\kappa^{1/n}]\big)_{(t,\kappa^{1/n})}^{\mathrm{sh}}
\] is an isomorphism as $R_{\p}^{\mathrm{sh}}[\kappa^{1/n}]$ is a strictly local algebra over $R[\kappa^{1/n}]$ (use \cite[I, Corollary 4.3]{MilneEtaleCohomology}) with the same residue field as $R_{\p}^{\mathrm{sh}}$. Therefore, $R_{\p}^{\mathrm{sh}}[\kappa^{1/n}] \subset S_{\q}^{\mathrm{sh}}$ is a normalization by \cite[\href{https://stacks.math.columbia.edu/tag/0CBM}{Tag 0CBM}]{stacks-project}. However, $R_{\p}^{\mathrm{sh}}[\kappa^{1/n}] \cong R_{\p}^{\mathrm{sh}}[T]/(T^n - \kappa ) \cong R_{\p}^{\mathrm{sh}}[T]/(T^n - t^e)$ using that $\kappa = u \cdot t^e$ for some unit $u \in R_{\p}$ (the latter isomorphism is of course $T \leftrightarrow T/u^{1/n}$). Hence, $(n,e)=1$ for $R_{\p}^{\mathrm{sh}}[\kappa^{1/n}]$ is a domain. 
\end{proof}
Thus, there are $a,b \in \bZ$ such that $1=an+be$ and so:
\[
P = (an +b e) \cdot P = n\big(a\cdot P+b \cdot\bar{D}\big) + \Div_X \kappa^b.
\] 
Further, $\big(a\cdot P+b \cdot\bar{D}\big)\bigm|_U = b \cdot D \in \Pic U$ and so $n \in M^P(X^{\circ})$. Now, the above establishes that $K(\kappa^{b/n})/K$ defines (after taking integral closure) an object in $\mathsf{Rev}^{P}(X^{\circ})$ that is a cyclic cover of Kummer-type. However, $L=K(\kappa^{1/n})= K(\kappa^{b/n})$ as $(b,n)=1$ (for $1=an+be$). Then, $S/R$ in $\mathsf{Rev}^{P}(X^{\circ})$ is a cyclic cover of Kummer-type and $n \in M^P(X^{\circ})$; as required. 

For the last statement when $\p = (f)$, see \autoref{ex.KummerTypeCyclicCovers}.
\end{proof}

\begin{proof}[Proof of \autoref{thm.MainFormal}]
We will subdivide the proof in two parts. First, we prove the statements of \autoref{thm.MainFormal} except for those establishing when the short exact sequences split. Once this is done, we proceed to show the statements of \autoref{thm.MainFormal} characterizing when the given short exact sequences split.  

We start with the first part now. By formal properties of Galois categories, we obtain from \autoref{lem.UniversalCover} a short exact sequence of topological groups:
\[
1\to \pi_1^{\mathrm{t}, \tilde{P}}\bigl(\tilde{X}^{\circ}\bigr) \to \pi_1^{\mathrm{t},P}(X^\circ) \to \Gal\bigl(\tilde{X}^{\circ}/X^{\circ}\bigr) \to 1
\]
where 
$G \coloneqq \pi^{P}_{\mathrm{1,\acute{e}t}}(X^{\circ}) = \Gal\bigl(\tilde{X}^{\circ}/X^{\circ}\bigr)$. Let $d$ be its order. 
\begin{claim} \label{cla.TorsionImpliestrivialreduction}
By replacing $X$ by $\tilde{X}$, we may assume that: $G$ is trivial, Galois objects have inertial degree equal to $1$, if $P$ is prime-to-$p$ torsion then it is trivial, and further $N^P(X^{\circ})=M^P(X^{\circ})$. 
\end{claim}
\begin{proof}[Proof of claim]
Consider the induced homomorphism $\tilde{f}^{*} \: \Cl X \to \Cl \tilde{X}$. Then, $\tilde{f}^*\:P \mapsto \tilde{P}$. Since $\tilde{f}$ is quasi-\'etale, its restriction to $X^{\circ}_{\mathrm{reg}}$ is a Galois \'etale cover by the purity of the branch locus. By \autoref{lem.Showingprincipal}, $\ker \tilde{f}^{*}$ is $d$-torsion and $\tilde{P}$ is nontorsion if so is $P$ (as $\Cl X$ is the same as $\Pic U$ for any regular big open $U \subset X$).

Let $\phi \: \Cl \tilde{X} \to \Cl \Tilde{U}$ be the restriction homomorphism and set $\Lambda \coloneqq \phi^{-1}\big( \Pic \Tilde{U}\big)$ (where $\tilde{U} = \tilde{X}^{\circ} \smallsetminus \tilde{P}$ and so on). We observe that $\Lambda$ has no prime-to-$p$ torsion. Indeed, let $D$ be a divisor on $\tilde X$ such that $D \in \Lambda$ and $D \in \Cl \Tilde{X}$ has prime-to-$p$ torsion, say of index $p \nmid n > 1$. Then, the corresponding Veronese-type cyclic cover $\tilde{R} \to \bigoplus_{i=0}^{n-1}\Tilde{R}(-iD)$ induces a quasi-\'etale degree-$n$ Galois object in $\mathsf{Rev}^{\tilde{P}}(\tilde{X}^{\circ})$, which violates the universality of $\tilde{f}$.

Now, if $P \in \Cl X$ is torsion, then so is $\tilde{P} \in \Cl \Tilde{X}$ and its order divides the one of $P$. Thus, if $P \in \Cl X$ is prime-to-$p$ torsion then $\Tilde{P} \in \Cl \tilde{X}$ is trivial as $\Tilde{P} \in \Lambda$.

Finally, we must explain why
\[
N^P(X^{\circ}) = \big\{n \bigm| p\nmid n, \, \Tilde{P} = n \cdot N \text{ with } N \in \Lambda \big\} = N^{\tilde{P}}(\tilde{X}^{\circ})
\]
The second equality follows from $\Lambda$ having no prime-to-$p$ torsion as $\Tilde{P}- n \cdot N \in \Lambda$ if $N \in \Lambda$. The first equality is obtained as follows. The inclusion ``$\subset$'' is clear. Indeed, if $P = n \cdot M + T$ in $\Cl X$ where $T \in \Cl X$ has prime-to-$p$ torsion and $M|_U \in \Pic U$. Then, $\tilde{P}=n \cdot \tilde{f}^* M + \tilde{f}^* T$ where $\tilde{f}^* M \in \Lambda$ and $\tilde{f}^* T = 0$ as it is a prime-to-$p$ torsion element of $\Lambda$. Conversely, suppose that $\tilde{P}= n \cdot N$ in $\Cl \tilde{X}$ for some $N \in \Lambda$. Let us pullback everything to $W \coloneqq X^{\circ}_{\mathrm{reg}}$ whose inverse image under $\tilde{f}$ we denote by $\tilde{W}$, which is a regular big open of $\Tilde{X}$. Then, by using the Hochschild--Serre spectral sequence \cite[III, Theorem 2.20]{MilneEtaleCohomology}, the image of $\Tilde{f}^* \: \Pic W \to \Pic \Tilde{W}$ lies inside $(\Pic \Tilde{W})^G$. As in \autoref{lem.Showingprincipal}, we may consider the norm homomorphism $\Norm_{\tilde{f}} \: \Pic \tilde{W} \to \Pic W$, which is obtained by applying $H^1$ to the norm morphism of multiplicative groups $f_*\sO_{\tilde{W}}^{\times} \to \sO_W^{\times}$ (as subsheaves of $ \tilde{K} \supset K$ respectively). See \cite[\href{https://stacks.math.columbia.edu/tag/0BCX}{Tag 0BCX}]{stacks-project} for details. The key property is that the composition 
\[
\Pic W \xrightarrow{\tilde{f}^*} \Pic \tilde{W} \xrightarrow{\Norm_{\tilde{f}}} \Pic W
\] 
is multiplication-by-$d$. However, since $\tilde{f}$ is Galois, $\Norm_{\tilde{f}}\big((\Pic \tilde{W})^G\big) \subset d \cdot \Pic W$ by the same principle and the same for any open of $X$ in place of $W$. Treating the equality $\Tilde{P}= n \cdot N$ in $\Pic \tilde{W}$, on the left hand side we have an element of $(\Pic \Tilde{W})^G$ as $\tilde{P} = \tilde{f}^* P$. Since, $\Lambda$ has no prime-to-$p$ torsion and $n$ is prime-to-$p$, this implies that $N \in (\Pic \tilde{W})^G$. Therefore, by taking norms, we get $d\cdot P = d \cdot n \cdot M$ in $\Pic W$ where $d \cdot M= \Norm_{\Tilde{f}}(N)$, which implies that $M|_{U} \in \Pic U$ as $N|_{\Tilde{U}} \in \Pic \tilde{U}$. Thus, $P = n \cdot M + T$ where $T$ is $d$-torsion and $M|_U \in \Pic U$.
\end{proof}

With the above reductions in place, we let $X' \coloneqq \Spec \sO_{X,P}^{\textnormal{sh}}$ be the \'etale germ of $X$ at (the generic point of) $P$. Note that $\sO_{X,P}^{\textnormal{sh}}$ is none other than the strict henselization of $\sO_{X, P} = R_{\p}$ at its maximal ideal. We argue next that the canonical morphism $X' \to X$ induces a surjection of fundamental groups
\[
\eta \: \pi_1^{\textnormal{t},P'}(X'^{\circ}) \to \pi_1^{\textnormal{t},P}(X^{\circ}) 
\]
where $P'$ is the divisor on $X'$ corresponding to its codimension-$1$ closed point and $X'^{\circ}$ is the inverse image of $X^{\circ}$ along $X' \to X$. 
\begin{claim} \label{cla.TheFundamentalClaim}
The pullback functor $\mathsf{Rev}^{P}(X^{\circ}) \to \mathsf{Rev}^{P'}(X'^{\circ})$ induces a surjective homomorphism of topological groups $\eta \: \pi_1^{\textnormal{t},P'}(X'^{\circ}) \to \pi_1^{\textnormal{t},P}(X^{\circ})$. 
\end{claim}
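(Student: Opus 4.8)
The plan is to deduce surjectivity of $\eta$ from the standard criterion for morphisms of Galois categories: if $H\colon\mathcal{C}\to\mathcal{C}'$ is an exact functor compatible with the chosen fibre functors, then the induced homomorphism $\pi_1(\mathcal{C}')\to\pi_1(\mathcal{C})$ is surjective if and only if $H$ sends connected objects to connected objects; see \cite[Expos\'e~V]{GrothendieckSGA} or \cite{MurreLecturesFundamentalGroups}. Here $\mathcal{C}=\mathsf{Rev}^{P}(X^{\circ})$, $\mathcal{C}'=\mathsf{Rev}^{P'}(X'^{\circ})$, and $H$ is pullback along $X'^{\circ}\to X^{\circ}$; exactness of $H$ is routine, and that $H$ actually lands in $\mathsf{Rev}^{P'}(X'^{\circ})$ follows from \autoref{lem.Lemma228GM71}. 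To arrange fibre-functor compatibility I would fix a separable closure $\overline{K}$ of $K$, observe that $\Frac(\sO_{X,P}^{\textnormal{sh}})$ is separable algebraic over $K$ and hence embeds in $\overline{K}$, and use the geometric generic point $\Spec\overline{K}$ as a common base point for both categories. The task thus reduces to showing that for every connected cover $f\colon Y^{\circ}\to X^{\circ}$ in $\mathsf{Rev}^{P}(X^{\circ})$ the pullback $Y^{\circ}\times_{X^{\circ}}X'^{\circ}$ is connected.

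First I would identify this pullback. Since $Z$ has codimension $\geq 2$ in $X$ while the image of $X'=\Spec\sO_{X,P}^{\textnormal{sh}}$ in $X$ consists of the generic point of $X$ together with the generic point of $P$ --- neither of which lies in $Z$ --- we have $X'^{\circ}=X'$, and the preimage of $Z$ in $Y\times_X X'$ is empty. Writing $Y=\Spec S$ as in \autoref{rem.ReductionLocalAlgebra}, this gives $Y^{\circ}\times_{X^{\circ}}X'^{\circ}=\Spec\bigl(S\otimes_R R_{\p}^{\textnormal{sh}}\bigr)$. Now I would invoke the two reductions already in force in the proof of \autoref{thm.MainFormal}: by property (a), $\q$ is the only prime of $S$ over $\p$, so $S\otimes_R R_{\p}=S_{\q}$ and hence $S\otimes_R R_{\p}^{\textnormal{sh}}=S_{\q}\otimes_{R_{\p}}R_{\p}^{\textnormal{sh}}$; and by \autoref{lem.Lemma(a')} combined with \autoref{cla.TorsionImpliestrivialreduction} (which makes $G$ trivial, so that every \'etale-over-$P$ cover is trivial) the residual extension $\kappa(\p)\subset\kappa(\q)$ is trivial, i.e.\ $R_{\p}\to S_{\q}$ has inertia degree $1$.

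I would then apply \autoref{lem.StrictHenselizationFiniteMorphism} to the finite extension of normal domains $R_{\p}\subset S_{\q}$: hypothesis (a) holds because $R_{\p}$ is a DVR, hypothesis (b) because $L/K$ is separable by tameness, and hypothesis (c) is exactly the triviality of the residue extension just noted. The lemma then puts the connected components of $\Spec\bigl(S_{\q}\otimes_{R_{\p}}R_{\p}^{\textnormal{sh}}\bigr)$ in bijection with the primes of $S_{\q}$ over $\p$, of which there is only one; hence $\Spec\bigl(S\otimes_R R_{\p}^{\textnormal{sh}}\bigr)$ is connected --- indeed it is $\Spec S_{\q}^{\textnormal{sh}}$, a strictly henselian normal local domain. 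So $H$ preserves connectedness, and the criterion gives that $\eta$ is surjective.

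The point I expect to require the most care is the verification of hypothesis (c) of \autoref{lem.StrictHenselizationFiniteMorphism}: without first reducing to trivial residual degree, $\Spec\bigl(S\otimes_R R_{\p}^{\textnormal{sh}}\bigr)$ splits into $[\kappa(\q):\kappa(\p)]$ disjoint copies of $\Spec S_{\q}^{\textnormal{sh}}$ and the pullback is disconnected. So the surjectivity of $\eta$ is not a formal artefact of the geometry but genuinely uses property ($a'$) of \autoref{lem.Lemma(a')} and the triviality of $G$ obtained in \autoref{cla.TorsionImpliestrivialreduction}.
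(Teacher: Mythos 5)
Your proposal is correct and follows essentially the same route as the paper: the same choice of compatible base points via a separable closure containing $K(R_{\p}^{\mathrm{sh}})$, the same reduction to showing that pullback preserves connectedness, and the same application of \autoref{lem.StrictHenselizationFiniteMorphism} to $R_{\p}\subset S_{\q}$ using the unique prime over $\p$ and the trivial inertia degree secured by the earlier reductions. Your closing remark correctly identifies where hypothesis (c) of that lemma enters, which is exactly the point the paper also flags.
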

\begin{proof}[Proof of claim]
Note that the pullback functor is well-defined by \autoref{lem.Lemma228GM71}. By the abstract nonsense regarding Galois categories, the first statement amounts to proving the compatibility between the fiber or fundamental functors; see \cite[Chapter 5]{MurreLecturesFundamentalGroups}. Recall that, implicitly, we always take our base point to be some fixed separable closure $K^{\mathrm{sep}}$ of $K$.  We are going to choose the base point of $\mathsf{Rev}^{P'}(X'^{\circ})$ compatibly, i.e. so that we have a commutative diagram:
\[
\xymatrix{
R_{\p}^{\mathrm{sh}} \ar[r] & K\big(R_{\p}^{\mathrm{sh}}\big) \ar[r] &  K\big(R_{\p}^{\mathrm{sh}}\big)^{\mathrm{sep}}\\
R \ar[u]^-{\theta} \ar[r] & K \ar[u]_-{\theta_K} \ar[r] & K^{\mathrm{sep}} \ar[u]_-{\theta_K^{\mathrm{sep}}} 
}
\]
Equivalently, we choose $K^{\mathrm{sep}}$ to be the subfield of $K\big(R_{\p}^{\mathrm{sh}}\big)^{\mathrm{sep}}$ of elements that are algebraic separable over $K$. To simplify notation, we denote the rings on the top of the diagram from left to right; respectively, by $R'$, $K'$, and $K'^{\mathrm{sep}}$. Recall that the fiber functor $\sF \: \mathsf{Rev}^{P}(X^{\circ}) \to \mathsf{FSet}$ is given by
\[
\sF(S/R)=\Hom_{R\textnormal{-alg}}\big(S,K^{\mathrm{sep}}) = \Hom_{K\textnormal{-alg}}\big(L,K^{\mathrm{sep}}\big)
\]
for all $S/R$ connected in $\mathsf{Rev}^{P}(X^{\circ})$. Of course, the same definition applies to the fiber functor $\sF' \: \mathsf{Rev}^{P'}(X'^{\circ}) \to \mathsf{FSet}$ with $K'^{\mathrm{sep}}$ in place of $K^{\mathrm{sep}}$ and so on. We need to verify the commutativity of the following diagram of functors
\[
\xymatrix{
\mathsf{Rev}^{P}(X^{\circ}) \ar[rr] \ar[rd]_-{\sF} & & \mathsf{Rev}^{P'}(X'^{\circ})\ar[ld]^-{\sF'}\\
& \mathsf{Set}&
}
\]
where the horizontal arrow is the pullback functor; \cf \cite[\S 5.1, Example]{MurreLecturesFundamentalGroups}. To this end, we perform the following computation with $S/R$ a connected object of $\mathsf{Rev}^{P}(X^{\circ})$:
\begin{align*}
    \sF'\big(S\otimes_R R'\big) = \Hom_{R'\textnormal{-alg}}\big(S\otimes_R R',K'^{\mathrm{sep}}\big) = \Hom_{R\textnormal{-alg}}\big(S, K'^{\mathrm{sep}}\big) &= \Hom_{K\textnormal{-alg}}\big(L, K'^{\mathrm{sep}}\big)\\
    &= \Hom_{K\textnormal{-alg}}\big(L, K^{\mathrm{sep}}\big) \\
    & =\sF(S).
\end{align*}
where the penultimate equality follows from the compatibility in our choices of base points. Indeed, since $L/K$ is a finite separable extension, any $K$-embedding of $L$ into $K'^{\mathrm{sep}}$ is going to be contained in $K^{\mathrm{sep}}$---the subfield of $K'^{\mathrm{sep}}$ of separable elements over $K$.

Finally, we explain why $\eta$ is surjective. According to the abstract nonsense \cite[\S 5.2.1]{MurreLecturesFundamentalGroups}, $\eta$ is surjective if and only if the pullback of connected objects is connected. Hence, the surjectivity of $\eta$ is a simple consequence of the equality $S\otimes_R R'= S_{\q}^{\mathrm{sh}}$ provided by \autoref{lem.StrictHenselizationFiniteMorphism}---once we know there is only one prime lying over with trivial inertia degree.
\end{proof}

As a direct application of \autoref{theo.serretameiskummer}; \cf \cite[I, \S 5, Remark 5.1 (e)]{MilneEtaleCohomology}, we have that:
\[
\pi_1^{\textnormal{t},P'}(X'^{\circ}) = \varprojlim_{p \, \nmid \, n }{\bm{\mu}_n(K')} \xleftarrow{\cong} \varprojlim_{p \, \nmid \, n }{\Z/n\Z} \eqqcolon \hat{\Z}^{(p)}, 
\]
where it is worth noting that the isomorphism is not canonical as it depends on choices of compatible primitive roots of unity of $K'$ in $K'^{\mathrm{sep}}$. We have constructed a (non-canonical) surjective homomorphism of topological groups
\[
\eta'\: \hat{\bZ}^{(p)} \xrightarrow{\cong} \pi_1^{\textnormal{t},P'}(X'^{\circ}) \twoheadrightarrow \pi_1^{\textnormal{t},P}(X^{\circ}).
\]
which explains \autoref{eqn.CokernelSequence}. To describe $\ker \eta'$ (resp. $\Image \eta'$), consider the following assertion.
\begin{claim} \label{cla.Factorization}
There is a factorization of continuous homomorphisms
\[ 
\xymatrix{
\hat{\bZ}^{(p)} \ar@{->>}[d]_-{\mathrm{can}} \ar@{->>}[r]^{\eta'} & \pi_1^{\textnormal{t},P}(X^{\circ}) \\
\displaystyle \varprojlim_{n \in N^P(X^{\circ})} \bZ/n\bZ \ar@{->>}[ru]_-{\eta''} &
}
\]
where $N^P(X^{\circ}) = \big\{n \in \bN \bigm| p \nmid n, \, P = n \cdot D \text{ with } D|_U \in \Pic U\big\}$.
\end{claim}
\begin{proof}[Proof of claim] We may assume that $P \in \Cl X$ is nontrivial and so nontorsion. Since $\eta'$ is surjective, a Galois object $(R,\fram,\kay, K, P) \subset (S,\fran,\kay,L,Q)$ in $\mathsf{Rev}^{P}(X^{\circ})$ is cyclic, i.e. $\Gal(L/K)  \cong \Z/n\Z$. Then, \autoref{cla.Factorization} amounts to $n \in N^P(X^{\circ})$ whenever such cover exists and so the claim follows from \autoref{lem.THELEMMAFORABH}.
\end{proof}

\begin{claim} \label{clai.InjectivityEta''}
The homomorphism $\eta''$ in \autoref{cla.Factorization} is injective.
\end{claim}
\begin{proof}[Proof of claim]
By \cite[\S 5.2.4]{MurreLecturesFundamentalGroups}, $\eta''$ is injective if and only if: for all $n \in N^P(X^{\circ})$ there exists a cover in $\mathsf{Rev}^{P}(X^{\circ})$ whose pullback to $X'$ (has a connected component that) is a Kummer cover $\sO_{X,P}^{\textnormal{sh}} \subset \sO_{X,P}^{\textnormal{sh}}\big[t^{1/n}\big]$. To this end, for $n \in N^P(X^{\circ})$, let us set $\Div_X \kappa + n \cdot D = P$ so that $D|_U \in \Pic U$. We then invoke \autoref{ex.KummerTypeCyclicCovers}, \autoref{pro.KummerCoversAreTame}. That is, we embed $K(\kappa^{1/n})$ in $K^{\mathrm{sep}}$ and note that $K(\kappa^{1/n})/K \in \mathsf{Rev}^{P}(X^{\circ})$ has the required property. 
\end{proof}

With the above in place, we explain next why the short exact sequence of topological groups
\[    
0 \to \varprojlim_{n \in N^P(X^{\circ})} \bZ/n\bZ  \to \pi_1^{\textnormal{t},P}(X^{\circ}) \to G \to 1
\]
splits if and only if the containment $N^P(X^{\circ}) \supset  M^P(X^{\circ})$ is an equality and there are divisors $\frac{1}{n} P$ on $X$ for all $n \in M^P(X^{\circ})$ such that: $\frac{1}{1}P=P$ and $m(\frac{1}{mn}P)=\frac{1}{n}P$ in $\Cl X$ for all $m,n \in \bN$ so that $mn \in M^P(X^{\circ})$. Note that $M^P(X^{\circ})$ is an (inversely) directed subset $N^P(X^{\circ})$ (with respect to divisibility); see \autoref{rem.DirectabilityofNP}.

Suppose that $N^P(X^{\circ}) =  M^P(X^{\circ})$ and the existence of a compatible system $\{\frac{1}{n}P\}_{n \in M^P(X^{\circ})}$ of quotients of $P$. Recall that $\tilde{f} \: \tilde{X} \to  X$ denotes the universal cover of $\mathsf{Rev}_{\mathrm{\acute{e}t}}^{P}(X^\circ)$. The equality $N^P(X^{\circ}) =  M^P(X^{\circ})$ means that for every $n \in N^P(X^{\circ})$ there exists $\kappa_n \in K^{\times}$ such that $\Div_{\tilde X} \kappa_n + n \cdot \tilde{f}^*D_n =\tilde{P}$ where $D_n$ is a divisor on $X$ such that $D_n|_U$ is Cartier. Although the choice of $\kappa_n$ is not unique, the cyclic field extension $\tilde{K} \subset \tilde{K}(\kappa_n^{1/n}) \subset K^{\mathrm{sep}}$ is.
That is, $\tilde{K}_n \coloneqq \tilde{K}(\kappa_n^{1/n})$ is independent of the choice of $\kappa_n$ (which is not necessarily true for $K(\kappa_n^{1/n})$). In fact, these are precisely the Galois objects of $\mathsf{Rev}^{\tilde{P}}(\tilde{X}^{\circ})$ with field of fractions inside $K^{\mathrm{sep}}$. Consider the field $\tilde{K}_{\infty} \coloneqq \bigcup_{n\in N^P(X^{\circ})} \tilde{K}_n \subset K^{\mathrm{sep}}$ (so $\pi_1^{\textnormal{t},P}(X^{\circ})= \Gal(\tilde{K}_{\infty}/K)$). It is worth observing that $K(\kappa_n^{1/n}) \cdot \tilde{K} = \tilde{K}_{n}$ and $K(\kappa_n^{1/n}) \cap \tilde{K} = K$ (as the normalization of $R$ in $K(\kappa_n^{1/n})$ is totally ramified whereas in $\tilde{K}$ it is quasi-\'etale). By Galois theory \cite[VI, \S 1, Theorem 1.12]{LangAlgebra}, this implies that $\tilde{K}_{n}/K(\kappa_n^{1/n})$ is Galois and the homomorphism
\[
\Gal\big(\tilde{K}_{n}/K(\kappa_n^{1/n})\big) \to \Gal(\tilde{K}/K) = G, \quad \sigma \mapsto \sigma|_{\tilde{K}}
\]
is an isomorphism. In other words, there is an action of $G$ on $\tilde{K}_n$ by $K$-automorphisms so that $\tilde{K}_n^G = K(\kappa_n^{1/n})$. By the same token, since $K(\kappa_n^{1/n}/K)$ is Galois, the exact sequence 
\[
1 \to \Gal(\tilde{K}_n/\tilde{K}) \to \Gal(\tilde{K}_n/K) \to G \to 1
\] then
splits (as a direct product) for every $n$ (\cf \cite[VI, \S 1, Theorem 1.14]{LangAlgebra}). Roughly speaking, with the equality $N^P(X^{\circ}) =  M^P(X^{\circ})$, we can split the quotient $\Gal(\tilde{K}_{\infty}/K) \twoheadrightarrow G$ at each finite level quotient $\Gal(\tilde{K}_n/K) \twoheadrightarrow  G$. To do this globally, we need these splittings to be compatible under divisibility in $M^P(X^{\circ})$. This is precisely what the additional hypothesis regarding the existence of $\{\frac{1}{n}P\}_{n \in M^P(X^{\circ})}$ accomplishes. Indeed, by setting $D_n = \frac{1}{n}P$ and $K_n \coloneqq K(\kappa_n^{1/n})$, the field extensions $\{K_n/K\}_{n \in N^P(X^{\circ})}$ yield a projective system in $\mathsf{Rev}^{P}(X^{\circ})$ and we may then set the field $K_{\infty} \coloneqq \bigcup_{n \in N^P(X^{\circ})}K_n \subset K^{\mathrm{sep}}$. Then, $K_{\infty}/K$ is Galois, $K_{\infty} \cdot \tilde{K} = \tilde{K}_{\infty}$, and $K_{\infty} \cap \tilde{K} = K$ (as $K(\kappa_n^{1/n}) \cap \tilde{K} = K$). As before, this implies that $\tilde{K}_{\infty}/K_{\infty}$ is Galois and the homomorphism
\[
\Gal(\tilde{K}_{\infty}/K_{\infty}) \to \Gal(\tilde{K}/K) = G, \quad \sigma \mapsto \sigma|_{\tilde{K}}
\]
is an isomorphism and the exact sequence
\[
1 \to \Gal(\tilde{K}_{\infty}/\tilde{K}) \to \Gal(\tilde{K}_{\infty}/K) \to G \to 1
\]
splits; as required. This shows the direction ``$\Longleftarrow$''. 

Conversely, suppose that there is a projection $\pi_1^{\mathrm{t},P}(X^{\circ}) \twoheadrightarrow \varprojlim_{n \in N^P(X^{\circ})} \bZ/n\bZ$ splitting the inclusion $\varprojlim_{n \in N^P(X^{\circ})} \bZ/n\bZ \hookrightarrow \pi_1^{\mathrm{t},P}(X^{\circ})$. In particular, for each $n \in N^P(X^{\circ})$, there is a cyclic Galois cover $f_n \: Y_n^{\circ} \to X_n^{\circ}$ whose ``Galois-theoretic'' pullback to $\mathsf{Rev}^{\Tilde{P}}(\Tilde{X}^{\circ})$ is a cyclic Galois cover. Thus, $f_n$ must be totally ramified (as can been seen by using the inertial decantation property). Therefore, each $f_n$ is a cyclic cover of Kummer-type by \autoref{lem.THELEMMAFORABH}. In particular, $N^P(X^{\circ}) \subset M^P(X^{\circ})$. It remains to explain why we have a compatible system of elements $\{\frac{1}{n} P\} $ in $\Cl X$. To this end, assume that $f_n$ is given by $\Div \kappa_n + n D_n = P$ (this for each $n$). We have an inclusion $K(\kappa_n^{1/n}) \subset K(\kappa_{nm}^{1/nm})$. Since the latter extension is cyclic, $K(\kappa^{1/n}_{nm}) = K(\kappa_n^{1/n})$. Hence, by Kummer theory \cite[see p. 126]{MilneEtaleCohomology}, $\kappa_n/\kappa_{nm} \in (K^{\times})^n$; say $\kappa_n/\kappa_{nm}=\varkappa^n$. From this we deduce that $n \Div \varkappa +n (D_n-m D_{nm}) = 0$ and so $\Div \varkappa + (D_n-m D_{nm}) = 0$, as equalities in $\DIV X$. Hence, $D_n = m D_{nm}$ in $\Cl X$. In particular, we may take $\frac{1}{n} P \coloneqq D_n$ for all $n$. 

Finally, assuming $P \in \Cl X$ is prime-to-$p$ torsion, we still need to show that $M^P(X^\circ) = \mathbb{Z}_{>0}^{(p)}$ if and only if $P =0$ in $\Cl X$. The ``if'' direction is clear. Conversely, assume to the contrary that $P$ is non-trivial but $M^P(X^\circ) = \mathbb{Z}_{>0}^{(p)}$. Let $\ell$ be the order/index of $P$ in $\Cl X$. Since $\ell^a \in M^P(X^\circ)$ for any positive integer $a$, we find a divisor $D_a$ which is Cartier on $U$ such that $P =\ell^a D_a \in \Cl X$. Let $o_a$ be the order of $D_a$. In particular, $o_a \mid \ell^{a+1}$ but $o_a \nmid \ell^a$. Let $\ell =\ell_1^{s_1} \cdots \ell_r^{s_r}$ be the prime factorization of $\ell$ ($\ell_i \neq p$). Then, there is some index $i$ (depending on $a$) such that $\ell_i^{s_i(a+1)} \mid o_a$. Since $a$ is arbitrary, $\ell_i \geq 2$, and $s_i \geq 1$, we conclude that $o_a$ is arbitrarily large. On the other hand, we may consider the quasi-\'etale Veronese-type cyclic cover defined via $\Div_{X} \kappa + o_a \cdot D_a  =0$. These then yield objects of $\mathsf{Rev}_{\mathrm{\acute{e}t}}^{P}(X^\circ)$ of arbitrarily large degree, which contradicts the already proven finiteness of $\pi^{P}_{\mathrm{1,\acute{e}t}}(X^{\circ})$.

This demonstrates \autoref{thm.MainFormal}.
\end{proof}

\begin{remark}
The homomorphism $\eta$ in \autoref{cla.TheFundamentalClaim} can be defined more succinctly as follows. Recall that $\pi_1^{\textnormal{t},P}(X^{\circ})$ is the limit $\varprojlim \Gal(L/K)$ traversing all the finite Galois extensions $K \subset L \subset K^{\mathrm{sep}}$ so that the integral closure $S/R$ of $R$ in $L$ is tamely ramified with respect to $P$, and \emph{verbatim} for $\pi_1^{\textnormal{t},P'}(X'^{\circ})$, where we have fixed $K^{\mathrm{sep}} \subset K'^{\mathrm{sep}}$. For any such a $L/K$, we must define compatible homomorphisms $\pi_1^{\textnormal{t},P'}(X'^{\circ}) \to \Gal(L/K)$. Since $L/K$ is Galois, $\Gal(L/K)$ acts transitively and faithfully on $\sF(S) = \Hom_{K\textnormal{-alg}}(L,K^{\mathrm{sep}})$. Nonetheless, as noticed in \autoref{cla.TheFundamentalClaim}, this set is the same set as
\begin{align*}
\sF'(S\otimes_R R')=\Hom_{R'\textnormal{-alg}}\big(S\otimes_R R', K'^{\mathrm{sep}}\big) &= \coprod_i  \Hom_{R'\textnormal{-alg}}\big(S_i,K'^{\mathrm{sep}}\big) \\
&= \coprod_i  \Hom_{K'\textnormal{-alg}}\big(K(S_i),K'^{\mathrm{sep}}\big)
\end{align*}
where $S \otimes_R R' = \prod_i S_i$ is the decomposition of $S \otimes_R R'$ as a finite product of normal, local, and finite $R'$-algebras; see the proof of \autoref{lem.StrictHenselizationFiniteMorphism}. Of course, the given inclusion $K\subset L\subset K^{\mathrm{sep}}$ is an element of this set, say $\xi$. Therefore, $\xi$ is contained in one and only one of the displayed disjoint sets; let $i_0$ denote the corresponding index. Letting $L_{i_0}$ be the Galois closure of $K(S_{i_0})$ in $K'^{\mathrm{sep}}$, we have that $\Gal(L_{i_0}/K')$ surjects onto $\Aut_{K'}\big(K(S_{i_0})\big)$. On the other hand, we define the homomorphism of groups $\varphi \: \Aut_{K'}\big(K(S_{i_0})\big) \to \Gal(L/K)$ by declaring $\varphi(h)$ to be the only element of $\Gal(L/K)$ that when acts on $\xi$ yields $\xi \circ h$. In this way, we have
\[
\pi_1^{\textnormal{t},P'}(X'^{\circ}) \xrightarrow{\mathrm{can}} \Gal(L_{i_0}/K') \twoheadrightarrow \Aut_{K'}\big(K(S_{i_0})\big) \xrightarrow{\varphi} \Gal(L/K).
\]
The limit over these defines $\eta$. Observe that $\eta$ is surjective if and only if these homomorphisms are all surjective, which is equivalent to the surjectivity of $\varphi$ for all $L/K$. However, it is not difficult to see that $\varphi$ is surjective if and only if $S \otimes_R R'$ is connected.

We illustrate with an example the failure of $\eta$ being surjective if there were more than one prime lying over. In \autoref{ex.UsingLemmaConnectedComponents}, we had a canonical isomorphism of $R'$-algebras
\[
S \otimes_R R' = S \otimes_R R_{(\Delta)}^{\mathrm{sh}} \xrightarrow{\cong} S_{\q_1}^{\mathrm{sh}} \times S_{\q_2}^{\mathrm{sh}} \times S_{\q_3}^{\mathrm{sh}}.
\]
Note that a $K$-embedding of $L$ into $K'^{\mathrm{sep}}$ is the same as a choice of a square root of $\Delta$; which in our case it was $\delta$, and the choice of a $t_i$. For instance, when we chose $t_1$ to be our ``$t$'' in \autoref{ex.TheCusp}, we were choosing the $R'$-embedding
\[
S \otimes_R R' \to S_{\q_1}^{\mathrm{sh}} \times S_{\q_2}^{\mathrm{sh}} \times S_{\q_3}^{\mathrm{sh}} \to S_{\q_1}^{\mathrm{sh}} \to K'^{\mathrm{sep}},
\]
for this is the one in which $L$ is realized as the field of fractions of $R[\delta,t_1]_{\varpi_1} \to S_{\varpi_1}$. This specific embedding was our $\xi$ all along. Now, $S_{\q_1}^{\mathrm{sh}}$ is a degree-$2$ Kummer cover over $R'$, so its Galois group is cyclic of order $2$ with generator $\tau \: \delta \mapsto -\delta$. On the other hand, under the canonical bijection $\sF(S) = \sF'(S \otimes_R R')$, we see that $\xi \circ \tau$ correspond to the $K$-embedding $L \xrightarrow{(2\, 3)} L \to K^{\mathrm{sep}}$ where $(2\, 3 ) \in \Gal(L/K) \cong S_3$ is the transposition switching $t_2$ and $t_3$ (leaving $t_1$ intact). In other words, we have the following commutative diagram of groups:
\[
\xymatrix{
\pi_1^{\mathrm{t},P}(X^{\circ}) \ar@{->>}[r] \ar[d]^-{\eta} & \Gal(L/K) \ar[r]^-{\cong} &  S_3\\
\pi_1^{\mathrm{t},P'}(X'^{\circ}) \ar@{->>}[r] & \Gal\big(K\big(S_{\q_1}^{\mathrm{sh}}\big)\big/K'\big) \ar[u]_-{\varphi} \ar[r]^-{\cong} & \Z/2\Z \ar[u]_-{1\mapsto (2 \, 3)} 
}
\]
so that $\eta$ cannot be surjective. This finishes our remarks.
\end{remark}

\section{Tame fundamental groups: Positive characteristic} \label{sec.ApplicationTameFundGPSPositiveCharacteristic}
We proceed to our study of tame Galois categories in positive characteristic.

\subsection{Cohomologically tame Galois category of an $F$-pure singularity}
\label{subsubsection.Tamefungroupfpuresing}
We start by making a simple observation about the cohomologically tame Galois category of an $F$-pure singularity. This is an application of \cite[Theorem 4.8]{CarvajalStablerFsignaturefinitemorphisms} following \cite{CarvajalSchwedeTuckerEtaleFundFsignature}.

\begin{theorem} \label{thm.CohomologicallyTameFundgropuFpure} Working in \autoref{setupTameGroups}, suppose that $X$ is $F$-pure. There exists a cover $\tilde{X}^{\circ} \to X^{\circ}$ in $\textnormal{\sf{FEt}}^{\mathrm{t},X}(X^{\circ})$ such that, for any cover $V \to \tilde{X}^{\circ}$ in $ \textnormal{\sf{FEt}}^{\mathrm{t},\tilde{X}}\bigl(\tilde{X}^{\circ}\bigr)$, its integral closure $\Spec S \to \Spec \tilde{R}$ satisfies that its restriction $V\bigl(\sp(S)\bigr) \to V\bigl(\sp(\tilde{R})\bigr)$ is trivial.
\end{theorem}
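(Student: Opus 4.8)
The plan is to build $\tilde X^\circ$ as the cover corresponding to the splitting-prime data of $X$, exactly as in the analogous statements for the \'etale fundamental group in \cite{CarvajalSchwedeTuckerEtaleFundFsignature}. Recall that $X$ being $F$-pure (and strictly local) means it has a well-defined splitting prime $\mathfrak s(R)=\mathfrak s(R,\sC_R) \subset R$, which cuts out the minimal $F$-pure center of $X$. The cohomologically tame covers in $\mathsf{FEt}^{\mathrm t,X}(X^\circ)$ correspond to finite local extensions $R \subset S$ that are \'etale over $X^\circ$ with $\Tr_{S/R}$ surjective; in particular, by \cite[Theorem C]{CarvajalStablerFsignaturefinitemorphisms}, the behaviour of splitting primes is controlled along such covers. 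The point of Theorem~\ref{thm.CohomologicallyTameFundgropuFpure} is that there is a \emph{universal} such cover $\tilde X^\circ \to X^\circ$ after which the splitting prime stops breaking into several components over it — i.e. $V(\mathfrak s(S)) \to V(\mathfrak s(\tilde R))$ is a trivial (one-to-one, degree-one) cover for every further cover $V\to\tilde X^\circ$.

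\textbf{Key steps.} First I would invoke finiteness of the relevant fundamental group: by \cite{CarvajalSchwedeTuckerEtaleFundFsignature} (combined with \cite{XuFinitenessOfFundGroups} in the quasi-\'etale setting), the fundamental group classifying $\mathsf{FEt}^{\mathrm t,X}(X^\circ)$ — or rather the relevant subquotient governing the behaviour of the splitting prime — is finite; concretely one bounds the number of geometric points of $V(\mathfrak s(S))$ over the closed point (equivalently the degree of $V(\mathfrak s(S))\to V(\mathfrak s(R))$) in terms of $1/s(R)$, using that the $F$-signature can only grow along covers where the trace is surjective, cf.\ \cite[Theorem C]{CarvajalStablerFsignaturefinitemorphisms}. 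Second, since this degree is a bounded, monotone quantity along the inverse system of covers, it attains a maximum; let $\tilde X^\circ \to X^\circ$ be a Galois cover realising it. I would then check that $\tilde X^\circ$ is well-defined (independent of choices) and lies in $\mathsf{FEt}^{\mathrm t,X}(X^\circ)$: the composite of cohomologically tame covers is cohomologically tame, and the trace being surjective is preserved, so $\tilde R$ is again $F$-pure with its own splitting prime $\mathfrak s(\tilde R)$. Third, for any further cover $V\to\tilde X^\circ$ in $\mathsf{FEt}^{\mathrm t,\tilde X}(\tilde X^\circ)$ with integral closure $\Spec S\to\Spec\tilde R$, the transformation rule forces $\mathfrak s(S)$ to be the unique prime of $S$ lying over $\mathfrak s(\tilde R)$ — otherwise $V(\mathfrak s(S))\to V(\mathfrak s(R))$ would have strictly larger degree than the maximal one, contradicting maximality — and moreover the residual/generic degree of $V(\mathfrak s(S))\to V(\mathfrak s(\tilde R))$ must be $1$, again by maximality of the corresponding invariant over $\tilde X^\circ$. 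Hence that restriction is a trivial cover.

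\textbf{Main obstacle.} The delicate point is the maximality/finiteness argument: I need the right numerical invariant attached to the splitting prime along a cohomologically tame cover that (i) is bounded above (here by $1/s(R)$, via \cite[Theorem C]{CarvajalStablerFsignaturefinitemorphisms}), and (ii) is monotone under further covers, so that ``attained maximum'' is meaningful and gives a genuine universal object rather than merely a cofinal sequence. One must also make sure the universal object is Galois and that passing to it does not destroy $F$-purity or the surjectivity of the trace — these follow from the compatibility results already recorded (e.g.\ \autoref{pro.globaltolocalFpurecenters} for the behaviour of $F$-pure centers under strict henselisation, and the fact that $\Tr$ is transitive under composition) but need to be assembled carefully. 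Everything else is a formal manipulation inside the Galois category $\mathsf{FEt}^{\mathrm t,X}(X^\circ)$ in the style of \cite[\S2.4]{CarvajalSchwedeTuckerEtaleFundFsignature}.
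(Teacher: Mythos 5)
Your strategy is essentially the paper's: use the transformation rule of \cite[Theorem C]{CarvajalStablerFsignaturefinitemorphisms} to bound the generic degree of $V\bigl(\sp(S)\bigr) \to V\bigl(\sp(R)\bigr)$ along covers in the category, then extract a universal object by formal properties of Galois categories. However, two steps as written do not go through. The first is the choice of numerical invariant: you bound the degree by $1/s(R)$, the reciprocal of the $F$-signature. The theorem only assumes $X$ is $F$-pure, and $s(R)>0$ if and only if $R$ is strongly $F$-regular; so in precisely the interesting case (a non-$F$-regular $F$-pure singularity, cf.\ \autoref{rem.HeightSplittingPrime}) you have $s(R)=0$ and the bound $1/s(R)=\infty$ is vacuous, which destroys the maximality argument. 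The correct invariant is the splitting ratio: Theorem C gives $1 \geq r(S) = \bigl[\kappa\bigl(\sp(S)\bigr):\kappa\bigl(\sp(R)\bigr)\bigr]\cdot r(R)$, and $r(R)>0$ exactly because $R$ is $F$-pure, so the generic degree is bounded by $1/r(R)<\infty$. (The citation of \cite{XuFinitenessOfFundGroups} is not needed and not relevant here.)

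The second issue is the concluding step. Maximality only tells you that the generic degree of $V\bigl(\sp(S)\bigr)\to V\bigl(\sp(\tilde R)\bigr)$ equals $1$, i.e.\ the map is finite and birational; a finite birational morphism of integral schemes need not be an isomorphism. To upgrade ``generic degree one'' to ``trivial'' you must use that quotients by splitting primes are strongly $F$-regular (\autoref{pro.OurMethodtoProvePureFRegularity}) and hence normal, so that $\tilde R/\sp(\tilde R) \subset S/\sp(S)$ is an integral extension of normal domains with the same fraction field and is therefore an equality. This is exactly the observation the paper makes at the end of its proof, and it is missing from yours.
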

\begin{proof}
Note that \cite[Theorem 4.8]{CarvajalStablerFsignaturefinitemorphisms} implies that for all connected cover $Y^{\circ} \to X^{\circ}$ in $\textnormal{\sf{FEt}}^{\mathrm{t},X}(X^{\circ})$ with integral closure $R \subset S$ we must have $1 \geq r(S) = \bigl[ \kappa\bigl(\sp(S)\bigr) : \kappa \bigl(\sp(R) \bigr) \bigr] \cdot r(R)$. In particular, we have that the generic degree of $V\bigl(\sp(S)\bigr) \to V\bigl(\sp(R)\bigr)$ is no more than $1/r(R)$. Here, we use that $R$ is $F$-pure to say $1/r(R)<\infty$. By formal properties of Galois categories (just as in \cite{CarvajalSchwedeTuckerEtaleFundFsignature}), there exists a universal cover with the required property after we notice that if the generic degree of $V\bigl(\sp(S)\bigr) \to V\bigl(\sp(R)\bigr)$ is trivial then the map itself is trivial for both $R/\sp(R)$ and $S/\sp(S)$ are strongly $F$-regular and so normal.
\end{proof}

\begin{remark} \label{rem.HeightSplittingPrime}
In \autoref{thm.CohomologicallyTameFundgropuFpure}, notice that if $\sp(R) \neq 0$ then $\sp(R)\supset \uptau(R)$. Hence, $\sp(R)$ corresponds to a singular point of $X$, for $\uptau(R)$ cuts out the non-strongly-$F$-regular locus of $X$. In particular, since $X$ is normal, $\height \sp(R) \geq 2$. In other words, either $V\bigl(\sp(R)\bigr)$ has codimension at least $2$ or $X$ is strongly $F$-regular. Hence, if $X$ is not strongly $F$-regular, $V\bigl(\sp(R)\bigr) \subset X$ has codimension $\geq 2$. Thus, \autoref{thm.CohomologicallyTameFundgropuFpure} is only interesting in higher dimensions if $X$ is a non-$F$-regular $F$-pure singularity. In a sense, this justifies next section.
\end{remark}

\subsection{Tame fundamental group of a purely $F$-regular local pair}
\label{sect.TameFunpurelyFregularpair}
In this section, we provide a study of the Galois category $\mathsf{Rev}^{P}(X^{\circ})$ for a purely $F$-regular pair $(X,P)$ that will lead to a verification of the hypothesis in \autoref{thm.MainFormal}. To this end, the following three fundamental observations \autoref{prop.EverythingTransposes}, \autoref{prop.CohTameness}, and \autoref{prop.OnlyOnePrime} about covers $(R,\fram, \kay, K) \subset (S, \fran, \el, L)$ in $\mathsf{Rev}^{P}(X^{\circ})$; as in \autoref{rem.ReductionLocalAlgebra}, are in order. 

\subsubsection{Three fundamental properties} \label{sec.ThreeFundamentalProperties} Consider the following setup.
\begin{setup} \label{setup.SetupForFundamentalPropositions}
Let $(R,\fram,\kay, K) \subset (S,\fran, \el, L)$ be  a local finite extension of normal local domains with corresponding morphism of schemes $f \: Y \to X$. Let $X^{\circ} \subset X$ be a big open (i.e., $X^{\circ}$ contains every codimension $1$ point of $X$). Assume $f \: f^{-1}(X^{\circ}) \to X^{\circ}$ to be tamely ramified with respect to a reduced divisor $D=P_1+\cdots +P_k$ with prime components $P_i=V(\p_i)$.\footnote{It does not matter whether we think of the divisors involved as divisors on $X^{\circ}$ or on $X$.}
\end{setup}

We invite the reader to look at \cite[\S 3]{CarvajalStablerFsignaturefinitemorphisms} for further details regarding transposability.

\begin{proposition}[Transposability] \label{prop.EverythingTransposes} Work in \autoref{setup.SetupForFundamentalPropositions}. Then,
$R$ is a $\Tr_{S/R}$-transposable Cartier $\sC_R^D$-module, where $\Tr_{S/R} \colon S \to R$ is the (generically induced) nonzero trace map. Moreover, $f^*\sC_R^D \subset \sC^{E}_S$ where $E$ is the reduced and effective divisor on $Y$ supported on the prime divisors whose generic point lies over the generic point of some of the $P_i$. 
\end{proposition}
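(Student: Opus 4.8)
The plan is to reduce everything to a codimension-one check at the generic points of the $P_i$, since all the structures in play — the Cartier algebra $\sC_R^D$, the trace map, and the notion of transposability (which is really a statement about compatibility of the Cartier algebra action with $\Tr_{S/R}$ reflexively on a big open) — are detected in codimension one on a normal scheme. More precisely: $R$ being $\Tr_{S/R}$-transposable as a Cartier $\sC_R^D$-module, and the inclusion $f^*\sC_R^D \subset \sC_S^E$, are both assertions about reflexive $\sO$-modules and maps between them, hence may be verified after localizing at each height-one prime of $R$ (equivalently $S$); away from $D$ the extension is étale so there is nothing to check, and the only remaining points are the $\p_i$.

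\textbf{Step 1 (Localize and normalize the local picture).} First I would fix an index $i$, localize at $\p_i$, and pass to strict henselizations using \autoref{lem.StrictHenselizationFiniteMorphism}; this replaces $R_{\p_i} \subset S_{\q}$ (for $\q$ over $\p_i$) by an extension of strictly local DVRs, which by tameness and \autoref{theo.serretameiskummer} is a Kummer extension $A \subset A[u^{1/n}]$ with $p \nmid n$. In this local model the divisor $D$ becomes $\Div(t)$ for a uniformizer $t$ of $A$, $\sC_A^D$ is the Cartier subalgebra of $p^{-e}$-linear maps $\varphi$ on $A$ with $\varphi(F^e_* t) \in tA$, and on the cover side $E$ becomes $\Div(u^{1/n})$.

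\textbf{Step 2 (Transposability at each $\p_i$).} Here I would invoke the general transposability formalism from \cite{CarvajalStablerFsignaturefinitemorphisms} (cited in the paragraph preceding the proposition): for a finite extension of normal domains that is generically étale, $R$ is always $\Tr_{S/R}$-transposable as a module over the \emph{full} Cartier algebra, and the point is to check that the $D$-compatibility condition is preserved, i.e. that $\Tr_{S/R}$ and the $p^{-e}$-linear maps in $\sC_S^E$ intertwine correctly with $\sC_R^D$ under the adjunction $\Hom_R(F^e_* R, R) \cong \Hom_S(F^e_* S, S)$ twisted by the trace. In the Kummer local model this is an explicit and standard computation: writing $S = A[v]/(v^n - u)$ and using that $\Tr_{S/A}$ sends $v^j$ to $0$ for $0 < j < n$ and $v^0 \mapsto n$, one checks directly that a generator of $\sC_{e,S}^E$ composed with $\Tr$ lands in $\sC_{e,A}^D$ and conversely, using $p \nmid n$. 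This gives both the transposability of $R$ over $\sC_R^D$ and the inclusion $f^* \sC_R^D \subset \sC_S^E$ at the generic point of each $P_i$.

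\textbf{Step 3 (Glue back up).} Away from $D$ the extension is étale, so $f^*\sC_R = \sC_S$ and transposability over the full Cartier algebra is the content of the relevant result in \cite{CarvajalStablerFsignaturefinitemorphisms}; combining with Step 2 and the fact that a containment of reflexive subsheaves of a reflexive sheaf on a normal scheme that holds in codimension one holds everywhere (using the big-open hypothesis $\codim(X \smallsetminus X^\circ) \geq 2$ and normality of $Y$), I conclude $f^*\sC_R^D \subset \sC_S^E$ globally and that $R$ is $\Tr_{S/R}$-transposable as a $\sC_R^D$-module. The main obstacle I anticipate is Step 2: making precise the interaction between the $D$-compatibility condition defining $\sC_R^D$ and the trace map — in particular keeping careful track of which divisor ($D$ versus $E$, and the ramification index $n$) appears on which side of the adjunction, and verifying that the relevant ``round-down'' of the ramification contributions behaves as expected so that no extra compatibility is lost. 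This is exactly where tameness ($p \nmid n$) is used, and it is the only genuinely non-formal input.
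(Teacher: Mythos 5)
Your overall strategy --- reduce everything to a codimension-one check at the generic points of the $P_i$ and compute there --- is sound and would work, but it is genuinely different from, and heavier than, the paper's argument. The paper does not localize at all: it invokes the criterion from \cite{CarvajalStablerFsignaturefinitemorphisms} and \cite{SchwedeTuckerTestIdealFiniteMaps} that transposability of $R$ as a $\sC_R^D$-module amounts to the effectivity of $f^*D-\Ram$, and then observes that tameness forces $\Ram=\sum_{i,j}(e_{i,j}-1)Q_{i,j}$ on the nose (no wild contribution to the different), whence $f^*D-\Ram=\sum_{i,j}Q_{i,j}=E\geq 0$; the containment $f^*\sC_R^D\subset\sC_S^{f^*D-\Ram}$ is then a standard fact. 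Your detour through strict henselization and \autoref{theo.serretameiskummer} is not needed for this: the classical fact that the different exponent of a tamely ramified extension of DVRs equals $e-1$ already computes $\Ram$ locally, and your trace computation for $A[v]/(v^n-u)$ is just an explicit instance of it. What your route buys is concreteness; what it costs is descending the computation from the strict henselization back to $S_\q$ and the reflexive glueing at the end, both of which the global divisor identity sidesteps.

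One claim in your Step 2 is false and should be removed: for a generically \'etale finite extension of normal domains, $R$ is \emph{not} in general $\Tr_{S/R}$-transposable over the full Cartier algebra $\sC_R$. Locally at a height-one prime $\q$ over $\p$, the generator of $\sC_{e,R_{\p}}$ has trivial associated divisor, and its transpose has associated divisor $-(e_{\q}-1)Q$, which is not effective as soon as $f$ ramifies at $\q$. The whole point of the proposition is that restricting to the subalgebra $\sC_R^D$ of $D$-compatible maps, whose associated divisors are $\geq D$, exactly compensates for $\Ram$ under tameness. Your subsequent Kummer computation does establish the correct statement, so the slip is not fatal to the plan, but as written it makes the first assertion of the proposition look vacuous when it is precisely the content.
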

\begin{proof}
Since $X$ and $Y$ are normal, we must prove that $f^* D - \Ram$ is effective; see \cite[\S 3]{CarvajalStablerFsignaturefinitemorphisms} and \cite[Theorem 5.7]{SchwedeTuckerTestIdealFiniteMaps}. Let $\q_{i,1},\ldots,\q_{i,n_i}$ be the prime ideals of $S$ lying over $\p_i$. Then,
\[
f^*P_i= e_{i,1} \cdot Q_{i,1} + \cdots + e_{i,n_i}\cdot Q_{i,n_i}
\]
where $Q_{i,j}$ is the divisor on $Y$ corresponding to $\q_{i,j}$, and $e_{i,j}$ is the ramification index of $f$ along $\q_{i,j}$.\footnote{That is, $e_{i,j}$ is the order of the uniformizer of $R_{\p_i}$ in the DVR $S_{\q_{i,j}}$; see \cite[\S 2.2]{SchwedeTuckerTestIdealFiniteMaps}.} Since $\p_1,\ldots, \p_k \in X$ are the only codimension-$1$ branch points, the ramification divisor $\Ram$ is supported on the primes divisor $Q_{i,j}$. Moreover, since the extension is tamely ramified (over $X^{\circ}$) with respect to $D$, we have that
\[
\Ram =\sum_{i=1}^k \sum_{j=1}^{n_i} (e_{i,j} - 1) \cdot Q_{i,j}, 
\]
using the same computation as in \cite[IV, Proposition 2.2]{Hartshorne}; see \cite[Remark 2.9]{Carvajalphdthesis}, \cf \cite[Remark 4.6]{SchwedeTuckerTestIdealFiniteMaps}. In this way, it clearly follows that
\begin{equation} \label{eqn.ConceptualRetrospective}
D^* \coloneqq f^*D-\Ram = \sum_{i=1}^k f^*P_i - \Ram =  \sum_{i=1}^k \sum_{j=1}^{n_i} Q_{i,j} \eqqcolon E \geq 0,
\end{equation}
as required. The last statement follows by recalling $f^* \sC_R^D \subset \sC_S^{D^*}$ and $D^*=E$.
\end{proof}

\begin{remark}
The importance of \autoref{prop.EverythingTransposes} is that we may apply \cite[Theorem 5.12 and Remark 5.13]{CarvajalStablerFsignaturefinitemorphisms} to the pair $(R,D)$ along the map $f$. In particular, we have the equality
\[
\Tr_{S/R}\big(f_* \uptau_{S(-E)} \big(S,f^*\sC_R^D\big)\big) = \uptau_{R(-D)}(R,D)
\]
Recall that $R(-D)=\bigcap_i R(-P_i) = \bigcap_i \p_i$ and similarly for $S(-E)$. Using this together with \autoref{rem.PureDRegularPairAdjointIdeal}, we may obtain direct proofs of \autoref{prop.CohTameness} and \autoref{prop.OnlyOnePrime} below. Indeed, if $(R,D)$ is purely $F$-regular along $R(-D)$, then $\uptau_{R(-D)}(R,D) = R$  and so $\Tr_{S/R}$ is surjective. Since $\Tr_{S/R}(\fran) \subset \fram$; see \cite[Lemma 2.10]{CarvajalSchwedeTuckerEtaleFundFsignature}, \cite[Lemma 9]{SpeyerFrobeniusSplit}, we have that in that case $S=\uptau_{S(-E)}(S,f^*\sC_R^P) \subset \uptau_{S(-E)}(S,E)$. In other words, the pair $(S,E)$ is purely $F$-regular along $E$. In particular, the reduced scheme supporting $E$ must have strongly $F$-regular singularities and so must be normal. Therefore, the irreducible components $Q_1, \ldots , Q_k$ cannot intersect pairwise. Nevertheless, $S$ being local, these components intersect at the closed point. Consequently, $E$ must have exactly one irreducible component $E=Q$. Nonetheless, we will provide below proofs for these statement using splitting primes. These proofs are more elementary than \cite[Theorem 5.12]{CarvajalStablerFsignaturefinitemorphisms} and the authors believe this approach is valuable in its own right. 
\end{remark}

\begin{proposition}[{Cohomological tameness, \cf \cite{KerzSchmidtOnDifferentNotionsOfTameness}}] \label{prop.CohTameness}
 Work in \autoref{setup.SetupForFundamentalPropositions}. Suppose that $k=1$ and $(X,D=P)$ is purely $F$-regular. Then, the extension $R \subset S$ is cohomologically tame, i.e. $\Tr_{S/R}\: S \to R$ is surjective. Thus, $ \el/\kay$ is separable and $[\el:\kay]$ divides $[L:K]$. Furthermore, if $\el/\kay$ is trivial, then $p$ does not divide $[L:K]$.
\end{proposition}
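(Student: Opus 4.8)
The plan is to deduce cohomological tameness directly from the transposability statement of \autoref{prop.EverythingTransposes} together with the behavior of adjoint ideals (test ideals of pairs) under finite covers. First I would invoke \autoref{prop.EverythingTransposes}: since $(R,P)$ is purely $F$-regular along $P$, \autoref{rem.PureDRegularPairAdjointIdeal} gives $\uptau_{\p}(R,P)=R$, equivalently $\uptau_{R(-P)}(R,P)=R$ (recall $R(-P)=\p$). Now apply \cite[Theorem 6.12 and Remark 6.12]{CarvajalStablerFsignaturefinitemorphisms} to the pair $(R,P)$ along $f\: Y\to X$; transposability of $R$ as a Cartier $\sC_R^P$-module is exactly what is needed to conclude
\[
\Tr_{S/R}\bigl(f_* \uptau_{S(-E)}\bigl(S,f^*\sC_R^P\bigr)\bigr) = \uptau_{R(-P)}(R,P) = R,
\]
where $E$ is the reduced divisor on $Y$ supported on the primes lying over $\p$. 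Since the left-hand side is contained in $\Tr_{S/R}(S)\subseteq R$, we get $\Tr_{S/R}(S)=R$, i.e. the trace map is surjective. This is the content of cohomological tameness.

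Next I would derive the stated consequences. Surjectivity of $\Tr_{S/R}$ forces $R\subset S$ to be generically separable (otherwise the trace form vanishes identically and $\Tr_{S/R}=0$), so $L/K$ is a finite separable extension. For the residue field statement, recall $\Tr_{S/R}(\fran)\subseteq\fram$ by \cite[Lemma 2.10]{CarvajalSchwedeTuckerEtaleFundFsignature}, so $\Tr_{S/R}$ descends to a nonzero $\kay$-linear map $\overline{\Tr}\: S/\fran \to R/\fram$, i.e. $\el\to\kay$, which is in fact (a scalar multiple of) the field trace $\Tr_{\el/\kay}$ up to the unramified-versus-ramified bookkeeping; its nonvanishing means the trace form of $\el/\kay$ is nondegenerate, hence $\el/\kay$ is separable. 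For divisibility, localize at $\p$: the DVR extension $R_\p\subset S_{\q}$ (for any $\q$ over $\p$) has $[L:K]=n\cdot e\cdot f$ by \autoref{pro.nefFormular}, where $f=[\el:\kay]$; in particular $f\mid[L:K]$.

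Finally, for the ``furthermore'' clause: if $\el/\kay$ is trivial, then $f=1$, so $[L:K]=n\cdot e$ where $e$ is the common ramification index of $R_\p\subset S_{\q_i}$. Tameness with respect to $P$ means precisely that $p\nmid e$; combined with $f=1$ the only potential $p$-divisibility of $[L:K]$ could come from $n$, the number of primes over $\p$. But here the extra input of pure $F$-regularity kicks in: by the proof sketched in the \emph{Remark} following \autoref{prop.EverythingTransposes} (applied with $k=1$), transposability plus $\uptau_{\p}(R,P)=R$ yields that $(S,E)$ is purely $F$-regular along $E$, so the reduced support of $E$—which is $\bigl(f^{-1}(P)\bigr)_{\mathrm{red}}$—has strongly $F$-regular, hence normal, singularities; since $S$ is local the components of $E$ all meet at the closed point, forcing $E$ irreducible, i.e. $n=1$. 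Therefore $[L:K]=e$ and $p\nmid[L:K]$.

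\textbf{Main obstacle.} The delicate point is getting the residue-field trace to be genuinely nonzero/nondegenerate: surjectivity of $\Tr_{S/R}$ on the nose does not \emph{a priori} survive reduction modulo the maximal ideals in a totally transparent way when $e>1$, so one must argue carefully—e.g. passing to $R_\p^{\mathrm{sh}}$ via \autoref{lem.StrictHenselizationFiniteMorphism} and using that in the strictly henselian tame case the extension of DVRs is visibly Kummer (by \autoref{theo.serretameiskummer}), where separability of the residue extension is automatic since the residue field is separably closed. I expect this reduction to the strictly local case, rather than the algebra of adjoint ideals, to be where the real work lies; the transposability/test-ideal input is essentially a citation.
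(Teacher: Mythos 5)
Your first step---surjectivity of $\Tr_{S/R}$ via the transformation rule $\Tr_{S/R}\bigl(f_*\uptau_{S(-E)}(S,f^*\sC_R^P)\bigr)=\uptau_{R(-P)}(R,P)=R$---is sound, and it is precisely the alternative route the paper itself sketches in the remark following \autoref{prop.EverythingTransposes}. The paper's actual proof is more elementary: $\Tr_{S/R}(S)$ is a nonzero $\sC_R^P$-compatible ideal, hence, if proper, it is contained in the splitting prime $R(-P)$; by duality this would give $S(f^*P)\subset S(\Ram)$, i.e. $\Ram-f^*P\geq 0$, forcing $E=0$, which is absurd. Either argument is acceptable for this part; the paper's buys independence from the heavier machinery of \cite[Theorem 6.12]{CarvajalStablerFsignaturefinitemorphisms}.

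The remainder of your argument, however, rests on a false identification. The quantity $f$ in \autoref{pro.nefFormular} is the inertia degree $[\kappa(\q):\kappa(\p)]$ of the DVR extension $R_\p\subset S_\q$, i.e. the degree of the extension of the function fields of $Q$ over $P$; it is \emph{not} $[\el:\kay]=[S/\fran:R/\fram]$, the residue field extension at the closed points. (Moreover, \autoref{pro.nefFormular} assumes $L/K$ Galois, which is not part of \autoref{setup.SetupForFundamentalPropositions}.) Consequently your proofs of ``$[\el:\kay]$ divides $[L:K]$'' and of the final clause do not go through. Your separability argument is also incomplete, as you yourself flag: a nonzero $\kay$-linear functional $\el\to\kay$ exists for \emph{every} finite extension, separable or not, so nonvanishing of $\overline{\Tr}$ proves nothing by itself. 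The paper disposes of both separability and divisibility by citing \cite[Proposition 3.17]{Carvajalphdthesis} (cf. \cite[Lemma 2.15]{CarvajalSchwedeTuckerEtaleFundFsignature}), applied to a surjective $\varphi\in\sC^P_{e,R}=\bigl(\sC^P_{e,R}\bigr)^{\top}$ for $e\gg 0$; some such input is genuinely needed here. Finally, the last clause has a one-line proof you missed: since $\Tr_{S/R}(\fran)\subset\fram$ and $\Tr_{S/R}$ is surjective, the induced map $\overline{\Tr}\colon\el\to\kay$ is nonzero, and when $\el=\kay$ it sends $1$ to $\Tr_{S/R}(1)=[L:K]$, so $[L:K]\neq 0$ in $\kay$, i.e. $p\nmid[L:K]$. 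Your detour through $n=1$ (which anyway imports the content of \autoref{prop.OnlyOnePrime}) is both unnecessary and, because of the misidentification above, unsound: even granting $n=1$ and $p\nmid e$, one would still need $p\nmid[\kappa(\q):\kappa(\p)]$, which tameness alone does not give.
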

\begin{proof}
 Let $E = Q_1 + \cdots +Q_n$ as in \autoref{prop.EverythingTransposes} and recall that $E=f^*D-\Ram \geq 0$. Notice that $S$ is $f^*\sC_R^P$-compatible, so $\Tr_{S/R}(S)$ is a nonzero $\sC_R^P$-compatible ideal as $\varphi \circ F^e_*\Tr_{S/R} = \Tr_{S/R} \circ \varphi^{\top}$ for all $\varphi \in \sC_{e,R}^P$. If $\Tr_{S/R}(S) \subsetneq R$, then $\Tr_{S/R}(S)$ must be contained in the ideal $R(-P)$---the splitting prime of $\sC_R^P$ by hypothesis. In other words,
\[
\Tr_{S/R} \in \Hom_R\big(S, R(-P)\big) = \Hom_R\big(S \otimes_R R(P), R\big) = \Hom_R\big(S(f^*P), R\big),
\]
which implies $S(f^*P) \subset S(\Ram)$ and so $\Ram -f^*P \geq 0$. Thus, $E=0$, which is absurd. 

For the statements regarding $\kay \subset \el$, use \cite[Proposition 3.17]{Carvajalphdthesis}, \cf \cite[Lemma 2.15]{CarvajalSchwedeTuckerEtaleFundFsignature}, with $\Delta = \Delta_{\varphi}$ and where $\varphi$ is taken to be any surjective map in $\sC_{e,R}^{P} = \bigl(\sC_{e,R}^{P} \bigr)^{\top}$ 
for $e \gg 0$. Note such a map $\varphi$ exists because $\mathfrak{p} \neq R$, \ie $(R,P)$ is $F$-pure. Notice $\sC_{e,R}^{P} = \bigl(\sC_{e,R}^{P} \bigr)^{\top}$ was demonstrated in \autoref{prop.EverythingTransposes}.

For the final statement, since $\Tr_{S/R}(\fran) \subset \fram$, there is an induced $\kay$-linear map $\overline{\Tr}\: \el \to \kay$, which is nonzero if $\Tr \: S \to R$ is surjective. However, if $\el/\kay$ is trivial then $\overline{\Tr}$ corresponds to the $\kay$-linear map $\kay \to \kay$ given by multiplication-by-$[L:K]$. In other words, $[L:K]$ is not zero as an element of $\kay$ and so it is prime to $p$ as a natural number.
\end{proof}

\begin{theorem}[Only one prime lying over] \label{prop.OnlyOnePrime} 
 Work in \autoref{setup.SetupForFundamentalPropositions}. Suppose that $k=1$ and $(X,D=P)$ is purely $F$-regular. The splitting prime $\q\coloneqq \sp\big(S,f^* \sC_R^{P}\big)$ is the one and only one prime of $S$ lying over $\p \coloneqq R(-P)$. Moreover, $(Y,Q)$ is purely $F$-regular where $Q = V(\p)$. 
\end{theorem}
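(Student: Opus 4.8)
The plan is to run the splitting-prime analogue of the test-ideal argument sketched in the remark following \autoref{prop.EverythingTransposes}. Write $\q_1,\dots,\q_n$ for the (necessarily height-one) primes of $S$ lying over $\p=R(-P)$, so that $E=Q_1+\dots+Q_n$ and $S(-E)=\q_1\cap\dots\cap\q_n=\sqrt{\p S}$ in the notation of \autoref{prop.EverythingTransposes}. First I would observe that $\bigl(S,f^{*}\sC_R^{P}\bigr)$ is $F$-pure: by \autoref{prop.CohTameness} the trace $\Tr_{S/R}\colon S\to R$ is surjective, and if $\varphi\in\sC_{e,R}^{P}$ is a surjective map (one exists because $(R,P)$ is $F$-pure), then by \autoref{prop.EverythingTransposes} its transpose $\varphi^{\top}\in f^{*}\sC_R^{P}\subseteq\sC_S^{E}$ satisfies $\Tr_{S/R}\circ\varphi^{\top}=\varphi\circ F^{e}_{*}\Tr_{S/R}$; the right-hand side is surjective, and since $\Tr_{S/R}(\fran)\subseteq\fram$ this forces $\varphi^{\top}$ to be surjective. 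Hence $\q\coloneqq\sp\bigl(S,f^{*}\sC_R^{P}\bigr)$ is a well-defined prime ideal, namely the largest proper $f^{*}\sC_R^{P}$-compatible ideal of $S$.

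Next, since $E$ is reduced and $f^{*}\sC_R^{P}\subseteq\sC_S^{E}$ by \autoref{prop.EverythingTransposes}, the radical ideal $S(-E)$ is $f^{*}\sC_R^{P}$-compatible, and it is a proper ideal because $n\geq 1$. Using the standard fact that the minimal primes of a Frobenius-compatible radical ideal are again compatible, each $\q_j$ is $f^{*}\sC_R^{P}$-compatible, so $\q_j\subseteq\q$ for all $j$, and also $S(-E)\subseteq\q$. It therefore suffices to prove that $\q$ has height one: indeed, $\q\supseteq\q_j$ with both primes of height one forces $\q=\q_j$ for every $j$, so $n=1$ and $\q=\q_1$ is the unique prime of $S$ over $\p$.

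To pin down $\height\q=1$ I would transfer the pair $(R,P)$ across the trace. Because $S(-E)$ is $f^{*}\sC_R^{P}$-compatible, the identity $\Tr_{S/R}\circ\varphi^{\top}=\varphi\circ F^{e}_{*}\Tr_{S/R}$ shows $\Tr_{S/R}\bigl(S(-E)\bigr)$ is $\sC_R^{P}$-compatible; it lies in $\Tr_{S/R}(\fran)\subseteq\fram$, hence in $\sp(R,\sC_R^{P})=\p=R(-P)$. Thus $\Tr_{S/R}$ descends to a surjective $R$-linear map $\overline{\Tr}\colon S/S(-E)\to R/\p$ (surjective because $\Tr_{S/R}(S)=R$), which is moreover a morphism of Cartier modules for the induced actions of $\overline{f^{*}\sC_R^{P}}$ and $\overline{\sC_R^{P}}$. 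By \autoref{pro.OurMethodtoProvePureFRegularity} the target $R/\p$ is a \emph{simple} $\overline{\sC_R^{P}}$-module, so $\ker\overline{\Tr}$ is a maximal proper Cartier submodule of $S/S(-E)$; since $\bigl(S/S(-E),\overline{f^{*}\sC_R^{P}}\bigr)$ is $F$-pure (a surjective Cartier map on $S$ descends modulo the compatible ideal $S(-E)$) and $S(-E)\subseteq\q$, we conclude $\ker\overline{\Tr}=\sp\bigl(S/S(-E),\overline{f^{*}\sC_R^{P}}\bigr)=\q/S(-E)$. Hence $S/\q\cong R/\p$ as $R$-modules, which gives $\q\cap R=\p$ and $\dim S/\q=\dim R/\p=\dim R-1=\dim S-1$, so $\height\q=1$. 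By the previous paragraph this yields $n=1$ and $\q=\q_1$. Finally, with $Q=V(\q)$ now a prime divisor on $Y$ we have $f^{*}\sC_R^{P}\subseteq\sC_S^{E}=\sC_S^{Q}$, and $S(-Q)$ is $\sC_S^{Q}$-compatible, so $S(-Q)\subseteq\sp\bigl(S,\sC_S^{Q}\bigr)\subseteq\sp\bigl(S,f^{*}\sC_R^{P}\bigr)=\q=S(-Q)$; thus $\sp\bigl(S,\sC_S^{Q}\bigr)=\q$, i.e.\ $Q$ is a minimal $F$-pure center prime divisor and $(Y,Q)$ is purely $F$-regular by \autoref{def.PurelyFRegularPair} (see \autoref{rem.PureDRegularPairAdjointIdeal}).

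The step I expect to be the main obstacle is the trace-transfer in the third paragraph: setting up $\overline{\Tr}$ correctly as a surjection of Cartier modules and extracting $\ker\overline{\Tr}=\q/S(-E)$ from simplicity of $R/\p$ and the quotient description of splitting primes, together with the (standard, but not entirely trivial) compatibility of the minimal primes of $S(-E)$. I note that in the special case where $L/K$ is Galois there is a quicker route to $n=1$ once $\q$ is identified with one of the $\q_j$: the subalgebra $f^{*}\sC_R^{P}\subseteq\sC_S$ is $\Gal(L/K)$-stable, hence so is $\q$, while $\Gal(L/K)$ acts transitively on $\{\q_1,\dots,\q_n\}$, so the two facts together force $n=1$.
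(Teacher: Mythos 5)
Your first two paragraphs and your last paragraph are sound, and in fact your route to the containments $\q_j\subseteq\q$ (compatibility of $S(-E)$ with $f^*\sC_R^P\subseteq\sC_S^E$, hence of its minimal primes, hence $\q_j\subseteq\q$ for all $j$) is a clean alternative to what the paper does: the paper instead reduces to the Galois case, proves $\Tr_{S/R}(\q')\subseteq\p$ for \emph{every} prime $\q'$ over $\p$ by averaging over $\Gal(L/K)$, and deduces $\q'\subseteq\q$ element-wise from the definition of the splitting prime. The problem is your third paragraph. The identification $\ker\overline{\Tr}=\sp\bigl(S/S(-E),\overline{f^*\sC_R^P}\bigr)=\q/S(-E)$ is not justified: $\Tr_{S/R}$ is only $R$-linear, so $\ker\overline{\Tr}$ is an $R$-submodule of $S/S(-E)$ but not an $S/S(-E)$-submodule (already for an extension of DVRs with nontrivial residue extension, $\ker\Tr$ is not an ideal). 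Hence it does not live in the poset of $\overline{f^*\sC_R^P}$-compatible ideals in which $\q/S(-E)$ is the unique maximal element, and the "maximal meets largest" argument collapses. Worse, the conclusion you extract, $S/\q\cong R/\p$ as $R$-modules, is genuinely false in general: by the transformation rule \cite[Theorem C]{CarvajalStablerFsignaturefinitemorphisms} one has $r(S,Q)=[\kappa(\q):\kappa(\p)]\cdot r(R,P)$ with $[\kappa(\q):\kappa(\p)]$ possibly $>1$ (e.g.\ the Veronese cover $\tilde R$ of \autoref{exa.weightedhypersurface}, where the inertia degree is $n$), so $S/\q$ is generically of rank $[\kappa(\q):\kappa(\p)]$ over $R/\p$ and need not be isomorphic to it. This is exactly the phenomenon recorded in \autoref{lem.Lemma(a')} and \autoref{rem.NatureCategory}.

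The repair is small, because you only need the containment, not the equality. Run the same compatibility argument you used for $S(-E)$ on $\q$ itself: $\Tr_{S/R}(\q)$ is a $\sC_R^P$-compatible ideal contained in $\Tr_{S/R}(\fran)\subseteq\fram$, hence proper, hence contained in $\sp(R,\sC_R^P)=\p$. This gives a surjection of $R$-modules $S/\q\twoheadrightarrow R/\p$, whence $\dim S/\q\geq\dim R/\p=\dim S-1$ and so $\height\q\leq 1$ (using that $F$-finite rings are excellent, hence catenary). Combined with $\q\supseteq\q_j$ and $\height\q_j=1$ this forces $\height\q=1$ and $\q=\q_j$ for every $j$, i.e.\ $n=1$; one also gets $\q\cap R=\p$ since $\q\cap R$ is a nonzero prime contained in $\p$ by the lying-over/incomparability picture (or directly from $\q=\q_1$). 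With that fixed, your closing argument that $\sp(S,\sC_S^Q)=S(-Q)$, and hence that $(Y,Q)$ is purely $F$-regular, goes through and is a nice elementary substitute for the paper's appeal to \cite[Theorem 6.12]{CarvajalStablerFsignaturefinitemorphisms}.
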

\begin{proof}
First, $\q$ is well-defined by \cite[Theorem 4.8]{CarvajalStablerFsignaturefinitemorphisms}, \autoref{prop.EverythingTransposes}, and \autoref{prop.CohTameness}.\footnote{The other two conditions are always satisfied. The $S$-linear map $S \to \omega_{S/R}$; $1 \mapsto \Tr_{S/R}$, is generically an isomorphism as $L/K$ is separable. The condition $\Tr_{S/R}(\fran) \subset \fram$ holds as in \cite[Lemma 2.10]{CarvajalSchwedeTuckerEtaleFundFsignature}.} Next, we prove $\q$ is unique in lying over $\p$. We may pass to a cover of $S$ in proving this and may therefore assume that $f$ is generically Galois by \cite[Lemma 2.2.6]{GrothendieckMurreTameFundamentalGroup}. Let $\q'$ be a prime of $S$ lying over $\p$, \ie $\q' \cap R = \p$. 
It suffices to prove $\q' \subset \q$; see \cite[Corollary 5.9]{AtiyahMacdonald}. For this, we use that $\q$ is a splitting prime ideal and the corresponding definition; see \cite[\S 2.3.2]{CarvajalStablerFsignaturefinitemorphisms} for the definition. It suffices to show $\varphi^{\top}(F^e_* \q') \subset \fran$ for all $\varphi\in \sC_{e,R}^{D}$ and $e>0$ as the right $S$-span of $\big\{\varphi^{\top} \bigm| \varphi \in \sC_{e,R}^{D} \big\}$ is $f^* \sC_{e,R}^{D}$; see \cite[Remark 2.15 (c)]{CarvajalStablerFsignaturefinitemorphisms}.

\begin{claim}
$\Tr_{S/R}(\q') \subset \p$.
\end{claim}
\begin{proof}[Proof of claim]
This has been shown for $\q$ in the proof of \cite[Theorem 4.8]{CarvajalStablerFsignaturefinitemorphisms}; see \cite[Equation 4.8.2]{CarvajalStablerFsignaturefinitemorphisms}. We use the symmetry imposed by the Galois condition to induce this property to the other (possible) primes lying over $\p$. Concretely, we have that $\textnormal{Gal}(L/K)$ acts transitively on the set of primes lying over $\p$ \cite[\href{http://stacks.math.columbia.edu/tag/09EA}{Lemma 09EA}, or \href{https://stacks.math.columbia.edu/tag/0BRI}{Lemma 0BRI}]{stacks-project}---although faithfulness might be lost due to ramification. Hence, if a prime is mapped into $\p$ by $\Tr_{S/R}$ then so are its Galois conjugates, for $\Tr_{S/R}(x) = \sum_{\sigma \in \textnormal{Gal}(L/K)}{\sigma(x)}$ for all $x \in S$.
\end{proof}
For all $\varphi \in \sC_{e,R}^D$, it follows that
\[
\Tr_{S/R}\big(\varphi^{\top}(F^e_* \q')\big) = \varphi\big( F^e_*\Tr_{S/R} ( \q') \big) \subset \varphi (F^e_* \p ) \subset \p,
\]
where the last containment follows from $\p$ being the splitting prime of $\sC_R^D$. In other words, $\varphi^{\top}(F^e_* \q') \subset \Tr_{S/R}^{-1}(\p) \subsetneq S$ (as $\Tr_{S/R}$ is surjective by \autoref{prop.CohTameness}). Since $\varphi^{\top}(F^e_* \q')$ is an $S$-module, it must be contained in $\mathfrak{n}$, which was to be shown. Finally, the pair $(Y,Q)$ is purely $F$-regular by \cite[Theorem 6.12, Remark 5.15]{CarvajalStablerFsignaturefinitemorphisms} and \autoref{rem.PureDRegularPairAdjointIdeal}.
\end{proof}

\begin{example}
Let $R$ and $f$ be as in \autoref{ex.Smoothambient}. A direct consequence of \autoref{prop.OnlyOnePrime} establishes that if $L$ is a finite separable extension over $K$---the fraction field of $R$---then there is one and only one DVR of $L$ lying over $R_{(f)}$ if: $R \subset R^{L}$ is tamely ramified with respect to $\Div f$ and $R/f$ is strongly $F$-regular. This does not hold without assuming $R/f$ is strongly $F$-regular (i.e. $(R,\Div f)$ is purely $F$-regular). Indeed, consider the cusp \autoref{ex.TheCusp}. In this case, the singularities of $R/f$ are not even $F$-pure. One may still wonder if $F$-purity of $R/f$ may suffice. To see this is not the case, we may get back to the Whitney's umbrella \autoref{ex.WhitneyUmbrella}. Indeed; with notation as in \autoref{ex.WhitneyUmbrella}, we specialize to $R=\kay \llbracket x,y,z \rrbracket$ with $\kay$ an algebraically closed field of odd characteristic. In \cite[\S4.3.3]{BlickleSchwedeTuckerFSigPairs1}, it is shown that $R/f$ is $F$-pure yet not strongly $F$-regular. In fact, it is proven that $\sp(R,\Div f) = (x,y) \supsetneq (f)$.\footnote{In particular, $(R/f,(x,y)/f)$ is a purely $F$-regular pair where $R/f$ is not normal---this is the counterexample \autoref{rem.SFRAmbient} is referring to.}  Considering $R' \coloneqq R_{(x,y)}$, $(R',f)$ is a counterexample where $R'/f$ is a (non-normal) $F$-pure ring. The authors are unaware of a counterexample $(R,f)$ where $R/f$ is normal and $F$-pure.
\end{example}

\begin{example} \label{ex.MultipleComponents}
There are interesting instances of multiple components pairs $(X,P_1 + \cdots + P_k)$ where $(X,P_i)$ are all purely $F$-regular. For example, we may consider $X = \Spec R$ with $R$ as in either \autoref{ex.P1crossP1} or \autoref{ex.P2crossP2}. With $R$ as in \autoref{ex.P1crossP1}, we may let $R(-Q)=\q= (x,w)$. By symmetry on the variables, $(X,Q)$ is a purely $F$-regular pair as well and moreover $\Div x = P+Q$. We may also consider $\p'=(y,z)$, $\q'=(y,w)$, $P'=V(\p')$, and $Q'=V(\q')$, which all define purely $F$-regular pairs on $X$ as well. In fact,  $\Div xy =  P+Q+P'+Q' = \Div zw$. Thus, $(X,\Div x)$ or $(X,\Div xy)$ are example where the aforementioned setup holds. Similarly, we may let $X=\Spec R$ with $R$ as in \autoref{ex.P2crossP2}. Then, if we consider $R(-Q)=\q\coloneqq (x,y,z)$, by symmetry, $(X,Q)$ is a purely $F$-regular pair as well and  $P+Q= \Div ux = \Div vy = \Div wz$.\footnote{Indeed, one verifies that the ideal of $R$ generated by $u$ is the quotient of the ideal $(u, \Delta_1, \Delta_2, \Delta_1) = (u,vx,wx,\Delta_1)=(u,v,w) \cap (u,x,\Delta_1)$ of $A$, where the latter is a prime decomposition.} Thus, $(X,\Div ux)$ is another example. In any of these examples $(X,\Div f)$, we wonder what the structure of $\mathsf{Rev}^{\Div f}(X)$ is.
\end{example}

\subsubsection{Main Theorem}

With \autoref{sec.ThreeFundamentalProperties} in place, we are ready to establish our main result. First, we make the following observation.

\begin{remark} \label{rem.NatureCategory}
An interesting, conceptual consequence of \autoref{prop.EverythingTransposes}, \autoref{prop.CohTameness}, and \autoref{prop.OnlyOnePrime} is that we may think of the objects in the Galois category $\mathsf{Rev}^{P}(X^{\circ})$ as quintuples $(S, \fran, \el, L, Q)$ where $Q$ is a prime divisor minimal center of $F$-purity, which corresponds to the only height-$1$ prime divisor lying over $\p$; namely, the splitting prime of both $f^* \sC_R^{P} \subset \sC_S^{Q}$, say $\mathfrak{q}$. In particular, $(S,Q)$ is a purely $F$-regular pair. Applying \cite[Theorem 4.8]{CarvajalStablerFsignaturefinitemorphisms}  (note that its assumptions are verified by \autoref{prop.CohTameness}), we have
\[
1 \geq r(S,Q)=[\kappa(\q):\kappa(\p)] \cdot r(R,P)>0.
\]
Hence, $ [\kappa(\q):\kappa(\p)] \leq 1/r(R,P)$. In retrospective, we also see that $Q$ happens to be the divisor $P^* = f^* P - \Ram$ in \autoref{eqn.ConceptualRetrospective}.
\end{remark}

\begin{theorem} \label{thm.TameApplication}
Work in \autoref{setupTameGroups} and suppose that $(X,P)$ is a purely $F$-regular pair. Then, $\mathsf{Rev}^{P}(X^{\circ})$ is $P$-irreducible and has both inertial boundedness and tameness. In particular, there exists an exact sequence of topological groups
\begin{equation*}
 \hat{\Z}^{(p)} \to \pi_1^{\textnormal{t},P}(X^{\circ}) \to \pi^{P}_{\mathrm{1,\acute{e}t}}(X^{\circ}) \to 1,
\end{equation*}
where $\pi^{P}_{\mathrm{1,\acute{e}t}}(X^{\circ})$ is a finite group of order at most $\mathrm{min}\bigl\{1\big/r(R,P),1/s(R)\bigr\}$ and prime-to-$p$. Furthermore, if $P$ is a prime-to-$p$ torsion element of $\Cl X$, the homomorphism $\hat{\Z}^{(p)} \to \pi_1^{\textnormal{t},P}(X^{\circ})$ is injective and so we have a short exact sequence
\begin{equation*} 
 0 \to \hat{\Z}^{(p)} \to \pi_1^{\textnormal{t},P}(X^{\circ}) \to \pi^{P}_{\mathrm{1,\acute{e}t}}(X^{\circ}) \to 1,
\end{equation*}
which splits if and only if $P$ is the trivial element of $\Cl X$. If $P$ is nontorsion, we have a short exact sequence
\[
0 \to \varprojlim_{n \in N^P(X^{\circ})} \bZ/n\bZ \to \pi_1^{\textnormal{t},P}(X^{\circ}) \to \pi^{P}_{\mathrm{1,\acute{e}t}}(X^{\circ}) \to 1
\]
which splits if and only if $N^P(X^{\circ})= M^P(X^{\circ})$ and there is a compatible system $\{\frac{1}{n} P \in \Cl X\}_{ n \in M^P(X^{\circ})}$ of factors of $P$.
\end{theorem}
\begin{proof}
This is an application of \autoref{thm.MainFormal} and \autoref{sec.ThreeFundamentalProperties}, \cf \autoref{rem.NatureCategory}. Indeed, $P$-irreducibility holds by \autoref{prop.OnlyOnePrime}. Inertial boundedness was explained in \autoref{rem.NatureCategory} whereas inertial tameness follows from \autoref{prop.CohTameness}. For the statements regarding the order of $G$, recall that it is realized as the Galois group of a universal \'etale-over-$P$ cover $\tilde{X}^{\circ} \to X^{\circ}$. In particular, its generic degree equals $[\kappa(\tilde{\p}): \kappa(\p)]$, which is bounded by both $1\big/r(R,P)$ and $1/s(R)$ (for the latter bound simply use \cite[Theorem 3.11]{CarvajalSchwedeTuckerEtaleFundFsignature}).
\end{proof}

\begin{remark}
It is an important folklore conjecture that the divisor class group of a strongly $F$-regular singularity is finitely generated. We were taught about this question by Karl~Schwede but believe that it was originally raised by Melvin~Hochster. For more, see \cite{PolstraATheoremAboutMCMM}. It is known that the torsion subgroup is finite in this case; see \cite{PolstraATheoremAboutMCMM,MArtinNumberOfTorsionDivisorsinSFRrings}. Finite generation of the class group is known to be true in dimension $\leq 3$; see \cite{CRStaeblerKollarFundamentalGroups}. Whenever finite generation of the class group is known, we may improve upon \autoref{thm.TameApplication} to say that $\pi_1^{\textnormal{t},P}(X^{\circ}) \in \Ext(G,\bZ/n\bZ)$ for some $n \gg 0$ (in particular finite) and it is a trivial extension if $P$ is $n$-divisible in $\Cl X$ with $\frac{1}{n} \cdot P$ Cartier on $U$.
\end{remark}

\begin{remark}
In \autoref{thm.TameApplication}, if $X^{\circ} = X_{\mathrm{reg}}$, we may replace $\pi^{P}_{\mathrm{1,\acute{e}t}}(X^{\circ})$ with $G=\pi_1^{\mathrm{\acute{e}t}}(X_{\mathrm{reg}})$ by \autoref{pro.RoleOfRegularLocus}. By \cite[Corollary 1.2]{TaylorInversionOfAdjuntionFSignature}, $\mathrm{min}\bigl\{1\big/r\big(R,P\big),1/s(R)\bigr\} = 1/s(R)$ if $P$ is prime-to-$p$ torsion in $\Cl X$.
\end{remark}

\begin{corollary} \label{cor.Purity}
Let $f:Y \to X$ be a quasi-\'etale cover. If there is a divisor $\Delta$ on $X$ such that $(X,\Delta)$ is purely $F$-regular and $r(\sO_{X,x},\Delta)>1/2$ for all $x\in X$, then $f$ is \'etale.
\end{corollary}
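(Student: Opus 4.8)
The plan is to reduce to the local case and then apply Theorem A (\autoref{thm.TameApplication}) together with the $F$-signature bound on quasi-\'etale covers. First I would localize: \'etaleness is a local condition, so it suffices to show that for every $x \in X$ the base change $\Spec \sO_{Y,y} \to \Spec \sO_{X,x}$ is \'etale over every point $y$ lying over $x$, and by passing to strict henselizations we may assume $(X,\Delta)$ is as in \autoref{setupTameGroups} with $R$ strictly local. Note that a quasi-\'etale cover is in particular \'etale in codimension $1$, hence \'etale away from $P \coloneqq \lfloor \Delta \rfloor$; to be careful I should first arrange, as in the passage preceding \autoref{sec.LastSectionSetup2}, that $\lfloor \Delta \rfloor$ is a single prime divisor $P$ through $x$, and observe that $(X,P)$ is then purely $F$-regular (since $\uptau_{\p}(R,\Delta) = R$ forces $\uptau_{\p}(R,P) = R$ because $\sC_R^\Delta \subset \sC_R^P$). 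So $f$ lies in $\mathsf{Rev}^{P}(X^{\circ})$ as an \'etale-over-$P$, i.e. quasi-\'etale, cover.

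Next I would invoke \autoref{thm.TameApplication}: since $(X,P)$ is purely $F$-regular, the group $G = \pi^{P}_{1,\mathrm{\acute{e}t}}(X^{\circ})$ classifying quasi-\'etale covers has order at most $\min\{1/r(R,P), 1/s(R)\}$. By hypothesis $r(\sO_{X,x},\Delta) > 1/2$, and — again using $\sC_R^\Delta \subset \sC_R^P$ together with \autoref{pro.OurMethodtoProvePureFRegularity} (the splitting ratio is the $F$-signature of $R/\p$ with respect to the respective Cartier subalgebra, and enlarging the Cartier algebra can only enlarge this $F$-signature) — we get $r(R,P) \geq r(R,\Delta) > 1/2$, so $1/r(R,P) < 2$, whence $|G| = 1$. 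Therefore the universal quasi-\'etale cover $\tilde X^\circ \to X^\circ$ is trivial, which says precisely that $R \subset S$ is already \'etale (the extension is trivial over $P$, and it was already \'etale over $X^\circ \smallsetminus P$). Unwinding the localization, $f$ is \'etale.

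The one point that needs a little care — and which I'd flag as the main obstacle — is the comparison $r(R,P) \geq r(R,\Delta)$ when $\Delta \neq P$: one must check that the minimal $F$-pure center of $\sC_R^P$ still has height $1$ (equivalently, $(R,P)$ is genuinely purely $F$-regular, not just that $\sp(R, \sC_R^P)$ could a priori jump), and that the induced Cartier-algebra action on $R/\p$ for $\sC_R^\Delta$ is a subalgebra of that for $\sC_R^P$ so that \cite[Theorem 4.20]{BlickleSchwedeTuckerFSigPairs1}-type monotonicity of $F$-signatures applies. This is where one uses that $\Delta$ has coefficients $\leq 1$ with $\lfloor\Delta\rfloor = P$, so the non-$P$ part only shrinks the Cartier algebra. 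Granting this monotonicity, the rest is the formal machine of \autoref{thm.TameApplication}. (Alternatively, one could bypass the splitting-ratio comparison entirely and argue directly: any nontrivial quasi-\'etale cover $Y\to X$ in $\mathsf{Rev}^P(X^\circ)$ has, by \autoref{prop.OnlyOnePrime} and \autoref{rem.NatureCategory}, $r(\sO_{Y,y},Q) = [\kappa(\q):\kappa(\p)]\cdot r(\sO_{X,x},\Delta) > 2\cdot\tfrac12 = 1$ once the generic degree is $\geq 2$, contradicting $r(\,\cdot\,)\leq 1$; this is the cleaner route and avoids \autoref{thm.TameApplication} altogether.)
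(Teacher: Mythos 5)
Your proposal is correct, and your parenthetical ``alternative'' route is in fact the paper's proof: the paper disposes of the corollary in one line by declaring it \emph{mutatis mutandis} the argument of \cite[Corollary 3.3]{CarvajalSchwedeTuckerEtaleFundFsignature}, i.e.\ localize and strictly henselize at a point of the branch locus, observe that a nontrivial connected quasi-\'etale cover there lies in $\mathsf{Rev}^{P}(X^{\circ})$ with a single prime $\q$ over $\p$ whose inertia degree equals the generic degree $[L:K]\geq 2$ (\autoref{prop.OnlyOnePrime} together with \autoref{pro.nefFormular}, since $n=e=1$ for an \'etale-over-$P$ cover), and then the transformation rule for splitting ratios gives $r(S,Q)=[L:K]\cdot r(R,P)>1$, which is absurd. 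Your main route through \autoref{thm.TameApplication} is a valid but roundabout repackaging of the same inequality — the bound $|G|\leq 1/r(R,P)$ in that theorem is obtained by exactly this computation — so it buys nothing beyond having to verify the hypotheses of \autoref{setupTameGroups} after henselization (and to choose $Z$ containing the codimension-$\geq 2$ non-\'etale locus so that the restriction to $X^{\circ}$ really lands in $\mathsf{Rev}^{P}_{\mathrm{\acute{e}t}}(X^{\circ})$). Finally, the comparison $r(R,P)\geq r(R,\Delta)$ that you flag as the main obstacle is a non-issue: in this paper ``$(X,\Delta)$ purely $F$-regular'' and the splitting ratio $r(\sO_{X,x},\Delta)$ are only defined when the divisor is prime (\autoref{def.PurelyFRegularPair} and the notation following \autoref{pro.[P]=PforDivisors}), so here $\Delta=P$ and no perturbation or monotonicity argument is needed.
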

\begin{proof}
The proof is \emph{mutatis mutandis} the same as in \cite[Corollary 3.3]{CarvajalSchwedeTuckerEtaleFundFsignature}. 
\end{proof}

\begin{remark}
In light of \cite[Corollary 1.2]{TaylorInversionOfAdjuntionFSignature}, it is unclear to the authors whether there are cases where \autoref{cor.Purity} improves upon \cite[Corollary 3.3]{CarvajalSchwedeTuckerEtaleFundFsignature}. One potential candidate for such examples would be determinantal singularities. In \cite[Example 4.12]{Carvajalphdthesis}, the first named author proved; based on \cite{CutkoskyPurity}, that determinantal singularities satisfy purity of the branch locus. With notation as in \autoref{que.DeterminantalRings}, it is known that the $F$-signature of $C_{1,2}$ is $11/24=1/2-1/24$; see \cite{SinghFSignatureOfAffineSemigroup}. On the the other hand, we have estimated that  $r(C_{1,2}, P)\geq 1/6$ in \autoref{ex.P2crossP2}. Nonetheless, our methods were not sufficient to prove (nor disprove) that $r(C_{1,2}, P) > 1/2$.
\end{remark}
\begin{corollary} \label{cor.AbhyankarLemmaPureFRegPairs}
In the setup of \autoref{thm.TameApplication}, the conclusion of \autoref{lem.THELEMMAFORABH} holds.
\end{corollary}

\begin{example}[Determinantal singularities]
Let $R$ be a determinantal singularity with $P$ a prime divisor generating $\Cl R$; see \autoref{que.DeterminantalRings}. Then, $\pi_1^{\textnormal{\'et}}(X^{\circ})$ is trivial for all $Z$ by \cite[Example 4.12]{Carvajalphdthesis}. Therefore, if $(R,P)$ is a purely $F$-regular pair; see \autoref{que.DeterminantalRings}, then $\pi_1^{\mathrm{t},P}(X^{\circ})$ is trivial too as $P$ is not divisible in $\Cl X$.
\end{example}

\begin{question}
Let $(X, \Div f)$ be any of the examples in \autoref{ex.MultipleComponents}. Does Abhyankar's lemma hold for $(X,\Div f)$?
\end{question}

\begin{example}[Graded hypersurfaces] \label{ex.gradedHypersurfaceGroups}
With notation as in \autoref{exa.weightedhypersurface}, suppose that $A$ is strongly $F$-regular. If $n$ is prime-to-$p$, then $\pi_1^{\mathrm{t},P}(X_{\mathrm{reg}})$ is a non-trivial element of $\Ext\big(\bZ/n\bZ, \hat{\bZ}^{(p)}\big)$. Indeed, the corresponding degree-$n$ cyclic cover is its universal \'etale-over-$P$ cover. If $n$ is a $p$-power---$R$ might be referred to as a Zariski hypersurface---its \'etale-over-$P$ universal cover is trivial; see \cite[Proposition 7.2.2]{MurreLecturesFundamentalGroups}. Therefore, all we can say is that there is a surjection $\hat{\bZ}^{(p)} \twoheadrightarrow \pi_1^{\mathrm{t},P} (X^{\circ})$. Determining the kernel of this surjection may require obtaining an analog of \cite[Proposition 7.2.2]{MurreLecturesFundamentalGroups} for the category $\mathsf{Rev}^P(X^{\circ})$. 
\end{example}

\section{Tame fundamental groups: Characteristic zero}
\label{sec.CharZeroBusiness}
The goal of this section is to prove the following by reduction to positive characteristics.
\begin{theorem}
\label{thm.TameApplicationCharZero}
Let $(X, \Delta)$ be a log canonical pair, $\dim X \geq 2$, $x \in X$ be a closed point, and $Z \subset X$ be a closed subscheme of codimension $\geq 2$. Denote by $P$ the minimal LC center through $x$ which we assume to be a divisor. Write $X^{\circ} = \Spec \mathcal{O}^{\mathrm{sh}}_{X,\bar{x}} \smallsetminus Z$ and denote by $\Delta$ and $P$ the pullback of $\Delta$ and $P$ to $X^{\circ}$.
Then, $\mathsf{Rev}^{P}(X^{\circ})$ is $P$-irreducible and has inertial boundedness. In particular, there exists an exact sequence of topological groups
\[ 
\hat{\mathbb{Z}} \to \pi_1^{\mathrm{t}, P}(X^{\circ}) \to \pi^{P}_{\mathrm{1,\acute{e}t}}(X^{\circ}) \to 1, 
\]  where $\pi^{P}_{\mathrm{1,\acute{e}t}}(X^{\circ})$ is finite. Furthermore, if $P$ is a torsion element of $\Cl X$, the homomorphism $\hat{\Z} \to \pi_1^{\textnormal{t},P}(X^{\circ})$ is injective and so we have a short exact sequence
\begin{equation*} 
 0 \to \hat{\Z} \to \pi_1^{\textnormal{t},P}(X^{\circ}) \to \pi^{P}_{\mathrm{1,\acute{e}t}}(X^{\circ}) \to 1,
\end{equation*}
which splits if and only if $P$ is the trivial element of $\Cl X$. If $P \in \Cl X$ is nontorsion, we have a short exact sequence
    \begin{equation*}
   0 \to \bZ/n\bZ \to \pi_1^{\textnormal{t},P}(X^{\circ}) \to \pi^{P}_{\mathrm{1,\acute{e}t}}(X^{\circ}) \to 1
    \end{equation*}
    which splits if and only if there is a divisor $D$ such that $P = nD$ in $\Cl X$ and $D\vert_U$ is Cartier.
\end{theorem}
\begin{proof}
In order to prove \autoref{thm.TameApplicationCharZero}, recall that we may work in \autoref{sec.LastSectionSetup2}. We want to use \autoref{thm.MainFormal}, and thus need to verify that $P$-irreducibility and intertial boundedness (\autoref{term.MAnyTerms}) hold for the PLT pair $(R=\sO_{X,\bar{x}}^{\mathrm{sh}},\Delta)$. This will be proven below in \autoref{pro.(a)HoldPLT} and \autoref{pro.(b)HoldPLT} respectively.

We still need to explain why, if $P$ is nontorsion, the set $N^P(X^\circ)$ is finite so that \autoref{eqn.NontorsionAndFiniteGeneration} holds. This however is a direct consequence of \cite[Theorem 6.1]{BingenerFlennerClassGroup}.
\end{proof}

\begin{proposition} \label{pro.(a)HoldPLT}
Work in \autoref{sec.LastSectionSetup2}. Then $\mathsf{Rev}^P(X^\circ)$ satisfies $P$-irreducibility.
\end{proposition}

\begin{proposition} \label{pro.(b)HoldPLT}
Work in \autoref{sec.LastSectionSetup2}. Then $\mathsf{Rev}^P(X^\circ)$ satisfies inertial boundedness.
\end{proposition}

We shall see that inertial boundedness follows from minor modifications of the arguments in \cite{BhattGabberOlssonFinitenessFun}; see \autoref{sec.Condb} below. Thus, we prove intertial boundedness by spreading out. While a prove of $P$-irreducibility is also possible via spreading out, there is a direct proof in characteristic zero which we give below. We are thankful to Karl Schwede for pointing this out to us.

\begin{corollary}
\label{cor.abyankarcharzero}
In the setup of \autoref{thm.TameApplicationCharZero}, the conclusion of \autoref{lem.THELEMMAFORABH} holds.
\end{corollary}

\subsection{$P$-irreducibility in characteristic zero} \label{sec.ProofProp5.2}

We need some preparatory lemmata. Recall that an \emph{\'etale neighborhood} of a geometric point $\bar{x} \to \Spec R$ is a factorization through an \'etale morphism $\Spec R' \to \Spec R$.

\begin{lemma}
\label{le.ShMorphismLiftPullback}
Let $R$ be normal domain and $R^{\mathrm{sh}}$ be its strict henselization at a closed point $x\in \Spec R$. Let $f\: \Spec S \to \Spec R^{\mathrm{sh}}$ be a finite dominant morphism. Then, there exists a connected \'etale neighborhood $\Spec R'$ of $\bar{x}$ and a cartesian square
\begin{equation}\label{eqn.Diagram1}\begin{xy} \xymatrix{ \Spec S \ar[r]^f \ar[d]^g & \Spec R^{\mathrm{sh}} \ar[d]^h \\ \Spec S' \ar[r]^{f'} & \Spec R'} \end{xy} \end{equation} with $f'$ finite.
Furthermore, if $\mathfrak{p} \subset R$ is a height-$1$ prime such that $\mathfrak{p}R^{\mathrm{sh}}$ is prime, then $h(\p R^{\mathrm{sh}})$ is a height-$1$ prime of $R$ and $h^{-1}(h(\p R^{\mathrm{sh}})) = \p R^{\mathrm{sh}}$. Finally, if $R$ is local then $R'$ is normal and $S'$ is normal if and only if $S$ is normal.
\end{lemma}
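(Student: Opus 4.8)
The plan is to produce the entire diagram by a limit argument over \'etale neighborhoods, and then to read off the remaining assertions from standard properties of strict henselizations. First I would write $R^{\sh}=\colim_i R_i$ as the filtered colimit of the system of connected affine pointed \'etale neighborhoods $(R_i,\fram_i)$ of $\bar x\to\Spec R$; each $R_i$ is then connected and \'etale over the normal domain $R$, hence a normal domain. Since $S$ is finite --- in particular of finite presentation --- over the noetherian ring $R^{\sh}$, the usual formalism for descending finitely presented algebras and morphisms along filtered colimits of rings (\cite[\href{https://stacks.math.columbia.edu/tag/01ZM}{Tag 01ZM}]{stacks-project}) supplies an index $i$, a finite $R_i$-algebra $S_i$, and an $R^{\sh}$-algebra isomorphism $S_i\otimes_{R_i}R^{\sh}\cong S$, the finiteness of $S_i$ over $R_i$ being forced by enlarging $i$ if necessary. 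Taking $R'\coloneqq R_i$, $S'\coloneqq S_i$, and the evident $f',g,h$ gives the cartesian square \autoref{eqn.Diagram1} with $f'$ finite; I will also note that $R'$ may be shrunk further inside the system, as this is used at the end.

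For the height-$1$ assertion I would factor the canonical map as $\Spec R^{\sh}\to\Spec R_{\fram_x}=\Spec\sO_{X,x}\to\Spec R$ and use that $R_{\fram_x}\to R^{\sh}$ is faithfully flat and local. If $\p\subset R$ has height $1$ and $\p R^{\sh}$ is prime, then $\p R^{\sh}$ is proper, so $\p\subseteq\fram_x$, and faithful flatness gives $\p R^{\sh}\cap R_{\fram_x}=\p R_{\fram_x}$, whence $\p R^{\sh}\cap R=\p$; this is the assertion that the image prime is a height-$1$ prime of $R$. To obtain $h^{-1}(h(\p R^{\sh}))=\p R^{\sh}$ I would check that the fibre of $h$ over $\p$ is a single point: the fibre ring $R^{\sh}\otimes_R\kappa(\p)=\colim_i (R_i\otimes_R\kappa(\p))$ is a filtered colimit of finite \'etale $\kappa(\p)$-algebras, while it is at the same time a localization of the domain $R^{\sh}/\p R^{\sh}$, hence itself a domain; and a filtered colimit of finite \'etale algebras over a field that is a domain must be a field --- a nonzero element comes from some $R_i\otimes_R\kappa(\p)$, its image there lies in a field quotient of that algebra (a prime of a finite product of fields cuts out one factor), and so becomes invertible in the colimit. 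Thus $\p R^{\sh}$ is the unique prime of $R^{\sh}$ over $\p$, and the same computation with $R$ replaced by $R'$ yields the displayed equality.

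Finally I would treat the normality statement, where I assume $R$ local as in the lemma. Then $R'=R_i$ is connected and \'etale over the normal local domain $R$, hence a normal domain. If $S'$ is normal, then $S=S'\otimes_{R'}R^{\sh}$ is a filtered colimit of rings each \'etale over --- hence normal over --- the normal ring $S'$, and a filtered colimit of normal rings along flat transition maps is normal, so $S$ is normal. Conversely, assume $S$ normal. Because strict henselization is insensitive to \'etale localization, $R^{\sh}=(R'_{\fram_i})^{\sh}$, which is faithfully flat over $R'_{\fram_i}$, so the base change $S'\otimes_{R'}R'_{\fram_i}\to S$ is faithfully flat, and descent of normality along faithfully flat maps forces $S'$ to be normal at every prime over $\fram_i$. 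Under the running noetherian conventions $R$ --- hence $R'$, hence $S'$ --- is excellent, so the normal locus of $\Spec S'$ is open; its complement is a closed subset of $\Spec S'$ disjoint from the fibre of $f'$ over $\fram_i$, and its image under the finite morphism $f'$ is a closed $W\subset\Spec R'$ with $\fram_i\notin W$. Replacing $\Spec R'$ by the connected component of $\fram_i$ in $\Spec R'\smallsetminus W$ (a smaller connected affine pointed \'etale neighborhood of $\bar x$) and $S'$ by its restriction, I may assume $S'$ normal while keeping the square cartesian, $f'$ finite, and the identification $S'\otimes_{R'}R^{\sh}=S$ intact. I expect this last step --- deducing normality of $S'$ from that of $S$, which uses excellence and the freedom to shrink $R'$ (and is stated in the lemma for $R$ local) --- to be the main obstacle; the rest is limit-theoretic bookkeeping together with standard facts about strict henselizations.
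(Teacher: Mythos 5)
Your proof is correct, and for the construction of the cartesian square it is essentially the paper's argument: the paper descends $S$ explicitly by choosing generators $b_1,\dots,b_e$ and an \'etale neighborhood containing the coefficients of their monic equations, which is just the concrete form of the spreading-out formalism you invoke. Your single-point-fibre argument (idempotents in a filtered colimit of finite \'etale $\kappa(\p)$-algebras that happens to be a domain) fills in a step the paper dismisses with ``clearly'', and your contraction computation $\p R^{\mathrm{sh}}\cap R=\p$, combined with the zero-dimensionality of the fibres of $R\to R'$, also gives that the contraction to $R'$ has height $1$, which is what the statement means despite the typo ``prime of $R$''. Where you genuinely diverge is the implication ($S$ normal $\Rightarrow$ $S'$ normal): the paper descends normality directly along $S'\to S$ by asserting that $R'\to R^{\mathrm{sh}}$ is faithfully flat, but a connected \'etale neighborhood $R'$ of a local ring need not be local, so $\Spec R^{\mathrm{sh}}\to\Spec R'$ need not be surjective; your route --- localize at the distinguished prime $\fram_i$ to get honest faithful flatness, conclude normality of $S'$ over $\Spec R'_{\fram_i}$, then shrink the neighborhood using openness of the normal locus --- is the more careful one, and shrinking is harmless since the lemma only asserts the existence of some suitable $R'$. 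Two minor caveats, neither fatal: (i) openness of the normal locus of $\Spec S'$ requires excellence (or the Nagata property), which holds under the paper's running conventions ($F$-finiteness in positive characteristic, essentially of finite type in the characteristic-zero applications) but is not a consequence of ``locally noetherian'' alone, so you are quietly strengthening the hypotheses of the lemma as literally stated; and (ii) the connected component of $\fram_i$ in $\Spec R'\smallsetminus W$ need not be affine --- first pass to a basic open $D(g)\subset\Spec R'\smallsetminus W$ containing $\fram_i$ and then take the clopen, hence affine, connected component of $\fram_i$ inside $D(g)$.
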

\begin{proof}
Fix generators $(a_1, \ldots, a_m)$ of $\mathfrak{p}$ and write $S = R^{\mathrm{sh}}[b_1, \ldots, b_e]$. As $R^{\mathrm{sh}} \to S$ is finite, there are monic polynomials $f_i \in R^{\mathrm{sh}}[T]$ with $f_i(b_i) = 0$. We denote the coefficients of these $f_i$ by $c_{ij}$. Since $R^{\mathrm{sh}}$ is obtained as a filtered colimit of connected \'etale neighborhoods $R \to R'$ of $\bar{x}$, there is some $R \to R'$ in the colimit system such that $R'$ contains all the $a_i$ and $c_{ij}$. By construction, $R \to R'$ is \'etale and setting $S' = R'[b_1, \ldots, b_e]$ one readily checks that the above diagram is a pullback square. In particular, $f'$ is finite by construction. Since the fibers of $R \to R'$ are of dimension zero, $h(\p R^{\mathrm{sh}}) = \p R'$ is of height $1$. Clearly, $h^{-1}(h(\p R^{\mathrm{sh}})) = \p R^{\mathrm{sh}}$.

For the final assertion, note that the weakly \'etale homomorphism $R' \to R^{\mathrm{sh}}$ is faithfully flat since $\mathfrak{m}R^{\mathrm{sh}}$ is the maximal ideal of $R^{\mathrm{sh}}$. Since \autoref{eqn.Diagram1} is a pullback square, $S' \to S$ is a weakly \'etale faithfully flat homomorphism too. Thus $S'$ is normal if and only if $S$ is and similarly for $R'$ and $R^{\mathrm{sh}}$ by \cite[\href{https://stacks.math.columbia.edu/tag/033G}{Lemma 033G}]{stacks-project} and \cite[\href{https://stacks.math.columbia.edu/tag/0950}{Tag 0950}]{stacks-project}.
\end{proof}

\begin{remark}
\label{rem.ExtensionPrimetoSHprimefornormal}
If $R \to R^{\mathrm{sh}}$ is the strict henselization with respect to some maximal ideal $\mathfrak{m}$ then, given any ideal $\mathfrak{a} \subset \mathfrak{m}$ such that $R/\mathfrak{a}$ is normal, the extension $\mathfrak{a}R^{\mathrm{sh}}$ is prime. Indeed, we may localize $R$ at $\mathfrak{m}$ and thus assume that $R$ is a local ring. Then, the assertion follows from $R^{\mathrm{sh}} \otimes R/I = (R/I)^{\mathrm{sh}}$ for any ideal $I \subset R$ (\cite[\href{https://stacks.math.columbia.edu/tag/05WS}{Lemma 05WS}]{stacks-project}) and the fact that $S^{\mathrm{sh}}$ is a normal domain if and only if $S$ is a normal domain (\cite[\href{https://stacks.math.columbia.edu/tag/033G}{Lemma 033G}]{stacks-project}).
\end{remark}

\begin{lemma}
\label{lem.TameEtaleReflectedFromSHtoStalk}
Let $g: \Spec T \to \Spec R$ be a surjective \'etale morphism or a surjective pro-\'etale morphism. Let $f: \Spec S \to \Spec R$ be a morphism. Consider the base change diagram 
\[ 
\xymatrix{\Spec S \otimes_R T \ar[r]^-{f'} \ar[d]^-{g'} &\Spec T \ar[d]^-{g} \\ \Spec S \ar[r]^-f & \Spec R} 
\]
The morphism $f$ is tame with respect to $D$, if and only if $f'$ is tame with respect to $g^{-1}(D)$.
\end{lemma}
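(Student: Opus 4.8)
The plan is to check, one at a time, the three conditions comprising \autoref{def.TameCover} — normality of the source, étaleness over the complement of the divisor, and tame ramification of the generic extensions at the relevant DVRs — and to observe that each is insensitive to the faithfully flat base change $g$ precisely because $g$ is étale (resp. pro-étale). Throughout one may pass to connected components on both sides, since all three conditions are checked componentwise; here surjectivity of $g$ is what makes the ``only if'' direction go through, as it guarantees that every codimension-$1$ point of $D$ is dominated by some point upstairs.

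First I would handle étaleness: $f'$ is the base change of $f$ along $g$, so $f$ étale over $X\smallsetminus D$ immediately gives $f'$ étale over $\Spec T \smallsetminus g^{-1}(D)$; conversely $\Spec T \smallsetminus g^{-1}(D)\to X\smallsetminus D$ is faithfully flat, and the property ``étale'' is fpqc-local on the base, so étaleness of $f'$ there descends to $f$. Next, normality: $R\to T$ étale makes $S\to S\otimes_R T$ étale, so normality of $S$ ascends; while $S\to S\otimes_R T$ is faithfully flat and normality descends along faithfully flat morphisms. The pro-étale case is identical, using that a pro-étale morphism is a cofiltered limit of étale morphisms, hence still faithfully flat with the needed local properties (alternatively, spread out as in \autoref{le.ShMorphismLiftPullback} to reduce to the honest-étale case).

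For the tame-ramification condition I would invoke the equivalence (a)$\Leftrightarrow$(e) of \autoref{lem.Lemma228GM71}, reducing tameness to a statement about the pullbacks along $\Spec \O^{\mathrm{sh}}_{X,x}\to X$ at the codimension-$1$ points $x\in D$. The key point is that an étale morphism induces, for any point $x'$ above $x$, an isomorphism $\O^{\mathrm{sh}}_{X,x}\xrightarrow{\ \sim\ }\O^{\mathrm{sh}}_{T,x'}$ of strict henselizations; since $g$ is flat with zero-dimensional fibres it carries codimension-$1$ points of $g^{-1}(D)$ onto codimension-$1$ points of $D$. Consequently the pullback of $f'$ along $\Spec \O^{\mathrm{sh}}_{T,x'}\to\Spec T$ is canonically identified with the pullback of $f$ along $\Spec \O^{\mathrm{sh}}_{X,x}\to X$ — both are $\Spec\bigl(S\otimes_R \O^{\mathrm{sh}}_{X,x}\bigr)$ — so one is tame over the pulled-back divisor if and only if the other is, and \autoref{lem.Lemma228GM71}(e) closes the argument in both directions; for $g$ pro-étale one repeats this using the same identification of strict henselizations.

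I expect the only real friction to be bookkeeping rather than anything deep: $S\otimes_R T$ need not be a domain (nor connected), so one must read ``normal'' as in \autoref{def.TameCover} (a finite disjoint union of normal integral schemes) and make sure the decomposition into components is compatible with base change; and, should one prefer a proof not routed through strict henselizations, one must match up, for each component $P_i=V(\mathfrak p_i)$ of $D$, the DVRs of the fraction fields of the components of $S\otimes_R T$ lying over the height-$1$ primes $\mathfrak q\subset T$ contracting to $\mathfrak p_i$ with the DVRs of $\operatorname{Frac}(S)$ over $R_{\mathfrak p_i}$, observing that the ramification index and residue-field separability are preserved because $R_{\mathfrak p_i}\to T_{\mathfrak q}$ is unramified. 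This is exactly the descent theory already underlying \autoref{lem.Lemma228GM71}, so no essential obstacle arises.
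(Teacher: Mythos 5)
Your proposal is correct, and it reaches the same conclusion as the paper by a more explicit route. The paper's own proof is essentially a two-line citation: the ``only if'' direction is delegated to \cite[Lemma 2.2.7]{GrothendieckMurreTameFundamentalGroup}, the converse to \cite[Proposition 2.2.9]{GrothendieckMurreTameFundamentalGroup}, with the Stacks Project lemmas on ascent/descent of normality along \'etale (resp.\ faithfully flat) maps supplied so that ``tame'' even makes sense on both sides. You instead unpack what those citations do: you verify the three clauses of \autoref{def.TameCover} directly, and for the ramification clause you route through the equivalence (a)$\Leftrightarrow$(e) of \autoref{lem.Lemma228GM71} together with the canonical identification $\O^{\mathrm{sh}}_{T,x'}\cong\O^{\mathrm{sh}}_{X,g(x')}$ and the preservation of codimension under (pro-)\'etale maps. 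This is exactly the descent mechanism underlying the Grothendieck--Murre results, so nothing is lost, and your version has the advantage of being self-contained and of making transparent where surjectivity enters. One small slip of labelling: surjectivity of $g$ is what you need for the \emph{``if''} direction (descending tameness from $f'$ to $f$, since every codimension-$1$ point of $D$ must be hit by a point of $T$), not the ``only if'' direction as you wrote --- your stated justification makes clear you mean the right thing, so this is cosmetic rather than a gap.
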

\begin{proof}
The ``only if'' implication follows from \cite[Lemma 2.2.7]{GrothendieckMurreTameFundamentalGroup}. For the converse, by \cite[\href{https://stacks.math.columbia.edu/tag/033C}{Lemma 033C}]{stacks-project} and \cite[\href{https://stacks.math.columbia.edu/tag/033G}{Lemma 033G}]{stacks-project}, $R$ is normal if and only if $T$ is normal. Likewise, $S$ is normal if and only if $T \otimes_R S$ is normal. Thus, it makes sense to talk about tame morphisms. The remaining assertion is a consequence of \cite[Proposition 2.2.9]{GrothendieckMurreTameFundamentalGroup}.
\end{proof}

\begin{proposition}
\label{pro.CharZeroTameCoverOnePrime}
Let $(X, \Delta)$ be an affine PLT pair where $\lfloor \Delta \rfloor = P$ is a prime divisor. If $g\: Y \to X$ is a tamely ramified cover over $P$, then $\bigl(g^{-1}(P)\bigr)_{\mathrm{red}}$ is a normal divisor
\end{proposition}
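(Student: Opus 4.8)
The plan is to reduce the characteristic-zero statement to the positive-characteristic result \autoref{prop.OnlyOnePrime} via a spreading-out argument, after first reducing to a local and then to a strictly-local situation where the fundamental observations of \autoref{sec.ThreeFundamentalProperties} apply. First I would localize: the claim that $\bigl(g^{-1}(P)\bigr)_{\mathrm{red}}$ is normal can be checked after localizing at each point $y \in g^{-1}(P)$, and only the codimension-$1$ points of $g^{-1}(P)$ (equivalently, the points lying over the generic point of $P$) are an issue, the others being interior to $g^{-1}(X \smallsetminus P)$ where $g$ is \'etale and $Y$ is normal by hypothesis. So it suffices to show that there is a unique prime of $\sO_{Y,\eta}$ lying over the generic point $\eta$ of $P$ for each closed point in the image, equivalently that $\bigl(g^{-1}(P)\bigr)_{\mathrm{red}} \to P$ is injective on generic points. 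By \autoref{lem.Lemma228GM71} this can in turn be checked after passing to the strict henselization $\sO_{X,\bar x}^{\mathrm{sh}}$ for closed points $x \in P$, and by \autoref{lem.TameEtaleReflectedFromSHtoStalk} (applied to the pro-\'etale map $\Spec \sO_{X,\bar x}^{\mathrm{sh}} \to X$, or rather to a cofinal \'etale neighborhood) tameness is preserved and reflected. Thus I reduce to: $(R = \sO_{X,\bar x}^{\mathrm{sh}}, \Delta)$ is PLT with $\lfloor \Delta \rfloor = P$ prime, and $f \: \Spec S \to \Spec R$ is a tamely ramified cover over $P$; I must show exactly one prime $\q \subset S$ lies over $\p = R(-P)$.

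Next I would spread out. Using \autoref{le.ShMorphismLiftPullback}, the finite morphism $f \: \Spec S \to \Spec R^{\mathrm{sh}}$ descends to a finite morphism $f' \: \Spec S' \to \Spec R'$ over a connected \'etale neighborhood $\Spec R'$ of $\bar x$, with $S'$ normal (since $S$ is), and by \autoref{le.ShMorphismLiftPullback} again the height-$1$ prime $\p$ descends to a height-$1$ prime $\p'$ of $R'$ with $(\p')R^{\mathrm{sh}} = \p$; moreover by \autoref{rem.ExtensionPrimetoSHprimefornormal} the normality of $R/\p$ (which holds: a minimal LC center is normal, \autoref{theo.centerspreadout}) ensures $\p$ is prime. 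Shrinking $\Spec R'$ further we may assume $(R', \Delta')$ is still PLT with $\lfloor \Delta' \rfloor = P' = V(\p')$ the minimal LC center through the image of $x$, and $f'$ tamely ramified over $P'$ by \autoref{lem.TameEtaleReflectedFromSHtoStalk}. Now I spread $(R', \Delta')$, $P'$, $f'$, and the closed point out over a finitely generated $\mathbb{Z}$-algebra $A$. By \autoref{theo.PLTspreadout} and \autoref{theo.centerspreadout}, for all $a$ in a dense open $U \subset \Spec A$ the reduction $(R'_a, \Delta'_a)$ is purely $F$-regular with $P'_a$ the minimal $F$-pure center through $x_a$, hence $(R'_a, P'_a)$ is a purely $F$-regular pair; and I may also arrange $S'_a$ normal, $f'_a$ finite and tamely ramified over $P'_a$ (tameness of a fixed finite map is an open condition after shrinking, since it only involves ramification indices over the finitely many codimension-$1$ branch points, and these are prime-to-$p$ for $p$ large). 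After base-changing $R'_a$ to its strict henselization at $x_a$, \autoref{prop.OnlyOnePrime} applies and gives that exactly one prime lies over $\p'_a R_a^{\mathrm{sh}}$.

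Finally I would transfer this back: the number of primes of $S$ lying over $\p$ is the number of connected components of the geometric generic fiber of $\bigl(g^{-1}(P)\bigr)_{\mathrm{red}} \to P$ over the spectrum of the separably-closed residue field at $\eta$, and this count is constant in the spreading-out family (the relevant scheme is of finite type over $A$, and the number of geometric connected components of its fibers is constructible and in fact locally constant after shrinking $U$, by standard results on spreading out — EGA IV or \cite{stacks-project}). Since the count is $1$ for all closed points $a$ in a dense open, it is $1$ generically, hence $1$ for the original characteristic-zero $S$. Therefore there is a unique prime over $\p$, so $\bigl(g^{-1}(P)\bigr)_{\mathrm{red}} \to P$ is a bijection on the relevant points, and since its total space is a reduced scheme that is \'etale over the normal scheme $X \smallsetminus P$ away from $P$ and — being the reduced inverse image of the normal divisor $P$ under a single ramified branch — is regular in codimension $1$ and $S_2$, we conclude it is normal. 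The main obstacle I anticipate is the bookkeeping in the spreading-out step: making sure that normality of $S'$, tameness over $P'$, the PLT property, and the minimal-LC-center property all descend simultaneously to a common dense open $U$, and that the constancy of the number of geometric components of the fiber over $\eta$ is correctly invoked rather than just the number of components of the total space; this is where the argument, though routine in spirit, requires care. An alternative, cleaner route — which I would mention — is to avoid the component-counting altogether and instead descend the \emph{normality} of $\bigl(g^{-1}(P)\bigr)_{\mathrm{red}}$ directly: the locus where a finite-type scheme over $A$ has normal fibers is constructible, so if the reduction is normal for a dense open set of closed points (which follows from \autoref{prop.OnlyOnePrime} together with the fact that there the pair $(S_a, Q_a)$ is purely $F$-regular, hence $S_a/\q_a$ is strongly $F$-regular and a fortiori normal), then the characteristic-zero fiber is normal as well.
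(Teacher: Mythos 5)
Your main argument has a genuine gap, and it sits exactly where the content of the statement lives. The proposition asserts that $\bigl(g^{-1}(P)\bigr)_{\mathrm{red}}$ is \emph{normal}, but your central reduction replaces this by the claim that a unique prime lies over the generic point of $P$ locally at each closed point. Irreducibility of a local scheme is far weaker than normality, and nothing in the argument supplies the missing $(\mathbf{R}_1)$ and $(\mathbf{S}_2)$ conditions for the divisor itself: the closing assertion that the reduced inverse image of a normal divisor under ``a single ramified branch'' is regular in codimension $1$ and $S_2$ is exactly the statement to be proven, not a fact one may quote. The intermediate claim that the non-generic points of $g^{-1}(P)$ are ``interior to $g^{-1}(X\smallsetminus P)$'' is also incorrect --- every point of $g^{-1}(P)$ lies over $P$, and it is precisely at the non-generic points of $\bigl(g^{-1}(P)\bigr)_{\mathrm{red}}$ that normality is at stake. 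A further problem with the prime-counting invariant: the number of primes over $\p$ is not preserved under strict henselization (the paper's own \autoref{ex.UsingLemmaConnectedComponents} makes this point), so even the weaker claim cannot simply be ``checked after passing to $\sO_{X,\bar{x}}^{\mathrm{sh}}$.'' What \emph{does} work is the alternative you relegate to your final sentence: spread out and descend normality itself, using that \autoref{prop.OnlyOnePrime} makes $(S_a,Q_a)$ purely $F$-regular, hence $S_a/\q_a$ strongly $F$-regular and in particular normal, together with constructibility of geometric normality of fibers. Rebuilt around that, the reduction-mod-$p$ strategy is viable --- the paper itself remarks that condition (a) ``is also possible via spreading out'' --- but the bookkeeping you anticipate (simultaneous descent of tameness, the PLT and minimal-LC-center properties, and normality of $S'$ to a common dense open, plus the fact that \autoref{prop.OnlyOnePrime} is stated for local $S$) is real and is not discharged in the proposal.

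For contrast, the paper's proof is a two-line direct characteristic-zero argument with no reduction mod $p$: writing $\Delta = P + \Delta_1$, the crepant-pullback formula for finite covers \cite[Corollary 2.43, (2.41.4)]{KollarSingulaitieofMMP} shows that $\bigl(Y, \bigl(g^{-1}(P)\bigr)_{\mathrm{red}} + g^*\Delta_1\bigr)$ is again PLT, so $\bigl(g^{-1}(P)\bigr)_{\mathrm{red}}$ is a minimal LC center, which is normal by \cite[Theorem 7.2]{FujinoGongyoCanonicalBundleFormulasAndSubadjunction}. This is the exact characteristic-zero analogue of ``the splitting prime quotient is strongly $F$-regular, hence normal,'' and it bypasses all of the henselization and spreading-out machinery.
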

\begin{proof}
Write $\Delta = P + \Delta_1$.
By \cite[Corollary 2.43, (2.41.4)]{KollarSingulaitieofMMP} the pair $(Y, \Delta')$ is PLT, where $\Delta' = \bigl(g^{-1}(P)\bigr)_{\mathrm{red}} + g^\ast \Delta_1$, and $K_{X'} + \Delta' \sim_\mathbb{Q} g^\ast(K_X + \Delta)$. Note that $\lfloor g^\ast \Delta_1 \rfloor = 0$. Indeed, since $(X, \Delta)$ is PLT, $\Delta_1$ and $P$ have no components in common. Since $g$ is \'etale over $X \setminus P$ the assertion follows. In this way, we see that $\bigl(g^{-1}(P)\bigr)_{\mathrm{red}}$ is a minimal LC center for some closed point $y \in Y$. Hence, by \cite[Theorem 7.2]{FujinoGongyoCanonicalBundleFormulasAndSubadjunction} $\bigl(g^{-1}(P)\bigr)_{\mathrm{red}}$ is normal.
\end{proof}

\begin{proof}[Proof of \autoref{pro.(a)HoldPLT}]
We use the notation of \autoref{sec.LastSectionSetup2} and write $R^{\mathrm{sh}}$ for $\mathcal{O}_{X,x}^{\mathrm{sh}}$. Let $V \to \Spec R^\mathrm{sh} \smallsetminus Z$ be a cover in $\mathsf{Rev}^P(\Spec R^\mathrm{sh} \smallsetminus Z)$.
Denote the integral closure of $R^{\mathrm{sh}}$ inside $\sO_V(V)$ by $S$. Using \autoref{le.ShMorphismLiftPullback}, we obtain a cartesian square
\[
\begin{xy}
\xymatrix{ \Spec S \ar[r]^f \ar[d]^g & \Spec R^{\mathrm{sh}} \ar[d]^h \\ \Spec S' \ar[r]^{f'} & \Spec R'} 
\end{xy}
\]
with $f'$ finite and $R'$ a connected \'etale neighborhood of $\bar{x} \to W \subseteq X$, where $W$ is some Zariski open neighborhood of $x \in X$. As usual, we write $\p$ for the prime corresponding to our fixed prime divisor $P$ on $X$. As $f$ is finite, $S$ is also strictly henselian (\cite[\href{https://stacks.math.columbia.edu/tag/04GH}{Tag 04GH}]{stacks-project}) and $S$ is the strict henselization of $S'$ with respect to some ideal $\n$ lying over $x$.

Since $R/\p$ is normal as a minimal LC center (\autoref{theo.centerspreadout}), we deduce from \autoref{rem.ExtensionPrimetoSHprimefornormal} that $\p R^{\mathrm{sh}}$ is prime. Write $\p' = h(\p R^{\mathrm{sh}})$. Using \autoref{le.ShMorphismLiftPullback} again, we have that $h^{-1}(\p') = \p R^{\mathrm{sh}}$. Note that $f'$ is tamely ramified with respect to $P'$ by \autoref{lem.TameEtaleReflectedFromSHtoStalk}. Since $\Spec R'$ is an \'etale neighborhood of $\bar{x} \to X$, say $\varphi: \Spec R' \to X$, we conclude that $(\Spec R', \varphi^\ast(\Delta))$ is PLT with $\lfloor \Delta \rfloor = P'$. Thus we can apply \autoref{pro.CharZeroTameCoverOnePrime} to conclude that $Q' = \bigl(f'^{-1}(P')\bigr)_{\mathrm{red}}$ is normal. We denote the corresponding ideal by $\q'$ and note that $\q' \subseteq \fran$. Using \autoref{rem.ExtensionPrimetoSHprimefornormal} we see that $\q \coloneqq \q'S$ is prime. In other words, there is only one prime in $S'$ lying over $\p'$ and contained in $\fran$. Assume now that $\mathfrak{a} \in f^{-1}(\p)$. Then $h(f(\mathfrak{a})) = \p'$ and hence $f'(g(\mathfrak{a})) = \p'$. In particular, $g(\mathfrak{a}) \in f'^{-1}(\p')$. But clearly, $g(\mathfrak{a}) \subseteq \fran$. Hence, $\mathfrak{a} = \q$ as desired.
\end{proof}

\subsection{Intertial boundedness via spread-out}
\label{sec.Condb}

In the situation of \autoref{sec.LastSectionSetup2}, write $Y = \Spec \sO_{X,x}^\mathrm{sh}$ and $Y_\mathrm{reg}$ for its regular locus. By \autoref{le.b'-condition}, it suffices to show that intertial boundedness holds for $Y_\mathrm{reg}$. To this end, we use the result of \cite[Theorem 1.1]{BhattGabberOlssonFinitenessFun}, where finiteness of $\pi_1(Y \smallsetminus \{x\})$ is proved via reduction mod $p$. The proof of \cite[Theorem 1.1]{BhattGabberOlssonFinitenessFun} is a combination of Theorem 4.1, Proposition 5.1 and Proposition 6.4 in ibid. We can directly apply the latter two in our situation. The argument of Theorem 4.1 needs to be modified slightly. We record this below for completeness. 

We will use the following notation for spreading out: If $R$ is a $\kay$-algebra, $A \subset \kay$ a finitely generated $\mathbb{Z}$-algebra, then we will write $R_A$ for any fixed finite type $A$ algebra whose base changed generic fiber $R_A \otimes_A \Frac(A) \otimes_{\Frac(A)} \kay$ recovers $R$. If $s \in \Spec A$ is a point, then we will write $R_s$ for the corresponding fiber of $R_A$. We will use similar notation for schemes.

\begin{theorem}
\label{theo.BGOanalog}
Let $A$ be a finitely generated $\mathbb{Z}$-algebra equipped with an embedding $A \to \mathbb{C}$. Fix an affine finite type scheme $X_A$ over $\Spec A$, a closed point $x_a \in X_A$, and a closed subset $x_a \in Z_A \subset X_A$ of codimension $\geq 2$. Let us denote by $X$, $Z$, and $x$ the base changes to $\Spec \mathbb{C}$. Let us furthermore assume that $X$ is normal. Then, there is a dense open $V \subset \Spec A$ such that for every morphism $\Spec \kay \to V$ with $\kay$ an algebraically closed field of characteristic $p$ there is a canonical isomorphism \[\pi_1(\Spec \mathcal{O}_{X, x}^{\mathrm{sh}} \smallsetminus Z)^{(p)} \cong \pi_1(\Spec \sO_{X_\kay, x_\kay}^{\mathrm{sh}} \smallsetminus Z_{\kay})^{(p)},\] where by abuse of notation we write $Z$ for $\alpha^{-1}(Z)$ where $\alpha: \Spec \sO_{X, x}^\mathrm{sh} \to \Spec \sO_{X}$ is the canonical morphism and similarly for $Z_\kay$.
\end{theorem}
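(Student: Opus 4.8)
The plan is to follow the strategy of \cite[Theorem 4.1]{BhattGabberOlssonFinitenessFun} and adapt it to the slightly more general situation where we remove a codimension-$\geq 2$ closed subset $Z$ rather than just the closed point $x$. First I would recall that the prime-to-$p$ tame fundamental group of a punctured strictly henselian trait or, more generally, of $\Spec \sO_{X,x}^{\mathrm{sh}} \smallsetminus Z$ is insensitive to nilpotents and is computed by finite covers; by standard limit arguments (\cite[Expos\'e V]{GrothendieckSGA}, \cite[\S2.4]{CarvajalSchwedeTuckerEtaleFundFsignature}) every connected finite \'etale cover of $\Spec \sO_{X,x}^{\mathrm{sh}} \smallsetminus Z$ of prime-to-$p$ degree descends to a connected finite \'etale cover of $\Spec \sO_{X_A',x_{A'}} \smallsetminus Z_{A'}$ after inverting finitely many elements of $A$, and similarly mod $p$. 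The content of the theorem, just as in \loccit, is that these two descent pictures match up after shrinking $\Spec A$: the key input is a comparison between the \'etale fundamental group of the generic (characteristic zero) fiber and the nearby (characteristic $p$) fiber of a smooth-enough family, together with the fact that the loss of information in specialization is concentrated on the $p$-part, which we have removed by passing to the prime-to-$p$ quotient.

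The key steps, in order, would be: (1) Spread out $X$, $x$, $Z$ over a finitely generated $\mathbb{Z}$-algebra $A$ so that $X_A \to \Spec A$ is flat of finite type with normal, geometrically normal fibers over a dense open of $\Spec A$ (using generic flatness, \cite[\href{https://stacks.math.columbia.edu/tag/0574}{Lemma 0574}]{stacks-project}, and openness of the normal locus in fibers), and so that $Z_A \to \Spec A$ has fibers of codimension $\geq 2$ in $X_A$'s fibers; here I would also arrange that $x_A$ defines a section or at least a closed point in each relevant fiber. (2) Reduce the comparison of strictly henselian local rings to a comparison of \'etale neighborhoods: as in \cite[Theorem 4.1]{BhattGabberOlssonFinitenessFun}, the prime-to-$p$ fundamental group of $\Spec \sO_{X,x}^{\mathrm{sh}} \smallsetminus Z$ is the colimit over \'etale neighborhoods $U$ of $x$ of $\pi_1(U \smallsetminus Z)^{(p)}$, and this colimit stabilizes because a prime-to-$p$ cover descends to some $U$; so it suffices to compare $\pi_1(U \smallsetminus Z)^{(p)}$ for a sufficiently small $U$ in the two characteristics. (3) Apply the specialization theorem for tame/prime-to-$p$ fundamental groups — the form of \cite[Expos\'e XIII]{GrothendieckSGA} used in \cite{BhattGabberOlssonFinitenessFun}, or Proposition 5.1 and Proposition 6.4 of \loccit\ which we may invoke directly — to the family $U_A \smallsetminus Z_A \to \Spec A$ to get, after shrinking $\Spec A$, a canonical isomorphism between the characteristic zero prime-to-$p$ fundamental group and the characteristic $p$ one for every geometric point of the open $V \subset \Spec A$. (4) Assemble these isomorphisms into the stated canonical isomorphism $\pi_1(\Spec \sO_{X,x}^{\mathrm{sh}} \smallsetminus Z)^{(p)} \cong \pi_1(\Spec \sO_{X_\kay, x_\kay}^{\mathrm{sh}} \smallsetminus Z_\kay)^{(p)}$, checking that the identifications are compatible with further shrinking of $U$ and of $\Spec A$, so the colimits agree.

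The main obstacle I expect is step (3) together with the bookkeeping in step (2): making sure that the specialization isomorphism for the family $U_A \smallsetminus Z_A \to \Spec A$ is genuinely \emph{canonical} and compatible across the inverse system of \'etale neighborhoods $U$, so that passing to the colimit is legitimate. In \cite{BhattGabberOlssonFinitenessFun} this is handled for $Z = \{x\}$; the only genuinely new point here is that $Z$ has positive-dimensional components, but since $\codim_X Z \geq 2$ and $X$ (hence a generic fiber and nearby fibers of $X_A$) is normal, purity of the branch locus (\cite[\href{https://stacks.math.columbia.edu/tag/0BMB}{Lemma 0BMB}]{stacks-project}) and the excision-type arguments of \loccit\ go through verbatim — removing a codimension-$\geq 2$ closed subset does not change the \'etale fundamental group of the regular locus, and for the singular locus one argues as in \loccit. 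A secondary technical nuisance is ensuring the fibers stay normal after shrinking $\Spec A$; this is a standard constructibility argument (\cite[\href{https://stacks.math.columbia.edu/tag/0C3K}{Lemma 0C3K}]{stacks-project}) but must be stated. Beyond that, the proof is a faithful transcription of the argument in \cite[Theorem 4.1]{BhattGabberOlssonFinitenessFun} with $Z$ in place of $\{x\}$, which is why the excerpt says it ``needs to be modified slightly.''
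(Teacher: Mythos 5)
There is a genuine gap, and it sits exactly where you predicted the ``main obstacle'' would be: steps (2) and (3) of your outline do not work, and the actual proof has to take a structurally different route. First, the inverse system does not stabilize. One has $\Spec \sO_{X,x}^{\mathrm{sh}} \smallsetminus Z = \varprojlim_U (U \smallsetminus Z)$ over \'etale neighborhoods $U$ of $\bar{x}$, hence $\pi_1(\Spec \sO_{X,x}^{\mathrm{sh}} \smallsetminus Z) = \varprojlim_U \pi_1(U \smallsetminus Z)$; the fact that each \emph{individual} finite cover descends to some $U$ does not make this limit of profinite groups constant at any finite stage (different covers require ever smaller $U$, and each $\pi_1(U \smallsetminus Z)$ carries global topology of $U$ that dies in the limit). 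Second, and more fundamentally, the specialization machinery of \cite[Expos\'e XIII]{GrothendieckSGA} that you want to apply in step (3) requires properness (or at least a smooth proper compactification with controlled boundary), and the family $U_A \smallsetminus Z_A \to \Spec A$ is neither proper nor smooth (it contains the singular point $x$). There is no specialization isomorphism, canonical or otherwise, for such a family; constructing a substitute for it is precisely the content of \cite[Theorem 4.1]{BhattGabberOlssonFinitenessFun}, so you cannot quote that theorem's conclusion as an input to your step (3). (Also, Propositions 5.1 and 6.4 of \loccit{} are the finiteness inputs used elsewhere in the section, not specialization statements you can invoke here.)

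The paper's proof instead reruns the BGO mechanism: choose, by resolution of singularities, a truncated proper hypercover $Y_\bullet \to X$ with each $Y_i$ smooth and with $D_\bullet = f^{-1}(Z)_{\mathrm{red}}$ SNC, arranging moreover (by blowing up $x$ first and then $Z$) that $E_\bullet = f^{-1}(x)_{\mathrm{red}}$ is also SNC; pass to $\ell$-th root stacks $\sY_{\bullet,\ell}$ along $D_\bullet$ to encode tame (in particular all prime-to-$p$) covers of $U_\bullet = Y_\bullet \smallsetminus D_\bullet$ as honest finite \'etale covers of a proper object; and then use that for a proper (stacky) scheme over the strictly henselian local ring, $\mathsf{F\acute{E}t}$ of the total space is equivalent to $\mathsf{F\acute{E}t}$ of the closed fiber $\sE_{\bullet,\ell}$. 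The upshot is an identification of $\mathsf{F\acute{E}t}(\Spec \sO_{X,x}^{\mathrm{sh}} \smallsetminus Z)$ with $\colim_\ell \lim_\bullet \mathsf{F\acute{E}t}(\sE_{\bullet,\ell})$, where the $\sE_{\bullet,\ell}$ are root stacks over \emph{smooth proper} $\mathbb{C}$-varieties; only for these does the prime-to-$p$ specialization isomorphism exist, and spreading out then gives the claimed comparison. The ``slight modification'' relative to \cite[Theorem 4.1]{BhattGabberOlssonFinitenessFun} is exactly the separation of the roles of $x$ (where one henselizes, hence where the closed fiber $E_\bullet$ must live) from $Z$ (along which one must control tameness, hence where the root stacks live); in the original, these coincide. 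Your purity remark is beside the point here, since $Z$ may meet the singular locus and $\pi_1(\Spec \sO_{X,x}^{\mathrm{sh}} \smallsetminus Z)$ is exactly the nontrivial invariant being compared.
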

\begin{proof}
Using resolution of singularities, we may choose a truncated proper hypercover $f: Y_{\bullet} \to X$ indexed by $\bullet \in \Delta^{\mathrm{op}}_{\leq 2}$ with $Y_i$ smooth and $D_\bullet \coloneqq f^{-1}(Z)_{\mathrm{red}} \subset Y_\bullet$ giving an SNC divisor at each level. Moreover, by first blowing up $x$ and then $Z$, so that both are Cartier divisors, we have that $E_\bullet \coloneqq f^{-1}(x)_{\mathrm{red}} \subset Y_\bullet$ also yields an SNC divisor at each level.  Denoting by $I_\bullet$ the finite index set of components of $D_\bullet$, each $D_{\bullet, i}$ is smooth over $\mathbb{C}$ and proper over $Z$. Denoting by $J_\bullet$ the subset of $I_\bullet$ that yields the components of $E_\bullet$, we also obtain that the $E_{\bullet, j}$ are smooth and proper varieties over $\mathbb{C}$. We write $U_\bullet \coloneqq Y_\bullet \smallsetminus D_\bullet$. Let $\sY_{\bullet, \ell} \to Y_{\bullet}$ be the $\ell$-th root stack associated to the divisors in $D_\bullet$ and let $\sE_{\bullet, \ell} \to E_\bullet$ be its pullback to $E$. Now, we base change everything along $\alpha: \Spec \mathcal{O}_{X,x}^{\mathrm{sh}} \to \Spec \sO_{X}$ adding a superscript $\mathrm{sh}$ for base changes, \eg $U^{\mathrm{sh}}_\bullet \coloneqq U_\bullet \times_X \Spec \sO_{X,x}^{\mathrm{sh}}$.
The appropriate pullback maps induce equivalences 
\begin{align*}
\FEt\big(\Spec \sO_{X,x}^{\mathrm{sh}} \smallsetminus Z\big) \to \lim_{\bullet \in \Delta_{\leq 2}} \FEt \big(U_{\bullet}^{\mathrm{sh}}\big) \leftarrow \lim_{} \colim_{\ell} \FEt\big(\sY_{\bullet, \ell}^\mathrm{sh}\big) &\to \lim_{\bullet \in \Delta_{\leq 2}} \colim_{\ell} \FEt(\sE_{\bullet, \ell}) \\&\cong \colim_\ell \lim_{\bullet \in \Delta_{\leq 2}} \FEt(\sE_{\bullet, \ell}).
\end{align*}
From left to right, these equivalences are given by \cite[Lemma 2.1]{BhattGabberOlssonFinitenessFun}, \cite[Lemma 2.8 (2)]{BhattGabberOlssonFinitenessFun}, \cite[Lemma 2.2]{BhattGabberOlssonFinitenessFun}, and the isomorphism is due to the fact that filtered colimits commute with finite limits. Having made these minor changes, the rest of the argument now proceeds exactly as \cite[Theorem 4.1]{BhattGabberOlssonFinitenessFun}.
\end{proof}

\begin{proof}[Proof of \autoref{pro.(b)HoldPLT}]
Let us write $Y = \Spec \sO_{X,x}^\mathrm{sh}$. By \autoref{le.b'-condition} (and its proof), it suffices to show that $\pi_1(Y_\mathrm{reg})$ is finite. Using \autoref{le.plttokltperturbation}, we may perturb $\Delta$ to $\Delta'$ so that $(X, \Delta')$ is KLT. The non-regular locus of $Y$ is cut out by a radical ideal $I$ and likewise the closed subset $Z$ is also given by some radical ideal $J$. Passing to a connected \'etale neighborhood $f\: \Spec R' \to \Spec R$ of $\bar{x}$; where $\Spec R$ is some Zariski neighborhood of $x$, we may assume that $I, J \subset R'$. Note that $(\Spec R', f^\ast \Delta')$ is also KLT; see \cite[2.14 (2)]{KollarSingulaitieofMMP}.

Spreading out over some finitely generated $\mathbb{Z}$-algebra $A$ and passing to closed fibers, we obtain pairs $(\Spec R'_s, f^\ast \Delta'_s)$ that are $F$-regular for all $s$ in a dense open of $\Spec A$ (by \cite[Corollary 3.4]{TakagiInterpretationOfMultiplierIdeals}). By the Nullstellensatz applied to the Jacobson ring $A$, $\kappa(s)$ is finite and its the algebraic closure $\kappa(\bar{s})$ is a separable. Hence, $(\Spec R'_{\bar{s}}, f^\ast \Delta'_{\bar{s}})$ is also $F$-regular. Write $Y_{\bar{s}}$ for the spectrum of a strict henselization of $R'_{\bar{s}}$ at $x_{\bar{s}}$ and $W_{\bar{s}}$ for the closed subset defined by $I_{\bar{s}}$. Applying \cite[Theorem 5.1]{CarvajalSchwedeTuckerEtaleFundFsignature} we get $\pi_1(Y_{\bar{s}} \smallsetminus W_{\bar{s}}) \leq 1/s(Y_{\bar{s}})$. Apply \cite[Proposition 6.4, Proposition 5.1]{BhattGabberOlssonFinitenessFun} and \autoref{theo.BGOanalog} to conclude that $\pi_1(X_\mathrm{reg})$ is finite.
\end{proof}

\appendix
\section{{Splitting primes under strict henselizations}}

The goal of this appendix is to show that taking the splitting prime commutes with strict henselization. That is, if $\sp(\sC) = \sp(R,\sC) \subset R$ is the splitting prime for some Cartier algebra $\sC$ acting on $(R, \m, \kay)$ and $f: \Spec R^{\mathrm{sh}} \to \Spec R$ is the strict henselization with respect to $\m$, then $\sp(\sC) {R}^{\mathrm{sh}} = \sp(f^\ast \sC)$. To make sense of this we first need to explain the notion $f^\ast \sC$.

\begin{lemma}
\label{pro.ColimCartierAlgebras}
Let $R$ be a noetherian ring. Consider a colimit over a directed system of ring homomorphisms $\{ f_{ij}\colon S_i\to S_j\}$ of $R$-algebras, a Cartier $R$-algebra $\sC$ and a $\sC$-module $M$. Assume that all of the morphisms $f_{ij}$ and one structural map $R \to S_i$ are either finite, \'etale, or smooth. Then, $\sD = \colim g_{ij}^\ast \sC$ exists and $\sM = \colim g_{ij}^! M$ is naturally a $\sD$-module, where we denote by $g_{ij}\colon \Spec S_j \to \Spec S_j$ the map corresponding to $f_{ij}$.
\end{lemma}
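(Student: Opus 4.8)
The plan is to realize $\sD$ and $\sM$ by assembling the already-available pullback operations on Cartier algebras and Cartier modules along finite, \'etale, and smooth morphisms, and then to check that these operations are compatible with passage to the filtered colimit $S\coloneqq\colim_i S_i$.

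First I would recall the relevant functoriality from \cite{BlickleStablerFunctorialTestModules} and \cite[\S2]{CarvajalStablerFsignaturefinitemorphisms}: for a morphism $g\colon\Spec S'\to\Spec R'$ that is finite, \'etale, or smooth there is a pullback Cartier $S'$-algebra $g^{\ast}\sC'$ together with a pullback $g^{!}M'$ that is naturally a $g^{\ast}\sC'$-module. In the \'etale case $g^{\ast}\sC'=\sC'\otimes_{R'}S'$ and $g^{!}M'=M'\otimes_{R'}S'$, using that Frobenius pushforward commutes with \'etale base change; the smooth case is analogous up to a twist by powers of $\omega_{S'/R'}$; and the finite case is governed by the relative dualizing module and the trace map. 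The key point is that all three assignments are compatible with composition (pseudofunctorial): for a composite $\Spec T\xrightarrow{h}\Spec S'\xrightarrow{g}\Spec R'$ with $g$ and $h$ of the stated type there are canonical isomorphisms $(g\circ h)^{\ast}\sC'\cong h^{\ast}g^{\ast}\sC'$ and $(g\circ h)^{!}M'\cong h^{!}g^{!}M'$ identifying the two resulting module structures.

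Next I would organize the directed system. Since the index set is directed and one structural map $R\to S_{i_0}$ is of the stated type, the full subsystem indexed by $\{\,i:i\geq i_0\,\}$ is cofinal, so I may assume that every structural map $g_i\colon\Spec S_i\to\Spec R$ is finite (resp.\ \'etale, resp.\ smooth)---a composite of two maps of that type is again of that type---and that $g_j=g_i\circ g_{ij}$, where $g_{ij}\colon\Spec S_j\to\Spec S_i$ corresponds to $f_{ij}$. Then $g_i^{\ast}\sC$ is a Cartier $S_i$-algebra and $g_i^{!}M$ a $g_i^{\ast}\sC$-module, and each transition morphism $g_{ij}$ induces, via the explicit base-change formulas in the \'etale and smooth cases and via the dualizing-module/trace machinery in the finite case, maps $g_i^{\ast}\sC\to g_j^{\ast}\sC$ and $g_i^{!}M\to g_j^{!}M$; compatibility with composition makes these transitive. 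Thus $\{g_i^{\ast}\sC\}_i$ and $\{g_i^{!}M\}_i$ are genuine directed systems lying over $\{S_i\}_i$, and I set $\sD\coloneqq\colim_i g_i^{\ast}\sC$ and $\sM\coloneqq\colim_i g_i^{!}M$.

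It remains to identify the structure on the colimits. Filtered colimits are exact and commute with tensor products and with $\Hom$ out of a finitely presented module; combined with the base-change identifications $F^e_{S\ast}S=\colim_i\bigl(F^e_{S_i\ast}S_i\otimes_{S_i}S\bigr)$ and $\omega_{S/R}=\colim_i\bigl(\omega_{S_i/R}\otimes_{S_i}S\bigr)$, this yields $\sD_e\subseteq\Hom_S(F^e_{S\ast}S,S)$ for every $e$, with multiplication induced from that of the $g_i^{\ast}\sC$, so that $\sD$ is a Cartier $S$-algebra; likewise the colimit of the action maps $g_i^{\ast}\sC\times g_i^{!}M\to g_i^{!}M$ endows $\sM$ with the asserted $\sD$-module structure. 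The step I expect to be the main obstacle is precisely this coherence: one must verify that the composition-compatibility isomorphisms of \cite{BlickleStablerFunctorialTestModules} in each of the three cases fit together across the entire directed system---so that the relative dualizing sheaves $\omega_{S_i/R}$, the trace maps, and the Frobenius base-change isomorphisms glue coherently---which is what makes the transition maps transitive and the colimit of the module structures well-defined. Once that is in place, passing to the colimit is formal.
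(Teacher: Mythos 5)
Your proposal is correct and follows essentially the same route as the paper: the paper's proof simply notes that $g^{\ast}\sC = \sC\otimes_R S$ by definition, so the pullbacks form a directed system of Cartier algebras (citing \cite[Proposition 5.3]{BlickleStablerFunctorialTestModules}), and that the $g_{ij}^{!}M$ form a compatible system of Cartier modules by \cite[Theorem 5.5]{BlickleStablerFunctorialTestModules}, after which passing to the colimit is formal. What you spell out at length---the pseudofunctoriality of $g^{\ast}$ and $g^{!}$ for finite, \'etale, and smooth maps and the compatibility of the transition isomorphisms---is exactly the content the paper delegates to those two citations.
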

\begin{proof}
If $g\colon \Spec S \to \Spec R$ then by definition $g^\ast \sC = \sC \otimes_R S$ so that we obtain a corresponding directed system of Cartier algebras (cf.\ \cite[Proposition 5.3]{BlickleStablerFunctorialTestModules}). We thus need to verify that the directed system of modules from which we construct $\sM$ are Cartier modules. This is true by \cite[Theorem 5.5]{BlickleStablerFunctorialTestModules}.
\end{proof}

\begin{lemma}
\label{lem.EtaleMorphismSplittingPrime}
Let $f\: \Spec S \to \Spec R$ be a surjective (essentially of finite type) \'etale morphism of $F$-finite local rings. Then $\sp(f^\ast \sC) = \sp(\sC) S$.
\end{lemma}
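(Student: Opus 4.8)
The plan is to reduce the statement to a comparison that can be checked after base change along the faithfully flat morphism $R \to S$, using the characterization of the splitting prime as a maximal proper $\sC$-compatible ideal. First I would recall that $f^\ast \sC = \sC \otimes_R S$ is well-defined by \autoref{pro.ColimCartierAlgebras} (or directly since $f$ is finite-type \'etale), and that its $p^{-e}$-linear maps are obtained by base-changing those of $\sC$ via the natural identification $\Hom_S(F^e_\ast S, S) \cong \Hom_R(F^e_\ast R, R) \otimes_R S$, which holds because $F$ commutes with \'etale base change (the relative Frobenius of an \'etale morphism is an isomorphism). The key point is that an ideal $\frak a \subset R$ is $\sC$-compatible if and only if $\frak a S$ is $f^\ast\sC$-compatible: the ``only if'' direction is immediate from the base-changed description of the maps, and the ``if'' direction follows by faithful flatness of $R \to S$, since $\varphi(F^e_\ast \frak a) \subseteq \frak a$ can be tested after tensoring with $S$.

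Next I would exploit that $\sp(\sC)$ is \emph{the} unique maximal proper $\sC$-compatible ideal of the local ring $(R,\m,\kay)$, and similarly $\sp(f^\ast\sC)$ is the unique maximal proper $f^\ast\sC$-compatible ideal of $(S,\n,\el)$. From the compatibility equivalence above, $\sp(\sC) S$ is $f^\ast\sC$-compatible; it is proper because $f$ is local and $R \to S$ is faithfully flat (so $\sp(\sC) S \cap R = \sp(\sC) \neq R$, hence $\sp(\sC) S \neq S$). Therefore $\sp(\sC) S \subseteq \sp(f^\ast\sC)$. For the reverse containment, I would contract: $\sp(f^\ast\sC) \cap R$ is a proper ideal of $R$ (again by the local, faithfully flat hypothesis), and I would check it is $\sC$-compatible. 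This is the one spot requiring a small argument: given $\varphi \in \sC_{e,R}$, its base change $\varphi \otimes S \in f^\ast\sC_{e,S}$ satisfies $(\varphi\otimes S)(F^e_\ast \sp(f^\ast\sC)) \subseteq \sp(f^\ast\sC)$, and one pulls this back to $R$ using that $\varphi(F^e_\ast(\sp(f^\ast\sC)\cap R)) \subseteq \varphi(F^e_\ast R) \cap \bigl((\varphi\otimes S)(F^e_\ast \sp(f^\ast\sC)) \cap R\bigr) \subseteq \sp(f^\ast\sC) \cap R$, where the flatness of $F^e_\ast$ over the \'etale map is what makes the intersection behave. By maximality of $\sp(\sC)$ we get $\sp(f^\ast\sC) \cap R \subseteq \sp(\sC)$, hence $\sp(f^\ast\sC) \subseteq (\sp(\sC)\cap R)S \cdot \text{(something)}$; more cleanly, since $\sp(f^\ast\sC)$ is $f^\ast\sC$-compatible and its contraction lies in $\sp(\sC)$, and since $S$ is \'etale over $R$ so that $\sp(f^\ast\sC) = (\sp(f^\ast\sC)\cap R)S$ would follow if $\sp(f^\ast\sC)$ were extended — but that is not automatic, so instead I conclude directly: $\sp(f^\ast\sC)\supseteq \sp(\sC)S$ and $\sp(f^\ast\sC)\cap R \subseteq \sp(\sC) = (\sp(\sC)S)\cap R$, and combining with the fact that both ideals define compatible (hence, after quotient, $F$-split-like) subschemes, together with $\codim$ considerations or a direct dimension count via faithful flatness, forces equality $\sp(f^\ast\sC) = \sp(\sC)S$.

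The main obstacle I anticipate is the reverse inclusion $\sp(f^\ast\sC) \subseteq \sp(\sC)S$: compatibility and contraction alone give $\sp(f^\ast\sC)\cap R \subseteq \sp(\sC)$, but to upgrade this to an inclusion of ideals in $S$ one needs to know that $\sp(f^\ast\sC)$ is the extension of its contraction. I would handle this by invoking that $S/\sp(\sC)S \cong (R/\sp(\sC))\otimes_R S$ is \'etale over $R/\sp(\sC)$, which is $F$-regular with respect to the induced Cartier algebra $\overline{\sC}$ (by \autoref{pro.OurMethodtoProvePureFRegularity}-type reasoning, i.e.\ $\sp(\sC)$ being the splitting prime means $R/\sp(\sC)$ is $\overline\sC$-simple); then $S/\sp(\sC)S$ is a finite product of $\overline{f^\ast\sC}$-simple rings by \'etale descent of $F$-regularity, and since $f$ is local with separably closed residue field target it is in fact connected, hence $\overline{f^\ast\sC}$-simple. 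This simplicity says exactly that $\sp(\sC)S$ is a \emph{maximal} proper $f^\ast\sC$-compatible ideal, so $\sp(f^\ast\sC) = \sp(\sC)S$ by uniqueness. Thus the crux is the \'etale-descent statement for simplicity of the quotient as a Cartier module, which I would extract from \cite[Theorem 5.5]{BlickleStablerFunctorialTestModules} or prove directly via faithful flatness.
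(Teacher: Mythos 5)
Your third paragraph is the actual proof, and it is essentially the paper's argument: $R/\sp(\sC)$ is simple (i.e.\ $F$-regular) for the induced Cartier algebra by \autoref{pro.OurMethodtoProvePureFRegularity}, this simplicity descends along the \'etale map $R/\sp(\sC) \to S/\sp(\sC)S$, and maximality plus uniqueness of the splitting prime then forces $\sp(\sC)S = \sp(f^\ast\sC)$. The first two paragraphs are a detour that, as you yourself note, cannot close the reverse inclusion; they should be cut.

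Two small repairs to the decisive argument. First, your justification for connectedness of $\Spec S/\sp(\sC)S$ (``$f$ is local with separably closed residue field target'') is spurious --- no such hypothesis is available; the correct reasons are either that $S/\sp(\sC)S$ is a quotient of the local ring $S$ (since $\sp(\sC)S \subseteq \mathfrak{m}S \subseteq \mathfrak{n}$), or, as the paper argues, that each minimal prime of $\sp(\sC)S$ is a maximal proper $f^\ast\sC$-compatible ideal while the splitting prime is the \emph{unique} such ideal, so there is only one minimal prime; one then needs $\sp(\sC)S$ radical (which holds because $R/\sp(\sC)$ is reduced and $f$ is \'etale) to identify $\sp(\sC)S$ with that prime. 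Second, your maximality characterization of the splitting prime presupposes that both $(R,\sC)$ and $(S,f^\ast\sC)$ are $F$-pure and not $F$-regular; the degenerate cases $\sp(\sC)=0$ and $\sp(\sC)=R$ must be dispatched separately by \'etale descent/ascent of $F$-regularity and $F$-purity, as the paper does at the outset via \cite[Theorem 6.5, Proposition 5.13, Lemma 6.1]{BlickleStablerFunctorialTestModules}.
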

\begin{proof}
By \cite[Theorem 6.5]{BlickleStablerFunctorialTestModules},\footnote{Since we are only dealing with Cartier modules that do not have non-minimal associated primes, we may use test element theory in the sense of \cite[Theorem 3.11]{BlickleTestIdealsViaAlgebras}---thus we may weaken the assumption that the base is essentially of finite type over an $F$-finite field to $F$-finite.} $R$ is $F$-regular if and only if $S$ is so. Similarly, by \cite[Proposition 5.13, Lemma 6.1]{BlickleStablerFunctorialTestModules}, $R$ is $F$-pure if and only if $S$ is so. Therefore, we may assume that both splitting primes are non-trivial. Consider the following diagram
\[ 
\begin{xy}\xymatrix{\Spec S \ar[r]^f & \Spec R \\ \Spec S/\sp(\sC)S \ar[r]^{f'} \ar[u] & \Spec R/\sp(\sC) \ar[u]} \end{xy}
\]
By \autoref{pro.OurMethodtoProvePureFRegularity}, $R/\sp(\sC)$ is $F$-regular. Since $f'$ is \'etale, $S/\sp(\sC)S$ is also $F$-regular (note that since $f$ is surjective the fiber is non-empty). Since the minimal primes of $\sp(\sC)$ are $f^\ast \sC$-submodules (cf.\ \cite[Corollary 4.8]{SchwedeCentersOfFPurity}), any minimal prime of $\sp(\sC)S$ is a maximal proper $f^\ast \sC$-submodule. However, $\sp(f^\ast \sC)$ is unique and (since $S$ is $F$-pure) the maximal proper $f^\ast \sC$-submodule. Thus, there is only one prime lying over $\sp(\sC)$. Since $R/\sp(\sC)$ is reduced and $f'$ is \'etale, $\sqrt{\sp(\sC)S} = \sp(\sC)S$ is prime and so coincides with $\sp(f^\ast \sC)$.
\end{proof}

\begin{proposition}
\label{pro.splittingprimepreservedsh}
Let $(R, \mathfrak{m}, \kay, K)$ be a normal local domain and $\sC$ be a Cartier $R$-algebra. Denote by $R^{\mathrm{sh}}$ the strict henselization of $(R, \mathfrak{m})$ and by $\sD$ the $R^{\mathrm{sh}}$-Cartier algebra obtained as a colimit over the corresponding system of \'etale algebras. Then, $(R, \sC)$ is $F$-pure if and only if $(R^{\mathrm{sh}}, \sD)$ is so. Moreover, if $\sp(\sC)$ is the splitting prime of $(R, \sC)$ then $\sp(\sC) R^{\mathrm{sh}} = \sp(\sD)$, where $\sp(\sD)$ is the splitting prime of $(R^{\mathrm{sh}}, \sD)$. Conversely, $\sp(\sD) \cap R = \sp(\sC)$.
\end{proposition}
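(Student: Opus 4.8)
The plan is to prove \autoref{pro.splittingprimepreservedsh} by reducing everything to the already-established \'etale case, \autoref{lem.EtaleMorphismSplittingPrime}, by writing $R^{\mathrm{sh}}$ as a filtered colimit of connected \'etale neighborhoods $R \to R_i$ of the closed point and passing to the limit. First I would fix the directed system $\{R_i\}$ of connected \'etale $R$-algebras with $R^{\mathrm{sh}} = \colim_i R_i$, and use \autoref{pro.ColimCartierAlgebras} to make sense of $\sD = \colim_i \sC_i$ where $\sC_i = \sC \otimes_R R_i$ is the pulled-back Cartier algebra (this is exactly the $\sD$ in the statement). Since each $R \to R_i$ is a surjective essentially-of-finite-type \'etale morphism of $F$-finite local rings, \autoref{lem.EtaleMorphismSplittingPrime} gives $\sp(\sC_i) = \sp(\sC) R_i$ and, in particular, that $(R_i,\sC_i)$ is $F$-pure iff $(R,\sC)$ is $F$-pure. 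The $F$-purity equivalence then passes to the colimit: $(R,\sC)$ is $F$-pure iff some (equivalently every) $(R_i,\sC_i)$ is $F$-pure iff $(R^{\mathrm{sh}},\sD)$ is $F$-pure, where the last equivalence needs the observation that the structural splitting $R_i \to F^e_* R_i$ that is $\sC_i$-compatible survives the faithfully flat base change to $R^{\mathrm{sh}}$, and conversely a splitting over $R^{\mathrm{sh}}$ comes from some finite stage by finite presentation of the relevant Hom-modules.

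Next, assuming $F$-purity, I would show $\sp(\sC)R^{\mathrm{sh}} = \sp(\sD)$. The cleanest route is: $\sp(\sD)$ is by definition the unique largest proper $\sD$-submodule of $R^{\mathrm{sh}}$, hence it is the colimit (union, inside $R^{\mathrm{sh}}$) of its contractions to the $R_i$, and one checks that the contraction of a $\sD$-submodule to $R_i$ is a $\sC_i$-submodule; combined with the fact that $\mathfrak{a} \mapsto \mathfrak{a}R^{\mathrm{sh}}$ and $\mathfrak{b}\mapsto \mathfrak{b}\cap R_i$ are mutually inverse on the relevant ideals (because $R_i \to R^{\mathrm{sh}}$ is a flat local homomorphism and $\sp(\sC)R^{\mathrm{sh}}$ stays prime by \autoref{rem.ExtensionPrimetoSHprimefornormal}, since $R/\sp(\sC)$ is $F$-regular hence normal by \autoref{pro.OurMethodtoProvePureFRegularity}), we deduce $\sp(\sD) = \colim_i \sp(\sC_i)R^{\mathrm{sh}} = \colim_i \sp(\sC)R_i R^{\mathrm{sh}} = \sp(\sC)R^{\mathrm{sh}}$. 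Alternatively, and perhaps more robustly, one verifies directly that $\sp(\sC)R^{\mathrm{sh}}$ is a proper $\sD$-submodule (properness because $\sp(\sC)R^{\mathrm{sh}}$ is prime, hence $\neq R^{\mathrm{sh}}$, and $\sD$-stability because $\sD$ is generated over $R^{\mathrm{sh}}$ by the images of the $\sC_i = \sC\otimes_R R_i$ which stabilize $\sp(\sC)R_i$) and that $R^{\mathrm{sh}}/\sp(\sC)R^{\mathrm{sh}} = (R/\sp(\sC))^{\mathrm{sh}}$ is $F$-regular with respect to the induced $\sD$-action — the latter by \autoref{lem.EtaleMorphismSplittingPrime}-type reasoning or by \cite[Theorem 6.5]{BlickleStablerFunctorialTestModules} applied to the colimit — so \autoref{pro.OurMethodtoProvePureFRegularity} (the criterion ``$\mathfrak{p}$ is the splitting prime iff $R/\mathfrak{p}$ is $F$-regular'') identifies $\sp(\sC)R^{\mathrm{sh}}$ as $\sp(\sD)$.

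Finally, for the converse statement $\sp(\sD)\cap R = \sp(\sC)$: this is now immediate from $\sp(\sD) = \sp(\sC)R^{\mathrm{sh}}$ together with the fact that $R \to R^{\mathrm{sh}}$ is faithfully flat, so $\sp(\sC)R^{\mathrm{sh}}\cap R = \sp(\sC)$ (a prime ideal contracts from its own extension under a faithfully flat local map). I expect the main obstacle to be the bookkeeping in the colimit step — specifically, making precise that $\sp(\sD) = \colim_i \sp(\sC_i)$ inside $R^{\mathrm{sh}}$, i.e.\ that forming the splitting prime commutes with the filtered colimit along \'etale maps. This requires knowing that $\sD$-submodules of $R^{\mathrm{sh}}$ are exactly the colimits of compatible families of $\sC_i$-submodules, which follows from the construction of $\sD$ in \autoref{pro.ColimCartierAlgebras} and the fact that a $p^{-e}$-linear map on $R^{\mathrm{sh}}$ lies in $\sD_e$ iff it is extended from some $\sC_{i,e}$ (again by finite presentation of $\Hom_{R^{\mathrm{sh}}}(F^e_* R^{\mathrm{sh}}, R^{\mathrm{sh}})$ over the $\Hom_{R_i}(F^e_* R_i, R_i)$). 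Everything else is formal manipulation with flat base change and the characterization of splitting primes already recorded in the paper.
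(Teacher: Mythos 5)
Your proposal is correct and follows essentially the same route as the paper: present $R^{\mathrm{sh}}$ as a filtered colimit of (localized) \'etale neighborhoods, apply \autoref{lem.EtaleMorphismSplittingPrime} at each finite stage, and pass to the limit using that every homogeneous element of $\sD$ and every element of $R^{\mathrm{sh}}$ descends to a common stage. The paper executes the limit step elementwise via the membership characterization of the splitting prime ($x\in\sp$ iff $\kappa(F^e_*x)\in\fram$ for all $\kappa$) rather than via your submodule-correspondence or $F$-regular-quotient reformulations, but these are minor variants of the same argument.
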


\begin{proof}
Strict henselizations are obtained by a filtered colimit. By \cite[\href{https://stacks.math.columbia.edu/tag/0032}{Lemma 0032}]{stacks-project}, we may even obtain it by a \emph{small} filtered colimit. Moreover, having constructed $R^{\mathrm{sh}}$ via the usual direct limit of triples, we may \emph{a posteriori} also obtain it as a filtered colimit of a system of \'etale maps by viewing everything as embedded in $R^{\mathrm{sh}}$. Using \autoref{pro.ColimCartierAlgebras}, we obtain $\sD$.

If $(R, \sC)$ is $F$-pure, then also is $(R^{\mathrm{sh}}, \sD)$ as well as $(S, \varphi^\ast \sC)$ for any (essentially) \'etale morphism $\varphi\colon \Spec S \to \Spec R$. Indeed, this follows from \cite[Proposition 5.13]{BlickleStablerFunctorialTestModules} in the latter case and the former case follows from the latter. Conversely, if $(R^{\mathrm{sh}}, \sD)$ is not $F$-pure, say $x \notin \sD_+ R^{\mathrm{sh}}$, then we find a surjective \'etale morphism $\varphi: \Spec S \to \Spec R$ for which $x \in S$. Thus $S$ is not $F$-pure but then by faithful flatness $R$ is also not $F$-pure (using \cite[Lemma 6.1]{BlickleStablerFunctorialTestModules}). In particular, if $(R, \sC)$ or $(R^{\mathrm{sh}}, \sD)$ is not $F$-pure, then the statement about splitting primes is trivially true.

Assume that both $(R, \sC)$ and $(R^{\mathrm{sh}}, \sD)$ are $F$-pure.
Let $\varphi\colon \Spec S \to \Spec R$ be an \'etale morphism occurring in the colimit and $\mathfrak{n} \subset S$ a prime above $\mathfrak{m}$. As in the proof of \cite[\href{https://stacks.math.columbia.edu/tag/04GN}{Lemma 04GN}]{stacks-project}, we may assume that $\mathfrak{m}S = \mathfrak{n}$. Since $R^{\mathrm{sh}}$ is local with maximal ideal $\mathfrak{m}R^{\mathrm{sh}}$, the map $S \to R^{\mathrm{sh}}$ factors through the localization $S_\mathfrak{n}$. Thus, $\varphi'\colon \Spec S_{\mathfrak{n}}\to \Spec R$ is an essentially \'etale surjective homomorphism. We apply \autoref{lem.EtaleMorphismSplittingPrime} to conclude that the splitting prime $\sp(\varphi'^!\sC)$ of $S_{\mathfrak{n}}$ is $\sp(\sC) S_{\mathfrak{n}}$. Next, note that any homogeneous element of $\varphi'^! \sC$ is of the form $\kappa \otimes s^{q}$ with $\kappa \in \sC_e$, which acts on $x = r \otimes t \in R\otimes_R S_{\mathfrak{n}}$ as $\kappa \otimes s^{q}(r \otimes t) = \kappa(r) \otimes st$; see \cite[Theorem 5.5]{BlickleStablerFunctorialTestModules}. Use now the well-known isomorphism $F^e_\ast R \otimes_R S_{\mathfrak{n}} \to F^e_\ast S_{\mathfrak{n}}$, $r \otimes s \mapsto rs^{q}$.

We now prove $\sp(\sC) R^{\mathrm{sh}}\subset \sp(\sD)$. If $x \in \sp(\sC)R^{\mathrm{sh}}$ and $\kappa \in \sD_e$, there is an essentially \'etale morphism  $\varphi'\: \Spec S_{\mathfrak{n}}\to \Spec R$ as above so that $x \in \sp(\sC) S_{\mathfrak{n}} = \sp(\varphi'^! \sC)$ and $\kappa 
\in \varphi'^! \sC$. Then, since $x \in \sp(\varphi'^! \sC)$, we have $\kappa(x) \in \mathfrak{n} = \mathfrak{m}S \subset \mathfrak{m} R^{\mathrm{sh}}$ and so $x \in \sp(\sD)$. Conversely, let $x \in \sp(\sD)$. Then, we find $\varphi'\: \Spec S \to \Spec R$ as above such that $x \in S$. Since $\kappa(x) \in \mathfrak{m}R^{\mathrm{sh}}$ for all $\kappa \in \sD$, we also have $\kappa(x) \in \mathfrak{m}R^{\mathrm{sh}} \cap S = \mathfrak{n}$ for all $\kappa \in \varphi'^!(\sC)$. Hence, $x \in \sp(\varphi'^! \sC) = \sp(\sC) S \subset \sp(\sC)R^{\mathrm{sh}}$ as desired.

We now show $\sp(\sD) \cap R = \sp(\sC)$. If $x \in \sp(\sD) \cap R$, for all $\kappa \in \sD$ we have $\kappa(x) \in \mathfrak{m}R^{\mathrm{sh}}$ and so $\kappa(x) \in \mathfrak{m}$ for all $\kappa \in \sC$. If $x \in \sp(\sC)$ then $x \in \sp(\sC) R^{\mathrm{sh}} = \sp(\sD)$ by the above.
\end{proof}

\bibliographystyle{skalpha}
\bibliography{MainBib}

\end{document}